\UseRawInputEncoding
\documentclass[12pt, reqno]{amsart}
\usepackage[margin=1in]{geometry}
\usepackage{amssymb,latexsym,amsmath,amscd,amsfonts}
\usepackage{latexsym}
\usepackage[mathscr]{eucal}
\usepackage{bm}
\usepackage{tabularx}
\usepackage{mathptmx}
\usepackage{amssymb}
\usepackage{amsthm}
\usepackage{dcolumn}
\usepackage[all]{xy}
\usepackage{enumitem}
\usepackage[utf8]{inputenc}
\usepackage{mathtools}

\def \qed {\hfill \vrule height6pt width 6pt depth 0pt}
\def\textmatrix#1&#2\\#3&#4\\{\bigl({#1 \atop #3}\ {#2 \atop #4}\bigr)}
\def\dispmatrix#1&#2\\#3&#4\\{\left({#1 \atop #3}\ {#2 \atop #4}\right)}
\newcommand{\beg}{\begin{equation}}
	\newcommand{\eeg}{\end{equation}}
\newcommand{\ben}{\begin{eqnarray*}}
	\newcommand{\een}{\end{eqnarray*}}

\newcommand{\Q}{\mathbb{H}}
\newcommand{\CQ}{\overline{\mathbb{H}}}
\newcommand{\C}{\mathbb{C}}

\newcommand{\D}{\mathbb{D}}
\newcommand{\DC}{\overline{\mathbb{D}}}
\newcommand{\T}{\mathbb{T}}

\newcommand{\Gg}{\mathbb{G}_2}

\newcommand{\E}{\mathbb E}
\newcommand{\Ebar}{\overline{\mathbb E}}

\newcommand{\lm}{\lambda}

\newcommand{\al}{\alpha}
\newcommand{\be}{\beta}
\newcommand{\om}{\omega}

\newcommand{\A}{\rm{Aut}}

\newcommand{\HB}{\mathbb{H}_\mu}
\newcommand{\CHB}{\overline{\mathbb{H}}_\mu}
\newcommand{\QN}{\mathbb{H}_{N}}
\newcommand{\CQN}{\overline{\mathbb{H}}_{N}}
\newcommand{\Pe}{\mathbb{P}}
\newcommand{\Pbar}{\overline{\mathbb{P}}}
\newcommand{\Pz}{\psi_{z_1, z_2}}
\newcommand{\w}{\omega}

\newtheorem{thm}{Theorem}[section]

\newtheorem{lem}[thm]{Lemma}

\newtheorem{prop}[thm]{Proposition}
\numberwithin{equation}{section} \theoremstyle{definition}
\newtheorem{defn}[thm]{Definition}
\newtheorem{rem}[thm]{Remark}

\newtheorem{eg}[thm]{Example}

\def\textmatrix#1&#2\\#3&#4\\{\bigl({#1 \atop #3}\ {#2 \atop #4}\bigr)}
\def\dispmatrix#1&#2\\#3&#4\\{\left({#1 \atop #3}\ {#2 \atop #4}\right)}
\begin{document}
	\title{On a family of domains associated with $\mu$-synthesis in $M_2(\mathbb C)$}
	
	\author{Indranil Biswas, Sourav Pal and Nitin Tomar}	
	
	\address[Indranil Biswas]{Department of Mathematics,
		Shiv Nadar University,
		NH91, Tehsil Dadri, Greater
		Noida,
		Uttar Pradesh 201314, India.} \email{indranil.biswas@snu.edu.in}		
	
	\address[Sourav Pal]{Mathematics Department, Indian Institute of Technology Bombay,
		Powai, Mumbai - 400076, India.} \email{sourav@math.iitb.ac.in, souravmaths@gmail.com}
	
	\address[Nitin Tomar]{Statistics and Mathematics Unit, Indian Statistical Institute, Bangalore, Karnataka - 560059, India.} \email{tomarnitin414@gmail.com}

	\keywords{Hexablock, Symmetrized bidisc, Tetrablock,
		Pentablock, Euclidean biball, Rational $\CQ$-inner function, Schwarz lemma}	
	
	\subjclass[2020]{32Q02, 93B50, 32A08}	
	
	\begin{abstract}
		The study of special cases of the $\mu$-synthesis in $M_2(\mathbb C)$ gives rise to several domains in $\mathbb C^n$, e.g., the symmetrized bidisc $\mathbb G_2$, the tetrablock $\mathbb E$, the pentablock $\mathbb P$ and the hexablock $\mathbb H$. In this paper, we show that the domain $\mathbb H$ serves as a unifying domain for $\mathbb G_2, \mathbb E$ and $\mathbb P$ by proving that each of these domains is an analytic retract of $\mathbb H$. Further, we prove that the bidisc $\mathbb D^2$ and the Euclidean unit ball $\mathbb B_2$ are also analytic retracts of $\mathbb H$. Then the rational $\overline{\mathbb H}$-inner functions, i.e., the rational maps from $\mathbb D$ into $\overline{\mathbb H}$ whose boundary values lie in the distinguished boundary of $\mathbb H$, are characterized. Two different versions of Schwarz type lemma for $\mathbb H$ are established of which one provides a necessary and sufficient condition. Finally, capitalizing on the theory of $\mathbb H$, we present alternative proofs to a few known results on $\mathbb P$ from the literature.
	\end{abstract}	
	
	\maketitle
	
	\section{Introduction}\label{ch_intro} 	
	\noindent A family of four domains, which arise naturally from different cases of the $\mu$-synthesis into the space of $2\times 2$ matrices $M_2(\C)$, has attracted considerable attention over past two decades. We briefly describe the $\mu$-synthesis problem that originates in $H^{\infty}$ control theory, see \cite{Doyle, Francis}. Given a linear subspace $E$ of $M_n(\C)$, \textit{the structured singular value} associated with $E$ is the function $\mu_E: M_n(\C)\to[0,\infty)$ defined by
	\begin{equation} \label{eqn:NEW-01}
	\mu_E(A)=1\slash \inf\{\|X\|\in E,\ \det(I-AX)=0\}
	\end{equation}
	with the understanding that $\mu_E(A)=0$ if there does not exist $X\in E$ with $\det(I-AX)=0$. Here, $\|\cdot\|$ denotes the operator norm on $M_n(\mathbb{C})$ and $I$ is the identity matrix. For a linear subspace $E\subseteq M_n(\mathbb C)$, the corresponding $\mu$-synthesis problem asks the following: given distinct points $\lambda_1,\ldots,\lambda_m$ in the open unit disc $\mathbb D$ in the complex plane and matrices $A_1,\ldots,A_m\in M_n(\mathbb C)$, does there exist a holomorphic map $F:\mathbb D\to M_n(\mathbb C)$ satisfying $\mu_E(F(\lambda))\le1$ for every $\lambda\in\mathbb D$ such that	$F(\lambda_j)=A_j$ for $1\leq j\leq m$ ? When $E=M_n(\mathbb C)$, $\mu_E$ coincides with the operator norm and the corresponding $\mu$-synthesis problem reduces to the classical matricial Nevanlinna-Pick interpolation problem (see \cite{Foias_Frazho} for details). Also, when $E=\{\alpha I:\alpha\in\mathbb C\}$, $\mu_E$ becomes the spectral radius and the resulting $\mu$-synthesis problem is the spectral Nevanlinna-Pick problem, e.g., see \cite{Agler2004, Costara2005_II}. For further background on the control-theoretic origin of the structured singular value, we refer the reader to \cite{Doyle, DoyleII, Francis}. Suppose $E_1, E_2$ and $E_3$ are linear subspaces of $M_2(\C)$ consisting of all scalar matrices, diagonal matrices and upper triangular matrices with same diagonal entries, respectively. We now show the readers how the $\mu_{E_1}, \mu_{E_2}, \mu_{E_3}$ unit balls induce domains naturally and what are their relevance with the corresponding $\mu$ unit balls. For example, consider the linear subspace $E_2$ consisting of $2 \times 2$ diagonal matrices. It was shown in \cite{Abouhajar} that the $\mu$-synthesis with respect to this subspace induces the domain \textit{tetrablock}, $\mathbb E$. Indeed, for $X=\begin{pmatrix}
z_1 & 0\\
0& z_2
\end{pmatrix} \in E$ and $A=(a_{ij})_{i,j=1}^2 \in M_2(\mathbb{C})$, we have that
$\|X\|=\max\{|z_1|, |z_2|\}$ and $\det(I-AX)=1-a_{11}z_1-a_{22}z_2+\det(A)z_1z_2$ and it follows from the definition of $\mu_{E_2}$ (see Equation-(\ref{eqn:NEW-01})) that
\begin{align*}
\mu_{E_2}(A)<1
& \iff \|X\|>1 \ \text{for all} \ X \in E \ \text{with} \ \det(I-AX)=0\\
& \iff \det(I-AX)\ne 0 \ \text{for all} \ X \in E \ \text{with} \ \|X\| \leq 1\\
& \iff 1-a_{11}z_1-a_{22}z_2+\det(A)z_1z_2\ne 0 \ \text{for all} \ z_1, z_2 \in \overline{\mathbb{D}}\\
& \iff (a_{11}, a_{22}, \det(A)) \in \mathbb{E}.
\end{align*}
Thus, $A=(a_{ij})_{i,j=1}^2$ is in the $\mu_E$-unit ball if and only if the point $(a_{11}, a_{22}, \det(A))$ belongs to the tetrablock $\mathbb E$. It turns out that an interpolation problem between $\D$ and the $\mu_{E_2}$-unit ball is equivalent with a similar interpolation problem between $\D$ and the tetrablock $\mathbb E$,  e.g., see the last Section of \cite{Abouhajar}. Evidently, it is easier to deal with a bounded domain like $\mathbb E \subset \C^3$ than the norm-unbounded object, the $\mu_{E_2}$ unit ball in $\C^4$. This is in fact the main reason behind studying such a domain arising from a $\mu$-synthesis problem. Similarly, the domains associated with $\mu_{E_1}$ and $\mu_{E_3}$ are the symmetrized bidisc $\mathbb G_2$ and the pentablock $\mathbb P$, respectively. Indeed,
	{\fontsize{10.7}{14}\selectfont
		\begin{equation}\label{eqn_domains}
			\left.
			\begin{aligned}
				\Gg&=\{(\text{tr}(A), \det(A)): A \in M_2(\C), \mu_{E_1}(A)<1\}
				=\{(\text{tr}(A), \det(A)): A \in M_2(\C), \|A\|<1\},\\[1mm]
				\E&=\{(a_{11}, a_{22}, \det(A)): A \in M_2(\C), \mu_{E_2}(A)<1\}
				=\{(a_{11}, a_{22}, \det(A)): A \in M_2(\C), \|A\|<1\},\\[1mm]
				\Pe&=\{(a_{21}, \text{tr}(A), \det(A)): A \in M_2(\C), \mu_{E_3}(A)<1\}
				=\{(a_{21}, \text{tr}(A), \det(A)): A \in M_2(\C), \|A\|<1\}.
			\end{aligned}
			\right\}
		\end{equation}
	}
	\par \noindent 
	Here $\text{tr}(A)$ and $\det(A)$ denotes the trace and determinant of $A \in M_2(\C)$, respectively.  The domains $\mathbb{G}_2$, $\mathbb{E}$ and $\mathbb{P}$ were introduced in \cite{AglerYoung}, \cite{Abouhajar} and \cite{AglerIV}, respectively. Each of these domains has been studied extensively from complex geometric, function theoretic and operator theoretic perspective. We refer the reader to \cite{AglerYoung, Agler2003, AglerIII, Tirtha_Pal, tirtha-sourav1, Costara2004, zwonek1} for the symmetrized bidisc, \cite{AbouhajarI, Abouhajar, Tirtha, EdigarianI, Kosinski, zwonek1, Zwonek} for the tetrablock and \cite{AglerIV, KosinskiII, Guicong, JindalII, Pal_penta} for the pentablock. Since the subspaces $E_1, E_2$ and $E_3$ are all contained in the linear subspace $E_4$ consisting of all upper triangular matrices in $M_2(\C)$, it is natural to consider the $\mu$-synthesis problem corresponding to $E_4$. Motivated by this, the authors of \cite{Biswas-Pal} introduced the hexablock $\Q$, which is a domain in $\C^4$ defined as
	\[
	\Q=\left\{(a,x_1,x_2,x_3)\in\C\times\E:
	\sup_{z_1,z_2\in\D}\bigl|\Pz(a,x_1,x_2,x_3)\bigr|<1
	\right\},
	\]
	where $\Pz$ is the fractional linear map given by
	\begin{equation}\label{eqn_Pz}
		\Pz(a,x_1,x_2,x_3)=
		\frac{a\sqrt{(1-|z_1|^2)(1-|z_2|^2)}}
		{1-x_1z_1-x_2z_2+x_3z_1z_2}.
	\end{equation}
	To further understand the origin of $\Q$ from $\mu$-synthesis problem, we recall from \cite{Biswas-Pal} the sets 
	\[
	\HB=\{\pi(A) : A \in M_2(\C), \  \mu_{E_4}(A)<1 \} \quad \text{and} \quad \QN=\{\pi(A) : A \in M_2(\C), \  \|A\|<1 \},
	\]
	where $\pi(A)=(a_{21}, a_{11}, a_{22}, \text{det}(A))$ for $A=(a_{ij})_{i, j=1}^2 \in M_2(\C)$. The sets $\HB$ and $\QN$ are called \textit{$\mu$-hexablock} and \textit{normed hexablock}, respectively. As mentioned in \eqref{eqn_domains}, the domains $\Gg, \E$ and $\Pe$ admit equivalent descriptions in terms of the corresponding $\mu_E$-unit ball and the operator norm unit ball of $M_2(\C)$. Unlike the domains $\Gg, \E$ and $\Pe$, the sets $\HB$ and $\QN$ are distinct and neither is a domain in $\C^4$.  However, it turns out that
	\[
	\Q= \text{int}(\CHB)=\text{int}\left(\widehat{\CQN}\right), 
	\]
	where $\widehat{\CQN}$ is the polynomial convex hull of $\CQN$ and $\text{int}(X)$ denotes the interior of a given topological space $X$. We refer the reader to \cite{Biswas-Pal, Biswas-Pal2} for further details on the origin and geometric properties of $\Q$. The inclusion relations
	$
	E_1\subseteq E_2$, and $E_2, E_3\subseteq E_4\subseteq M_2(\C)
	$
	together with the monotonicity of the structured singular value imply that
	$
	r(A)=\mu_{E_1}(A)\leq \mu_{E_2}(A),
	\mu_{E_3}(A)\leq \mu_{E_4}(A)\leq \|A\|
	$
	for every $A\in M_2(\C)$. It is thus natural to study if these inclusion relations among the underlying linear subspaces $E_1, E_2, E_3$ and $E_4$ are reflected at the level of the associated domains $\Gg, \E, \Pe$ and $\Q$, respectively. More precisely, we ask whether the domains $\Gg$, $\E$ and $\Pe$ admit holomorphic embeddings into $\Q$. In Section \ref{connect}, we answer this question affirmatively and prove that all three domains are analytic retracts of $\Q$. We further show that the same conclusion holds for the bidisc $\D^2$ and the Euclidean biball $\mathbb{B}_2$. Indeed, we prove the following result.
	
		\begin{thm}
		For $(a, s, p, x_1, x_2, x_3) \in \C^6$, the following hold:
		\begin{enumerate}
			\item[$(1)$] $(a, x_3) \in \D^2$ $($respectively, $\DC^2$$)$ if and only if $(a, 0, 0, x_3) \in \Q$ $($respectively, $\CQ$$)$; \vspace{0.1cm}
			
			\item[$(2)$] $(a, x_1) \in \mathbb{B}_2$ $($respectively, $\overline{\mathbb{B}}_2$$)$ if and only if $(a, x_1, 0, 0) \in \Q$ $($respectively, $\CQ$$)$; \vspace{0.1cm}
			
			\item[$(3)$] $(s, p) \in \Gg$ $($respectively, $\Gamma_2$$)$ if and only if $(0, s\slash 2, s\slash 2, p) \in \Q$ $($respectively, $\CQ$$)$; \vspace{0.1cm}
			
			\item[$(4)$] $(x_1, x_2, x_3) \in \E$ $($respectively, $\Ebar$$)$ if and only if $(0, x_1, x_2, x_3) \in \Q$ $($respectively, $\CQ$$)$; \vspace{0.1cm}
			
			\item[$(5)$] $(a, s, p) \in \Pe$ $($respectively, $\Pbar$$)$ if and only if $(a, s\slash 2, s\slash 2, p) \in \Q$ $($respectively, $\CQ$$)$.
		\end{enumerate}	
	\end{thm}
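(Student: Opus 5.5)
The plan is to work directly from the definition of $\Q$ as the set of $(a,x)\in\C\times\E$ with $\sup_{z_1,z_2\in\D}|\Pz(a,x)|<1$. We also need a description of $\CQ$. We would use the characterization from \cite{Biswas-Pal}: $(a,x)\in\CQ$ if and only if $x\in\Ebar$ and $|a|\sqrt{(1-|z_1|^2)(1-|z_2|^2)}\le |1-x_1z_1-x_2z_2+x_3z_1z_2|$ for all $z_1,z_2\in\D$. If that is not available verbatim, we would obtain it by approximation: $(a,x)\in\CQ$ iff $(ra,rx)\in\Q$ for all $r<1$, using that $\Q$ is balanced in the appropriate sense. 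Each item then reduces to computing the supremum
\[
S(a,x)=\sup_{z_1,z_2\in\D}\frac{|a|\sqrt{(1-|z_1|^2)(1-|z_2|^2)}}{|1-x_1z_1-x_2z_2+x_3z_1z_2|}
\]
on the given slice, together with the known membership criterion for the $\E$-part.

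\textbf{Item (4).} With $a=0$ we have $\Pz\equiv 0$, so $(0,x)\in\Q$ iff $x\in\E$, and the closed version is identical.

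\textbf{Item (3).} Use the standard fact that $(s,p)\in\Gg$ iff $(s/2,s/2,p)\in\E$, and likewise $\Gamma_2$ with $\Ebar$. Combined with (4), this gives (3).

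\textbf{Item (1).} Here $x=(0,0,x_3)$, and $(0,0,x_3)\in\E$ iff $|x_3|<1$. The ratio becomes $|a|\sqrt{(1-|z_1|^2)(1-|z_2|^2)}/|1+x_3z_1z_2|$. Write $z_1=r_1e^{i\theta_1}$, $z_2=r_2e^{i\theta_2}$ and choose the phases so that $x_3z_1z_2=-|x_3|r_1r_2$. The quantity then becomes $|a|\sqrt{(1-r_1^2)(1-r_2^2)}/(1-|x_3|r_1r_2)$. By Cauchy–Schwarz, $\sqrt{(1-r_1^2)(1-r_2^2)}\le 1-r_1r_2\le 1-|x_3|r_1r_2$, so the supremum is at most $|a|$, and it equals $|a|$ (attained at $z=0$). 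Hence $S=|a|$, and the condition is $|a|<1$, $|x_3|<1$, i.e. $(a,x_3)\in\D^2$. The closed case follows the same way.

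\textbf{Item (2).} Here $x=(x_1,0,0)\in\E$ iff $|x_1|<1$. The ratio is $|a|\sqrt{(1-|z_1|^2)(1-|z_2|^2)}/|1-x_1z_1|$. Taking $z_2=0$ is optimal, and aligning the phase gives
\[
\sup_{0\le r<1}|a|\,\frac{\sqrt{1-r^2}}{1-|x_1|r}.
\]
This is a one-variable maximization. The maximum occurs at $r=|x_1|$ and equals $|a|/\sqrt{1-|x_1|^2}$. The condition $S<1$ is therefore $|a|^2+|x_1|^2<1$, which is $\mathbb B_2$. Note that this already forces $|x_1|<1$; the closed case uses $\le$.

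\textbf{Item (5), the main obstacle.} We need the pentablock characterization from \cite{AglerIV}: $(a,s,p)\in\Pe$ iff $(s,p)\in\Gg$ and
\[
|a|<\bigl|1-\tfrac{s\bar\beta}{2}\bigr|\Bigl(1+\sqrt{1-|\beta|^2}\Bigr)^{-1}\cdots,
\]
or equivalently
\[
\sup_{z\in\D}\frac{|a|(1-|z|^2)}{|1-sz+pz^2|}<1 .
\]
I would use the latter form, $\sup_{z\in\D}|a|(1-|z|^2)/|1-sz+pz^2|<1$. With $x_1=x_2=s/2$, $x_3=p$, the denominator of $\Pz$ is $1-\tfrac{s}{2}(z_1+z_2)+pz_1z_2$. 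Restricting to the diagonal $z_1=z_2=z$ shows $\sup\ge$ the pentablock supremum, which gives one direction. For the converse we must show the supremum over $\D^2$ is attained on the diagonal. I would try to prove
\[
\frac{\sqrt{(1-|z_1|^2)(1-|z_2|^2)}}{|1-\tfrac{s}{2}(z_1+z_2)+pz_1z_2|}\le \sup_{w\in\D}\frac{1-|w|^2}{|1-sw+pw^2|}
\]
by factoring $1-sw+pw^2=(1-\lambda_1w)(1-\lambda_2w)$ with $|\lambda_j|<1$. One route is to view the left side as the modulus of a two-variable function and use the symmetry in $(z_1,z_2)$. Another is to fix $z_1z_2$ and $z_1+z_2$ and apply the geometric–arithmetic inequality $\sqrt{(1-|z_1|^2)(1-|z_2|^2)}\le 1-|w|^2$, where $w$ is chosen with $w^2=z_1z_2$ and $2w$ close to $z_1+z_2$. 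I expect this comparison to be the delicate step. If it fails directly, I would instead invoke the explicit description of $\Q$ from \cite{Biswas-Pal} in terms of $\E$-type inequalities and match it with the pentablock inequality of \cite{AglerIV}.
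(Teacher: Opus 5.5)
Items (1)--(4) are fine. In (1) and (2) your direct evaluation of the supremum (the bound $\sqrt{(1-r_1^2)(1-r_2^2)}\le 1-r_1r_2$, and the one-variable maximum $1/\sqrt{1-|x_1|^2}$ at $r=|x_1|$) is a legitimate self-contained replacement for what the paper uses: the explicit maximizer from Proposition 2.1 of \cite{Biswas-Pal}, Lemma \ref{lem6.5}, and $\QN\subseteq\Q$. In the closed biball case, note separately that $|x_1|=1$ forces $a=0$.

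The gap is in item (5), in the direction $(a,s,p)\in\Pe\Rightarrow(a,s/2,s/2,p)\in\Q$. This direction amounts to
\[
\sup_{z_1,z_2\in\D}\frac{\sqrt{(1-|z_1|^2)(1-|z_2|^2)}}{|1-\frac{s}{2}(z_1+z_2)+pz_1z_2|}\le\sup_{z\in\D}\frac{1-|z|^2}{|1-sz+pz^2|},
\]
and you do not prove it. Neither of your sketched routes works as stated.
\begin{itemize}
\item Taking $w^2=z_1z_2$ bounds the numerator by $1-|w|^2$. But it replaces $\frac{s}{2}(z_1+z_2)$ by $sw$, and you lose control of the denominator. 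For instance, take $p=0$, $z_2=0$ and $sz_1>0$ small. Then $w=0$ and the ratio at $(w,w)$ is $1$, while the ratio at $(z_1,0)$, namely $\sqrt{1-|z_1|^2}/(1-\frac{s}{2}z_1)$, exceeds $1$.
\item Symmetry under $z_1\leftrightarrow z_2$ only tells you that maximizers come in swapped pairs. It does not tell you that one of them lies on the diagonal.
\end{itemize}

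The missing ingredient is uniqueness of the maximizer. In the proof of Theorem \ref{thmP_in_H}, the paper invokes Corollary 2.8 of \cite{Biswas-Pal}: for $x\in\E$, the supremum of $\sqrt{(1-|z_1|^2)(1-|z_2|^2)}/|1-x_1z_1-x_2z_2+x_3z_1z_2|$ over $\D^2$ is attained at a unique point. That point is given explicitly in terms of $\beta_1,\beta_2$, and $\beta_1=\beta_2$ when $x_1=x_2$. So for $x=(s/2,s/2,p)$ the maximizer is diagonal, the two suprema coincide, and the closed case follows by continuity.

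If you want to stay elementary, observe that the denominator is affine in $z_1$. Your item (2) computation then gives
\[
\sup_{z_1,z_2\in\D}\frac{(1-|z_1|^2)(1-|z_2|^2)}{|1-\frac{s}{2}(z_1+z_2)+pz_1z_2|^2}=\sup_{z\in\D}\frac{1-|z|^2}{|1-\frac{s}{2}z|^2-|\frac{s}{2}-pz|^2}.
\]
You would still have to show that this equals $\bigl(\frac12|1-\overline{\lambda}_2\lambda_1|+\frac12\sqrt{(1-|\lambda_1|^2)(1-|\lambda_2|^2)}\bigr)^{-2}$, the reciprocal square of the pentablock bound in Theorem \ref{pentablock}. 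That comparison is the substance of item (5), and it is absent from your proposal.
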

 This is Theorem \ref{thm_final_embed} in this paper. Consequently, the domain $\Q$ provides a unified framework for studying the classical domains $\D^2$ and $\mathbb{B}_2$, as well as the domains arising from the $\mu$-synthesis problem, namely $\Gg$, $\E$ and $\Pe$. 
	
	\smallskip 
	
	For a bounded domain $\Omega \subset \C^n$,  a holomorphic map $f=(f_1, \dotsc, f_n): \D \to \overline{\Omega}$ is said to be \textit{rational $\overline{\Omega}$-inner}, or simply \textit{rational ${\Omega}$-inner} if each $f_i$ is a rational map that is holomorphic on $\D$ and $f(\T)$ is a subset of the distinguished boundary $b\Omega$ of $\Omega$. Recall that the distinguished boundary $b\Omega$ is the smallest closed subset of $\overline{\Omega}$ on which every function that is continuous on $\overline{\Omega}$ and holomorphic on $\Omega$ attains its maximum modulus.  Blaschke  \cite{Blaschke} proved that any rational $\overline{\D}$-inner function is a finite product of maps of the form $\w B_\alpha$, where $\w \in \T, \alpha \in \D$ and
	\[
	B_\alpha(\lambda)= \frac{\lambda-\alpha}{\overline{\alpha}\lambda-1} \quad (\lambda \in \D).
	\]
	A complete description of rational $\Gamma_2$-inner functions was given by Agler et al. in \cite{AglerV}. Here, $\Gamma_2$ is the closure of $\Gg$. Alsalhi and Lykova \cite{Alsalhi} characterized the rational $\Ebar$-inner functions. As a next step, the rational $\overline{\Pe}$-inner functions were described in \cite{Alsheri} and \cite{Jindal}. To do so, the authors of \cite{Alsheri, Jindal} utilized the representation of rational $\Gamma_2$-inner functions and proved that if $(a, s, p)$ is a rational $\Pbar$-inner function, then $(s, p)$ is a rational $\Gamma_2$-inner function. So, the problem reduces to finding necessary and sufficient conditions on the first component so that $(a, s, p)$ is a rational $\Pbar$-inner function. Using the same idea, we prove in Section \ref{inner} that if $(a, x_1, x_2, x_3)$ is a rational $\CQ$-inner function, then $(x_1, x_2, x_3)$ is a rational $\Ebar$-inner function. Now, building upon the theory of rational $\Ebar$-inner functions, we present a complete description of rational $\CQ$-inner functions in Section \ref{inner} as stated below.

	\begin{thm}
		Let $f=(a, x_1, x_2, x_3)$ be a rational $\CQ$-inner function and let $deg(x_3)=n$. Let $a \ne 0$ and let $a=a_{in}a_{out}$ be an inner-outer factorization of $a$. Then there exist polynomials $A, E_1, E_2, D$ of degree at most $n$ such that  the following hold.
		\begin{enumerate}
			\item $E_1(\lm)=E_2^{\sim n}(\lm)$ for $\lm \in \overline{\D}$,\\
			
			\item $A$ is outer and $D(\lm) \ne 0$ for all $\lambda  \in \overline{\D}$,\\
			
			\item $|E_i(\lm)| \leq |D(\lm)|$ on $\overline{\D}$, for $i=1,2$,\\
			
			\item $|A(\lambda)|^2=|D(\lambda)|^2-|E_1(\lambda)|^2$ on $\T$,\\	
			
			\item 
			$\displaystyle 
			(a, x_1, x_2, x_3)=\left(a_{in}\frac{A}{D}, \frac{E_1}{D}, \frac{E_2}{D}, \frac{D^{\sim n}}{D} \right)$ on $\overline{\D}$.
		\end{enumerate}
		Conversely, if $A, E_1, E_2$ and $D$ are polynomials of degree at most $n$ that satisfy conditions $(1)-(4)$ and $a, x_1, x_2$ and $x_3$ are defined as in $(5)$, then $f=(a, x_1, x_2, x_3)$ is a rational $\CQ$-inner function with degree of $x_3$ at most $n$.	
	\end{thm}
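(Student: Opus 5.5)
The plan is to reduce to the known description of rational $\Ebar$-inner functions due to Alsalhi and Lykova \cite{Alsalhi}, and then to handle the first coordinate $a$ by an outer spectral factorization on $\T$. I will use the description of the distinguished boundary of $\Q$ from \cite{Biswas-Pal}:
\[
b\Q=\{(a,x_1,x_2,x_3): (x_1,x_2,x_3)\in b\E,\ |a|^2+|x_1|^2=1\}.
\]
Here $(x_1,x_2,x_3)\in b\E$ means $x_1=\overline{x_2}x_3$ and $|x_3|=1$, so in particular $|x_1|=|x_2|$.

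\textbf{Necessity, reduction to the tetrablock.} Let $f=(a,x_1,x_2,x_3)$ be rational $\CQ$-inner. Since $\Q\subseteq\C\times\E$, we have $(x_1,x_2,x_3)(\D)\subseteq\Ebar$. Since $f(\T)\subseteq b\Q$, we get $(x_1,x_2,x_3)(\T)\subseteq b\E$. Hence $(x_1,x_2,x_3)$ is rational $\Ebar$-inner with $\deg(x_3)=n$. The Alsalhi--Lykova theorem then gives polynomials $E_1,E_2,D$ of degree at most $n$ with the following properties:
\begin{itemize}
\item[(i)] $D$ has no zeros on $\DC$;
\item[(ii)] $E_1=E_2^{\sim n}$;
\item[(iii)] $|E_i|\le|D|$ on $\DC$;
\item[(iv)] $x_1=E_1/D$, $x_2=E_2/D$ and $x_3=D^{\sim n}/D$.
\end{itemize}
This yields conditions (1) and (3), the last three coordinates in (5), and the second half of (2).

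\textbf{Necessity, the first coordinate.} On $\T$ the boundary condition reads $|a|^2=1-|x_1|^2$, that is,
\[
|a\,D|^2=|D|^2-|E_1|^2 \quad\text{on } \T .
\]
The function $g:=aD$ is rational and holomorphic on a neighbourhood of $\DC$, because the poles of $a$ are off $\DC$ and $D$ is a polynomial. The right-hand side $|D|^2-|E_1|^2$ is a nonnegative trigonometric polynomial of degree at most $n$. By the Fej\'er--Riesz theorem it equals $|A|^2$ on $\T$ for an outer polynomial $A$ of degree at most $n$, which gives (4) and the remaining part of (2). If $a\ne0$ and $a=a_{in}a_{out}$, then $g=a_{in}(a_{out}D)$, where $a_{out}D$ is outer (as $D$ is zero-free on $\DC$) and satisfies $|a_{out}D|=|A|$ on $\T$. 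Two outer functions with equal moduli on $\T$ agree up to a unimodular constant, which can be absorbed into $a_{in}$. Hence $a_{out}D=A$, so $a=a_{in}A/D$, which completes (5).

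\textbf{Sufficiency.} Suppose $A,E_1,E_2,D$ satisfy (1)--(4) and $f$ is defined by (5). By the converse part of the Alsalhi--Lykova theorem, $x=(x_1,x_2,x_3)$ is rational $\Ebar$-inner with $\deg(x_3)\le n$. The function $a$ is rational and holomorphic on $\DC$, and on $\T$ we have $|a|^2+|x_1|^2=(|A|^2+|E_1|^2)/|D|^2=1$, so $f(\T)\subseteq b\Q$. It remains to show $f(\D)\subseteq\CQ$, which means checking
\[
\sup_{z_1,z_2\in\D}\bigl|\psi_{z_1,z_2}(f(\lm))\bigr|\le1 \quad\text{for each }\lm\in\D .
\]
\textbf{This is the main obstacle}, since membership in $\CQ$ is not determined by boundary values coordinatewise. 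The approach is as follows. Fix $z_1,z_2\in\D$. The map $\lm\mapsto\psi_{z_1,z_2}(f(\lm))$ is holomorphic on $\D$, because $x(\lm)\in\Ebar$ keeps the denominator $1-x_1z_1-x_2z_2+x_3z_1z_2$ zero-free; a point of $\Ebar$ makes this vanish only when $|z_i|=1$. It extends continuously to $\DC$. By the maximum principle it therefore suffices to bound it on $\T$. There $f(\lm)\in b\Q\subseteq\CQ$, so $|\psi_{z_1,z_2}(f(\lm))|\le1$. Taking the supremum over $z_1,z_2$ gives the claim.

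Should the cited description of $b\Q$ or the inclusion $b\Q\subseteq\CQ$ need justification, I would verify it directly. For $x\in b\E$ and $|a|^2=1-|x_1|^2$, the quantity $\sup_{z}|\psi_{z}|$ reduces to a one-variable Schwarz--Pick computation, whose supremum equals $|a|/\sqrt{1-|x_1|^2}=1$.
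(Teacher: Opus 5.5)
Your proposal is correct and follows the paper's proof essentially step for step. You reduce to the Alsalhi--Lykova description of rational $\Ebar$-inner functions, obtain $A$ by Fej\'er--Riesz and identify $a_{out}$ with $A/D$ via uniqueness of outer functions, and for the converse apply the maximum principle to $\lambda\mapsto\psi_{z_1,z_2}(f(\lambda))$, which the paper packages as Theorem~\ref{thm10.14}. One small point: the factorization $a=a_{in}a_{out}$ is fixed in the hypothesis, so absorb the unimodular constant into $A$ rather than into $a_{in}$, as the paper does; replacing $A$ by $\omega A$ preserves (2) and (4).
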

	 
The above result is established in Theorem \ref{thm10.12} of Section \ref{inner}. En route, we provide various examples of rational $\CQ$-inner functions. 
	
	\smallskip 
	
	The classical Schwarz lemma provides a necessary and sufficient condition for the solvability of a two-point interpolation problem from $\mathbb{D}$ to itself. This lemma has numerous generalizations, where the focus shifts to determine the solvability of a two-point interpolation problem from $\mathbb{D}$ to some other domain in $\mathbb{C}^n$. Agler and Young \cite{AglerII} proved a Schwarz lemma for the symmetrized bidisc. Abouhajar et al.  \cite{Abouhajar} obtained a Schwarz lemma for the tetrablock. In recent past, a Schwarz lemma for pentablock was studied by Alshehri and Lykova in \cite{Alsheri}. Motivated by these seminal works, we prove in Section \ref{Schwarz_lem} a Schwarz type lemma for the hexablock, where the interpolating function is rational $\CQ$-inner, that is, for given $\lambda_0 \in \D$ and $(a, x_1, x_2, x_3) \in \Q$, we find necessary and sufficient conditions for the existence of a rational $\CQ$-inner map $f:\D \rightarrow \Q$ such that $f(0)=(0, 0, 0, 0)$ and $f(\lambda_0)=(a, x_1, x_2, x_3)$. In particular, we have the following result. 
	
		\begin{thm}
		Let $\lambda_0\in\mathbb D\setminus\{0\}$ and
		$(a,x_1,x_2,x_3)\in\mathbb H$. Then the following
		are equivalent:
		
		\begin{enumerate}
			\item there exists a rational $\CQ$-inner
			map $f$ such that $f(0)=(0,0,0,0)$ and $f(\lambda_0)=(a,x_1,x_2,x_3)$;

			\item there exist $w_{12}, w_{21}, z_{12}\in\mathbb C$ satisfying $w_{12}w_{21}=x_1x_2-x_3$ and $|a| \leq |\lm_0w_{12}|$ such that 
			\begin{equation*}
				\begin{pmatrix}
					I-W_0W_0^*&I-W_0W_1^*\\[2mm]
					I-W_1W_0^*&
					\dfrac{I-W_1W_1^*}{1-|\lambda_0|^2}
				\end{pmatrix}
				\geq0,
				\quad 	\text{where} \quad W_0=
				\begin{pmatrix}
					0&z_{12}\\
					0&0
				\end{pmatrix} \ \ \text{and} \ \  W_1=
				\begin{pmatrix}
					x_1& w_{12}\\
					w_{21}&x_2
				\end{pmatrix}.
			\end{equation*}
		\end{enumerate}
	\end{thm}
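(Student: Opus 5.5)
Both implications will go through the rational $\CQ$-inner characterization (Theorem \ref{thm10.12}) and the corresponding Schwarz lemma for the tetrablock of Abouhajar et al. The link between the two is the realization of $\E$ through $2\times 2$ contractions: $(x_1,x_2,x_3)\in\E$ exactly when $x_j=w_{jj}$ and $x_3=\det W$ for some $W$ with $\|W\|<1$. The condition $w_{12}w_{21}=x_1x_2-x_3$ in (2) is precisely $\det W_1=x_3$ with diagonal $(x_1,x_2)$. The block matrix in (2) is the Pick matrix of a two-point matricial Nevanlinna--Pick problem. Its positivity is equivalent to the existence of a Schur-class $F:\D\to M_2(\C)$ with $F(0)=W_0$ and $F(\lambda_0)=W_1$. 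Such an $F$ can be taken rational inner of low degree, since there are only two nodes.

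For $(2)\Rightarrow(1)$, I first take the rational matrix inner $F$ with $F(0)=W_0$ and $F(\lambda_0)=W_1$. I set $x_j(\lambda)=F_{jj}(\lambda)$ and $x_3(\lambda)=\det F(\lambda)$. Since $W_0$ is strictly upper triangular, $F_{11}(0)=F_{22}(0)=\det F(0)=0$. Because $F$ is unitary on $\T$, the triple $(x_1,x_2,x_3)$ is rational $\Ebar$-inner by the $\E$-realization of the distinguished boundary. It remains to build the first coordinate $a$.

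My choice is $a(\lambda)=\lambda\, g(\lambda)\, F_{12}(\lambda)$ up to a correction, or more intrinsically $a=\lambda\,\varphi$, where $\varphi$ is chosen so that $(\varphi,x_1,x_2,x_3)$ sits in $\CQ$ with boundary values in $b\Q$. In the parametrization of Theorem \ref{thm10.12}, write $(x_1,x_2,x_3)=(E_1/D,E_2/D,D^{\sim n}/D)$. Let $A$ be the outer polynomial with $|A|^2=|D|^2-|E_1|^2$ on $\T$, obtained by Fejér--Riesz. Then any $a=a_{in}A/D$ with $a_{in}$ inner is admissible. The value $|A(\lambda_0)/D(\lambda_0)|$ should be controlled by the off-diagonal entry, heuristically $|F_{12}F_{21}|$ versus $|F_{12}|^2$, and should be at least $|w_{12}|$-like.

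The hypothesis $|a|\le|\lambda_0 w_{12}|$ will be used to choose $a_{in}=\lambda\, B(\lambda)$ with a Blaschke factor $B$ interpolating the needed value at $\lambda_0$. This is possible exactly when the target modulus divided by $|\lambda_0|\,|A(\lambda_0)/D(\lambda_0)|$ is at most $1$. The factor $\lambda$ guarantees $a(0)=0$.

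For $(1)\Rightarrow(2)$, given $f=(a,x_1,x_2,x_3)$, the triple $(x_1,x_2,x_3)$ is rational $\Ebar$-inner with value $0$ at $0$. The tetrablock Schwarz lemma, or the matricial lift of $\Ebar$-inner functions (a rational inner $F$ with $F_{11}=x_1$, $F_{22}=x_2$, $\det F=x_3$), produces $F$. Rescaling the off-diagonal entries by a unimodular constant and a factor $\lambda$-splitting, I arrange $F(0)=W_0$ of the stated form. Then $W_1=F(\lambda_0)$ gives the Pick positivity. Since $a(0)=0$, Schwarz gives $|a(\lambda_0)|\le|\lambda_0|\sup|a/\lambda|$, and on $\T$ we have $|a|^2=|D|^2-|E_1|^2$ normalized. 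This should match $|F_{12}|^2$ on $\T$ for a suitably balanced lift, where one can choose the $F_{12}$, $F_{21}$ splitting with $|F_{12}|=|A/D|$ on $\T$. Hence $|a|\le|\lambda F_{12}|$ on $\T$ gives $|a(\lambda_0)|\le|\lambda_0 w_{12}|$ provided $a/(\lambda F_{12})$ is holomorphic. That requires choosing $F_{12}=A/D$, which is outer and zero-free, so it works.

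The main obstacle is this balancing. One must show that the lift can be taken with $F_{12}=A/D$, where $A$ is the outer Fejér--Riesz factor, and $F_{21}=(E_1E_2-DD^{\sim n})/(AD)$ still analytic. Equivalently, the zeros of $E_1E_2-DD^{\sim n}$ must absorb those of $A$, which I would try to show using $E_1=E_2^{\sim n}$ and the boundary identity. Correspondingly, in the converse direction, $F_{12}(0)$ must be $z_{12}$ while the $(2,1)$ entry vanishes at $0$.
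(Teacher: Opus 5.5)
Your overall architecture matches the paper's proof: Power's two-point matricial Pick theorem in both directions, the rational $\overline{\mathbb{E}}$-inner triple $(F_{11},F_{22},\det F)$, the Fej\'er--Riesz outer factor $A$ with $|A|^2=|D|^2-|E_1|^2$ on $\mathbb{T}$, the balanced matrix with $(1,2)$ entry $A/D$, and the first coordinate $\lambda B\,A/D$ obtained from Theorem~\ref{thm10.14}. However, the two steps that carry the proof are left as heuristics, and the degenerate cases are missing.

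\textbf{Direction $(2)\Rightarrow(1)$.} You need $|w_{12}|\le |A(\lambda_0)/D(\lambda_0)|$, and the comparison of $|F_{12}F_{21}|$ with $|F_{12}|^2$ does not give it. The missing idea is to compare $F_{12}$ with the outer function $A/D$.
\begin{itemize}
\item Since $F$ is unitary on $\mathbb{T}$, $|F_{12}|^2=1-|F_{11}|^2=|A/D|^2$ there.
\item Since $A$ has no zeros in $\mathbb{D}$, $F_{12}D/A$ is holomorphic on $\mathbb{D}$. It is unimodular on $\mathbb{T}$ off the finitely many zeros of $A$, so it has no poles on $\mathbb{T}$ and is inner.
\item Hence $|w_{12}|=|F_{12}(\lambda_0)|\le |A(\lambda_0)/D(\lambda_0)|$.
\end{itemize}
Together with $|a|\le|\lambda_0 w_{12}|$, this is exactly the condition your Blaschke interpolation needs. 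You must also treat the case $F_{11}$ inner separately. Then $A\equiv 0$ and you cannot divide; but $F_{12}\equiv 0$, so $w_{12}=0$ forces $a=0$, and $(0,F_{11},F_{22},\det F)$ works.

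\textbf{Direction $(1)\Rightarrow(2)$.} The ``main obstacle'' you leave open needs no polynomial divisibility argument.
\begin{itemize}
\item Fej\'er--Riesz gives $A$ zero-free in $\mathbb{D}$, so $F_{21}=(x_1x_2-x_3)D/A$ is automatically holomorphic on $\mathbb{D}$. Only zeros of $A$ on $\mathbb{T}$ matter.
\item At those zeros, use the boundary relations $x_1=\overline{x}_2x_3$, $|x_3|=1$ and $|x_1|^2+|A/D|^2=1$. They give $F_{21}=-x_3\overline{(A/D)}$ on $\mathbb{T}$ away from the zeros of $A$. So $F_{21}$ stays bounded there and these singularities are removable.
\item The same identities show $F(\zeta)$ is unitary for $\zeta\in\mathbb{T}$, hence $\|F\|\le 1$ on $\mathbb{D}$.
\item $F(0)=W_0$ with $z_{12}=A(0)/D(0)$ holds automatically, because $x_1,x_2,x_3$ vanish at $0$ and $A(0)\ne 0$.
\end{itemize}
So no unimodular rescaling or ``$\lambda$-splitting'' of a lift from the tetrablock Schwarz lemma is needed. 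Note also that an arbitrary lift need not have $F(0)$ of the form $W_0$ with zero-free $(1,2)$ entry, which your bound on $|a|$ requires.

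The case $x_1$ inner must again be split off. There $a\equiv 0$ and $x_3=x_1x_2$, so take $F=\mathrm{diag}(x_1,x_2)$ and $w_{12}=w_{21}=z_{12}=0$. With these arguments supplied, your proposal becomes the paper's proof.
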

This is Theorem \ref{thm_hexa_schwarz} in this paper. The theory of rational $\CQ$-inner functions developed in Section \ref{inner} plays a significant role in this endeavor. Furthermore, we have a set of necessary conditions for Schwarz lemma for the hexablock as presented below. 

		\begin{thm}
	Let $\lm_0 \in \D\setminus\{0\}$ and let $(a, x_1, x_2, x_3) \in \Q$. Then, in the following, $(1)\implies(2)\implies(3)\implies(4)\implies(5):$

	\begin{enumerate}
		\item there is a rational $\CQ$-inner function $f$ with $f(0)=(0, 0, 0, 0)$ and $f(\lm_0)=(a, x_1, x_2, x_3)$; 
		
		\item there exists $h \in$ Hol$(\D, \CQ)$ such that $h(0)=(0, 0, 0, 0)$ and $h(\lm_0)=(a, x_1, x_2, x_3)$;
		
		\item 
		$
		\underset{z_1, z_2 \in \D}{\sup} |\psi_{z_1, z_2}(a, x_1, x_2, x_3)| \leq |\lambda_0|
		$
		and
		\[
		\max\bigg\{\frac{|x_1-\overline{x}_2x_3|+|x_1x_2-x_3|}{1-|x_2|^2}, \ \frac{|x_2-\overline{x}_1x_3|+|x_1x_2-x_3|}{1-|x_1|^2}\bigg\} \leq |\lambda_0|;
		\]
		\item $(\lm_0^{-1}a, x_1, x_2, x_3) \in \CQ$	 and
		\[
		\max\bigg\{\frac{|x_1-\overline{x}_2x_3|+|x_1x_2-x_3|}{1-|x_2|^2}, \ \frac{|x_2-\overline{x}_1x_3|+|x_1x_2-x_3|}{1-|x_1|^2}\bigg\} \leq |\lambda_0|;
		\]		
		\item 
		$
		|a| \leq |\lm_0|\sqrt{1-|x_1|^2}
		$ 
		and
		\[
		\max\bigg\{\frac{|x_1-\overline{x}_2x_3|+|x_1x_2-x_3|}{1-|x_2|^2}, \ \frac{|x_2-\overline{x}_1x_3|+|x_1x_2-x_3|}{1-|x_1|^2}\bigg\} \leq |\lambda_0|.
		\]		
	\end{enumerate}
\end{thm}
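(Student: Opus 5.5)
The chain of implications is proved one link at a time.

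\emph{(1)$\Rightarrow$(2).} This is immediate. A rational $\CQ$-inner function is by definition holomorphic from $\D$ into $\CQ$, so we may take $h=f$.

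\emph{(2)$\Rightarrow$(3).} This is the substantive step, and it has two parts.

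For the first inequality, fix $z_1,z_2\in\D$. Write $h=(h_0,h_1,h_2,h_3)$ and consider
\[
g(\lm)=\psi_{z_1,z_2}(h(\lm)).
\]
Since $h(\lm)\in\CQ$, the triple $(h_1,h_2,h_3)(\lm)$ lies in $\Ebar$. For fixed $z_1,z_2\in\D$, the denominator $1-x_1z_1-x_2z_2+x_3z_1z_2$ does not vanish for $(x_1,x_2,x_3)\in\Ebar$; this follows from the defining description of $\Ebar$ by a limiting argument. Hence $g$ is holomorphic on $\D$. Its modulus satisfies $|g|\le 1$, because the closure $\CQ$ is described by $\sup_{z_1,z_2}|\psi_{z_1,z_2}|\le1$ together with membership in $\Ebar$; this is the standard description of the closure from \cite{Biswas-Pal}. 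Moreover $g(0)=0$, since $h_0(0)=0$. The classical Schwarz lemma then gives $|g(\lm_0)|\le|\lm_0|$. Taking the supremum over $z_1,z_2$ yields the first inequality.

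For the second inequality, note that $(h_1,h_2,h_3)\in\mathrm{Hol}(\D,\Ebar)$ maps $0$ to $0$ and $\lm_0$ to $(x_1,x_2,x_3)$. The Schwarz lemma for the tetrablock of Abouhajar et al.\ \cite{Abouhajar} states that such a map exists only if the displayed maximum is at most $|\lm_0|$. If \cite{Abouhajar} is formulated only for maps into the open tetrablock $\E$, we handle this as follows. Apply the result to the dilations $h_r(\lm)=(h_1,h_2,h_3)(r\lm)$ with $r<1$, which take values in a compact subset of $\E$; use $\lm_0$ replaced by $\lm_0$ itself for the map $\lm\mapsto(h_1,h_2,h_3)(r\lm)$ evaluated at $\lm_0$. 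Then let $r\to1$, using continuity of the expressions because $(x_1,x_2,x_3)\in\E$. Here we also use that $|x_1|,|x_2|<1$ on $\E$, so the denominators are nonzero.

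\emph{(3)$\Rightarrow$(4).} The second condition is unchanged. For the first, $\psi_{z_1,z_2}$ is linear in $a$, so
\[
\sup_{z_1,z_2}|\psi_{z_1,z_2}(\lm_0^{-1}a,x_1,x_2,x_3)|=|\lm_0|^{-1}\sup_{z_1,z_2}|\psi_{z_1,z_2}(a,x_1,x_2,x_3)|\le1.
\]
Together with $(x_1,x_2,x_3)\in\E\subseteq\Ebar$, this gives membership in $\CQ$ by the same description of the closure used above.

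\emph{(4)$\Rightarrow$(5).} Again only the first condition changes. Since $(\lm_0^{-1}a,x_1,x_2,x_3)\in\CQ$, the supremum bound holds for every $z_1,z_2$. Specialize to $z_2=0$ and $z_1=t\in\D$, which gives
\[
|\lm_0^{-1}a|\,\frac{\sqrt{1-|t|^2}}{|1-x_1t|}\le1 .
\]
Choose $t=\overline{x}_1$, which lies in $\D$ because $|x_1|<1$. Then
\[
\frac{\sqrt{1-|t|^2}}{|1-x_1t|}=\frac{\sqrt{1-|x_1|^2}}{1-|x_1|^2}=\frac{1}{\sqrt{1-|x_1|^2}},
\]
and hence $|a|\le|\lm_0|\sqrt{1-|x_1|^2}$.

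The main obstacle is the step (2)$\Rightarrow$(3), which relies on two facts about closures. The first is the description of $\CQ$ via $\sup_{z_1,z_2}|\psi_{z_1,z_2}|\le1$ with $(x_1,x_2,x_3)\in\Ebar$. The second is the transfer of the tetrablock Schwarz lemma from $\E$ to $\Ebar$-valued maps. I would handle both by the dilation $h(r\lm)$, with $r<1$, together with the known fact that the image of $\D$ under a nonconstant holomorphic map into $\CQ$ lies in $\Q$ when it does so at one point; otherwise I would use the simple dilation argument.
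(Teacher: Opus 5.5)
Your proposal is correct and follows the paper's proof step for step. For (2)$\Rightarrow$(3) you apply Schwarz's lemma to $\psi_{z_1,z_2}\circ h$ together with the tetrablock Schwarz lemma, and for (3)$\Rightarrow$(4) you use the description of $\CQ$ via $\sup_{z_1,z_2\in\D}|\psi_{z_1,z_2}|\le 1$. For (4)$\Rightarrow$(5), your evaluation at $z_1=\overline{x}_1$, $z_2=0$ is simply a direct proof of the inequality $|b|^2+|x_1|^2\le 1$ on $\CQ$, which the paper quotes as a lemma.

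Your dilation fallback is unnecessary, because the tetrablock Schwarz lemma quoted in the paper is already stated for $\mathrm{Hol}(\D,\Ebar)$. On its own it would also not work: dilating only keeps values in a compact subset of $\Ebar$. It is the analytic-disc fact ($h(0)=0\in\Q$ forces $h(\D)\subseteq\Q$) that puts them in $\E$, and once you have that, no dilation is needed.
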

This is Theorem \ref{necessary} in this paper. At every stage of this paper, we explore and find interaction of the hexablock with the symmetrized bidisc, the tetrablock and the pentablock. As an application of the theory of hexablock developed so far, we provide in Section \ref{sec_appl} alternative proofs of various results for the pentablock from  \cite{AglerIV, Alsheri, Jindal, KosinskiII}. 	
	
	\medskip 
	
	\noindent \textbf{Note.} The first arXiv version of \cite{Biswas-Pal} (which was 135 pages long) has been divided into several parts. The present paper is one of those parts and it contains the material from Chapters 10, 12, 13 and 14 of that version of \cite{Biswas-Pal}.

	\section{Connection with bidisc, biball, symmetrized bidisc, tetrablock and pentablock}	\label{connect}
	
	\noindent	In this section, we establish a connection of the hexablock $\Q$ with the bidisc $\D^2$, the biball $\mathbb{B}_2$, the symmetrized bidisc $\Gg$, the tetrablock $\E$ and the pentablock $\Pe$. Indeed, we show that all these domains can be embedded holomorphically in $\Q$ and each one of them is an analytic retract of $\Q$. Recall that an embedding is an injective continuous map that is a homeomorphism onto its image. By a holomorphic embedding, we mean an embedding which is also a holomorphic map. Also, a domain $\Omega_1 \subseteq \C^n$ is said to be an \textit{analytic retract} of a domain $\Omega_2 \subseteq \C^m$ if there are holomorphic maps $\phi: \Omega_1 \to \Omega_2$ and $\psi: \Omega_2 \to \Omega_1$ such that $\psi \circ \phi$ is the identity map $\text{id}_{\Omega_1}$ on $\Omega_1$. In particular, if $\Omega_1$ is an analytic retract of $\Omega_2$, then $\Omega_1$ is holomorphically embedded into $\Omega_2$. Let us recall that the domain $\Q$ is defined as
	\[
	\mathbb{H}=\left\{(a, x_1, x_2, x_3) \,\in\, \mathbb{C} \times \mathbb{E}\,\,:\,\,
	\sup_{z_1,\, z_2 \,\in\,
		\D}\left|\frac{a\sqrt{(1-|z_1|^2)(1-|z_2|^2)}}{1-x_1z_1-x_2z_2+x_3z_1z_2}\right|
	<1\right\},
	\]
	where $\E=\{(x_1, x_2, x_3) \in \C^3: 1-x_1z_1-x_2z_2+x_3z_1z_2 \ne 0 \ \text{for every} \ z_1, z_2 \in \DC \}$ is the tetrablock. An analogous description holds for the closure of $\Q$ (see Theorem 5.3 in \cite{Biswas-Pal}), which is given by
	\begin{align}\label{eqn5.3}
		\CQ=\left\{(a, x_1, x_2, x_3) \in \C \times \overline{\E} \ : \ \bigg|\frac{a\sqrt{(1-|z_1|^2)(1-|z_2|^2)}}{1-x_1z_1-x_2z_2+x_3z_1z_2}\bigg| \leq 1 \ \text{for every} \ z_1, z_2 \in \D \right\}.
	\end{align}
	Therefore, the sets $\Q$ and $\CQ$ are fibered over $\E$ and $\Ebar$, respectively, via the  map $(a, x_1, x_2, x_3) \mapsto (x_1, x_2, x_3)$. Also, $(0,x_1,x_2,x_3) \in \Q$ (or, $(0,x_1,x_2,x_3) \in \CQ$) for every $(x_1,x_2,x_3)\in\E$ (or, $(x_1, x_2, x_3) \in \Ebar$). We therefore begin by studying the connection between the domains $\E$ and $\Q$. For this purpose, we recall from the literature a few useful results related to the tetrablock.
	
	\begin{thm}[\cite{Abouhajar}, Theorems 2.2 \& 2.4]\label{tetrablock}
		For $(x_1, x_2, x_3) \in \C^3$, the following are equivalent:
		\begin{enumerate}
			\item $(x_1, x_2, x_3) \in \E$ $($ or, $(x_1, x_2, x_3) \in \Ebar$$)$; \vspace{0.1cm}
			
			\item $|x_1|^2+|x_2-\overline{x}_1x_3|+|x_1x_2-x_3|<1$ $($or, $|x_1|^2+|x_2-\overline{x}_1x_3|+|x_1x_2-x_3| \leq 1$ and $|x_2| \leq 1$$)$;
			
			\vspace{0.1cm}
			
			\item $1+|x_1|^2-|x_2|^2-|x_3|^2-2|x_1-\overline{x}_2x_3|>0$ and $|x_1|<1$ \\ $($or, $1+|x_1|^2-|x_2|^2-|x_3|^2-2|x_1-\overline{x}_2x_3| \geq 0$ and $|x_1|\leq 1$ $)$; \vspace{0.1cm}
			
			\item $|x_1-\overline{x}_2x_3|+|x_2-\overline{x}_1x_3| < 1-|x_3|^2$ $($or, $|x_1-\overline{x}_2x_3|+|x_2-\overline{x}_1x_3| \leq 1-|x_3|^2$ and $|x_1| \leq 1$$)$.
		\end{enumerate}	
	\end{thm}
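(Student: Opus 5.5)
The approach is to reduce membership in $\E$ to a one-variable statement about a linear fractional map. Then (2)–(4) follow from two algebraic identities relating the quantities involved.

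\emph{Step 1: reduction to one variable.} Fix $z_2\in\DC$ and view $1-x_1z_1-x_2z_2+x_3z_1z_2=(1-x_2z_2)-z_1(x_1-x_3z_2)$ as an affine function of $z_1$. Taking $z_1=0$, the condition $(x_1,x_2,x_3)\in\E$ forces $1-x_2z_2\neq 0$ on $\DC$, i.e. $|x_2|<1$. Given this, nonvanishing for all $z_1\in\DC$ is equivalent to $|x_1-x_3z_2|<|1-x_2z_2|$. Hence
\[
(x_1,x_2,x_3)\in\E \iff |x_2|<1 \ \text{and}\ \sup_{z\in\DC}\left|\frac{x_3z-x_1}{x_2z-1}\right|<1 .
\]
Because $|x_2|<1$, this function is holomorphic on a neighbourhood of $\DC$, so by the maximum principle the supremum may be taken over $z\in\T$. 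For $|z|=1$ we expand:
\[
|1-x_2z|^2-|x_1-x_3z|^2=1+|x_2|^2-|x_1|^2-|x_3|^2-2\,\mathrm{Re}\big(z(x_2-\overline{x}_1x_3)\big).
\]
This is positive on all of $\T$ exactly when $1+|x_2|^2-|x_1|^2-|x_3|^2>2|x_2-\overline{x}_1x_3|$. The defining polynomial is symmetric under $(x_1,z_1)\leftrightarrow(x_2,z_2)$, so repeating the argument with the roles swapped gives (1)$\iff$(3).

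\emph{Step 2: the algebraic identities.} Write $A=1-|x_1|^2$, $\beta=|x_2-\overline{x}_1x_3|$ and $\gamma=|x_1x_2-x_3|$. Direct expansion gives the key identity
\[
\beta^2-\gamma^2=(1-|x_1|^2)(|x_2|^2-|x_3|^2),
\]
and hence $1+|x_2|^2-|x_1|^2-|x_3|^2=A+(\beta^2-\gamma^2)/A$ when $A>0$. So the swapped form of (3) reads $A^2+\beta^2-\gamma^2>2A\beta$, i.e. $(A-\beta)^2>\gamma^2$. To conclude $A-\beta>\gamma$, which is (2), we must exclude the branch $A-\beta<-\gamma$. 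This exclusion uses $|x_1|<1$ together with a continuity or connectedness argument: $\E$ is connected and contains $0$, where $A-\beta=1>0$. For (4) we use the companion identity
\[
|x_1-\overline{x}_2x_3|^2-|x_1x_2-x_3|^2=(1-|x_2|^2)(|x_1|^2-|x_3|^2).
\]
Adding the two identities' consequences shows that (2) together with its symmetric counterpart is equivalent to $|x_1-\overline{x}_2x_3|+|x_2-\overline{x}_1x_3|<1-|x_3|^2$. Alternatively, (4) can be read off from Step 1 as the condition that the image disc of $\T$ under the linear fractional map lies inside $\D$, and I would use whichever route is cleaner.

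\emph{Step 3: the closed versions, which I expect to be the main obstacle.} The closed versions ($\Ebar$) need care in the degenerate cases $|x_1|=1$ or $|x_2|=1$. There $A=0$, the division in Step 2 fails, and the maximum principle argument no longer applies because the map may have a pole on $\T$. I would first show that $\Ebar$ equals the set where the nonstrict inequalities hold, by approximating: $(rx_1,rx_2,r^2x_3)$ with $r<1$ moves an element satisfying a nonstrict inequality into the open set. The extra side conditions ($|x_2|\le1$ in (2), $|x_1|\le1$ in (3) and (4)) are precisely what rules out spurious solutions in which the inequality holds but the point lies far from $\E$. I would also check separately the boundary cases $|x_1|=1$ (forcing $x_2=\overline{x}_1x_3$, since then $\beta=\gamma$ and $A=0$ force $\beta=0$) and similarly for $|x_2|=1$.
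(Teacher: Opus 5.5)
The paper gives no proof of this statement. It quotes Abouhajar, White and Young \cite{Abouhajar}, whose argument runs through the same one-variable reduction as your Step 1. What you have is sound as far as it goes:
\begin{itemize}
\item Step 1 is correct and gives (1)$\iff$(3).
\item Your connectedness argument, using the path $r\mapsto(rx_1,rx_2,r^2x_3)$ inside $\E$, correctly gives (1)$\Rightarrow$(2).
\item Adding the inequality in (3) to its $1\leftrightarrow2$ mirror gives $1-|x_3|^2>|x_1-\overline{x}_2x_3|+|x_2-\overline{x}_1x_3|$, so (1)$\Rightarrow$(4) holds.
\end{itemize}

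\textbf{Gap: (4)$\Rightarrow$(1).} A sum of two inequalities does not give back either summand, so your ``equivalent'' is proved in one direction only. Your fallback also fails. For $|x_2|<1$, the image of $\T$ under $z\mapsto(x_3z-x_1)/(x_2z-1)$ is the circle with centre $(x_1-\overline{x}_2x_3)/(1-|x_2|^2)$ and radius $|x_1x_2-x_3|/(1-|x_2|^2)$. So ``image disc inside $\D$'' reads $|x_1-\overline{x}_2x_3|+|x_1x_2-x_3|<1-|x_2|^2$, which is the mirror of (2), not (4). The missing ingredient is a third identity,
$|x_2-\overline{x}_1x_3|^2-|x_1-\overline{x}_2x_3|^2=(1-|x_3|^2)(|x_2|^2-|x_1|^2)$.
\begin{itemize}
\item Since (4) forces $|x_3|<1$, this identity rewrites the inequality in (3) as $(1-|x_3|^2-|x_1-\overline{x}_2x_3|)^2>|x_2-\overline{x}_1x_3|^2$, which (4) implies.
\item The side condition $|x_1|<1$ follows from $x_1(1-|x_3|^2)=(x_1-\overline{x}_2x_3)+x_3\overline{(x_2-\overline{x}_1x_3)}$.
\item Then (3)$\Rightarrow$(1) finishes.
\end{itemize}

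\textbf{Gap: (2)$\Rightarrow$(1).} From (2) you get $|x_1|<1$ and the mirrored inequality $1+|x_2|^2-|x_1|^2-|x_3|^2>2|x_2-\overline{x}_1x_3|$. But Step 1 pairs that inequality with $|x_2|<1$, and your pair is not enough: $(0,3,1)$ satisfies both and is not in $\E$. There are two ways to close this:
\begin{itemize}
\item extract $|x_2|<1$ from (2) via $x_2(1-|x_1|^2)=(x_2-\overline{x}_1x_3)-\overline{x}_1(x_1x_2-x_3)$; or
\item as in the source, use the centre--radius computation for $z\mapsto(x_3z-x_2)/(x_1z-1)$, which gives (1)$\iff$(2) directly and makes the branch exclusion unnecessary.
\end{itemize}

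\textbf{Step 3 is a plan, not a proof.} That $(rx_1,rx_2,r^2x_3)\in\E$ under a non-strict hypothesis is exactly what has to be shown. It cannot be checked by substituting into the non-strict inequalities. What works is to rerun Step 1 on the open bidisc:
\begin{itemize}
\item The non-strict inequality in (3) says $|x_2-x_3z|\le|1-x_1z|$ for every $z\in\T$.
\item If $|x_1|<1$, the maximum principle extends this to $\D$.
\item If $|x_1|=1$, evaluating at $z=\overline{x}_1$ forces $x_2=\overline{x}_1x_3$. Then $1-x_1z_1-x_2z_2+x_3z_1z_2=(1-x_1z_1)(1-\overline{x}_1x_3z_2)$, with $|x_3|\le1$ again coming from the inequality.
\end{itemize}
In both cases the polynomial has no zeros on $\D^2$. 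So the scaled point has none on $\DC^2$, i.e. it lies in $\E$, and letting $r\to1$ gives membership in $\Ebar$. The reverse inclusion is by continuity. The closed forms of (2) and (4) reduce to this through the same identities, with the cases $|x_1|=1$ and $|x_3|=1$ settled by the side conditions.
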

	
	We also present a description of the distinguished boundary of $\E$. Recall that the distinguished boundary of a bounded domain $\Omega \subset \C^n$ is the smallest closed subset $b\Omega$ (equivalently, $b\overline{\Omega}$) of $\overline{\Omega}$ on which every function continuous on $\overline{\Omega}$ and holomorphic in $\Omega$ attains its maximum modulus.
	\begin{thm}[\cite{Abouhajar}, Theorem 7.1]\label{thm6.1}
		For $x=(x_1, x_2, x_3) \in \C^3$, the following are equivalent:
		\begin{enumerate}
			\item $x_1=\overline{x}_2x_3, |x_3|=1$ and $|x_2|\leq 1$;
			\item $x \in b\E$;
			\item $x \in \overline{\E}$ and $|x_3|=1$. 
		\end{enumerate}	
	\end{thm}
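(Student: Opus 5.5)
We would prove the three implications by reducing everything to Theorem \ref{tetrablock} and to the matricial description $\Ebar=\{(a_{11},a_{22},\det A): \|A\|\le 1\}$ from \eqref{eqn_domains}. Write $K=\{x\in\Ebar : |x_3|=1\}$. The plan is to show $(1)\Leftrightarrow(3)$, then that $K$ is a closed boundary for $\Ebar$ (so $b\E\subseteq K$), and finally that every point of $K$ is a peak point (so $K\subseteq b\E$).

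\emph{Step 1: $(1)\Leftrightarrow(3)$.} Use the closed version of Theorem \ref{tetrablock}(4). If $x\in\Ebar$ and $|x_3|=1$, then $|x_1-\overline{x}_2x_3|+|x_2-\overline{x}_1x_3|\le 1-|x_3|^2=0$. Hence $x_1=\overline{x}_2x_3$, which gives $|x_1|=|x_2|$, and $|x_1|\le1$ then yields $|x_2|\le 1$. Conversely, assume $x_1=\overline{x}_2x_3$, $|x_3|=1$ and $|x_2|\le1$. Then $x_2=\overline{x}_1x_3$ automatically, both terms in (4) vanish, and $|x_1|=|x_2|\le 1$. So $x\in\Ebar$ and $|x_3|=1$.

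\emph{Step 2: $b\E\subseteq K$.} Let $f$ be continuous on $\Ebar$ and holomorphic on $\E$, and fix $x=\pi(A)$ with $\|A\|\le1$, where $\pi(A)=(a_{11},a_{22},\det A)$.
\begin{itemize}
\item Take a singular value decomposition $A=U\,\mathrm{diag}(s_1,s_2)\,V$ with $U,V$ unitary and $0\le s_i\le 1$.
\item Consider $g(\lambda_1,\lambda_2)=f\bigl(\pi(U\,\mathrm{diag}(\lambda_1,\lambda_2)\,V)\bigr)$ on $\DC^2$. It is continuous there and holomorphic on $\D^2$, since $\pi$ is polynomial and maps contractions into $\Ebar$ and strict contractions into $\E$.
\item By the maximum principle on the bidisc, $|f(x)|=|g(s_1,s_2)|\le\max_{\T^2}|g|$.
\item For $(\lambda_1,\lambda_2)\in\T^2$ the matrix $W=U\,\mathrm{diag}(\lambda_1,\lambda_2)\,V$ is unitary. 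A $2\times2$ unitary satisfies $|\det W|=1$ and $w_{11}=\overline{w}_{22}\det W$.
\end{itemize}
Thus $\pi(W)\in K$ by Step 1, so $K$ is a closed boundary and $b\E\subseteq K$.

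\emph{Step 3: $K\subseteq b\E$.} This is the step we expect to need the most care, because points of $K$ with $|x_2|<1$ are not extreme in any obvious sense. We would exhibit a peak function at each $x=(\overline{c}\omega, c,\omega)\in K$, where $\omega\in\T$ and $|c|\le1$. Set $h(y)=(1+\overline{\omega}y_3)/2$; it has modulus at most $1$ on $\Ebar$, with equality exactly on the fiber $\{y_3=\omega\}\cap\Ebar$. On that fiber $y_1=\overline{y}_2\omega$ by Step 1, so the polynomial
\[
u(y)=\overline{c}\,y_2+c\,\overline{\omega}\,y_1-\overline{\omega}\,y_1y_2
\]
restricts to the real function $|c|^2-|y_2-c|^2$, which is strictly maximized at $y_2=c$. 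Then $F=h^N e^{u}$ should peak exactly at $x$ for $N$ large:
\begin{itemize}
\item on the fiber $|h|=1$, so $|F|=e^{|c|^2-|y_2-c|^2}$ is uniquely maximized at $y=x$;
\item off the fiber, the decay of $|h|^N$ beats the bounded factor $e^{\operatorname{Re}u}$.
\end{itemize}
The delicate point is making this uniform near the fiber. We would use compactness of $\Ebar$: near the fiber, continuity of $u$ keeps $\operatorname{Re}u$ close to its fiber values, while away from the fiber $|h|\le 1-\delta$, and $N$ is chosen accordingly. Since peak points lie in every closed boundary, this gives $K\subseteq b\E$, completing $(2)\Leftrightarrow(3)$.
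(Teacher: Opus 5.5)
The paper gives no proof of this statement; it quotes Theorem~7.1 of \cite{Abouhajar}, so your argument has to stand on its own. Steps~1 and~2 are correct. The gap is in Step~3, at exactly the point you call delicate. Your compactness argument covers two regions: a neighbourhood of each fibre point $y^0\neq x$, where $\operatorname{Re}u(y^0)<|c|^2$ leaves a margin, and the region $\{|h|\le 1-\delta\}$. It says nothing about a neighbourhood of $x$ itself. There, continuity only gives $\operatorname{Re}u\le |c|^2+\eta$, while $|h|^N\to 1$ for every fixed $N$. The excess is also genuinely positive. For $0<|c|<1$, the points $y=(\overline{(c+\delta)}\,\omega e^{i\theta},\,c+\delta,\,\omega e^{i\theta})\in K$, with $\delta$ small and suitably chosen, give $\operatorname{Re}u(y)-|c|^2\approx |c|^2\theta^2/4$, whereas $-\log|h(y)|\approx\theta^2/8$. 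So near $x$ the sign of $\log|F(y)|-|c|^2$ is decided by two quantities vanishing at the same rate. The continuity argument you sketch cannot produce a uniform $N$. What is missing is a quantitative bound $\operatorname{Re}u-|c|^2\le C\,(-\log|h|)$ near $x$.

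Such a bound does hold for your $u$. Write $\zeta=\overline{\omega}y_3$, $\delta=y_2-c$ and $y_1=\overline{y_2}\,y_3+\epsilon$, so that $|\epsilon|\le 1-|\zeta|^2$ by Theorem \ref{tetrablock}(4). A direct computation gives, for $\operatorname{Re}\zeta>0$,
\[
\operatorname{Re}u-|c|^2=\operatorname{Re}\bigl(\overline{c}\,\delta\,(1-\zeta)\bigr)-|\delta|^2\operatorname{Re}\zeta-\operatorname{Re}(\overline{\omega}\,\epsilon\,\delta)\le\frac{(|1-\zeta|+|\epsilon|)^2}{4\operatorname{Re}\zeta}.
\]
Since $4(1-|h|^2)=|1-\zeta|^2+2(1-|\zeta|^2)$, on $\{\operatorname{Re}\zeta\ge 1/2\}$ this yields $\operatorname{Re}u-|c|^2\le 4(1-|h|^2)\le -8\log|h|$. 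Hence $\log|F|-|c|^2\le (N-8)\log|h|$ there, which is negative for $N>8$ unless $\zeta=1$. In that case $\epsilon=0$ and the identity reduces to your fibre computation, giving $-|\delta|^2$. On $\{\operatorname{Re}\zeta<1/2\}$ one has $|h|^2<3/4$ and $|u|\le 3$, so a large $N$ handles that region. With this estimate inserted, $e^{-|c|^2}h^Ne^{u}$ is a genuine peak function at $x$, and your proof goes through.
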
  
	
	The characterization of $b\E$ leads to the following characterization of $b\Q$.
	
	\begin{thm}[\cite{Biswas-Pal2}, Theorem 3.23]\label{thmbQ}
		The distinguished boundary $b\Q$ of the hexablock is given by
		\[
		b\Q=\left\{(a, x_1, x_2, x_3) \in \C\times b\E : |a|^2+|x_1|^2=1 \right\}.
		\]
	\end{thm}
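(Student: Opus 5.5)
We would prove the equality by identifying the set
\[
K=\left\{(a,x_1,x_2,x_3)\in\C\times b\E : |a|^2+|x_1|^2=1\right\}
\]
with the image $\pi(\mathcal U_2)$ of the unitary group $\mathcal U_2\subset M_2(\C)$. We would then show two things: $K$ is a boundary for the algebra $A(\Q)$ of functions continuous on $\CQ$ and holomorphic on $\Q$, and every point of $K$ is a peak point. Throughout we use the facts recalled from \cite{Biswas-Pal}: $\CQ$ is given by \eqref{eqn5.3}, and $\Q=\text{int}\bigl(\widehat{\CQN}\bigr)$.

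\emph{Step 1 (fibres over $b\E$).} Fix $x=(x_1,x_2,x_3)\in b\E$. By Theorem \ref{thm6.1} we have $x_1=\overline{x}_2x_3$ and $|x_3|=1$. First assume $|x_2|<1$. Then
\[
1-x_1z_1-x_2z_2+x_3z_1z_2=(1-x_2z_2)(1-x_3z_1w), \qquad w=\frac{\overline{x}_2-z_2}{1-x_2z_2},
\]
and $1-|w|^2=(1-|x_2|^2)(1-|z_2|^2)/|1-x_2z_2|^2$. Substituting, the square of the quantity in \eqref{eqn5.3} becomes
\[
\frac{|a|^2}{1-|x_2|^2}\cdot\frac{(1-|z_1|^2)(1-|w|^2)}{|1-x_3z_1w|^2}.
\]
As $z_1,w$ range over $\D$, the second factor has supremum exactly $1$. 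It is at most $1$ by the elementary inequality $(1-|z|^2)(1-|w|^2)\le |1-uzw|^2$ for $|u|=1$. It approaches $1$ when $x_3z_1w$ is real and tends to $1$.

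So the fibre of $\CQ$ over $x$ is the closed disc $|a|^2\le 1-|x_2|^2=1-|x_1|^2$. The case $|x_2|=1$ forces $|x_1|=1$ and $a=0$; this is consistent, and we handle it by a limiting argument. Hence $K$ is exactly the set of points of $\CQ$ lying on the boundary circles of the fibres over $b\E$. In particular $K\subseteq\CQ$, and $K$ is closed.

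\emph{Step 2 ($K=\pi(\mathcal U_2)$ and $K$ is a boundary).} For a unitary $U$, the point $\pi(U)$ satisfies $|u_{21}|^2+|u_{11}|^2=1$, $|\det U|=1$, and $u_{11}=\overline{u}_{22}\det U$. Hence $\pi(U)\in K$. Conversely, given a point of $K$, we set $u_{11}=x_1$, $u_{22}=x_2$, $u_{21}=a$, $u_{12}=-x_3\overline{a}$, and check directly that this matrix is unitary with determinant $x_3$.

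Now take a polynomial $p$. Then $p\circ\pi$ is a polynomial in the matrix entries, so its maximum modulus over the closed norm ball of $M_2(\C)$ is attained on the unitaries, since these form its Shilov boundary. Therefore
\[
\max_{\widehat{\CQN}}|p|=\max_{\CQN}|p|=\max_{K}|p|.
\]
To pass from polynomials to general $f\in A(\Q)$, we would use dilations $f_r(a,x_1,x_2,x_3)=f(ra,rx_1,rx_2,r^2x_3)$. We need to check that these map $\CQ$ into $\Q$ for $r<1$; from \eqref{eqn5.3} and Theorem \ref{tetrablock} this is a direct estimate. We also need $\CQ$ to be polynomially convex, which follows from $\CQ=\widehat{\CQN}$ as recalled. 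Then $f_r$ is a uniform limit of polynomials on $\CQ$ by the Oka–Weil theorem, and letting $r\to1$ shows that $K$ is a boundary. I expect this approximation step to be the main technical obstacle. As a fallback, we would apply the maximum principle in two stages: first in the $a$-variable on each fibre disc, then through the fibration over $\E$.

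\emph{Step 3 (minimality).} Let $(a^0,x^0)\in K$. The linear function
\[
\ell(a,x)=\overline{a^0}\,a+\overline{x^0_1}\,x_1
\]
has modulus at most $1$ on $\CQN$, because $(a_{21},a_{11})$ is a column of a contraction. Hence $|\ell|\le1$ on $\widehat{\CQN}=\CQ$, with $\ell=1$ only when $(a,x_1)=(a^0,x^0_1)$. Points of $b\E$ are peak points for $\E$, so choose $g\in A(\E)$ peaking at $x^0$. The product $\tfrac12(1+\ell)\,g$ then peaks on $\CQ$ exactly at $(a^0,x^0)$. Consequently every closed boundary for $A(\Q)$ contains $K$, and so $b\Q=K$.
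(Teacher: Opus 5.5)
Your argument is correct, and it takes a genuinely different route from the one the paper relies on. The paper does not prove this statement; it imports it from \cite{Biswas-Pal2}. Its Section 5 reruns that method for $b\Pe$: split the topological boundary into pieces, move points around with the automorphisms induced by $\mathrm{Aut}(\E)$, rule out peak points that lie on analytic discs, check that points of the candidate set are peak points, and finish with density of peak points in the Shilov boundary. You avoid all of that casework by identifying $K=\pi(\mathcal U_2)$. This carries the Shilov boundary of the closed matrix ball onto $K$ through $\CQ=\widehat{\CQN}$. You then reach all of $A(\Q)$ by quasi-homogeneous dilation plus Oka--Weil, and you write down explicit peak functions $\tfrac12(1+\ell)\,(g\circ\mathrm{pr})$. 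Your route is shorter and shows why $b\Q$ is exactly the image of the unitaries. However, it leans on heavier outside input: polynomial convexity of $\CQ$, Oka--Weil, and peak points of $b\E$. The automorphism/analytic-disc method is intrinsic to the domain and transfers directly to $\Pe$, as Section 5 shows.

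Two citations need tightening. First, from $\Q=\text{int}(\widehat{\CQN})$ you only get $\CQ\subseteq\widehat{\CQN}$. The reverse inclusion, and with it the polynomial convexity of $\CQ$ that Oka--Weil requires, does not follow from what you recalled. Take the polynomial convexity of $\CQ$ from \cite{Biswas-Pal}; together with $\CQN\subseteq\CQ$ it gives $\CQ=\widehat{\CQN}$. Second, the fact that every point of $b\E$ is a peak point is part of Theorem 7.1 of \cite{Abouhajar}, but not of the version quoted in the paper. Alternatively, density of peak points in $b\E$ together with closedness of $b\Q$ already gives $K\subseteq b\Q$: approximate $x^0$ by peak points and adjust $a$ so that $|a|^2+|x_1|^2=1$ still holds.

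Finally, Step 1 is not needed, since $K=\pi(\mathcal U_2)\subseteq\CQN\subseteq\CQ$. Also, $|\ell|\le 1$ on $\CQ$ follows at once from Lemma \ref{lem6.5} and Cauchy--Schwarz.
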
 		
	
	The following theorem establishes a canonical relationship between $\E$ and $\Q$.
	
	\begin{thm}\label{thm_EQ}$($Connection of hexablock with tetrablock$)$
		A point $(x_1, x_2, x_3) \in \E$ if and only if $(0, x_1, x_2, x_3) \in \Q$. In particular, the map $e_{\E}: \E \to \Q$ given by $e_{\E}(x_1, x_2, x_3)=(0, x_1, x_2, x_3)$ is a holomorphic embedding, and $\E$ is an analytic retract of $\Q$.
	\end{thm}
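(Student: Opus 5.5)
The equivalence follows directly from the definition of $\Q$. First suppose $(x_1,x_2,x_3)\in\E$. Then $1-x_1z_1-x_2z_2+x_3z_1z_2\neq 0$ for all $z_1,z_2\in\DC$. With $a=0$ we have $\Pz(0,x_1,x_2,x_3)=0$ for every $z_1,z_2\in\D$, so the supremum in the definition of $\Q$ is $0<1$, and therefore $(0,x_1,x_2,x_3)\in\Q$. Conversely, if $(0,x_1,x_2,x_3)\in\Q$, then by the very definition of $\Q$ as a subset of $\C\times\E$ we get $(x_1,x_2,x_3)\in\E$. Running the same argument with the description \eqref{eqn5.3} of $\CQ$ gives the closed version, although the statement does not require it.

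Next define $e_{\E}(x_1,x_2,x_3)=(0,x_1,x_2,x_3)$ and $\pi(a,x_1,x_2,x_3)=(x_1,x_2,x_3)$. By the first step, $e_{\E}$ maps $\E$ into $\Q$. The map $\pi$ sends $\Q$ into $\E$, again because $\Q\subseteq\C\times\E$ by definition. Both maps are restrictions of linear maps, so they are holomorphic, and $\pi\circ e_{\E}=\mathrm{id}_{\E}$. This shows that $\E$ is an analytic retract of $\Q$.

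For the embedding claim, $e_{\E}$ is injective, and its inverse on the image $e_{\E}(\E)$ is the continuous map $\pi|_{e_{\E}(\E)}$. Hence $e_{\E}$ is a homeomorphism onto its image, and so it is a holomorphic embedding.

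No step is a genuine obstacle. The only point needing care is to use that membership in $\Q$ already forces the last three coordinates to lie in $\E$. This is immediate from how $\Q$ is defined, so there is no need to invoke Theorem \ref{tetrablock}.
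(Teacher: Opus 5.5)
Your proposal is correct and follows the paper's proof. The equivalence is read off from the definition of $\Q$ as a subset of $\C\times\E$, with $\Pz$ vanishing when $a=0$, and the retraction is the coordinate projection $(a,x_1,x_2,x_3)\mapsto(x_1,x_2,x_3)$, which the paper calls $\psi_{\E}$. You also check explicitly that this projection inverts $e_{\E}$ on its image, which justifies the embedding claim more carefully than the paper does; just rename your projection, since $\pi$ is already used in the paper for $A\mapsto(a_{21},a_{11},a_{22},\det A)$.
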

	
	\begin{proof}
		We have by definition of $\Q$ that $(x_1, x_2, x_3) \in \E$ if and only if $(0, x_1, x_2, x_3) \in \Q$. The fact that $e_{\E}$ is a holomorphic embedding follows from the definition of $\Q$. Note that the map $\psi_\E: \Q \to \E$ defined as $\psi_\E(a, x_1, x_2, x_3)=(x_1, x_2, x_3)$ is a well-defined and holomorphic. Evidently, $\psi_\E \circ e_\E$ is the identity map on $\E$ and so, $\E$ is an analytic retract of $\Q$.
	\end{proof}

	Next, we consider the symmetrized bidisc $\Gg$ and its connection with the domain $\Q$. Recall that the domain $\Gg$ is the symmetrization of the bidisc $\D^2$, that is, $\Gg=\{(z_1+z_2, z_1z_2) : z_1, z_2 \in \D\}$. We now recall characterization of elements in $\Gg$ and its closure $\Gamma_2$ from the literature.
	
	\begin{thm}[\cite{AglerIII}, Theorem 2.1 \& \cite{Agler2003}, Theorem 1.1]\label{thmG_2}
		Let $(s,p)\in\C^2$. The following statements are equivalent:
		
		\begin{enumerate}
			\item $(s,p)\in \Gg$   $($or, $(s,p)\in \Gamma_2$$)$;
			
			\item $|s-\overline{s}p|<1-|p|^2$ 
			$($or, $|s|\le 2$ and
			$|s-\overline{s}p|\le 1-|p|^2$$)$;
			
			\item
			$
			2|s-\overline{s}p|+|s^2-4p|
			<4-|s|^2
			$ 
			$($or,
			$
			2|s-\overline{s}p|+|s^2-4p|
			\le 4-|s|^2$$)$.
		\end{enumerate}
	\end{thm}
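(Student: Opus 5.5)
Since this is a cited result (\cite{AglerIII}, \cite{Agler2003}), I would only reconstruct the standard argument. It starts from the definition $\Gg=\{(z_1+z_2,z_1z_2): z_1,z_2\in\D\}$: a pair $(s,p)$ lies in $\Gg$ if and only if both roots of $q(z)=z^2-sz+p$ lie in $\D$. For the closed version, $(s,p)\in\Gamma_2$ if and only if both roots lie in $\DC$, since symmetrization is proper and continuous. The plan is to prove the open equivalences and then pass to closures.

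For $(1)\Leftrightarrow(2)$ I would apply the Schur--Cohn test to $q$. Let $q^*(z)=z^2\overline{q(1/\overline z)}=\overline p z^2-\overline s z+1$. The reduced polynomial
\[
\frac{q^*(0)\,q(z)-q(0)\,q^*(z)}{z}=(1-|p|^2)z-(s-\overline s p)
\]
has degree at most one. The Schur--Cohn criterion says both roots of $q$ lie in $\D$ if and only if $|p|<1$ and this linear polynomial has its root in $\D$. That condition reads $|s-\overline s p|<1-|p|^2$, and this inequality already forces $|p|<1$, which gives (2).

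A more self-contained route to the same conclusion is to show directly that $(s,p)\in\Gg$ if and only if $|p|<1$ and $s=\beta+\overline\beta p$ for some $\beta\in\D$.
\begin{itemize}
\item[(a)] For the forward direction, take $\beta=(z_1-\overline{z_2}z_1z_2)/(1-|z_2|^2)$ or an averaged variant.
\item[(b)] Given $|p|<1$, the real-linear map $\beta\mapsto\beta+\overline\beta p$ is invertible, and solving gives $\beta=(s-\overline s p)/(1-|p|^2)$.
\item[(c)] With this $\beta$, the condition $|\beta|<1$ is exactly $|s-\overline s p|<1-|p|^2$.
\end{itemize}

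For $(1)\Leftrightarrow(3)$ I would use the function $\Phi(\omega,s,p)=(2\omega p-s)/(2-\omega s)$ and the fact that $(s,p)\in\Gg$ if and only if $|s|<2$ and $\sup_{\omega\in\T}|\Phi(\omega,s,p)|<1$. This fact follows from $(2)$ by rewriting $|2\omega p-s|<|2-\omega s|$ for all $\omega\in\T$. Expanding gives $|2\omega p-s|^2<|2-\omega s|^2$, i.e.
\[
4|p|^2+|s|^2-4\,\mathrm{Re}(\omega p\overline s)<4+|s|^2|\omega|^2-4\,\mathrm{Re}(\omega s).
\]
This simplifies to $\mathrm{Re}\bigl(\omega(s-\overline s p)\bigr)<1-|p|^2$ for all $\omega$, which is $(2)$. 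To reach $(3)$, I would instead compute $\sup_\omega|\Phi|$ as the norm of a $2\times2$ matrix realisation. Alternatively, I would substitute the roots, use $|s^2-4p|=|z_1-z_2|^2$ and $4-|s|^2$, and reduce $(3)$ to $|z_1|,|z_2|<1$ via the identity
\[
(4-|s|^2)^2-|s^2-4p|^2=4\bigl(2-|z_1|^2-|z_2|^2\bigr)+\dots
\]
comparing it with $4|s-\overline sp|^2$.

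Finally, the closed versions follow by the scaling $(s,p)\mapsto(rs,r^2p)$ with $r\uparrow1$: $\Gamma_2=\bigcap_{r<1}\{(s,p):(rs,r^2p)\in\Gg\}$, and the non-strict inequalities are limits of strict ones. The extra condition $|s|\le2$ in $(2)$ is needed to exclude points with $|p|=1$, $s=\overline s p$ and $|s|>2$, where the inequality is degenerate.

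I expect the main obstacle to be the algebra in $(1)\Leftrightarrow(3)$, namely identifying the supremum over $\omega\in\T$ with the inequality $2|s-\overline s p|+|s^2-4p|<4-|s|^2$, together with the degenerate boundary cases $|p|=1$ in the closed statements.
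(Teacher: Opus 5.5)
The paper gives no proof of this theorem; it is quoted from Agler--Young. Your outline therefore has to stand on its own, and it has gaps at the step the theorem really hinges on, and in the closed case.

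\textbf{What works.} Your Schur--Cohn argument for $(1)\Leftrightarrow(2)$ in the open case is correct and complete. So is your expansion showing that, for $|s|<2$, condition (2) is equivalent to $|\Phi(\omega,s,p)|<1$ for all $\omega\in\T$.

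The alternative route (a) is wrong as written. Indeed $(z_1-\overline{z_2}z_1z_2)/(1-|z_2|^2)=z_1$, and $z_1+|z_1|^2z_2\neq z_1+z_2$ in general. The correct choice is $\beta=(s-\overline{s}p)/(1-|p|^2)=\bigl(z_1(1-|z_2|^2)+z_2(1-|z_1|^2)\bigr)/(1-|z_1z_2|^2)$. Then $|\beta|<1$ follows from $1-|z_1z_2|^2-|z_1|(1-|z_2|^2)-|z_2|(1-|z_1|^2)=(1-|z_1|)(1-|z_2|)(1-|z_1z_2|)>0$. Since Schur--Cohn already does the job, this slip is harmless.

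\textbf{The gap in $(2)\Leftrightarrow(3)$.} You never compute $\sup_{\omega\in\T}|\Phi(\omega,s,p)|$, and your fallback cannot work as stated. Write $A=4-|s|^2$, $B=|s^2-4p|$, $C=2|s-\overline{s}p|$. Condition (3) says $C<A-B$, i.e.\ $C^2<(A-B)^2$ with $A>B$. Comparing $A^2-B^2$ with $C^2$ tests a different inequality. Moreover $A^2-B^2$ is the product $8(2-|z_1|^2-|z_2|^2)(1-\mathrm{Re}\,z_1\overline{z_2})$, not a sum.

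The missing observation is elementary. For $|s|<2$, the map $\omega\mapsto\Phi(\omega,s,p)=(2p\omega-s)/(2-s\omega)$ is linear fractional with pole $2/s$ outside $\DC$. It therefore maps $\T$ onto the circle with centre $-2(s-\overline{s}p)/(4-|s|^2)$ and radius $|s^2-4p|/(4-|s|^2)$, degenerating to a point when $s^2=4p$. Hence
\[
\sup_{\omega\in\T}|\Phi(\omega,s,p)|=\frac{2|s-\overline{s}p|+|s^2-4p|}{4-|s|^2}\qquad(|s|<2).
\]
Since (3) itself forces $|s|<2$, the equivalence $(2)\Leftrightarrow(3)$ follows at once.

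\textbf{The gap in the closed statements.} Your identity $\Gamma_2=\bigcap_{r<1}\{(s,p):(rs,r^2p)\in\Gg\}$, combined with ``limits of strict inequalities'', only shows that points of $\Gamma_2$ satisfy the non-strict conditions. The converse needs more: you must show that the non-strict inequality at $(s,p)$ gives the strict one at $(rs,r^2p)$ for every $r<1$. Continuity alone does not give this.

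Again $\Phi$ does it. If $|s|<2$, then closed (2) and closed (3) are both equivalent to $|\Phi(\cdot,s,p)|\le1$ on $\T$, by your expansion and the formula above. Now $\Phi(\omega,rs,r^2p)=r\,\Phi(r\omega,s,p)$, and $\Phi(\cdot,s,p)$ is holomorphic near $\DC$. The maximum modulus principle then gives $\sup_{\T}|\Phi(\cdot,rs,r^2p)|\le r<1$, so $(rs,r^2p)\in\Gg$.

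The case $|s|=2$ needs a separate line:
\begin{itemize}
\item Closed (3) forces $s=\overline{s}p$ and $s^2=4p$.
\item Closed (2) gives $2(1-|p|)\le|s-\overline{s}p|\le1-|p|^2$, hence $|p|=1$, $s=\overline{s}p$ and $p=s^2/4$.
\end{itemize}
Either way $(s,p)=(2\zeta,\zeta^2)$ with $\zeta\in\T$, which lies in $\Gamma_2$. The hypothesis $|s|\le2$ in closed (2) is used exactly to reduce to these two cases. Your final remark anticipates this but does not carry it out.
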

	
	Next, we note down various characterizations of the distinguished boundary $b\Gamma_2$ of $\Gg$.
	
	\begin{thm}[\cite{Agler2003}, Theorem 1.3 \& \cite{Tirtha_Pal}, Theorem 1.3]\label{thm_bGam}
		For $(s, p) \in \C^2$, the following statements are equivalent:
		\begin{enumerate}
			\item $s=\overline{s}p, |p|=1$ and $|s|\leq 2$;
			\item $(s, p) \in b\Gamma_2$;
			\item $(s, p) \in \Gamma_2$ and $|p|=1$. 
		\end{enumerate}	
	\end{thm}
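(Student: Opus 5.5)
The equivalence $(1)\Leftrightarrow(3)$ should follow directly from Theorem \ref{thmG_2}. If $(s,p)\in\Gamma_2$ and $|p|=1$, then part $(2)$ of Theorem \ref{thmG_2} (closed version) gives $|s|\le 2$ and $|s-\overline{s}p|\le 1-|p|^2=0$. Hence $s=\overline{s}p$, which is $(1)$. Conversely, if $s=\overline{s}p$, $|p|=1$ and $|s|\le 2$, then $|s-\overline{s}p|=0=1-|p|^2$. The same criterion then puts $(s,p)$ in $\Gamma_2$, and $|p|=1$ holds by assumption, which is $(3)$.

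For $(2)$ I would first identify the set in $(1)$ with $\pi(\T^2)$, where $\pi(z_1,z_2)=(z_1+z_2,z_1z_2)$.
\begin{itemize}
\item If $z_1,z_2\in\T$, then $p=z_1z_2$ is unimodular and $|s|\le 2$. Moreover $\overline{s}p=(\overline{z}_1+\overline{z}_2)z_1z_2=z_2+z_1=s$.
\item Conversely, assume $(1)$. By $(1)\Leftrightarrow(3)$ we have $(s,p)\in\Gamma_2=\pi(\DC^2)$, so the roots $z_1,z_2$ of $t^2-st+p$ lie in $\DC$. Since $|z_1z_2|=|p|=1$, both roots lie in $\T$.
\end{itemize}
It then remains to prove $b\Gamma_2=\pi(\T^2)$, which splits into two inclusions.

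First, $\pi(\T^2)$ is a closed boundary. Let $f$ be continuous on $\Gamma_2$ and holomorphic on $\Gg$. Then $f\circ\pi$ is continuous on $\DC^2$ and holomorphic on $\D^2$. Since $\pi$ maps $\DC^2$ onto $\Gamma_2$, the maximum of $|f|$ on $\Gamma_2$ equals the maximum of $|f\circ\pi|$ on $\DC^2$. By the maximum principle on the bidisc, applied in each variable separately, that maximum is attained on $\T^2$, so $|f|$ attains its maximum on $\pi(\T^2)$. This gives $b\Gamma_2\subseteq\pi(\T^2)$.

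The reverse inclusion, minimality, is the main obstacle. I would show that every point $(s_0,p_0)=\pi(w_1,w_2)$ with $w_1,w_2\in\T$ is a peak point for the algebra of polynomials in $(s,p)$. Since the Shilov boundary contains every peak point, this gives $\pi(\T^2)\subseteq b\Gamma_2$. The first factor I would use is $g_1(s,p)=(1+\overline{p}_0p)/2$, which satisfies $|g_1|\le 1$ on $\Gamma_2$ with equality exactly on the fibre $\{|p|=1,\ p=p_0\}$. On that fibre, by the description above, $s=z_1+\overline{z}_1p_0$ with $z_1\in\T$; this is a real-linear image of a circle, so it is a segment in $\C$. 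The value $s_0$ is an endpoint of that segment only in special cases, so a function in $s$ alone does not suffice.

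I would therefore multiply $g_1$ by a second polynomial factor $g_2$ in $(s,p)$ that has modulus at most $1$ on $\Gamma_2$ and attains modulus $1$ at $(s_0,p_0)$ but nowhere else on this fibre. My first candidate comes from the symmetrization of the bidisc peak function $\tfrac14(1+\overline{w}_1z_1)(1+\overline{w}_2z_2)$, rescaled so that its supremum over $\T^2$ is $1$. The delicate point is to check that its maximum modulus on the fibre is attained only at the orbit $\{(w_1,w_2),(w_2,w_1)\}$; this needs explicit computation, and the case $w_1=w_2$ may need separate treatment. If this fails, I would fall back on the description of $\Gamma_2$ through the linear fractional maps $\Phi_\omega(s,p)=(2\omega p-s)/(2-\omega s)$, $\omega\in\T$ (Agler--Young), and look for a peaking function among suitable averages of these.
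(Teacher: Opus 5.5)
The paper gives no proof of this statement; it is quoted from Agler--Young and Bhattacharyya--Pal, so your argument has to stand on its own. The following parts are correct: $(1)\Leftrightarrow(3)$, the identification of the set in $(1)$ with $\pi(\mathbb{T}^2)$, and the inclusion $b\Gamma_2\subseteq\pi(\mathbb{T}^2)$ via the maximum principle on $\overline{\mathbb{D}}^2$. Reducing the reverse inclusion to a factor $g_2$ that separates $(s_0,p_0)$ from the rest of the fibre $\{p=p_0\}$ is also sound. But you never produce such a $g_2$, and your candidate fails.

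Write $p_0=\mu^2$, $w_1=\mu e^{i\theta_0}$, $w_2=\mu e^{-i\theta_0}$ with $\mu\in\mathbb{T}$. Then the fibre is $\{(2\mu\cos\theta,\mu^2):\theta\in\mathbb{R}\}$ and $s_0=2\mu\cos\theta_0$.
\begin{itemize}
\item The averaged symmetrization of $\frac14(1+\overline{w}_1z_1)(1+\overline{w}_2z_2)$ is $\frac18(2+\overline{s}_0s+2\overline{p}_0p)$. On the fibre it equals $\frac12(1+\cos\theta_0\cos\theta)$. Its maximum there sits at an endpoint $s=\pm2\mu$, so its value at $s_0$ is strictly smaller when $0<|s_0|<2$. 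When $s_0=0$ it is constant on the fibre.
\item The product over the orbit does no better: its modulus on the fibre is $\frac14(\cos\theta+\cos\theta_0)^2$, again maximal only at the endpoints.
\end{itemize}
So your construction works only when $w_1=w_2$. The fallback is not a construction either. Each $\Phi_\omega$ is unimodular on all of $b\Gamma_2$ where it is defined, so it cannot single out a point of the fibre. It is also singular at $(2\overline{\omega},\overline{\omega}^2)\in b\Gamma_2$.

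The missing idea is to symmetrize a one-variable function that peaks on the whole set $\{w_1,w_2\}$, not a bidisc function that peaks at one point of the orbit. For $w_1\neq w_2$, take $m\in\mathrm{Aut}(\mathbb{D})$ with $m(w_1)=1$ and $m(w_2)=-1$, and set $\phi=\frac12(1+m^2)$. Then $|\phi|\le 1$ on $\overline{\mathbb{D}}$, with equality exactly at $w_1$ and $w_2$. Writing $(\sigma,\rho)=\tau_m(s,p)$, we get $\phi(z_1)\phi(z_2)=\frac14\big((1-\rho)^2+\sigma^2\big)$, which is rational and holomorphic near $\Gamma_2$. Now put $g=\frac12(1+\overline{p}_0p)\,\phi(z_1)\phi(z_2)$. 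It equals $1$ at $(s_0,p_0)$ and has modulus $<1$ elsewhere on $\Gamma_2$: the second factor forces $z_1,z_2\in\{w_1,w_2\}$, and the first factor excludes $(2w_1,w_1^2)$ and $(2w_2,w_2^2)$. When $w_1=w_2=w$, your candidate $\frac14(1+\overline{w}s+\overline{w}^2p)$ already works.

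Alternatively, you can avoid peak functions altogether:
\begin{itemize}
\item $b\Gamma_2$ is invariant under the automorphisms $\tau_v$, and these act transitively on the dense off-diagonal part of $\pi(\mathbb{T}^2)$.
\item $b\Gamma_2$ cannot lie inside the diagonal $\{(2w,w^2):w\in\mathbb{T}\}$, because $s^2/4-p$ vanishes there but not identically on $\Gamma_2$.
\end{itemize}
Hence $b\Gamma_2$ contains one off-diagonal point, so by invariance all of them, and being closed it contains all of $\pi(\mathbb{T}^2)$.
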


	We now present a relationship between the domains $\Gg$ and $\Q$.
	
	\begin{thm}\label{thm_GQretract}$($Connection of hexablock with symmetrized bidisc$)$
		A point $(s, p) \in \Gg$ if and only if $(0, s\slash 2, s\slash 2, p)\in \Q$. In particular, the map $e_{\Gg}: \Gg \to \Q$ defined by $e_{\Gg}(s, p)=(0,s\slash 2, s\slash 2, p)$ is a holomorphic embedding, and $\Gg$ is an analytic retract of $\Q$.
	\end{thm}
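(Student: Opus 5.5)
By Theorem \ref{thm_EQ}, $(0,s/2,s/2,p)\in\Q$ if and only if $(s/2,s/2,p)\in\E$. So the statement reduces to the tetrablock-level claim
\[
(s,p)\in\Gg \iff (s/2,s/2,p)\in\E .
\]
I will verify this using the characterizations in Theorems \ref{tetrablock} and \ref{thmG_2}.

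Put $x_1=x_2=s/2$ and $x_3=p$ in condition (4) of Theorem \ref{tetrablock}. Then
\[
x_1-\overline{x}_2x_3=x_2-\overline{x}_1x_3=\tfrac12(s-\overline{s}p),
\]
so condition (4) becomes $|s-\overline{s}p|<1-|p|^2$. This is exactly condition (2) of Theorem \ref{thmG_2}, and the equivalence follows.

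Alternatively, one can argue directly from the definitions, which also gives a check. If $(s,p)=(\lambda_1+\lambda_2,\lambda_1\lambda_2)$ with $\lambda_i\in\D$, then
\[
1-\tfrac s2(z_1+z_2)+pz_1z_2=\tfrac12\big[(1-\lambda_1z_1)(1-\lambda_2z_2)+(1-\lambda_2z_1)(1-\lambda_1z_2)\big].
\]
Both summands have positive real part after dividing by a suitable factor, so the expression does not vanish. The inequality route above is cleaner, so I would use it.

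For the embedding and retract claims, the map $e_{\Gg}$ is linear and injective, so it is a holomorphic embedding onto its image; it maps $\Gg$ into $\Q$ by the equivalence just proved. Define
\[
\psi_{\Gg}(a,x_1,x_2,x_3)=(x_1+x_2,\;x_3).
\]
This map is holomorphic, and $\psi_{\Gg}\circ e_{\Gg}=\mathrm{id}_{\Gg}$. It remains to show that it maps $\Q$ into $\Gg$, i.e. that $(x_1,x_2,x_3)\in\E$ implies $(x_1+x_2,x_3)\in\Gg$.

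This last step is the one real point, and I would handle it as follows. Take $(x_1,x_2,x_3)\in\E$ and set $s=x_1+x_2$, $p=x_3$. By Theorem \ref{tetrablock}(4) and the triangle inequality,
\[
|s-\overline{s}p|\le|x_1-\overline{x}_2x_3|+|x_2-\overline{x}_1x_3|<1-|p|^2,
\]
since $s-\overline{s}p=(x_1-\overline{x}_2x_3)+(x_2-\overline{x}_1x_3)$. Theorem \ref{thmG_2}(2) then gives $(s,p)\in\Gg$.

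As a cross-check by zeros: if $z^2-sz+p$ had a root $\lambda$ with $|\lambda|\ge1$, then taking $z_1=z_2=1/\lambda\in\DC$ would give
\[
1-x_1z_1-x_2z_2+x_3z_1z_2=\lambda^{-2}\big(\lambda^2-s\lambda+p\big)=0,
\]
which contradicts $(x_1,x_2,x_3)\in\E$. Either argument completes the proof that $\Gg$ is an analytic retract of $\Q$.
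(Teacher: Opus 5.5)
Your proof is correct and follows the paper's own route. You reduce via Theorem \ref{thm_EQ} to the claim $(s,p)\in\Gg\iff(s/2,s/2,p)\in\E$, match condition (4) of Theorem \ref{tetrablock} with condition (2) of Theorem \ref{thmG_2}, and retract by $(a,x_1,x_2,x_3)\mapsto(x_1+x_2,x_3)$; your identity $s-\overline{s}p=(x_1-\overline{x}_2x_3)+(x_2-\overline{x}_1x_3)$, combined with the triangle inequality, spells out the step the paper calls clear.

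Your root-location cross-check for the retraction is also valid. The discarded aside about both summands having positive real part ``after dividing by a suitable factor'' is not justified as written, since choosing that factor is the whole difficulty, but your main argument does not depend on it.
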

	
	\begin{proof}
		For any $(s, p) \in \Gg$, it follows from Theorems \ref{tetrablock} and \ref{thmG_2} that $(s\slash 2, s\slash 2, p) \in \E$. We have by Theorem \ref{thm_EQ} that $(0, s\slash 2, s\slash 2, p) \in \Q$. The converse follows by reversing the argument. Thus, the map $e_{\Gg}$ is a holomorphic embedding. Also, it is clear from Theorems \ref{tetrablock} and \ref{thmG_2} that $(x_1+x_2, x_3) \in \Gg$ for every $(x_1, x_2, x_3) \in \E$. Consequently, $(x_1+x_2, x_3) \in \Gg$ for every $(a, x_1, x_2, x_3) \in \Q$. Now, consider the map  $\psi_{\Gg}: \Q \to \Gg$ defined as $\psi_{\Gg}(a, x_1, x_2, x_3)=(x_1+x_2, x_3)$. Clearly, $\psi_{\Gg}$ is a holomorphic map that satisfies $\psi_{\Gg}\circ e_{\Gg}=\text{id}_{\Gg}$. Therefore, $\Gg$ is an analytic retract of $\Q$. 
	\end{proof}

	Next, we explore the connection of pentablock $\Pe$ with $\Q$. To do so, we need to state a few characterizations of the points in $\Pe$ and its closure $\Pbar$ from the literature. 
	
	\begin{thm}[\cite{AglerIV}, Theorems 5.2 \& 5.3]\label{pentablock}
		Let $a \in \C$ and let $(s, p)=(\lm_1+\lm_2, \lm_1\lm_2) \in \C^2$. For $(s, p) \in \Gg$ $($or, $(s, p) \in \Gamma_2$$)$, the following statements are equivalent:	
		\begin{enumerate}
			\item $(a, s, p) \in \mathbb{P}$ $($or, $(a, s, p) \in \Pbar$$)$; \vspace{0.1cm}
			\item $
			\displaystyle \sup_{z \in \D} \bigg|\displaystyle\frac{a(1-|z|^2)}{1-sz+pz^2}\bigg|<1$ $\left(\text{or, $
				\displaystyle \sup_{z \in \D} \bigg|\displaystyle\frac{a(1-|z|^2)}{1-sz+pz^2}\bigg|\leq 1$} \right)$; \vspace{0.2cm}
			\item   { \small
				$\displaystyle |a| < \frac{1}{2}|1-\overline{\lambda}_2\lambda_1|+\frac{1}{2}\sqrt{(1-|\lambda_1|^2)(1-|\lambda_2|^2)}$ 
				$\left(\text{or, $\displaystyle 
					|a|  \leq  \frac{1}{2}|1-\overline{\lambda}_2\lambda_1|+\frac{1}{2}\sqrt{(1-|\lambda_1|^2)(1-|\lambda_2|^2)}$} \right)$}.
		\end{enumerate}
	\end{thm}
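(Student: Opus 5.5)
Since this is quoted from \cite{AglerIV}, the plan is to reconstruct the argument rather than rely on it. Throughout, fix $(s,p)=(\lm_1+\lm_2,\lm_1\lm_2)\in\Gg$, so that $\lm_1,\lm_2\in\D$, and prove $(2)\Leftrightarrow(3)$ and $(1)\Leftrightarrow(3)$. The closed versions will follow by the same computations with non-strict inequalities, or by a limiting argument using $\Gamma_2=\overline{\Gg}$.

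\emph{Step 1: $(2)\Leftrightarrow(3)$.} Factor $1-sz+pz^2=(1-\lm_1z)(1-\lm_2z)$. Then (2) reads $|a|\,M(\lm_1,\lm_2)<1$, where
\[
M(\lm_1,\lm_2)=\sup_{z\in\D}\frac{1-|z|^2}{|1-\lm_1z|\,|1-\lm_2z|}.
\]
\begin{enumerate}
\item Use the identities $1-|\varphi_{\lm}(z)|^2=(1-|\lm|^2)(1-|z|^2)/|1-\overline{\lm}z|^2$ to rewrite the quotient as
\[
\frac{\sqrt{(1-|w_1|^2)(1-|w_2|^2)}}{\sqrt{(1-|\lm_1|^2)(1-|\lm_2|^2)}},
\]
where $w_1,w_2$ are two Möbius images of $z$ whose pseudo-hyperbolic distance is fixed in terms of $\lm_1,\lm_2$.
\item Maximize $(1-|w_1|^2)(1-|w_2|^2)$ over pairs at that fixed distance. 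By symmetry the maximum occurs at $w_2=-w_1$.
\item This should give $M=2\big/\big(|1-\overline{\lm}_2\lm_1|+\sqrt{(1-|\lm_1|^2)(1-|\lm_2|^2)}\big)$, which is exactly (3).
\end{enumerate}
As a check, at $\lm_1=\lm_2=0$ the sup is $1$ and the formula gives $2/(1+1)=1$.

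\emph{Step 2: $(1)\Leftrightarrow(3)$.} Every $A$ with $a_{21}=a$, $\mathrm{tr}A=s$ and $\det A=p$ has the form
\[
A=\begin{pmatrix} x_1 & w\\ a & x_2\end{pmatrix},\qquad x_1+x_2=s,\quad x_1x_2-aw=p.
\]
So (1) says that $\inf\|A\|<1$ over this family. If $a=0$, then $A$ is triangular and we may take $A=\mathrm{diag}(\lm_1,\lm_2)$, of norm $<1$; this agrees with (3), whose right-hand side is positive. If $a\neq0$, then $w=(x_1x_2-p)/a$ is determined, and the family is parametrized by $x_1\in\C$ with $x_2=s-x_1$. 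Note also that conjugation by a unitary only changes $a_{21}$ by a unimodular factor when the unitary is diagonal, so we may assume $a\ge 0$.

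For a $2\times2$ matrix, $\|A\|<1$ holds if and only if $|\det A|<1$ and $\|A\|_{HS}^2<1+|\det A|^2$. Since $|p|<1$ already, (1) becomes
\[
\inf_{x_1\in\C}\Big(|x_1|^2+|s-x_1|^2+|a|^2+\frac{|x_1(s-x_1)-p|^2}{|a|^2}\Big)<1+|p|^2 .
\]
The minimum in $x_1$ is attained at the balanced choice $x_1=x_2=s/2$ up to a correction, or is computed by completing the square after writing $x_1=s/2+t$. Then $x_1x_2-p=(s^2/4-p)-t^2=((\lm_1-\lm_2)/2)^2-t^2$. The inequality becomes a quadratic condition in $|a|^2$, and solving it for $|a|$ should reproduce the bound in (3).

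\emph{Main obstacle.} The hard step is the minimization over $t$ in Step 2, together with matching the resulting root to $\tfrac12|1-\overline{\lm}_2\lm_1|+\tfrac12\sqrt{(1-|\lm_1|^2)(1-|\lm_2|^2)}$. The plan there is to choose $t$ real after rotating by the argument of $(\lm_1-\lm_2)^2$, reducing to a one-variable calculus problem. A possible shortcut is to avoid the direct minimization: show that both (1) and (2) give the same extremal value of $|a|$ by exhibiting, for the $|a|$ in (3), an explicit contraction $A$ attaining norm one, and then use the maximum principle in $a$ (the fibres are discs centered at $0$) to conclude.
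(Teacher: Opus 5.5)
The gap is in items 1--2 of Step 1. With $w_j=(z-\overline{\lambda}_j)/(1-\lambda_j z)$ your identity for the quotient is correct. However, the pseudo-hyperbolic distance between $w_1$ and $w_2$ is \emph{not} independent of $z$. Take $\lambda_1=0$ and $\lambda_2=r\in(0,1)$, so $w_1=z$ and $w_2=(z-r)/(1-rz)$. At $z=0$ the distance is $r$. As $z\to i$ it tends to $1$, because $w_1\to i$ and $w_2\to (i-r)/(1-ri)\neq i$ are distinct points of $\T$. So ``maximize over pairs at a fixed distance'' optimizes over the wrong set, and the symmetry step has nothing to stand on. The value $M=2/(P+Q)$ you write down, with $P=\sqrt{(1-|\lambda_1|^2)(1-|\lambda_2|^2)}$ and $Q=|1-\overline{\lambda}_2\lambda_1|$, is correct, but items 1--2 do not prove it.

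The invariant that actually holds is the triangle inequality through $z$. Let $d$ be the Poincar\'e distance, so $\tanh d=\rho$. Then $|w_j|=\tanh d_j$ with $d_j=d(z,\overline{\lambda}_j)$, so the quotient equals $\operatorname{sech}d_1\operatorname{sech}d_2/P$, and
\[
\cosh d_1\cosh d_2=\tfrac12\bigl(\cosh(d_1+d_2)+\cosh(d_1-d_2)\bigr)\geq\tfrac12\bigl(\cosh D+1\bigr),\qquad D=d(\overline{\lambda}_1,\overline{\lambda}_2),
\]
with equality at the hyperbolic midpoint of $\overline{\lambda}_1,\overline{\lambda}_2$. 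Since $\cosh D=Q/P$, this gives $M=2/(P+Q)$, hence (2)$\Leftrightarrow$(3) on $\Gg$. This argument needs $|\lambda_j|<1$. On $\partial\Gg$ (say $|\lambda_1|=1$) you must check $M=2/Q$ separately: $M\leq 2/Q$ follows from pointwise limits of interior approximants, and $M\geq 2/Q$ follows along $z=r\overline{\lambda}_1$, $r\to1^-$. When $\lambda_1=\lambda_2\in\T$ one has $M=\infty$.

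Step 2 is sound, but the ``correction'' you leave vague is essential. Put $\omega=(\lambda_1-\lambda_2)/2$. For fixed $|t|$, $|\omega^2-t^2|$ is smallest when $t^2$ has the argument of $\omega^2$. Writing $|t|^2=r$, the problem reduces to minimizing $|s|^2/2+|a|^2+2r+(|\omega|^2-r)^2/|a|^2$ over $r\geq0$, with minimizer $r=\max\{0,|\omega|^2-|a|^2\}$. There are two cases.

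\emph{Case $|a|\leq|\omega|$.} The minimum is $|\lambda_1|^2+|\lambda_2|^2$, and the condition becomes $(1-|\lambda_1|^2)(1-|\lambda_2|^2)>0$, which always holds.

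\emph{Case $|a|>|\omega|$.} Here $t=0$, and you get your quadratic $|a|^4-(1+|p|^2-|s|^2/2)|a|^2+|\omega|^4<0$. Its roots are $c_\pm^2$ with $c_\pm=\frac12(Q\pm P)$ and $c_+c_-=|\omega|^2$, so the condition is $c_-<|a|<c_+$. Since $c_-\leq|\omega|<|a|$, this is just $|a|<c_+$.

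Insisting on the balanced choice $t=0$ alone would wrongly exclude $|a|\leq c_-$. The closed case then goes through with non-strict inequalities, once you note two things: $\Pbar$ is the image of the closed unit ball (by compactness), and the infimum over $t$ is attained. Drop the ``shortcut'': the claim that the fibres are discs centred at $0$ is essentially the content of the theorem.

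For comparison, the paper gives no proof of this statement; it quotes \cite{AglerIV}. Your two minimizers are exactly the two families of contractions in Proposition \ref{prop4.7}, with the switch at $|\omega|$ rather than at $c_-$. Your Hilbert--Schmidt minimization has the advantage of yielding the converse (1)$\Rightarrow$(3) at the same time, whereas the explicit contractions alone give only (3)$\Rightarrow$(1).
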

	
	A description of the distinguished boundary $b\Pe$ of the pentablock is presented below. 
	
	\begin{thm}[\cite{AglerIV}, Theorem  8.4]\label{thmbP}
		A point $(a, s, p) \in \C^3$ lies in $b\Pe$ if and only if $|a|^2+|s|^2 \slash 4=1$ and $(s, p) \in b\Gamma_2$.
	\end{thm}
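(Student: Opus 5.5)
The plan is to prove both inclusions $b\Pe\subseteq K$ and $K\subseteq b\Pe$, where
\[
K=\{(a,s,p):|a|^2+|s|^2/4=1,\ (s,p)\in b\Gamma_2\}.
\]
The first inclusion uses the fibration of $\Pbar$ over $\Gamma_2$ given by Theorem \ref{pentablock}. The second uses peak functions.

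\emph{Step 1: $b\Pe\subseteq K$.} First I would show that every $f\in A(\Pe)$ attains its maximum modulus on $\{(a,s,p)\in\Pbar:(s,p)\in b\Gamma_2\}$. By the matrix description \eqref{eqn_domains}, $f\circ\pi$ with $\pi(A)=(a_{21},\mathrm{tr}A,\det A)$ is continuous on the closed unit ball of $M_2(\C)$ and holomorphic inside. So it peaks on the unitaries. For a unitary $U$ we have $(\mathrm{tr}U,\det U)\in b\Gamma_2$ by Theorem \ref{thm_bGam}, since the eigenvalues lie on $\T$.

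Next I would compute the fibre over a point $(s,p)\in b\Gamma_2$. Write $(s,p)=(\lambda_1+\lambda_2,\lambda_1\lambda_2)$ with $\lambda_1,\lambda_2\in\T$. Then condition (3) of Theorem \ref{pentablock} says the fibre is the closed disc $|a|\le r$ with
\[
r=\tfrac12|1-\overline{\lambda}_2\lambda_1|=\tfrac12|\lambda_1-\lambda_2|=\sqrt{1-|s|^2/4},
\]
using $|\lambda_1-\lambda_2|^2+|\lambda_1+\lambda_2|^2=4$. The map $a\mapsto f(a,s,p)$ is continuous on this disc and holomorphic inside. By the one-variable maximum principle, the maximum of $|f|$ is therefore attained where $|a|=r$, that is, on $K$. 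Since $K$ is closed, $b\Pe\subseteq K$.

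\emph{Step 2: $K\subseteq b\Pe$.} Here I would show that every point $q_0=(a_0,s_0,p_0)\in K$ is a peak point for $A(\Pe)$. Take $g$, a peak function for $(s_0,p_0)$ in $A(\Gamma_2)$; such functions exist because $b\Gamma_2$ consists of peak points. Then set
\[
h(a,s,p)=\overline{a}_0a+\tfrac14\overline{s}_0s,
\]
and consider $F=\tfrac12\bigl(g+h\bigr)$, or $g\cdot\tfrac12(1+h)$. If $|h|\le1$ on $\Pbar$ with equality on the fibre over $(s_0,p_0)$ only at $a=a_0$, then $F$ peaks exactly at $q_0$.

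By Cauchy--Schwarz, the needed inequality $|h|\le1$ reduces to $|a|^2+|s|^2/4\le1$ on $\Pbar$. I expect this to be the main obstacle: it is not automatic, since off $b\Gamma_2$ the fibre radius from Theorem \ref{pentablock}(3) can exceed $\sqrt{1-|s|^2/4}$.

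To handle it, I would first try to bound $|\overline{a}_0a+\overline{s}_0s/4|$ directly, rather than the full norm. I would use the matrix model, writing $a=a_{21}$ and $s/2=(a_{11}+a_{22})/2$ for a contraction $A$, and then $h=\langle Av,w\rangle$-type expressions for suitable unit vectors $v,w$ built from $(a_0,s_0/2)$, so that $|h|\le\|A\|\le1$. Failing that, I would replace $h$ by the function $(a,s,p)\mapsto\overline{a}_0a/r(s,p)$ restricted near the fibre, or by rotational symmetry $(a,s,p)\mapsto(\omega a,\zeta s,\zeta^2p)$ together with the automorphisms of $\Gamma_2$. Combined with the fact that $b\Pe$ is closed and invariant, this would reduce to checking a single point of $K$ for each value of $|s|$.
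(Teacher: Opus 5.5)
\textbf{Gap in Step 2.} The gap sits exactly where you flag it. The peak function $F=\tfrac12(g+h)$ works only if $|h|\le 1$ on $\Pbar$. You leave this unproved, and the reason you give for doubting it is false: the fibre radius in Theorem~\ref{pentablock}(3) never exceeds $\sqrt{1-|s|^2/4}$. In fact $|a|^2+|s|^2/4\le 1$ on all of $\Pbar$.

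To see this, use the matrix model you already have: $\Pbar=\pi(\{A:\|A\|\le 1\})$. The first column and the second row of such an $A$ give $|a_{21}|^2+|a_{11}|^2\le 1$ and $|a_{21}|^2+|a_{22}|^2\le 1$. Hence $|a|^2+|s|^2/4\le |a_{21}|^2+\tfrac12(|a_{11}|^2+|a_{22}|^2)\le 1$. Equivalently, $h(\pi(A))=\tfrac12\langle Ae_1,u\rangle+\tfrac12\langle Av,e_2\rangle$ with unit vectors $u=(s_0/2,a_0)^T$ and $v=(\overline{a}_0,\overline{s}_0/2)^T$. So your first idea does work, but as an average of two such terms rather than a single one. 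The paper records this inequality in Example~\ref{eg_false_Pe}, via Theorem~\ref{thm_final_embed} and Lemma~\ref{lem6.5}.

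Your fallbacks could not have bypassed the inequality. Take $(a_0,s_0/2)$ to be the unit vector in the direction of a given $(a,s/2)$, and choose $p_0$ with $(s_0,p_0)\in b\Gamma_2$; this is possible because any $s_0$ with $|s_0|\le 2$ is a sum of two unimodular numbers. Then $h(a,s,p)=\sqrt{|a|^2+|s|^2/4}$. So requiring $|h|\le 1$ on $\Pbar$ for every $q_0\in K$ is equivalent to the inequality itself. Separately, $\overline{a}_0a/r(s,p)$ is not holomorphic.

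Once the inequality is in hand, your argument goes through: $|F(q)|=1$ forces $(s,p)=(s_0,p_0)$ and $\overline{a}_0a=|a_0|^2$ with $|a|\le |a_0|$, hence $q=q_0$. You should also cite that every point of $b\Gamma_2$ is a peak point for $A(\Gg)$.

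\textbf{Smaller gap in Step 1.} Step 1 is sound in outline, but one step needs an argument. For $(s,p)\in b\Gamma_2$ the whole fibre disc lies in $\partial\Pe$, where $f\in A(\Pe)$ is only known to be continuous, so holomorphy of $a\mapsto f(a,s,p)$ is not automatic. It is easily supplied: $(ta,ts,t^2p)=\pi(tA)\in\Pe$ for $0<t<1$, so the functions $a\mapsto f(ta,ts,t^2p)$ are holomorphic on $|a|<r$ and converge uniformly to $a\mapsto f(a,s,p)$ as $t\to 1$. Likewise, the claim that $f\circ\pi$ attains its maximum on the unitaries should be justified, for example by the singular value decomposition and the maximum principle on $\overline{\D}^2$.

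\textbf{Comparison with the paper.} With these repairs your argument proves the theorem by a route genuinely different from the paper's; your $K$ is its $K_0$. The paper gets $K_0\subseteq b\Pe$ by pulling back peak functions at points of $b\Q$ through $e_0$ (Proposition~\ref{prop14.7}). It gets $b\Pe\subseteq K_0$ from Proposition~\ref{peak_K1} followed by a three-case analysis using the automorphisms $f_{\omega,v}$ and analytic discs (Propositions~\ref{adiscP} and \ref{prop_bP_K0}). Your version is more elementary and self-contained: it needs only the matrix model, the one-variable maximum principle on fibres, and an explicit peak function. The paper's version is built to exhibit $b\Pe$ as $e_0^{-1}(b\Q)$.
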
 	
	
	We are now in a position to explore the relationships between the domains $\Pe$ and $\Q$. As discussed earlier, $(x_1+x_2, x_3) \in \Gg$ for every $(x_1, x_2, x_3) \in \E$ and a proof to this fact can be easily obtained using Theorems \ref{tetrablock} and \ref{thmG_2}. An analogous result holds for $\Pe$ and $\Q$, which is presented below.
	
	\begin{lem}\label{lem_H_P}
		If $(a, x_1, x_2, x_3) \in \Q$ $($respectively, $\CQ)$, then $(a, x_1+x_2, x_3) \in \mathbb{P} \ ($respectively, $\overline{\mathbb{P}})$.
	\end{lem} 
	
	\begin{proof}
		Let $(a, x_1, x_2, x_3) \in \Q$. Then  $(x_1, x_2, x_3) \in \E$ and
		\[
		\sup_{z_1, z_2 \in \mathbb{D}}\bigg|\frac{a\sqrt{(1-|z_1|^2)(1-|z_2|^2)}}{1-x_1z_1-x_2z_2+x_3z_1z_2}\bigg| <1.
		\]
		Consequently, we have
		\[
		\sup_{z \in \mathbb{D}}\bigg|\frac{a(1-|z|^2)}{1-(x_1+x_2)z+x_3z^2}\bigg| \leq \sup_{z_1, z_2 \in \mathbb{D}}\bigg|\frac{a\sqrt{(1-|z_1|^2)(1-|z_2|^2)}}{1-x_1z_1-x_2z_2+x_3z_1z_2}\bigg|  <1.
		\]
		By the discussion preceding the lemma, $(x_1+x_2,x_3)\in\Gg$. Thus, we have by Theorem \ref{pentablock} that $(a, x_1+x_2, x_3) \in \mathbb{P}$. Using continuity arguments,  $(a, x_1+x_2, x_3) \in \overline{\mathbb{P}}$ for every $(a, x_1, x_2, x_3) \in \CQ$.	
	\end{proof} 
	
	The following theorem establishes a relationship between the domains $\Pe$ and $\Q$. Recall that a continuous map $f$ between two topological spaces is said to be \textit{proper} if the inverse image of every compact set under $f$ is again compact. 
	
	\begin{thm}\label{thmP_in_H}$($Connection of hexablock with pentablock$)$
		A point $(a, s, p) \in \Pe$ $($respectively, $\Pbar)$ if and only if $(a, s\slash 2, s \slash 2, p) \in \Q$ $($respectively, $\CQ)$. In particular, the map $e_{\Pe}: \Pe \to \Q$ defined by $e_\Pe(a, s, p)=(a, s\slash 2, s \slash 2, p)$ is an embedding. Moreover, $e_\Pe$ is a proper holomorphic map such that $e_\Pe(b\Pe) \subseteq b\Q$, and $\Pe$ is an an analytic retract of $\Q$. 
	\end{thm}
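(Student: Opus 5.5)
The plan is to reduce everything to a single identity. For $x_1=x_2=s/2$, the hexablock supremum should coincide with the pentablock supremum:
\[
\sup_{z_1,z_2\in\D}\bigg|\frac{a\sqrt{(1-|z_1|^2)(1-|z_2|^2)}}{1-\tfrac s2 z_1-\tfrac s2 z_2+pz_1z_2}\bigg|
\;=\;\sup_{z\in\D}\bigg|\frac{a(1-|z|^2)}{1-sz+pz^2}\bigg|,
\qquad (s,p)\in\Gamma_2 .
\]

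\textbf{The easy direction.} If $(a,s/2,s/2,p)\in\Q$ (resp.\ $\CQ$), then Lemma \ref{lem_H_P} gives at once $(a,s,p)\in\Pe$ (resp.\ $\Pbar$). Indeed, the diagonal $z_1=z_2=z$ already shows ``$\ge$'' in the identity. Moreover, $(s,p)\in\Gg$ if and only if $(s/2,s/2,p)\in\E$, by Theorem \ref{thm_GQretract}. So the whole content of the first assertion is the reverse inequality ``$\le$'': for symmetric tetrablock data, the two-variable supremum is attained along the diagonal.

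\textbf{The reverse inequality (main obstacle).} I would try two routes.

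\emph{Route 1.} Write $D(z_1,z_2)=1-\tfrac s2(z_1+z_2)+pz_1z_2$. For fixed $z_1,z_2\in\D$, I would try to produce a $z\in\D$ with
\[
\frac{1-|z|^2}{|1-sz+pz^2|}\;\ge\;\frac{\sqrt{(1-|z_1|^2)(1-|z_2|^2)}}{|D(z_1,z_2)|}.
\]
The natural candidate is a ``hyperbolic midpoint'' of $z_1,z_2$. One could also argue that $\log$ of the right-hand side is plurisuperharmonic-type, and symmetric under $z_1\leftrightarrow z_2$, so that it is maximized on the diagonal. This is the step I expect to be hardest.

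\emph{Route 2 (what I would try first).} Use the matricial picture. By the description of $\Pe$ in \eqref{eqn_domains} together with Theorem \ref{pentablock}(3), for $(a,s,p)\in\Pe$ I would produce a matrix
\[
A=\begin{pmatrix} s/2 & \beta\\ a & s/2\end{pmatrix},\qquad \|A\|<1,\qquad \det A=p .
\]
Here $\beta=(s^2/4-p)/a$, and the norm condition should reduce exactly to the inequality in Theorem \ref{pentablock}(3). Then $\pi(A)=(a,s/2,s/2,p)$ lies in the normed hexablock $\QN\subseteq\Q$ (from \cite{Biswas-Pal}). The case $a=0$ is immediate from Theorem \ref{thm_GQretract}. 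The closed version then follows by continuity, or by the same argument with non-strict inequalities.

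\textbf{Remaining assertions.}
\begin{itemize}
\item \emph{Holomorphy and embedding.} The map $e_\Pe$ is holomorphic, injective and linear, hence an embedding.
\item \emph{Retraction.} Set $\psi_\Pe(a,x_1,x_2,x_3)=(a,x_1+x_2,x_3)$. It maps $\Q\to\Pe$ by Lemma \ref{lem_H_P}, and $\psi_\Pe\circ e_\Pe=\text{id}_\Pe$.
\item \emph{Boundary.} Take $(a,s,p)\in b\Pe$. By Theorem \ref{thmbP}, $|a|^2+|s|^2/4=1$ and $(s,p)\in b\Gamma_2$. By Theorem \ref{thm_bGam}, $s=\overline sp$, $|p|=1$ and $|s|\le2$. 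Hence $x_1=\overline{x}_2x_3$, $|x_3|=1$ and $|x_2|\le1$ for $x_1=x_2=s/2$, so $(s/2,s/2,p)\in b\E$ by Theorem \ref{thm6.1}. Also $|a|^2+|x_1|^2=1$, so $e_\Pe(a,s,p)\in b\Q$ by Theorem \ref{thmbQ}.
\item \emph{Properness.} The equivalence for closures shows $e_\Pe$ maps $\Pbar\setminus\Pe$ into $\CQ\setminus\Q$. Hence if $K\subset\Q$ is compact, $e_\Pe^{-1}(K)$ is closed in $\Pbar$ (which is compact, since $\Pe$ is bounded) and misses $\partial\Pe$. So $e_\Pe^{-1}(K)$ is a compact subset of $\Pe$, and $e_\Pe$ is proper.
\end{itemize}
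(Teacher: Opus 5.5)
Your Route 2, which carries all the nontrivial content of the first assertion, breaks for small nonzero $a$. Once $a\neq 0$, the requirement $\pi(A)=(a,s/2,s/2,p)$ leaves no freedom: $a_{12}=\beta=w^2/a$ with $w=(\lambda_1-\lambda_2)/2$. A direct computation then gives
\[
|a|^2\det(I-A^*A)=-(|a|^2-c_-^2)(|a|^2-c_+^2),\qquad c_\pm=\frac12|1-\overline{\lambda}_2\lambda_1|\pm\frac12\sqrt{(1-|\lambda_1|^2)(1-|\lambda_2|^2)} .
\]
So $\|A\|<1$ holds exactly when $c_-<|a|<c_+$, not when $|a|<c_+$. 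Since $c_+c_-=|w|^2$, we have $c_->0$ whenever $s^2\neq 4p$. For $0<|a|\le c_-$ the matrix has $\|A\|\ge 1$, and indeed $\|A\|\ge|a_{12}|=|w|^2/|a|\to\infty$ as $a\to 0$. For such points $(a,s/2,s/2,p)$ is not in $\QN$ at all. This is exactly why $\QN\neq\Q$, and why Proposition \ref{prop4.7} needs a second matrix with unequal diagonal entries. Treating $a=0$ separately does not cover the punctured disc $0<|a|\le c_-$.

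Route 1 does not close the gap either. Symmetry under $z_1\leftrightarrow z_2$ plus plurisuperharmonicity of the logarithm does not force the supremum onto the diagonal: $\mathrm{Re}\,(z_1-z_2)^2$ is pluriharmonic and symmetric but is not maximized there. What is needed is uniqueness of the maximizer. The paper imports precisely this as Corollary 2.8 of \cite{Biswas-Pal}: for symmetric tetrablock data the unique maximizer over $\D^2$ lies on the diagonal, so the two-variable supremum equals a diagonal value.

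Your route can be repaired with one observation. For fixed $x\in\E$, $a$ enters the defining condition of $\Q$ only through the factor $|a|$. Hence the fibre $\{a:(a,x)\in\Q\}$ is the open disc $|a|<1/M(x)$, where
\[
M(x)=\sup_{z_1,z_2\in\D}\frac{\sqrt{(1-|z_1|^2)(1-|z_2|^2)}}{|1-x_1z_1-x_2z_2+x_3z_1z_2|}.
\]
Your matrix puts the whole annulus $c_-<|a|<c_+$ into the fibre over $x=(s/2,s/2,p)$. (Note that $\det(I-A^*A)>0$ forces $\|A\|_F^2<1+|p|^2<2$, so indeed $I-A^*A>0$.) Therefore $1/M(x)\ge c_+$, and every $|a|<c_+$ works. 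By Theorem \ref{pentablock}(3), that is the full fibre of $\Pe$.

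With this fix you get a genuinely different proof. It replaces the paper's appeal to the location of the maximizer by $\QN\subseteq\Q$ plus the explicit norm computation, which is essentially the first case in the proof of Proposition \ref{prop4.7}. The rest of your proposal is sound: the embedding, retraction and $e_\Pe(b\Pe)\subseteq b\Q$ arguments match the paper's. Your derivation of $e_\Pe(\partial\Pe)\subseteq\partial\Q$ directly from the two equivalences is cleaner than the paper's case analysis.
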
	
	
	\begin{proof}
		Let $(a, s, p) \in \mathbb{P}$. We have by Theorem \ref{pentablock} that $(s, p) \in \Gg$ and $\displaystyle \sup_{z \in \D} \bigg|\frac{a(1-|z|^2)}{1-sz+pz^2}\bigg|<1$. It follows from part-$(4)$ of Theorem \ref{tetrablock} that $(x_1, x_2, x_3) \in \E$ if and only if $|x_1-\overline{x}_2x_3|+|x_2-\overline{x}_1x_3| < 1-|x_3|^2$. Thus, $( s\slash 2, s \slash 2, p) \in \E$ if $|s-\overline{s}p|<1-|p|^2$. By part-$(2)$ of Theorem \ref{thmG_2}, the latter condition holds since $(s, p) \in \Gg$ and so, $(s\slash 2, s \slash 2, p) \in \E$. Let $(x_1, x_2, x_3)=(s \slash 2, s \slash 2, p)$. We have by Corollary 2.8 in \cite{Biswas-Pal} that  the supremum of 
		\[
		\bigg|\displaystyle\frac{\sqrt{(1-|z_1|^2)(1-|z_2|^2)}}{1-x_1z_1-x_2z_2+x_3z_1z_2}\bigg|\] 
		over $\D^2$ is attained uniquely at a point 
		$(\xi_1, \xi_2)\in \D^2$ with $\xi_1=\xi_2$. Therefore, we have  		
		\begin{equation*}
			\begin{split}
				\sup_{z_1, z_2 \in \mathbb{D}}\bigg|\frac{a\sqrt{(1-|z_1|^2)(1-|z_2|^2)}}{1-x_1z_1-x_2z_2+x_3z_1z_2}\bigg|
				= \bigg|\frac{a\sqrt{(1-|\xi_1|^2)(1-|\xi_2|^2)}}{1-x_1\xi_1-x_2\xi_2+x_3\xi_1\xi_2}\bigg|
				&=	\bigg|\frac{a(1-|\xi_1|^2)}{1-s\xi_1+p\xi_1^2}\bigg|\\
				& \leq \sup_{z \in \D} \bigg|\frac{a(1-|z|^2)}{1-sz+pz^2}\bigg|\\
				&<1,
			\end{split}
		\end{equation*}	
		where the last inequality follows from Theorem \ref{pentablock}. Consequently, $(a, s\slash 2, s\slash 2, p) \in \Q$ and so, $e_\Pe(\mathbb{P}) \subset \Q$. By continuity arguments, $(a, s\slash 2, s\slash 2, p) \in \CQ$ for every $(a, s, p) \in \Pbar$ and so, $e_\Pe(\overline{\mathbb{P}}) \subseteq \CQ$. We show that $e_\Pe(\partial \Pe) \subseteq \partial \Q$. Let $(a, s, p) \in \partial \Pe$.  We have proved above that $(a, s\slash 2, s\slash 2, p) \in \CQ$. It follows from Theorem \ref{pentablock} that at least one of the following holds:
		\[
		(s, p) \in \partial \Gg \quad \text{or} \quad (s, p) \in \Gg, \ \ \sup_{z \in \D} \left|\frac{a(1-|z|^2)}{1-sz+pz^2} \right|=1.
		\]
		Let $(s, p) \in \partial \Gg$. We have by part-$(3)$ of Theorem \ref{thmG_2} that $|s|^2+2|s-\overline{s}p|+|s^2-4p|=4$. It follows from part-$(3)$ of Theorem \ref{tetrablock} that $(s\slash 2, s\slash 2, p) \in \partial \E$. By definition of $\Q$, $(a, s\slash 2, s\slash 2, p) \notin \Q$. Now, suppose $(s, p) \in \Gg$ and
		$\displaystyle
		\sup_{z \in \D} \left|\frac{a(1-|z|^2)}{1-sz+pz^2} \right|=1.
		$
		By part-$(3)$ of Theorem \ref{thmG_2}, we have that $|s|^2+2|s-\overline{s}p|+|s^2-4p|<4$. It follows from part-$(2)$ of Theorem \ref{tetrablock} that $(s\slash 2, s\slash 2, p) \in \E$. Moreover, we have
		\[
		1=\sup_{z \in \D} \left|\frac{a(1-|z|^2)}{1-sz+pz^2} \right| \leq 	\sup_{z_1, z_2 \in \mathbb{D}}\bigg|\frac{a\sqrt{(1-|z_1|^2)(1-|z_2|^2)}}{1-(s\slash 2)z_1-(s\slash 2)z_2+pz_1z_2}\bigg| \leq 1,
		\]
		where the last inequality follows from \eqref{eqn5.3}. Hence, 
		\[
		\sup_{z_1, z_2 \in \mathbb{D}}\bigg|\frac{a\sqrt{(1-|z_1|^2)(1-|z_2|^2)}}{1-(s\slash 2)z_1-(s\slash 2)z_2+pz_1z_2}\bigg|=1
		\]
		and so, $(a, s\slash 2, s\slash 2, p) \notin \Q$. In either case, $(a, s\slash 2, s\slash 2, p) \in \partial \Q$. Hence, $e_\Pe$ is a proper map as it extends continuously to the closures and preserves the topological boundaries. Next, suppose $(a, s, p) \in b\Pe$. By Theorem \ref{thmbP}, $(s, p) \in b\Gamma_2$ and $|a|^2+|s|^2\slash 4=1$. It follows from Theorems \ref{thm6.1} and \ref{thm_bGam} that $(s\slash 2, s\slash 2, p) \in b\E$. Thus, Theorem \ref{thmbQ} implies that the point $(a, s\slash 2, s\slash 2, p) \in b\Q$ and so, $e_\Pe(b\mathbb{P}) \subseteq b\Q$. It only remains to show that $\Pe$ is an analytic retract of $\Q$. To see this, consider the map $\psi_{\Pe}: \Q \to \mathbb{P}$ defined by $\psi_{\Pe}(a, x_1, x_2, x_3)=(a, x_1+x_2, x_3)$. By Lemma \ref{lem_H_P}, $\psi_{\Pe}$ is a well-defined holomorphic map such that $\psi_{\Pe}\circ e_{\Pe}$ is the identity map on $\Pe$.
	\end{proof}

	The above theorem presents a description of $\Pe$ in terms of the hexablock. Our next result presents an alternative characterization of $\Pe$ in terms of the normed hexablock $\QN$. Recall that 
	\[
	\QN=\{\pi(A) : A \in M_2(\C), \ \|A\|<1\}, \ \text{where $\pi(A)=(a_{21}, a_{11}, a_{22}, \det(A))$ for $A=(a_{ij}) \in M_2(\C)$}.
	\]
	Since $\QN \subseteq \Q$, it follows from Theorem \ref{thmP_in_H} that $(a, x_1+x_2, x_3)\in \mathbb{P}$ for every $(a, x_1, x_2, x_3) \in \QN$. We prove its converse using the ideas and computations of the proof of Proposition 2.3 in \cite{AglerIV}. 
	
	\begin{prop}\label{prop4.7}
		Let $a \in \C$ and let $(s, p)=(\lambda_1+\lambda_2, \lambda_1\lambda_2) \in \Gg$. Then $(a, s, p) \in \mathbb{P}$ if and only if either of the following holds:
		\[
		\left(a, \frac{s}{2}, \frac{s}{2}, p\right) \in \QN \quad  \text{or} \quad |a| \leq |w|, \ \ \left(a, \frac{s}{2}+\zeta_0\sqrt{|w|^2-|a|^2}, \frac{s}{2}-\zeta_0\sqrt{|w|^2-|a|^2}, p\right) \in \QN,
		\] 
		where $w=(\lambda_1-\lambda_2)\slash 2$ and $\zeta_0 \in \T$ such that $\lambda_1-\lambda_2=\zeta_0|\lambda_1-\lambda_2|$.  
	\end{prop}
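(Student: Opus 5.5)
The ``if'' direction is immediate from what has been done. If either point $(a, s/2, s/2, p)$ or $(a, s/2+\zeta_0\sqrt{|w|^2-|a|^2}, s/2-\zeta_0\sqrt{|w|^2-|a|^2}, p)$ lies in $\QN$, then it lies in $\Q$, since $\QN\subseteq\Q$. In both cases the sum of the second and third coordinates is $s$ and the last coordinate is $p$. Lemma \ref{lem_H_P} (equivalently, the retraction $\psi_\Pe$ of Theorem \ref{thmP_in_H}) then gives $(a,s,p)\in\Pe$.

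For the ``only if'' direction I would argue with matrices, following the computations in the proof of Proposition 2.3 in \cite{AglerIV}. Let $(a,s,p)\in\Pe$ with $(s,p)=(\lambda_1+\lambda_2,\lambda_1\lambda_2)\in\Gg$. By part (3) of Theorem \ref{pentablock},
\[
|a|<\tfrac12|1-\overline{\lambda}_2\lambda_1|+\tfrac12\sqrt{(1-|\lambda_1|^2)(1-|\lambda_2|^2)}.
\]
We look for $A=\begin{pmatrix} x_1 & b\\ a & x_2\end{pmatrix}$ with $\|A\|<1$, $x_1+x_2=s$, and $x_1x_2-ab=p$. Then $\pi(A)=(a,x_1,x_2,p)\in\QN$.

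The natural split is according to whether $|a|\le |w|$ or $|a|>|w|$, where $w=(\lambda_1-\lambda_2)/2$.

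\emph{Case $|a|>|w|$.} Take $x_1=x_2=s/2$. The determinant condition forces $ab=s^2/4-p=w^2$, so $b=w^2/a$ with $|b|<|a|$. I would then check $\|A\|<1$ through the criterion $1-\|A\|_{HS}^2+|\det A|^2>0$ together with $\|A\|_{HS}^2<2$, i.e.
\[
1-\tfrac{|s|^2}{2}-|a|^2-\tfrac{|w|^4}{|a|^2}+|p|^2>0 .
\]
Viewed as a function of $t=|a|^2$, this is concave, and at the endpoint allowed by the bound on $|a|$ it should reduce to an identity. AglerIV's computation shows that the optimal choice of $|b|$ subject to $|ab|=|w|^2$ makes the bound on $|a|$ exactly the condition for $\|A\|<1$, provided $|b|\le |a|$ is permitted. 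If that fails, one instead uses a symmetric $|a|=|b|$ matrix, which is the second case.

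\emph{Case $|a|\le |w|$.} Take $b=a$ up to a phase, and put $x_{1,2}=s/2\pm\zeta_0\sqrt{|w|^2-|a|^2}$. Then
\[
x_1x_2=\tfrac{s^2}{4}-\zeta_0^2(|w|^2-|a|^2)=p+w^2-\zeta_0^2|w|^2+\zeta_0^2|a|^2 .
\]
Since $w=\zeta_0|w|$, this equals $p+\zeta_0^2|a|^2$. So we need $ab=\zeta_0^2|a|^2$, i.e.\ $b=\zeta_0^2\overline{a}$, which gives $|b|=|a|$. It remains to verify $\|A\|<1$. After conjugating by a diagonal unitary the matrix becomes essentially normal-like, and its norm can be computed from the eigenvalues $\lambda_1,\lambda_2$ and $|a|$. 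The strict inequality should again be equivalent to the bound from Theorem \ref{pentablock}(3), via the identity
\[
|1-\overline{\lambda}_2\lambda_1|^2-(1-|\lambda_1|^2)(1-|\lambda_2|^2)=|\lambda_1-\lambda_2|^2 .
\]

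The main obstacle is the norm computation in each case: showing that $1-\operatorname{tr}(A^*A)+|\det A|^2>0$ (together with $\operatorname{tr}(A^*A)<2$) reduces exactly to the inequality in part (3) of Theorem \ref{pentablock}. I would first try to write $\operatorname{tr}(A^*A)$ in terms of $|\lambda_1|^2+|\lambda_2|^2$, $|a|^2$ and $|b|^2$, then solve the resulting quadratic in $|a|$ whose root is $\tfrac12|1-\overline{\lambda}_2\lambda_1|+\tfrac12\sqrt{(1-|\lambda_1|^2)(1-|\lambda_2|^2)}$.
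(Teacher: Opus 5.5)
Your plan is the paper's proof. It uses the same two matrices, the same test $\det(I-A^*A)>0$ (together with the trace) for $\|A\|<1$, and the same converse via $\QN\subseteq\Q$ and Lemma \ref{lem_H_P}. Splitting at $|w|$ rather than at $c_-$, where $c_\pm=\tfrac12|1-\overline{\lambda}_2\lambda_1|\pm\tfrac12\sqrt{(1-|\lambda_1|^2)(1-|\lambda_2|^2)}$, is harmless. Your identity gives $c_-c_+=|w|^2$, hence $c_-\le|w|$, so the factorization $|a|^2\det(I-A^*A)=-(|a|^2-c_-^2)(|a|^2-c_+^2)$ is positive for $|w|<|a|<c_+$. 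This factorization, not concavity plus the right endpoint, is what closes your first case.

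One correction: in your second case the bound from Theorem \ref{pentablock}(3) is not used at all. There $\det(I-A^*A)=(1-|\lambda_1|^2)(1-|\lambda_2|^2)$ and $\operatorname{tr}(I-A^*A)=2-|\lambda_1|^2-|\lambda_2|^2$ for every $|a|\le|w|$; that matrix is in fact normal with eigenvalues $\lambda_1,\lambda_2$.
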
 
	
	\begin{proof}
		It follows from Theorem \ref{thmG_2} that $|\lambda_1|, |\lambda_2|, |p|<1$. Let $(a, s, p) \in \mathbb{P}$ and define $c_{\pm}$ by
		\[
		c_{\pm}=\frac{1}{2}|1-\overline{\lambda}_2\lambda_1| \pm \frac{1}{2}\sqrt{(1-|\lambda_1|^2)(1-|\lambda_2|^2)}.
		\]
		Evidently, $0\leq c_{-}<c_{+}$. We have by part-$(5)$ of Theorem \ref{pentablock} that $|a|<c_+$. Then either $c_-<|a|<c_+$ or $|a| \leq c_-$. Let $c_{-}<|a|<c_+$. Consider the matrix given by
		\[
		A=\begin{pmatrix}
			s\slash 2 & w^2 \slash a\\
			a & s \slash 2
		\end{pmatrix} \quad \text{where} \quad w=(\lambda_1-\lambda_2)\slash 2.
		\]
		Then $w^2=s^2\slash 4-p$. A simple calculation gives that $
		|a|^2\det(I-A^*A)=-(|a|^2-c_-^2)(|a|^2-c_+^2)$ and so, $\det(I-A^*A)>0$. It follows from Lemma 4.4 in \cite{Biswas-Pal} that $\|A\|<1$ and thus, the point $\pi(A)=(a,  s \slash 2, s \slash 2, p) \in \QN$. We now assume that $|a| \leq c_-$. Again by simple computations, we have that $|w|^2=c_-c_+$ and so, $c_- \leq |w| \leq c_+$. Hence, $|a| \leq |w|$. Choose $\zeta_0 \in \mathbb{T}$ such that $\lambda_1-\lambda_2=\zeta_0|\lambda_1-\lambda_2|$ and define 
		\[
		A= \begin{pmatrix}
			\frac{s}{2}+\zeta_0\sqrt{|w|^2-|a|^2} & \zeta_0^2\overline{a}\\
			a & \dfrac{s}{2}-\zeta_0\sqrt{|w|^2-|a|^2}	
		\end{pmatrix}.
		\]
		A routine computation shows that $det(I-A^*A)=(1-|\lambda_1|^2)(1-|\lambda_2|^2)>0$ and $tr(I-A^*A)=2-(|\lambda_1|^2+|\lambda_2|^2)>0$. Thus, $I-A^*A>0$ and so, $\|A\|<1$. Hence, 
		\[
		\pi(A)= \left(a, \ \frac{s}{2}+\zeta_0\sqrt{|w|^2-|a|^2}, \ \dfrac{s}{2}-\zeta_0\sqrt{|w|^2-|a|^2}, \ p\right) \in \QN.
		\]
		The converse follows from Lemma \ref{lem_H_P} and the fact that $\QN \subset \Q$, which completes the proof.
	\end{proof} 
	
	Having established the relationships between $\Q$ and the domains $\E$, $\Gg$, and $\Pe$ arising from the $\mu$-synthesis problem, we now turn to the classical domains $\D^2$ and $\mathbb{B}_2$.
	
	\begin{thm}\label{thm_bidisc}$($Connection of hexablock with bidisc$)$
		A point $(a, x_3) \in \C^2$ belongs to $\D^2$ if and only if $(a, 0, 0, x_3) \in \Q$.  	In particular, the map $e_{\D^2}:\D^2 \to \Q$ given by $e_{\D^2}(a, x_3)=(a, 0, 0, x_3)$ is a holomorphic embedding, and $\D^2$ is an analytic retract of $\Q$. 
	\end{thm}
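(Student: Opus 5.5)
The plan is to reduce membership of $(a,0,0,x_3)$ in $\Q$ to two separate conditions, one on $x_3$ and one on $a$, and then compute a one-variable supremum. By definition, $(a,0,0,x_3)\in\Q$ if and only if $(0,0,x_3)\in\E$ and
\[
\sup_{z_1,z_2\in\D}\frac{|a|\sqrt{(1-|z_1|^2)(1-|z_2|^2)}}{|1+x_3z_1z_2|}<1 .
\]

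First, for the tetrablock fibre I will use part (4) of Theorem \ref{tetrablock} with $x_1=x_2=0$. The condition becomes $0<1-|x_3|^2$, so $(0,0,x_3)\in\E$ exactly when $|x_3|<1$.

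Next, assuming $|x_3|<1$, I claim the supremum equals $|a|$, which gives the condition $|a|<1$.
\begin{itemize}
\item[(i)] Taking $z_1=z_2=0$ shows the supremum is at least $|a|$.
\item[(ii)] For the upper bound, set $r_i=|z_i|$. Then $|1+x_3z_1z_2|\ge 1-r_1r_2$.
\item[(iii)] By AM--GM, $\sqrt{(1-r_1^2)(1-r_2^2)}\le 1-r_1r_2$. This follows from $(1-r_1r_2)^2-(1-r_1^2)(1-r_2^2)=(r_1-r_2)^2\ge0$.
\item[(iv)] Combining (ii) and (iii), each quotient is at most $|a|$.
\end{itemize}
Hence $(a,0,0,x_3)\in\Q$ if and only if $|a|<1$ and $|x_3|<1$, that is, $(a,x_3)\in\D^2$.

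For the second half of the statement, the map $e_{\D^2}$ is holomorphic and injective. Its inverse on the image is the restriction of a coordinate projection, so $e_{\D^2}$ is an embedding. For the retraction I will take $\psi_{\D^2}(a,x_1,x_2,x_3)=(a,x_3)$. I need to show this maps $\Q$ into $\D^2$.
\begin{itemize}
\item Since $(x_1,x_2,x_3)\in\E$, Theorem \ref{tetrablock}(4) gives $|x_3|<1$.
\item Evaluating the defining supremum at $z_1=z_2=0$ gives $|a|<1$.
\end{itemize}
Then $\psi_{\D^2}\circ e_{\D^2}=\mathrm{id}_{\D^2}$, so $\D^2$ is an analytic retract of $\Q$.

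No step is a genuine obstacle. The only nontrivial point is the AM--GM bound in (iii) controlling the supremum, and it is elementary.
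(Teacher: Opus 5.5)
Your argument is correct, and its skeleton matches the paper's. You split membership of $(a,0,0,x_3)$ into the tetrablock fibre condition plus the supremum condition, then retract via $(a,x_1,x_2,x_3)\mapsto(a,x_3)$.

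The difference is in how the supremum is evaluated. The paper cites Proposition 2.1 of \cite{Biswas-Pal}, which gives explicit formulas for the unique maximizer of $\sqrt{(1-|z_1|^2)(1-|z_2|^2)}\,/\,|1-x_1z_1-x_2z_2+x_3z_1z_2|$. These formulas are written in terms of $\beta_1=(x_1-\overline{x}_2x_3)/(1-|x_3|^2)$ and $\beta_2=(x_2-\overline{x}_1x_3)/(1-|x_3|^2)$. The paper then notes that $x_1=x_2=0$ forces $\beta_1=\beta_2=0$, so the maximizer is the origin. You instead bound every quotient directly by $|a|$, using $|1+x_3z_1z_2|\ge 1-|z_1||z_2|$ and $\sqrt{(1-|z_1|^2)(1-|z_2|^2)}\le 1-|z_1||z_2|$.

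There is one cosmetic difference as well. To get $|x_3|<1$ on $\Q$, the paper uses the matrix realization $x_3=\det A$ with $\|A\|<1$ from \eqref{eqn_domains}, whereas you use Theorem \ref{tetrablock}(4).

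The paper's route keeps this step inside the extremal-point machinery it uses elsewhere, e.g.\ in Theorem \ref{thmP_in_H}, but it depends on an external result. Yours is self-contained. Because your estimate only needs $|x_3|\le 1$, it also gives the closed version $(a,0,0,x_3)\in\CQ\iff(a,x_3)\in\DC^2$ of Theorem \ref{thm_final_embed}(1) directly from \eqref{eqn5.3}. The paper's maximizer formulas are unavailable there, since $1-|x_3|^2$ may vanish.
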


	\begin{proof}
		Given a point $(a, x_1, x_2, x_3) \in \Q$, we have that $(x_1, x_2, x_3)\in \E$ and for every $z_1, z_2 \in \D$, 
		\[
		\left|\frac{a\sqrt{(1-|z_1|^2)(1-|z_2|^2)}}{1-x_1z_1-x_2z_2+x_3z_1z_2}\right|
		<1.
		\]
		Putting $z_1=z_2=0$ in the above inequality, it follows that $|a|<1$. Since $(x_1, x_2, x_3) \in \E$, we have by \eqref{eqn_domains} that there exists $A=(a_{ij})_{i, j=1}^2 \in M_2(\C)$ with $\|A\|<1$ such that $(x_1, x_2, x_3)=(a_{11}, a_{22}, \det(A))$ and so, $|x_3|<1$. Thus, $(a, x_3) \in \D^2$ for every $(a, x_1, x_2, x_3) \in \Q$. Conversely, suppose $(a, x_3) \in \D^2$ and $x_1=x_2=0$. For $z_1, z_2 \in \DC$, note that
		$
		1-x_1z_1-x_2z_2+x_3z_1z_2=1+x_3z_1z_2,
		$	
		which is always non-zero and thus, $(x_1, x_2, x_3) \in \E$. Let $x=(x_1, x_2, x_3)$. We have by Proposition 2.1 in \cite{Biswas-Pal} that the supremum of $\bigg|\displaystyle\frac{\sqrt{(1-|z_1|^2)(1-|z_2|^2)}}{1-x_1z_1-x_2z_2+x_3z_1z_2}\bigg|$ over $(z_1, z_2)$ in $\DC^2$ is attained uniquely at a point in $\D^2$ given by
		\begin{align*}
			z_1(x)&=\frac{2\overline{\be}_1}{1+|\be_1|^2-|\be_2|^2+\sqrt{(1+|\be_1|^2-|\be_2|^2)^2-4|\be_1|^2}},\\	z_2(x)&=\frac{2\overline{\be}_2}{1+|\be_2|^2-|\be_1|^2+\sqrt{(1+|\be_2|^2-|\be_1|^2)^2-4|\be_2|^2}},
		\end{align*}
		where $\displaystyle 
		\be_1=\frac{x_1-\overline{x}_2x_3}{1-|x_3|^2}$ and $\displaystyle \be_2=\frac{x_2-\overline{x}_1x_3}{1-|x_3|^2}$. Since $x_1=x_2=0$, it is easy to see that 
		$z_1(x)=z_2(x)=0$. Then for every $z_1, z_2 \in \D^2$,
		\[
		\left|\frac{a\sqrt{(1-|z_1|^2)(1-|z_2|^2)}}{1-x_1z_1-x_2z_2+x_3z_1z_2}\right|=|a|<1.
		\]  
		Consequently, $(a, 0, 0, x_3) \in \Q$ and the map $e_{\D^2}: \D^2 \to \Q$ defined as $e_{\D^2}(a, x_3)=(a, 0, 0, x_3)$ is a holomorphic embedding. Consider the map $\psi_{\D^2}: \Q \to \D^2$ given by $\psi_{\D^2}(a, x_1, x_2, x_3)=(a, x_3)$. By above discussion, $\psi_{\D^2}$ is a well-defined holomorphic map. It is evident that $\psi_{\D^2}\circ e_{\D^2}$ is the identity map on $\D^2$. Therefore, $\D^2$ is an analytic retract of $\Q$. The proof is now complete. 
	\end{proof}	
	
	Next, we explore the connection of $\Q$ with biball, which is a bounded domain in $\C^2$ given by
	\[
	\mathbb{B}_2=\{(z_1, z_2) \in \C^2: |z_1|^2+|z_2|^2<1\}.
	\]
	The following lemma from \cite{Biswas-Pal} establishes a connection between the hexablock and biball.
	
	\begin{lem}[\cite{Biswas-Pal}, Lemma 5.5]\label{lem6.5}
		Let $(a, x_1, x_2, x_3) \in \C^4$. If $(a, x_1, x_2, x_3) \in \CQ$, then $|a|^2+|x_1|^2 \leq 1$. Moreover, if $(a, x_1, x_2, x_3) \in \Q$, then $|a|^2+|x_1|^2<1$.
	\end{lem}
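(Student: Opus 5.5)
Since $\Q\subseteq\CQ$, the plan is to prove the closed-set inequality first and then upgrade it to a strict inequality on $\Q$ using openness. I will lean on the fibered description \eqref{eqn5.3} of $\CQ$ together with a well-chosen test point $(z_1,z_2)$.

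Let $(a,x_1,x_2,x_3)\in\CQ$, so $(x_1,x_2,x_3)\in\Ebar$ and
\[
|a|\sqrt{(1-|z_1|^2)(1-|z_2|^2)}\le |1-x_1z_1-x_2z_2+x_3z_1z_2|
\]
for all $z_1,z_2\in\D$. First I will handle the case $|x_1|<1$. Fix $z_2=0$ and let $z_1=t\,\overline{x}_1/|x_1|$ with $0\le t<1$ (any $z_1=t$ if $x_1=0$). The inequality becomes
\[
|a|^2(1-t^2)\le (1-t|x_1|)^2.
\]
Now optimize over $t$. The function $t\mapsto (1-t|x_1|)^2/(1-t^2)$ on $[0,1)$ attains its minimum $1-|x_1|^2$ at $t=|x_1|$. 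Substituting gives $|a|^2(1-|x_1|^2)\le(1-|x_1|^2)^2$, hence $|a|^2\le 1-|x_1|^2$ after dividing by $1-|x_1|^2>0$. This is exactly the closed statement. If instead $|x_1|=1$, then Theorem \ref{tetrablock}(2) gives $|x_1|\le 1$. Taking $z_1=t\overline{x}_1$, $z_2=0$, the inequality reads $|a|^2(1-t^2)\le(1-t)^2$, i.e. $|a|^2\le (1-t)/(1+t)\to 0$ as $t\to1$. So $a=0$ and the bound holds.

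For the open statement, note that $\Q$ is open and contained in $\CQ$. Suppose $(a,x_1,x_2,x_3)\in\Q$ with $|a|^2+|x_1|^2=1$. Then the point $(ra,rx_1,x_2,x_3)$ with $r$ slightly larger than $1$ still lies in $\Q$. Applying the closed bound to it gives $r^2(|a|^2+|x_1|^2)\le1$, which contradicts $r>1$. Hence $|a|^2+|x_1|^2<1$ on $\Q$.

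Alternatively, one could argue directly on $\Q$: there $|x_1|<1$ by Theorem \ref{tetrablock}(3), and the supremum condition is strict, so plugging in $t=|x_1|$ gives strict inequality at once. The only real obstacle is the single-variable minimization, which amounts to checking $(1-t|x_1|)^2-(1-|x_1|^2)(1-t^2)=(t-|x_1|)^2\ge0$.
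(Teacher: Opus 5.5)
Your proof is correct. This paper does not prove the lemma itself (it is quoted from \cite{Biswas-Pal}), so there is no in-paper argument to compare it with. What you give is a complete, self-contained derivation from \eqref{eqn5.3} and Theorem \ref{tetrablock}.

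The key step is the single test point $(z_1,z_2)=(\overline{x}_1,0)$, where $|\psi_{\overline{x}_1,0}(a,x_1,x_2,x_3)|=|a|/\sqrt{1-|x_1|^2}$ whenever $|x_1|<1$. When $|x_1|=1$, your limiting choice $z_1=t\overline{x}_1$ with $t\to1$ correctly forces $a=0$.

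Two small remarks.

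\begin{itemize}
\item In the boundary case, the role of Theorem \ref{tetrablock}(2) is to rule out $|x_1|>1$, which makes your two cases exhaustive. As written (``if $|x_1|=1$, then \dots gives $|x_1|\le 1$'') the logic is stated backwards.
\item For the strict inequality, your ``alternative'' argument is the better one. On $\Q$ you have $|x_1|<1$ and a strict supremum, so evaluating at $(\overline{x}_1,0)$ gives $|a|^2<1-|x_1|^2$ at once. The scaling argument additionally needs openness of $\Q$. That fact is true: for $x\in\E$ the denominator is bounded away from $0$ on $\overline{\D}^2$, so the supremum is continuous in $(a,x)$. But it is an extra fact you would have to justify or quote.
\end{itemize}
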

	
	Capitalizing on the above lemma, we show that $\mathbb{B}_2$ is in fact an analytic retract of $\Q$. To do so, let us recall from \cite{Biswas-Pal} that the normed hexablock $\Q_N$ is the image of the norm unit ball in $M_2(\C)$ under the map $\pi: M_2(\C) \to \C^4$ given by $\pi(A)=(a_{21}, a_{11}, a_{22}, \det(A))$ for $A=(a_{ij})_{i, j=1}^2 \in M_2(\C)$. As discussed in the `Introduction', we have that $\QN$ is contained in $\Q$.  
	
	\begin{thm} \label{lem_biball_hexa}$($Connection of hexablock with biball$)$
		Let $(a, x) \in \C^2$. Then $(a,x) \in \mathbb{B}_2$ if and only if $(a, x, 0, 0) \in \Q$. In particular, the map $e_{\mathbb{B}_2}: \mathbb{B}_2 \to \Q$ defined by $e_{\mathbb{B}_2}(a, x)=(a, x, 0, 0)$ is a holomorphic embedding, and $\mathbb{B}_2$ is an analytic retract of $\Q$.
	\end{thm}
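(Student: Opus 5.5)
The proof has two directions, handled by different tools. For necessity we use Lemma \ref{lem6.5}; for sufficiency we realize the point through the normed hexablock $\QN \subseteq \Q$. The retraction $\psi_{\mathbb{B}_2}(a, x_1, x_2, x_3)=(a, x_1)$ then finishes the argument.

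\emph{Necessity.} If $(a, x, 0, 0) \in \Q$, then Lemma \ref{lem6.5} gives $|a|^2+|x|^2<1$ at once, so $(a,x) \in \mathbb{B}_2$. The same lemma shows more generally that $(a, x_1) \in \mathbb{B}_2$ for every $(a, x_1, x_2, x_3) \in \Q$. Hence $\psi_{\mathbb{B}_2}$ is a well-defined holomorphic map from $\Q$ into $\mathbb{B}_2$.

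\emph{Sufficiency.} Let $(a, x) \in \mathbb{B}_2$ and consider the matrix
\[
A=\begin{pmatrix} x & 0\\ a & 0\end{pmatrix}.
\]
Its entries give $\pi(A)=(a_{21}, a_{11}, a_{22}, \det A)=(a, x, 0, 0)$. Since $A$ has rank at most one, $\|A\|$ equals the Euclidean norm of its first column, namely $\sqrt{|x|^2+|a|^2}<1$. Therefore $(a,x,0,0) \in \QN \subseteq \Q$, where the inclusion is the one recalled before the statement.

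As a cross-check that avoids $\QN$, one can verify the definition of $\Q$ directly. First, $(x, 0, 0) \in \E$ because $1-xz_1 \neq 0$ on $\DC^2$. Second, $\sup_{z_1, z_2 \in \D} |a|\sqrt{(1-|z_1|^2)(1-|z_2|^2)}/|1-xz_1|$ equals $\sup_{z_1 \in \D} |a|\sqrt{1-|z_1|^2}/|1-xz_1|$, taking $z_2=0$. An elementary Cauchy--Schwarz computation shows this one-variable supremum is $|a|/\sqrt{1-|x|^2}$, attained at $z_1=\overline{x}$, and this is less than $1$ exactly when $|a|^2+|x|^2<1$. This computation also yields both directions directly.

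\emph{Conclusion.} The map $e_{\mathbb{B}_2}(a,x)=(a,x,0,0)$ is linear and injective, so it is a holomorphic embedding onto its image. Moreover $\psi_{\mathbb{B}_2}\circ e_{\mathbb{B}_2}=\text{id}_{\mathbb{B}_2}$, so $\mathbb{B}_2$ is an analytic retract of $\Q$. There is no real obstacle here; the only step requiring any care is the norm of the rank-one matrix in the sufficiency direction, or equivalently the one-variable supremum in the direct check.
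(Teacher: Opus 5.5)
Your proposal is correct and follows the paper's proof essentially verbatim. Necessity comes from Lemma \ref{lem6.5}; sufficiency uses the matrix $\begin{pmatrix} x & 0\\ a & 0\end{pmatrix}$, whose image under $\pi$ lies in $\QN\subseteq\Q$; and the retraction is the projection $(a,x_1,x_2,x_3)\mapsto(a,x_1)$. Your direct cross-check is also valid: the identity $|1-xz|^2-(1-|x|^2)(1-|z|^2)=|x-\overline{z}|^2\geq 0$ gives the one-variable supremum $|a|/\sqrt{1-|x|^2}$, which yields both directions without appealing to $\QN$ or Lemma \ref{lem6.5}.
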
 
	
	\begin{proof}
		By Lemma \ref{lem6.5}, if $(a, x, 0, 0) \in \Q$, then $(a, x) \in \mathbb{B}_2$. Conversely, let $(a, x) \in \mathbb{B}_2$. Define 
		\[ 
		B=\begin{pmatrix}
			x & 0\\
			a & 0
		\end{pmatrix} \quad \text{and so,} \quad B^*B=\begin{pmatrix}
			|a|^2+|x|^2 & 0\\
			0 & 0
		\end{pmatrix}.
		\] 
		Thus, $\|B\|^2=\|B^*B\|=|a|^2+|x|^2$.  If $(a, x) \in \mathbb{B}_2$, then $\pi(B)=(a, x, 0, 0) \in \QN$. Since $\QN \subset \Q$, it follows that $(a, x, 0, 0) \in \Q$ and so, the map $e_{\mathbb{B}_2}$ is a holomorphic embedding. Consider the map $\psi_{\mathbb{B}_2}: \Q \to \mathbb{B}_2$ given by  $\psi_{\mathbb{B}_2}(a, x_1, x_2, x_3)=(a, x_1)$. Clearly, $\psi_{\mathbb{B}_2}$ is a well-defined holomorphic map such that  $\psi_{\mathbb{B}_2}\circ e_{\mathbb{B}_2}$ is the identity map on $\mathbb{B}_2$. Consequently, $\mathbb{B}_2$ is an analytic retract of $\Q$.
	\end{proof}

	It is evident from the above lemma that $(a,x) \in \overline{\mathbb{B}}_2$ if and only if $(a, x, 0, 0) \in \CQ$. This gives a continuous injective map from $\overline{\mathbb{B}}_2$ into $\CQ$. Our next result presents another way to construct a continuous injective map from $\overline{\mathbb{B}}_2$ into $\CQ$.
	
	\begin{lem}\label{lem_B2_HexaII}
		Let $(a, x) \in \C^2$. Then $(a,x) \in \overline{\mathbb{B}}_2$ if and only if $(a, x, \overline{x}, 1) \in \CQ$.
	\end{lem}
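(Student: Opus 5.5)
The plan is to prove the two implications separately. The converse direction is immediate from Lemma \ref{lem6.5}, and the forward direction reduces, via monotonicity in $|a|$, to the case of the unit sphere, where a unitary matrix does the work.

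\emph{Converse.} Suppose $(a,x,\overline{x},1)\in\CQ$. Lemma \ref{lem6.5}, applied with $x_1=x$, gives $|a|^2+|x|^2\le 1$. Hence $(a,x)\in\overline{\mathbb B}_2$, and nothing more is needed.

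\emph{Forward direction, sphere case.} Suppose first that $|a|^2+|x|^2=1$. I will use the matrix
\[
U=\begin{pmatrix} x & -\overline{a}\\ a & \overline{x}\end{pmatrix}.
\]
Its columns are orthonormal, so $U$ is unitary, and $\det U=|x|^2+|a|^2=1$. Therefore $\pi(U)=(a,x,\overline{x},1)$. For $0<r<1$ we have $\|rU\|=r<1$, so $\pi(rU)\in\QN\subseteq\Q$. Since $\pi$ is continuous, letting $r\to1$ gives $\pi(U)\in\CQ$.

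\emph{Forward direction, general case.} Now let $|a|^2+|x|^2\le1$ be arbitrary, and choose $a'$ with $|a'|^2=1-|x|^2$, so that $|a|\le|a'|$. By the sphere case, $(a',x,\overline{x},1)\in\CQ$. I then read off from the description \eqref{eqn5.3} of $\CQ$ what this gives:
\begin{itemize}
\item[(i)] $(x,\overline{x},1)\in\Ebar$;
\item[(ii)] for every $z_1,z_2\in\D$,
\[
\left|\frac{a'\sqrt{(1-|z_1|^2)(1-|z_2|^2)}}{1-xz_1-\overline{x}z_2+z_1z_2}\right|\le 1 .
\]
\end{itemize}
The expression in (ii) depends on the first coordinate only through its modulus, linearly. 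Replacing $a'$ by $a$ with $|a|\le|a'|$ can only decrease it. So (i) and (ii) hold with $a$ in place of $a'$, and \eqref{eqn5.3} yields $(a,x,\overline{x},1)\in\CQ$.

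\emph{Expected obstacle.} The delicate point is reading \eqref{eqn5.3} correctly when $(x,\overline{x},1)$ lies on the boundary of $\E$. In that case the denominator $1-xz_1-\overline{x}z_2+z_1z_2$ might vanish inside $\D^2$, which would make (ii) meaningless. I expect this to be handled by a direct check for $|x|<1$. Setting the denominator to zero and solving gives $z_2=(1-xz_1)/(\overline{x}-z_1)$, and
\[
|1-xz_1|^2-|\overline{x}-z_1|^2=(1-|x|^2)(1-|z_1|^2)>0 ,
\]
so this $z_2$ has modulus greater than $1$ and the denominator never vanishes on $\D^2$. When $|x|=1$, we have $a=a'=0$, and the point is $(0,x,\overline{x},1)$ with $(x,\overline{x},1)\in b\E\subseteq\Ebar$ by Theorem \ref{thm6.1}. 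This lies in $\CQ$ by the fibre remark preceding Theorem \ref{tetrablock}. In any case, the limiting argument of the sphere case already places the point in $\CQ$ without touching \eqref{eqn5.3}.
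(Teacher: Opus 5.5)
Your proof is correct, and it takes a genuinely different route from the paper in the forward direction. The converse via Lemma \ref{lem6.5} is the same as the paper's.

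\textbf{The paper's route.} It works over the open ball and invokes an explicit extremal computation (Theorem 2.12 of \cite{Biswas-Pal}). That result says that for $(x,\overline{x},1)\in b\E$ with $|x|<1$,
\[
\sup_{z_1,z_2\in\D}\left|\psi_{z_1,z_2}(a,x,\overline{x},1)\right|=\frac{|a|}{\sqrt{1-|x|^2}} .
\]
So \eqref{eqn5.3} gives membership whenever $|a|^2+|x|^2<1$. The closed ball is then reached by the radial approximation $(r_na,r_nx)$.

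\textbf{Your route.} You realize the sphere points as $\pi(U)$ for the unitary $U$, in the spirit of the matrix argument in Theorem \ref{lem_biball_hexa}. You then fill in the rest of each fibre using the fact that the inequality in \eqref{eqn5.3} is monotone in $|a|$. This needs only $\QN\subseteq\Q$ and the fibred description of $\CQ$, and avoids the supremum computation.

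The monotonicity step is genuinely necessary, not a convenience. If $\|A\|\le 1$ and $|\det A|=1$, then $A$ is unitary. Hence points $(a,x,\overline{x},1)$ with $|a|^2+|x|^2<1$ do not lie in $\CQN$, and no matrix argument alone can reach them.

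\textbf{What the paper's route buys.} The exact value of the supremum identifies the fibre of $\CQ$ over $(x,\overline{x},1)$, for $|x|<1$, as the closed disc of radius $\sqrt{1-|x|^2}$. That gives the converse in this range without appealing to Lemma \ref{lem6.5}.

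Your ``obstacle'' paragraph is correct, but it is not really needed. The denominator in \eqref{eqn5.3} does not vanish on $\D^2$ for any point of $\Ebar$; this is implicit in that description, and the paper uses it in the proof of Theorem \ref{thm10.14}.
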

	
	\begin{proof}
		Suppose $(a, x, \overline{x}, 1) \in \CQ$. 	We have by Lemma \ref{lem6.5} that $(a,x) \in \overline{\mathbb{B}}_2$. To see the converse, we first assume that $(a, x) \in \mathbb{B}_2$. Then $|x| \leq \sqrt{|a|^2+|x|^2}<1$ and by Theorem \ref{thm6.1}, $(x, \overline{x}, 1) \in b\E$. Let $(x_1, x_2, x_3)=(x, \overline{x}, 1)$, which belongs to $b\E$ with $|x_1|<1$. It follows from Theorem 2.12 in \cite{Biswas-Pal} that
		\[
		\sup_{z_1, z_2 \in \D} \bigg|a\frac{\sqrt{(1-|z_1|^2)(1-|z_2|^2)}}{1-x_1z_1-x_2z_2+x_3z_1z_2}\bigg|=\frac{|a|}{\sqrt{1-|x_1|^2}}=\frac{|a|}{\sqrt{1-|x|^2}}<1.
		\]
		Therefore, we have by \eqref{eqn5.3} that	 $(a, x, \overline{x}, 1) \in \CQ$. Now, suppose $(a, x) \in \overline{\mathbb{B}}_2$. Define $r_n=1-1\slash n$ for every $n \in \mathbb{N}$. Then $(r_n a,r_n x) \in \mathbb{B}_2$ for all $n \in \mathbb{N}$ and so, $(r_n a, r_n x, r_n \overline{x}, 1) \in \CQ$. Since $(r_n a, r_n x, r_n \overline{x}, 1)$ converges to $(a, x, \overline{x}, 1)$ as $n \to \infty$, we have that $(a, x, \overline{x}, 1) \in \CQ$, which completes the proof.
	\end{proof} 
	
	The relationships of $\Q$ with $\D^2$, $\mathbb{B}_2$, $\Gg$, $\E$ and $\Pe$ are summarized below. The result follows directly from Theorems \ref{thm_EQ}, \ref{thm_GQretract}, \ref{thmP_in_H}, \ref{lem_biball_hexa} and \ref{thm_bidisc}. The details are left to the reader.
	
	\begin{thm}\label{thm_final_embed}
		For $(a, s, p, x_1, x_2, x_3) \in \C^6$, the following hold:
		\begin{enumerate}
			\item[$(1)$] $(a, x_3) \in \D^2$ $($respectively, $\DC^2$$)$ if and only if $(a, 0, 0, x_3) \in \Q$ $($respectively, $\CQ$$)$; \vspace{0.1cm}
			
			\item[$(2)$] $(a, x_1) \in \mathbb{B}_2$ $($respectively, $\overline{\mathbb{B}}_2$$)$ if and only if $(a, x_1, 0, 0) \in \Q$ $($respectively, $\CQ$$)$; \vspace{0.1cm}
			
			\item[$(3)$] $(s, p) \in \Gg$ $($respectively, $\Gamma_2$$)$ if and only if $(0, s\slash 2, s\slash 2, p) \in \Q$ $($respectively, $\CQ$$)$; \vspace{0.1cm}
			
			\item[$(4)$] $(x_1, x_2, x_3) \in \E$ $($respectively, $\Ebar$$)$ if and only if $(0, x_1, x_2, x_3) \in \Q$ $($respectively, $\CQ$$)$; \vspace{0.1cm}
			
			\item[$(5)$] $(a, s, p) \in \Pe$ $($respectively, $\Pbar$$)$ if and only if $(a, s\slash 2, s\slash 2, p) \in \Q$ $($respectively, $\CQ$$)$.
		\end{enumerate}	
	\end{thm}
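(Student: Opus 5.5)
The open-domain statements are already proved: part (1) is Theorem \ref{thm_bidisc}, part (2) is Theorem \ref{lem_biball_hexa}, part (3) is Theorem \ref{thm_GQretract}, part (4) is Theorem \ref{thm_EQ} and part (5) is Theorem \ref{thmP_in_H}, which also covers $\Pbar$ and $\CQ$. So the plan is to quote these and supply only the closure versions of (1)--(4). I will do this by direct arguments using the description \eqref{eqn5.3} of $\CQ$, together with the closure versions of Theorems \ref{tetrablock} and \ref{thmG_2}.

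For (4): the fibre description \eqref{eqn5.3} gives $(0,x_1,x_2,x_3)\in\CQ$ if and only if $(x_1,x_2,x_3)\in\Ebar$, because with $a=0$ the supremum condition is vacuous.

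For (3): by (4) it suffices to show that $(s,p)\in\Gamma_2$ if and only if $(s/2,s/2,p)\in\Ebar$. Put $x_1=x_2=s/2$ and $x_3=p$ in part (4) of Theorem \ref{tetrablock}. The condition becomes $|s-\overline{s}p|\le 1-|p|^2$ together with $|s|\le 2$, which is exactly part (2) of Theorem \ref{thmG_2}.

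For (2), one direction is Lemma \ref{lem6.5}. For the converse I would use the matrix $B$ from the proof of Theorem \ref{lem_biball_hexa}, now with $\|B\|\le 1$. This gives $(x_1,0,0)\in\Ebar$ and $(a,x_1,0,0)\in\overline{\QN}\subseteq\CQ$. If one prefers to avoid this, scale: $(ra,rx_1)\in\mathbb B_2$ for $r<1$, so $(ra,rx_1,0,0)\in\Q$, and letting $r\to1$ gives the point in $\CQ$.

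For (1), suppose $(a,0,0,x_3)\in\CQ$. Setting $z_1=z_2=0$ in \eqref{eqn5.3} gives $|a|\le1$. Also $(0,0,x_3)\in\Ebar$, so part (4) of Theorem \ref{tetrablock} gives $0\le 1-|x_3|^2$, that is, $|x_3|\le 1$. For the converse, if $(a,x_3)\in\DC^2$ then $(ra,rx_3)\in\D^2$ for $r<1$, so $(ra,0,0,rx_3)\in\Q$ by Theorem \ref{thm_bidisc}, and letting $r\to1$ shows $(a,0,0,x_3)\in\CQ$.

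The only point requiring care is that the closure versions of the theorems just cited carry the auxiliary conditions $|x_1|\le1$ and $|s|\le2$. These must be checked when translating between the inequalities, but they hold automatically in each case above.
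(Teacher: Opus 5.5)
Your proposal is correct and follows the paper's route. The paper also obtains the theorem directly from Theorems \ref{thm_EQ}, \ref{thm_GQretract}, \ref{thmP_in_H}, \ref{lem_biball_hexa} and \ref{thm_bidisc} and leaves the closure versions to the reader; your arguments correctly supply those details, namely \eqref{eqn5.3} for (4), part (4) of Theorem \ref{tetrablock} against part (2) of Theorem \ref{thmG_2} for (3), and Lemma \ref{lem6.5} or the $\Ebar$-inequality together with the $r\to 1$ limit for (1) and (2).
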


	We conclude this section with an application of the above theorem. To do so, we recall from \cite{Harris, HarrisII} the notion of $J^*$-algebras and its rank. For complex Hilbert spaces $\mathcal{H}$ and $\mathcal{H}'$, we denote by  $\mathcal{B}(\mathcal{H}, \mathcal{H}')$ the Banach space of all bounded linear operators from $\mathcal{H}$ to $\mathcal{H}'$.

	\begin{defn} A \textit{$J^*$-algebra} is a closed linear subspace $\mathcal{L}$ of$\mathcal{B}(\mathcal{H}, \mathcal{H}')$, for some Hilbert spaces $\mathcal{H}, \mathcal{H}'$, such that $TT^*T \in \mathcal{L}$ for all $T \in \mathcal{L}$. The \textit{rank} of a $J^*$-algebra $\mathcal{L}$ is the supremum of the number of non-zero elements in the spectrum of $T^*T$ over all $T \in \mathcal{L}$.
	\end{defn}
	
	The domains $\Gg, \E$ and $\Pe$ are not analytic retracts of the open unit ball of any $J^*$-algebra of finite rank. An interested reader is referred to the works \cite{Young, KosinskiII}. Indeed, the proof in \cite{KosinskiII} works for any domain such that $\Gg$ is its analytic retract. One such domain is $\Q$ as proved in Theorem \ref{thm_GQretract}. Using this, we simply have that $\Q$ is not an analytic retract of the open unit ball of any $J^*$-algebra of finite rank. One can also use the theory of $\E$ and $\Pe$ to prove the same result.
	
	\begin{prop}\label{prop_JQ}
		The hexablock is not an analytic retract of the open unit ball of any $J^*$-algebra of finite rank.
	\end{prop}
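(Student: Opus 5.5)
The plan is to argue by contradiction and reduce to the known result for the symmetrized bidisc, using that analytic retracts compose. Suppose $\Q$ is an analytic retract of the open unit ball $\mathcal{B}_{\mathcal{L}}$ of some $J^*$-algebra $\mathcal{L}$ of finite rank. Then there are holomorphic maps $\phi:\Q\to\mathcal{B}_{\mathcal{L}}$ and $\psi:\mathcal{B}_{\mathcal{L}}\to\Q$ with $\psi\circ\phi=\mathrm{id}_{\Q}$.

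By Theorem \ref{thm_GQretract}, the maps $e_{\Gg}:\Gg\to\Q$, $e_{\Gg}(s,p)=(0,s/2,s/2,p)$, and $\psi_{\Gg}:\Q\to\Gg$, $\psi_{\Gg}(a,x_1,x_2,x_3)=(x_1+x_2,x_3)$, are holomorphic and satisfy $\psi_{\Gg}\circ e_{\Gg}=\mathrm{id}_{\Gg}$. Set
\[
\Phi=\phi\circ e_{\Gg}:\Gg\to\mathcal{B}_{\mathcal{L}},\qquad \Psi=\psi_{\Gg}\circ\psi:\mathcal{B}_{\mathcal{L}}\to\Gg .
\]
Both maps are holomorphic, and
\[
\Psi\circ\Phi=\psi_{\Gg}\circ(\psi\circ\phi)\circ e_{\Gg}=\psi_{\Gg}\circ e_{\Gg}=\mathrm{id}_{\Gg}.
\]
Hence $\Gg$ would be an analytic retract of the open unit ball of a finite-rank $J^*$-algebra. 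This contradicts the result of Young \cite{Young} (see also \cite{KosinskiII}) that $\Gg$ is not such a retract, so $\Q$ cannot be one either.

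The only point needing care is that the cited theorem applies as stated. It must cover every finite-rank $J^*$-algebra, including infinite-dimensional ones, since the statement allows arbitrary Hilbert spaces $\mathcal{H},\mathcal{H}'$. I would verify that \cite{Young} states exactly this, and otherwise rely on the remark preceding the proposition that the argument of \cite{KosinskiII} applies to any domain having $\Gg$ as an analytic retract.

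Alternatively, the same composition argument works through $\E$ (Theorem \ref{thm_EQ}) or $\Pe$ (Theorem \ref{thmP_in_H}), each of which is also known not to be such a retract.
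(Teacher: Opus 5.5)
Your argument is correct and is essentially the paper's own: the paper uses the same composition-of-retractions contradiction but routes it through $\E$ (Theorem \ref{thm_EQ} together with Theorem 3.1 of \cite{Young}). The paper also notes, as you do, that the $\Gg$ route (Theorem \ref{thm_GQretract} and \cite{KosinskiII}) and the $\Pe$ route (Proposition 17 of \cite{KosinskiII}) work equally well.
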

	
	\begin{proof}
		Assume on the contrary that there is a $J^*$-algebra $\mathcal{L}$ of finite rank with analytic maps $\widetilde{\phi}: \Q \to U$ and $\widetilde{\psi}: U \to \Q$ such that $\widetilde{\psi} \circ \widetilde{\phi}=\text{id}_\Q$, where $U$ is the open unit ball in $\mathcal{L}$. By Theorem \ref{thm_EQ}, the maps given by
		\[
		\phi:\E \to \Q, \ \phi(x_1, x_2, x_3)=(0, x_1, x_2, x_3)\quad  \text{and} \quad \psi: \Q \to \E,\  \psi(a, x_1, x_2, x_3)=(x_1, x_2, x_3)
		\]
		are holomorphic. Evidently, $\psi\circ\phi=\text{id}_\E$. Then $\phi_0=\widetilde{\phi}\circ \phi : \E \to U$ and $\psi_0=\psi \circ \widetilde{\psi} : U \to \E$ are holomorphic maps such that $\psi_0 \circ \phi_0=\text{id}_\E$. This shows that $\E$ is an analytic retract of $U$ which is a contradiction to the fact that $\E$ cannot be an analytic retract of the open unit ball of any $J^*$-algebra of finite rank (see Theorem 3.1 in \cite{Young}). The proof is now complete. 
	\end{proof} 
	
	One can also apply the theory of pentablock to prove the above result. We briefly discuss it here. Suppose there is a $J^*$-algebra $\mathcal{L}$ of finite rank with analytic maps $\widetilde{\phi}: \Q \to U$ and $\widetilde{\psi}: U \to \Q$ such that $\widetilde{\psi} \circ \widetilde{\phi}=\text{id}_\Q$. Here, $U$ is the open unit ball in $\mathcal{L}$. By Theorem \ref{thmP_in_H}, the holomorphic maps
	\[
	\phi:\Pe \to \Q, \ \phi(a, s ,p)=(a, s\slash 2, s\slash 2, p) \quad  \text{and} \quad \psi: \Q \to \Pe,\  \psi(a, x_1, x_2, x_3)=(a, x_1+x_2, x_3)
	\]
	satisfy $\psi\circ\phi=\text{id}_\Pe$. Then $\phi_0=\widetilde{\phi}\circ \phi : \Pe \to U$ and $\psi_0=\psi \circ \widetilde{\psi} : U \to \Pe$ are holomorphic maps such that $\psi_0 \circ \phi_0=\text{id}_\Pe$. This shows that $\Pe$ is an analytic retract of $U$, which is a contradiction as $\Pe$ is not an analytic retract of the open unit ball of any $J^*$-algebra of finite rank (see Proposition 17 in \cite{KosinskiII}). Therefore, $\Q$ is not an analytic retract of the open unit ball of any $J^*$-algebra of finite rank.
	
	\section{Rational $\CQ$-inner functions}\label{inner}
	
	\noindent Let $\Omega$ be a bounded domain in $\C^n$. Recall that a holomorphic map $f=(f_1, \dotsc, f_n): \D \to \overline{\Omega}$ is said to be rational $\overline{\Omega}$-inner (or simply, rational $\Omega$-inner) if $f_1, \dotsc, f_n$ are rational maps that are holomorphic on $\D$ and $f(\T) \subseteq b\Omega$. Here, $b\Omega$ denotes the distinguished boundary of $\Omega$. A comprehensive description of rational $\Gamma_2$-inner functions was provided by Agler et al. in \cite{AglerV}. Building on the key insights from \cite{AglerV}, Alsalhi and Lykova \cite{Alsalhi} characterized rational $\Ebar$-inner functions. Following this, the rational $\overline{\Pe}$-inner functions were studied in \cite{Alsheri} and \cite{Jindal}. In their work, the authors of \cite{Alsheri, Jindal} leveraged the representation of rational $\Gamma_2$-inner functions to prove that $(s, p)$ is a rational $\Gamma_2$-inner function if $(a, s, p)$ is a rational $\Pbar$-inner function. This reduces the problem of characterizing rational $\Pbar$-inner function to finding necessary and sufficient conditions on the first component of $(a,s,p)$ so that $(a, s, p)$ becomes a rational $\Pbar$-inner function. Following the same approach, we first show that if $(a, x_1, x_2, x_3)$ is a rational $\CQ$-inner function, then $(x_1, x_2, x_3)$ is a rational $\Ebar$-inner function. Finally, building on the theory of rational $\Ebar$-functions from \cite{Alsalhi}, we provide a characterization of the rational $\CQ$-inner functions in this section. To do so, we recall the notions of inner and outer functions in $H^\infty$, where $H^\infty$ is the space of all bounded holomorphic functions on $\D$ equipped with the supremum norm.
	
	\smallskip 
	
	An \textit{inner function} is an analytic map $f: \D\to \overline{\D}$ such that the radial limit ${\displaystyle \lim_{r \to 1^{-}} f(r\lambda)}$	exists and belongs to $\T$ for almost all $\lambda \in \T$ with respect to Lebesgue measure. A \textit{finite Blaschke product} is a function of the form 
	\[
	B(\lm)=e^{i\theta} \ \overset{n}{\underset{i=1}{\prod}} \ \frac{\lm-\al_i}{\overline{\alpha}_i\lm-1} \qquad (\lm \in \overline{\D}),
	\]
	where $\theta \in \mathbb{R}$ and $\alpha_1, \dotsc, \alpha_n \in \D$. The number of factors in the product $B$ is said to be its \textit{degree}. It is evident that a finite Blaschke product is a rational $\DC$-inner function. In fact, these are the only rational $\DC$-inner maps as the following result shows.
	
	\begin{thm}[\cite{Blaschke} \& \cite{Grimm}, Theorem 3] 
		Any rational $\overline{\D}$-inner function is a finite Blaschke product. 
	\end{thm}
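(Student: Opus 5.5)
This is the classical theorem that every rational $\overline{\D}$-inner function $f$ is a finite Blaschke product. The plan is to divide out the zeros of $f$ in $\D$ by a finite Blaschke product and apply the maximum modulus principle to the quotient and to its reciprocal.

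\emph{Setup.} Write $f=P/Q$ with $P,Q$ coprime polynomials. Since $f$ is holomorphic on $\D$, $Q$ has no zeros in $\D$. We first check that $Q$ has no zeros on $\T$ either. Since $f$ maps $\D$ into $\overline{\D}$, $|f|\le 1$ on $\D$. A pole of the rational function $f$ at a point $\zeta\in\T$ would force $|f(\lambda)|\to\infty$ as $\lambda\to\zeta$ from inside $\D$, which is impossible. Hence $f$ extends continuously, indeed holomorphically, to a neighbourhood of $\DC$, and the hypothesis $f(\T)\subseteq b\D=\T$ gives $|f|=1$ everywhere on $\T$.

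\emph{Removing the zeros.} If $f\equiv 0$ we would have $|f|\neq1$ on $\T$, so $f\not\equiv 0$. Hence $f$ has finitely many zeros in $\D$, say $\alpha_1,\dots,\alpha_n$ listed with multiplicity; there are none on $\T$ because $|f|=1$ there. Put
\[
B(\lm)=\prod_{i=1}^n\frac{\lm-\alpha_i}{\overline{\alpha}_i\lm-1}.
\]
Then $g=f/B$ is holomorphic on a neighbourhood of $\DC$ and zero-free there. Moreover $|g|=1$ on $\T$, since $|B|=1$ on $\T$.

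\emph{Conclusion.} By the maximum modulus principle applied to $g$, we get $|g|\le 1$ on $\D$. Applying the same principle to $1/g$, which is also holomorphic near $\DC$, gives $|g|\ge 1$ on $\D$. Thus $|g|\equiv1$ on $\D$, so the open mapping theorem forces $g$ to be a unimodular constant $e^{i\theta}$. Therefore $f=e^{i\theta}B$, a finite Blaschke product.

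The only step needing real care is the first one: excluding poles of $f$ on $\T$, so that "$f(\T)\subseteq\T$" makes sense and the maximum principle applies up to the boundary. The boundedness of $f$ on $\D$ settles it directly, as shown above.
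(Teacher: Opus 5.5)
Your proof is correct and complete. You rule out poles on $\mathbb{T}$ using $|f|\le 1$ on $\mathbb{D}$, divide out the zeros by a finite Blaschke product $B$, and apply the maximum modulus principle to both $f/B$ and $B/f$; this is the standard argument. The paper gives no proof of this statement (it is quoted from Blaschke and Grimm), so there is no route to compare against. It does use the same two devices, excluding boundary poles via boundedness and the maximum principle for a quotient with unimodular boundary values, in the proof of Theorem~\ref{thm_hexa_schwarz}. One point to keep explicit: when $f$ has no zeros in $\mathbb{D}$, your conclusion is a unimodular constant, which must be read as a finite Blaschke product with an empty product ($n=0$) for the statement to hold.
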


	An \textit{outer function} $f$ is a function on $\D$ that can be represented as
	\[
	f(\lambda)=\xi \exp \left[ \frac{1}{2\pi} \int_{0}^{2\pi} \frac{e^{it}+\lambda}{e^{it}-\lambda}\log(g(t))dt \right],
	\]
	where $g(t) \geq 0, \log (g(t)) \in L^1$ and $\xi \in \T$. 	An \textit{inner function} is an analytic map $f: \D\to \overline{\D}$ such that the radial limit $
	{\displaystyle \lim_{r \to 1^{-}} f(r\lambda)}$	exists and belongs to $\T$ for almost all $\lambda \in \T$ with respect to Lebesgue measure. The outer function $f$ is in $H^\infty$ if and only if $g \in L^\infty$. Moreover, we have that $|f(e^{it})|=g(t)$ almost everywhere in this case. It is evident that an outer function has no zeros in $\D$ and the only functions which are both outer and inner are the constant functions of unit modulus. We mention a very well-known result related to inner and outer functions.
	
	\begin{thm}[\cite{Nagy}, Chapter III]
		Every function $f \in H^\infty$ that is not identically $0$ has a canonical factorization 
		$
		f=f_{in}f_{out}
		$
		into the product of an inner function $f_{in}$ and an outer function $f=f_{out}$ which is unique up to constant factors of modulus $1$.
	\end{thm}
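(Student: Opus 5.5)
We would prove the statement by the classical Herglotz--Poisson construction: build the outer factor directly from the boundary modulus of $f$, then show that the quotient is inner. Let $f\in H^\infty$, $f\not\equiv 0$. By Fatou's theorem the radial limits $f^*(e^{it})=\lim_{r\to1^-}f(re^{it})$ exist almost everywhere, and $|f^*|\le\|f\|_\infty$.

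The first step is to show that $\log|f^*|\in L^1$. Write $f(\lambda)=\lambda^k h(\lambda)$ with $h(0)\neq0$. Since $\log|h|$ is subharmonic, Jensen's inequality gives $\frac{1}{2\pi}\int_0^{2\pi}\log|h(re^{it})|\,dt\ge\log|h(0)|$ for every $r<1$. The functions $\log\|f\|_\infty-\log|f(re^{it})|$ are nonnegative, so Fatou's lemma as $r\to1$ yields $\int\log|f^*|\,dt>-\infty$. The upper bound $\log|f^*|\le\log\|f\|_\infty$ is trivial.

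Next we define the candidate outer factor
\[
F(\lambda)=\exp\left[\frac{1}{2\pi}\int_0^{2\pi}\frac{e^{it}+\lambda}{e^{it}-\lambda}\log|f^*(e^{it})|\,dt\right].
\]
This is an outer function in the sense recalled above, with $g=|f^*|\in L^\infty$. Hence $F\in H^\infty$, $F$ has no zeros in $\D$, and $|F^*|=|f^*|$ almost everywhere. We then set $I=f/F$, which is holomorphic on $\D$.

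The key step, and the one we expect to be the main obstacle, is the pointwise bound
\[
\log|f(\lambda)|\le P[\log|f^*|](\lambda)=\log|F(\lambda)| \qquad (\lambda\in\D),
\]
where $P$ denotes the Poisson integral. For this we apply subharmonicity to the dilates: $\log|f(r\lambda)|\le P[\log|f(r\,\cdot)|](\lambda)$ for each $r<1$. We then let $r\to1$, using the reverse Fatou lemma; this is legitimate because the integrands are bounded above by $\log\|f\|_\infty$ and the Poisson kernel is bounded for fixed $\lambda$. The bound gives $|I|\le1$ on $\D$. Moreover $|I^*|=|f^*|/|F^*|=1$ almost everywhere, because $F^*\neq0$ a.e. (as $\log|f^*|$ is finite a.e.). Therefore $I$ is inner and $f=IF$ is the required factorization $f=f_{in}f_{out}$.

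For uniqueness, suppose $I_1O_1=I_2O_2$. Taking moduli of boundary values gives $|O_1^*|=|O_2^*|$ almost everywhere. Since an outer function satisfies $\log|O|=P[\log|O^*|]$, it follows that $|O_1|=|O_2|$ on $\D$. The function $O_1/O_2$ is holomorphic and zero-free with modulus identically $1$, so it is a unimodular constant $c$. Consequently $I_2=cI_1$, and the factorization is unique up to constant factors of modulus $1$.
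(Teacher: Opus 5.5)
Your proof is correct and complete. The paper gives no proof of this statement; it is quoted as a classical fact from Sz.-Nagy--Foias, so there is no internal route to compare against. What you have written is the standard Poisson-integral proof.

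The other common textbook route goes as follows. Divide out the Blaschke product $B$ of the zeros of $f$. Represent the positive harmonic function $\log(\|f\|_\infty/|f/B|)$ as a Poisson--Stieltjes integral of a measure $\mu$ via Herglotz. Then split $\mu$ into its absolutely continuous and singular parts to get $f=cBS_\mu F$. Your argument never has to handle the zero set or the singular measure. The single inequality $\log|f|\le P[\log|f^*|]$ gives $|f/F|\le 1$, and $|F^*|=|f^*|\neq 0$ a.e. makes $f/F$ inner. The price is that $f_{in}$ comes out only as a black box, without its splitting into Blaschke and singular inner factors. That splitting is not needed for the statement, nor for its use in Theorem \ref{thm10.12}.

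One small point to tidy up concerns the uniqueness step. There the outer factors $O_1,O_2$ are not assumed to lie in $H^\infty$, and the paper only recalls the identity $|O^*|=g$ a.e. for bounded outer functions. You should justify it directly: $|O_j(re^{it})|=\exp P[\log g_j](re^{it})\to g_j(t)$ for almost every $t$, which is Fatou's theorem for Poisson integrals of $L^1$ functions. With that, $g_1=|f^*|=g_2$ a.e., hence $|O_1|=|O_2|$ on $\mathbb{D}$, and the rest of your argument goes through.
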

	
	This canonical factorization of non-zero functions in $H^\infty$ is referred to as the \textit{inner-outer factorization}. Next, we define the notion of $\CQ$-inner function.
	
	\begin{defn}
		An \textit{$\CQ$-inner} function is an analytic map $f: \D \to \CQ$ such that 
		the radial limit ${\displaystyle	\lim_{r \to 1^{-}} f(r\lambda )}$ exists and belongs to $b\Q$ for almost all $\lambda \in \T$ with respect to Lebesgue measure. Similarly, an $\Ebar$-inner function is an analytic map $g: \D \to \Ebar$ such that  ${\displaystyle	\lim_{r \to 1^{-}} g(r\lambda )}$ exists and belongs to $b\E$ for almost all $\lambda \in \T$ with respect to Lebesgue measure. 
	\end{defn}
	
	By Fatou's theorem, the limit ${\displaystyle \lim_{r \to 1^{-}} f(r\lambda)}$ exists for almost every $\lambda \in \T$ whenever $f$ is either an $\Ebar$-inner function or $\CQ$-inner function. It is not difficult to see that every rational $\CQ$-inner function admits a continuous extension to $\DC$. To see this, let $f: \D \to \CQ$ be a rational $\CQ$-inner function. Then $f$ is a rational bounded function with no singularities in $\overline{\D}$ and so, $f$ is continuous on $\overline{\D}$. Indeed, the map $\widetilde{f}: \T \to b\Q$ defined as 
	\[
	\widetilde{f}(\lambda)=	\lim_{r \to 1^{-}} f(r\lambda)
	\] 
	is a continuous extension of $f$ with $\widetilde{f}(\T) \subseteq b\Q$. Similarly, every rational $\Ebar$-inner function admits a continuous extension to $\DC$. A canonical relation between $\Ebar$ and $\CQ$ follows from \eqref{eqn5.3}, namely, $(x_1, x_2, x_3) \in \Ebar$ for every $(a, x_1, x_2, x_3)\in \CQ$. A similar connection between $\CQ$-inner functions and $\Ebar$-inner functions exists, as the following result illustrates.
	
	\begin{lem}\label{lem10.5} 
		Let $f=(a, x_1, x_2, x_3) : \D \to \CQ$ be an analytic function. Then 
		$
		h=(x_1, x_2, x_3): \D \to \Ebar 
		$
		is an analytic function. Moreover, if $f$ is an $\CQ$-inner function, then
		$h$ is an $\Ebar$-inner function.
	\end{lem}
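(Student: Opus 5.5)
The plan is to read off both assertions from the fibered description of $\CQ$ over $\Ebar$ and of $b\Q$ over $b\E$. For the first assertion, $h=(x_1,x_2,x_3)$ is the composition of $f$ with the coordinate projection $(a,x_1,x_2,x_3)\mapsto(x_1,x_2,x_3)$. This projection is linear, so $h$ is analytic on $\D$. By \eqref{eqn5.3}, every point $(a,x_1,x_2,x_3)\in\CQ$ has $(x_1,x_2,x_3)\in\Ebar$, and hence $h(\D)\subseteq\Ebar$.

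For the second assertion, assume $f$ is $\CQ$-inner. Then there is a set $S\subseteq\T$ of full Lebesgue measure such that, for each $\lambda\in S$, the radial limit $\lim_{r\to1^-}f(r\lambda)$ exists and lies in $b\Q$. Existence of a vector-valued limit is equivalent to existence of the coordinatewise limits. So for $\lambda\in S$ the limit $\lim_{r\to1^-}h(r\lambda)$ exists and equals the projection of $\lim_{r\to1^-}f(r\lambda)$ onto the last three coordinates. By Theorem \ref{thmbQ}, each $(a,x_1,x_2,x_3)\in b\Q$ satisfies $(x_1,x_2,x_3)\in b\E$, and in fact $b\Q\subseteq\C\times b\E$ directly from that description. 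Therefore $\lim_{r\to1^-}h(r\lambda)\in b\E$ for almost every $\lambda\in\T$, so $h$ is $\Ebar$-inner. (One could also note that existence of the radial limits a.e. follows from Fatou's theorem, since $h$ is bounded, as $\Ebar$ is compact; but it is in any case inherited from $f$.)

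I expect no real obstacle here. The only point needing care is that the description of $b\Q$ in Theorem \ref{thmbQ} places the last three coordinates in $b\E$ and not merely in $\Ebar$; that is exactly what is quoted.
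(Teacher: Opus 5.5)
Your proposal is correct and follows the paper's proof. The first part is the coordinate projection together with \eqref{eqn5.3}, and the second part is the inclusion $b\Q\subseteq\C\times b\E$ read off from Theorem \ref{thmbQ}. Your remark that the coordinatewise radial limits of $h$ are inherited from those of $f$ spells out a step the paper leaves implicit.
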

	
	\begin{proof}
		By assumption, $f=(a, x_1, x_2, x_3)$ is analytic on $\D$ and for all $\lambda \in \D$, we have 
		\[
		f(\lambda)=(a_1(\lambda), \ x_1(\lambda), \ x_2(\lambda), \ x_3(\lambda)) \in \CQ.
		\]
		We have by \eqref{eqn5.3} that $(x_1(\lambda), x_2(\lambda), x_3(\lambda)) \in \Ebar$ for all $\lambda \in \D$. Thus $h=(x_1, x_2, x_3): \D \to \Ebar$ is a well-defined analytic function on $\D$. Assume that $f$ is an $\CQ$-inner function, and so, for almost all $\lambda \in \T, f(\lambda) \in b\Q$. For almost all $\lambda \in \T$, it follows from Theorem \ref{thmbQ} that $h(\lambda) \in b\E$. Hence, $h$ is an $\Ebar$-inner function. 
	\end{proof} 
	
	The above result shows that one obtains an $\Ebar$-function from an $\CQ$-inner function. We now show how an $\Ebar$-inner function gives rise to an $\CQ$-inner function. 
	
	\begin{prop}\label{prop_Ein_Hin}
		Let $x=(x_1, x_2, x_3): \D \to \Ebar$ be an $\Ebar$-inner function. Then there exists an $\CQ$-inner function such that $f=(\textbf{a}, x_1, x_2, x_3)$ for some holomorphic map $\textbf{a}$ on $\D$.   
	\end{prop}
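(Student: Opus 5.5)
The natural candidate for $\textbf{a}$ is an outer function with boundary modulus $\sqrt{1-|x_1|^2}$. By Theorem \ref{thmbQ}, a boundary point $(\textbf{a}, x_1,x_2,x_3)$ lies in $b\Q$ exactly when $(x_1,x_2,x_3)\in b\E$ and $|\textbf{a}|^2+|x_1|^2=1$. Since $x$ is $\Ebar$-inner, the first condition already holds a.e. on $\T$. So the whole task is to produce a bounded holomorphic $\textbf{a}$ with $|\textbf{a}(\lambda)|^2=1-|x_1(\lambda)|^2$ a.e. on $\T$, and then to check that $f(\D)\subseteq\CQ$.

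\emph{Step 1 (degenerate case).} If $|x_1|=1$ a.e. on $\T$, I take $\textbf{a}\equiv 0$. Then $f=(0,x_1,x_2,x_3)$ maps into $\CQ$ by part (4) of Theorem \ref{thm_final_embed}, and its boundary values lie in $b\Q$ by Theorem \ref{thmbQ}.

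\emph{Step 2 (construction).} Otherwise I define
\[
\textbf{a}(\lambda)=\exp\left[\frac{1}{4\pi}\int_0^{2\pi}\frac{e^{it}+\lambda}{e^{it}-\lambda}\log\bigl(1-|x_1(e^{it})|^2\bigr)\,dt\right].
\]
This is an outer function in the closed unit ball of $H^\infty$ with $|\textbf{a}|^2=1-|x_1|^2$ a.e. on $\T$. It requires $\log(1-|x_1|^2)\in L^1(\T)$. This is the first point needing care: since $|x_1|\le 1$, the function is bounded above, so only $\int\log(1-|x_1|^2)>-\infty$ is at issue. I would establish it from the $\Ebar$-inner structure, e.g. $|x_1|=|x_2|$ a.e. on $\T$ because $x_1=\overline{x}_2x_3$ and $|x_3|=1$ there. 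If that fails in general, I would shrink to the rational case, where $1-|x_1|^2$ is a nonnegative real-analytic function on $\T$ with finitely many zeros of finite order, hence log-integrable. With $\textbf{a}$ in hand, Theorem \ref{thmbQ} gives $f(\lambda)\in b\Q$ for a.e. $\lambda\in\T$.

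\emph{Step 3 (interior containment, the main obstacle).} I must show $(\textbf{a}(\lambda),x(\lambda))\in\CQ$ for $\lambda\in\D$, i.e. by \eqref{eqn5.3} that for each fixed $z_1,z_2\in\D$,
\[
F(\lambda)=\frac{\textbf{a}(\lambda)\sqrt{(1-|z_1|^2)(1-|z_2|^2)}}{1-x_1(\lambda)z_1-x_2(\lambda)z_2+x_3(\lambda)z_1z_2}
\]
satisfies $|F|\le1$ on $\D$. The denominator $D_{z}(\lambda)$ does not vanish on $\D$, since $x(\lambda)\in\Ebar$ and $z_1,z_2\in\D$. On $\T$ the boundary point $x(\lambda)$ lies in $b\E$, and at a.e. point where $|x_1|<1$, Theorem 2.12 of \cite{Biswas-Pal} gives
\[
|F(\lambda)|\le \frac{|\textbf{a}(\lambda)|}{\sqrt{1-|x_1(\lambda)|^2}}=1 .
\]
To pass from the boundary to the interior I would apply the maximum principle in the Smirnov class. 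The function $D_z$ is bounded, nonvanishing, and satisfies $\operatorname{Re}D_z>0$: for $x\in\Ebar$ the set $\{1-x_1z_1-x_2z_2+x_3z_1z_2 : |z_i|<1\}$ stays in a half-plane. If that holds, then $D_z$ is outer, so $1/D_z\in N^+$, hence $F\in N^+$ with $|F|\le1$ a.e. on $\T$, and therefore $|F|\le1$ on $\D$. The half-plane property is the step I expect to be hardest. If I cannot prove it, I would instead approximate: replace $x$ by $x(r\lambda)$ and $\textbf{a}$ by a slightly shrunk outer function, run the argument with continuous boundary data, and pass to the limit using closedness of $\CQ$.
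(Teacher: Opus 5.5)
Your strategy (an outer $\textbf{a}$ with $|\textbf{a}|^2=1-|x_1|^2$ a.e.\ on $\T$, then a maximum principle for $\lambda\mapsto\psi_{z_1,z_2}(\textbf{a}(\lambda),x(\lambda))$) is viable and differs from the paper's. However, two steps fail as written.

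\textbf{Log-integrability.} The identity $|x_1|=|x_2|$ on $\T$ gives no lower bound for $\int\log(1-|x_1|^2)$. Retreating to rational $x$ would prove only a special case of a statement about arbitrary $\Ebar$-inner functions. Also, in your \emph{otherwise} branch you have not excluded $|x_1|=1$ on a set of positive but not full measure, where $\textbf{a}$ would be undefined. The missing observation is this. At a.e.\ $\lambda\in\T$, Theorem~\ref{thm6.1} gives $x_1=\overline{x}_2x_3$ and $|x_3|=1$. Hence $x_1x_2-x_3=x_3(|x_2|^2-1)$, so $1-|x_1|^2=|x_1x_2-x_3|$. Since $x_1x_2-x_3\in H^\infty$, either it vanishes identically (your Step 1), or $\log|x_1x_2-x_3|\in L^1(\T)$ and its boundary values vanish only on a null set. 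In the second case $|x_1|<1$ a.e.\ and your outer function is well defined.

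\textbf{The half-plane property is false.} Take the $\Ebar$-inner function $x(\lambda)=(\lambda,\lambda,\lambda^2)$ and $z_1=z_2=t$. The denominator is $(1-t\lambda)^2$, whose arguments fill $(-2\arcsin t,2\arcsin t)$. Once $t>1/\sqrt2$ this interval has length greater than $\pi$, so $\operatorname{Re}D_z$ takes negative values. Your approximation fallback is circular, because $x(r\lambda)\notin b\E$ for $|\lambda|=1$, so the boundary estimate is unavailable there. None of this is needed. For fixed $(z_1,z_2)\in\D^2$, the map $y\mapsto 1-y_1z_1-y_2z_2+y_3z_1z_2$ is continuous and zero-free on the compact set $\Ebar$: writing $y=(a_{11},a_{22},\det A)$ with $\|A\|\le 1$, it equals $\det(I-A\,\mathrm{diag}(z_1,z_2))\neq 0$. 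So $|D_z|\ge m_z>0$ on $\D$ and $F\in H^\infty$. Then $|F|\le 1$ a.e.\ on $\T$ gives $|F|\le1$ on $\D$ by the Poisson representation. The boundary bound itself follows from $(\textbf{a}(\lambda),x(\lambda))\in b\Q\subseteq\CQ$ and \eqref{eqn5.3}, as in the proof of Theorem~\ref{thm10.14}; Theorem 2.12 of \cite{Biswas-Pal} is not required.

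\textbf{Comparison with the paper.} With these repairs your argument is a self-contained alternative. The paper instead takes, from the proof of Theorem 2.2.5 in \cite{AbouhajarI}, an analytic contraction-valued $F:\D\to M_2(\C)$ with $x=(F_{11},F_{22},\det F)$ and $|F_{12}|=|F_{21}|$ on $\T$, and sets $\textbf{a}=F_{21}$. Interior containment is then free, since $\pi(F(\lambda))\in\CQN\subseteq\CQ$. The boundary condition is the same identity $|F_{21}|^2=|x_1x_2-x_3|=1-|x_1|^2$ that repairs your first gap. The paper's route is shorter but rests on an external matrix-lifting theorem. Yours uses only $H^\infty$ theory, Theorem~\ref{thmbQ} and \eqref{eqn5.3}. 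It also shows that $\theta\textbf{a}$ works for every inner $\theta$, extending Theorem~\ref{thm10.14} beyond the rational case.
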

	
	\begin{proof}
		Let $x=(x_1, x_2, x_3): \D \to \Ebar$ be an $\Ebar$-inner function. It follows from the proof of Theorem 2.2.5 in \cite{AbouhajarI} that there exists an analytic map $F: \D \to M_2(\C), \lambda \mapsto [F_{ij}(\lambda)]_{i, j=1}^2$ such that $\|F(\lambda)\| \leq 1$ for all $\lm \in \D$, $|F_{21}|=|F_{12}|$ on $\T$ and $x=(F_{11}, F_{21}, \det(F))$. Since $\CQN$ is the image of closed norm unit ball in $M_2(\C)$ under the map $\pi(A)=(a_{21}, a_{11}, a_{22}, \det(A))$ for $A=(a_{ij}) \in M_2(\C)$, and $\CQN \subseteq \CQ$, it follows from \eqref{eqn5.3} that
		$
		\pi(F(\lambda))=(F_{21}(\lambda), \ x_1(\lambda), \ x_2(\lambda), \ x_3(\lambda)) \in \CQN \subseteq \CQ
		$
		for all $\lambda \in \D$. Thus, the map
		\[
		f: \D \to \CQ, \quad f(\lambda)=\pi(F(\lambda))=(F_{21}(\lambda), x_1(\lambda), x_2(\lambda), x_3(\lambda))
		\]
		is well-defined and analytic. Since $x=(x_1, x_2, x_3)$ is an $\Ebar$-inner function, $x_1=\overline{x}_2x_3$ and $|x_3|=1$ almost everywhere on $\T$. Thus, $|F_{21}(\lambda)|^2=|F_{21}(\lambda)F_{12}(\lambda)|=|x_1(\lambda)x_2(\lambda)-x_3(\lambda)|=1-|x_1(\lambda)|^2$ for almost all $\lambda \in \T$. We have by Theorem \ref{thmbQ} that $f(\lambda) \in b\Q$ for almost all $\lambda \in \T$. Thus, $f$ is an $\CQ$-inner function.
	\end{proof} 
	
	Note that for any $\CQ$-inner function $f=(a, x_1, x_2, x_3)$ from $\D$ to $\CQ$, the map
	$ \lambda \mapsto a(\lambda)$ is a bounded analytic function from $\D$ to $\overline{\D}$ as $\CQ \subseteq \overline{\D}^4$. Let $a\ne 0$. Then $a$ admits an inner-outer factorization and let it be given by $a=a_{in}a_{out}$. The next results shows that $(a_{out}, x_1, x_2, x_3)$ is also an $\CQ$-inner function.
	
	\begin{prop}
		Let $f=(a, x_1, x_2, x_3)$ be an $\CQ$-inner function and $a \ne 0$. Let $a_{in}a_{out}$ be the inner-outer factorization of $a$. Then $\tilde{f}=(a_{out}, x_1, x_2, x_3)$ is an $\CQ$-inner function.
	\end{prop}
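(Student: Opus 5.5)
We must show two things: that $\tilde f=(a_{out},x_1,x_2,x_3)$ maps $\D$ into $\CQ$, and that its boundary values lie in $b\Q$ almost everywhere.

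The boundary statement is the easy half. An inner function has unimodular radial limits a.e., so $|a_{out}|=|a|$ a.e.\ on $\T$. The point $\tilde f(\lambda)$ has the same last three coordinates as $f(\lambda)$, and those lie in $b\E$. By Theorem \ref{thmbQ}, the condition $|a|^2+|x_1|^2=1$ a.e.\ transfers verbatim to $a_{out}$. Hence $\tilde f(\lambda)\in b\Q$ for almost all $\lambda\in\T$.

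The interior statement is the substantive part. By \eqref{eqn5.3} and Lemma \ref{lem10.5}, $h=(x_1,x_2,x_3)$ maps $\D$ into $\Ebar$. It therefore suffices to show, for every $\lambda\in\D$ and all $z_1,z_2\in\D$,
\[
|a_{out}(\lambda)|\sqrt{(1-|z_1|^2)(1-|z_2|^2)}\le |1-x_1(\lambda)z_1-x_2(\lambda)z_2+x_3(\lambda)z_1z_2|.
\]
Fix $z_1,z_2\in\D$ and set $D_z(\lambda)=1-x_1(\lambda)z_1-x_2(\lambda)z_2+x_3(\lambda)z_1z_2$. This is a bounded holomorphic function of $\lambda$.

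First I will show $D_z$ has no zeros in $\D$. For a fixed $\lambda$, the map $w\mapsto 1-x_1w_1-x_2w_2+x_3w_1w_2$ is nonvanishing on $\D^2$ whenever $(x_1,x_2,x_3)\in\Ebar$. This holds because $\Ebar$ is the closure of $\E$, and a limit of nonvanishing holomorphic functions on $\D^2$ is either nonvanishing or identically zero, which is impossible since its value at $0$ is $1$. Alternatively, one can scale: $(rx_1,rx_2,r^2x_3)\in\E$ for $r<1$.

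Next consider $g_z=a_{out}/D_z$, which is holomorphic on $\D$. Since $f(\lambda)\in\CQ$, we have $|a/D_z|\le c_z:=1/\sqrt{(1-|z_1|^2)(1-|z_2|^2)}$ on $\D$. The goal is to show $|g_z|\le c_z$ on $\D$. The key tool is that $a_{out}$ is outer, so $\log|a_{out}|$ is the Poisson integral of its boundary values. Meanwhile $1/D_z$ is bounded on compact subsets, and $\log|1/D_z|=-\log|D_z|$, where $\log|D_z|$ is harmonic because $D_z$ is zero-free.

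Thus $\log|g_z|=\log|a_{out}|-\log|D_z|$ is harmonic. To dominate it by its boundary values I would argue via Smirnov class: $D_z$ is bounded and zero-free, so $1/D_z\in N^+$. An outer function times an $N^+$ function lies in $N^+$, hence $g_z\in N^+$. Its boundary values satisfy $|g_z|=|a/D_z|\le c_z$ a.e.\ on $\T$. By Smirnov's theorem, an $N^+$ function with $L^\infty$ boundary values belongs to $H^\infty$ with the same bound, so $|g_z|\le c_z$ on $\D$.

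The main obstacle is this last step. The subtle point is that $1/D_z$ need not be bounded, since $D_z$ can approach $0$ at boundary points. I would justify membership in $N^+$ using the fact that the reciprocal of a zero-free bounded holomorphic function is in $N^+$ exactly when the function has no singular inner factor. If that fails, I would instead use the representation $\log|a|=\log|a_{in}|+\log|a_{out}|$ directly. Since $|a|\le c_z|D_z|$ on $\D$, taking logarithms gives $\log|a_{out}|\le \log c_z+\log|D_z|-\log|a_{in}|$. I would then try to remove the $\log|a_{in}|$ term by using the least harmonic majorant of the subharmonic function $\log|D_z|$ together with the Poisson representation of $\log|a_{out}|$.
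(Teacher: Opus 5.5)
\paragraph{Verdict: gap at the decisive step.} Your route matches the paper's. You fix $z_1,z_2\in\D$, note that $|a_{out}|=|a|$ a.e.\ on $\T$, and pass from the boundary to the interior by a maximum principle. Your boundary half is also fine. However, the interior inclusion $\tilde f(\D)\subseteq\CQ$ is not proved as written. You justify $g_z\in N^+$ by saying $D_z$ is bounded and zero-free, and that implication is false in general. A nonconstant singular inner function is bounded and zero-free, yet its reciprocal is not in $N^+$; you concede this yourself. Your fallback via least harmonic majorants is only sketched, never carried out.

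\paragraph{The missing observation.} Your worry that ``$D_z$ can approach $0$ at boundary points'' is unfounded once $z=(z_1,z_2)\in\D^2$ is fixed. The map $x\mapsto|1-x_1z_1-x_2z_2+x_3z_1z_2|$ is continuous on the compact set $\Ebar$. By the nonvanishing you already proved, it is strictly positive there, so it attains a minimum $\delta_z>0$.
\begin{itemize}
\item Since $h(\D)\subseteq\Ebar$, you get $|D_z(\lambda)|\ge\delta_z$ on $\D$. Hence $1/D_z\in H^\infty$, and in particular $D_z$ is outer, which is exactly your criterion.
\item $a_{out}\in H^\infty$, because it is outer with $|a_{out}|=|a|\le1$ a.e.\ on $\T$.
\item Therefore $g_z\in H^\infty$.
\item Its radial limits equal $a_{out}(\zeta)/D_z(h(\zeta))$ and have modulus at most $c_z$ a.e. This uses that the radial limits of $h$ lie in $\Ebar$, where $D_z$ does not vanish.
\item The Poisson representation of bounded analytic functions then gives $|g_z|\le c_z$ on $\D$.
\end{itemize}
No Smirnov-class argument is needed. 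This uniform lower bound is also what makes the paper's brief appeal to the maximum modulus principle at the same point legitimate.
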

	
	\begin{proof}
		Let $f=(a, x_1, x_2, x_3)$ be an $\CQ$-inner function. Then $(a(\lm), x_1(\lm), x_2(\lm), x_3(\lm)) \in \CQ$ for all $\lm \in \D$. We have by \eqref{eqn5.3} that
		\[
		\bigg|\frac{a(\lm)\sqrt{(1-|z_1|^2)(1-|z_2|^2)}}{1-x_1(\lm)z_1-x_2(\lm)z_2+x_3(\lm)z_1z_2}\bigg| \leq 1 \quad \text{for all} \quad \lambda, z_1, z_2 \in \D.
		\]
		Since $a_{in}$ is inner, we have that $|a_{in}(\lm)|=1$ for almost all $\lambda \in \T$. Therefore,  
		\[
		\bigg|a_{in}(\lm)\frac{a_{out}(\lm)\sqrt{(1-|z_1|^2)(1-|z_2|^2)}}{1-x_1(\lm)z_1-x_2(\lm)z_2+x_3(\lm)z_1z_2}\bigg|=	\bigg|\frac{a_{out}(\lm)\sqrt{(1-|z_1|^2)(1-|z_2|^2)}}{1-x_1(\lm)z_1-x_2(\lm)z_2+x_3(\lm)z_1z_2}\bigg| \leq 1
		\]
		for every $z_1, z_2 \in \D$ and for almost all $\lambda \in \T$. Note that for every $z_1, z_2 \in \D$, the function 
		\[
		\lambda \mapsto \frac{a_{out}(\lm)\sqrt{(1-|z_1|^2)(1-|z_2|^2)}}{1-x_1(\lm)z_1-x_2(\lm)z_2+x_3(\lm)z_1z_2}
		\]
		is analytic on $\D$. By maximum modulus principle, we have for all $z_1, z_2 \in \D$ that
		\[
		\bigg|\frac{a_{out}(\lm)\sqrt{(1-|z_1|^2)(1-|z_2|^2)}}{1-x_1(\lm)z_1-x_2(\lm)z_2+x_3(\lm)z_1z_2}\bigg| \leq 1, \quad \text{for all} \ \lm \in \D.
		\]
		It follows from  \eqref{eqn5.3} that $(a_{out}(\lm), \ x_1(\lm), \ x_2(\lm), \ x_3(\lm)) \in \CQ$ for all $\lm \in \D$. Therefore, $\tilde{f}=(a_{out}, x_1, x_2, x_3)$ is a holomorphic map from $\D$ to $\CQ$. It only remains to show that 
		\[
		\tilde{f}(\lm)=(a_{out}(\lm), x_1(\lm), x_2(\lm), x_3(\lm)) \in b\Q
		\]
		almost everywhere on $\T$. It follows from Theorem \ref{thmbQ} that 
		\[
		b\Q=\left\{(a, x_1, x_2, x_3) \in \C^4 : (x_1, x_2, x_3) \in b\E, \ |a|^2+|x_1|^2=1    \right\}.
		\] 
		By Lemma \ref{lem10.5}, $(x_1(\lm), x_2(\lm), x_3(\lm)) \in b\E$ for almost all $\lambda \in \T$. Since $f=(a, x_1, x_2, x_3)$ is an $\CQ$-inner function, we have that $|a(\lm)|^2+|x_1(\lm)|^2=1$  for almost all $\lambda \in \T$. Since $a=a_{in}a_{out}$ is the inner-outer factorization of $a$ and $|a_{in}(\lm)|=1$ for almost all $\lm \in \T$, we have
		\[
		|a_{out}(\lm)|^2+|x_1(\lm)|^2=|a(\lm)|^2+|x_1(\lm)|^2=1 \quad \text{ for almost all } \lambda \in \T.
		\]
		Therefore, $\tilde{f}=(a_{out}, x_1, x_2, x_3)$ is an $\CQ$-inner function and the proof is complete.
	\end{proof} 
	
	We recall the notion of the degree of a rational $\Ebar$-inner function $x: \D \to \Ebar$. Needless to mention, $x$ is a rational bounded function with no singularities in $\overline{\D}$ and so, $x$ is continuous on $\overline{\D}$. Moreover, $x(\T) \subseteq b\E$ and so, $x: \T \to b\E$ is a continuous map too. Then we have a group homomorphism $x_*: \mathbb{Z}=\pi_1(\T) \to \pi_1(b\E)$ of fundamental groups induced by $x$. Capitalizing on the map $x_*$, Alsalhi and Lykova \cite{Alsalhi} defined the degree of $x$ in the following way. 
	
	\begin{defn}
		Let $x$ be a rational $\Ebar$-inner function and let $x_*:\mathbb{Z}=\pi_1(\T) \to \pi_1(b\E)$ be the homomorphism of fundamental groups induced by $x$. The degree $deg(x)$ of $x$ is defined as $x_*(1)$. 
	\end{defn}
	
	We briefly discuss a useful characterization of the degree for a rational $\Ebar$-function obtained in \cite{Alsalhi}. Let $x=(x_1, x_2, x_3)$ be a rational $\Ebar$-inner function. It follows from Lemma 4.3 in \cite{Alsalhi} that $x_3$ is a rational inner function on $\D$ and so, $x_3$ is a finite Blaschke product. We have by Proposition 4.14 in \cite{Alsalhi} that $deg(x)$ is equal to the degree $deg(x_3)$ of the finite Blaschke product $x_3$, that is, the number of factors in the Blaschke product. Next, we need a few preparatory results and notations to describe the rational $\CQ$-inner functions. For a polynomial $g$ of degree less than or equal to $n$, let us define the polynomial $g^{\sim n}$ by 
	\[
	g^{\sim n}(\lm)=\lm^n\overline{g(1\slash \overline{\lm})}.
	\]
	The following result gives a characterization for rational $\Ebar$-inner functions.
	
	\begin{thm}[\cite{Alsalhi},  Theorem 4.15]\label{thm_Einner}
		Let $x=(x_1, x_2, x_3)$ be a rational $\Ebar$-inner function of degree $n$. Then there exist polynomials $E_1, E_2, D$ of degree at most $n$ such that the following hold. 
		\begin{equation}\label{eqn10.2}
			\begin{split}
				&(1)  \	\ D(\lm) \ne 0 \  \text{on} \  \overline{\D}, \qquad (2) \ E_1=E_2^{\sim n} \ \text{on} \ \overline{\D}, 
				\qquad (3) \  |E_1(\lm)|, |E_2(\lm)| \leq |D(\lm)| \  \text{for} \ \lm \in \overline{\D}, \\
				&(4) \ \	x_1=\frac{E_1}{D}\ \ \text{on} \ \ \overline{\D},\qquad (5) \ \	x_2=\frac{E_2}{D} \ \ \text{on} \ \ \overline{\D} \  \qquad (6) \ \	x_3=\frac{D^{\sim n}}{D} \ \ \text{on} \ \ \overline{\D}. \\
			\end{split}	
		\end{equation}
		Conversely, if $E_1, E_2$ and $D$ are polynomials of degree at most $n$ that satisfy conditions $(1), (2)$ and $(3)$, then $x=(x_1, x_2, x_3)$ with $(x_1, x_2, x_3)$ defined as in $(4)-(6)$ is a rational $\Ebar$-inner function of degree at most $n$.
	\end{thm}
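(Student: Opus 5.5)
The plan is to derive the polynomial representation from the boundary relations of Theorem \ref{thm6.1}, and to prove the converse by a maximum-principle argument on a linear-fractional test function.

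\emph{Setting up $D$.} Since $x$ is rational $\Ebar$-inner, $x$ is continuous on $\DC$ and $x(\T)\subseteq b\E$. By Theorem \ref{thm6.1}, on $\T$ we have
\[
|x_3|=1,\qquad x_1=\overline{x}_2x_3,\qquad |x_2|\le 1 .
\]
Hence $x_3$ is a rational inner function, so it is a finite Blaschke product. By the discussion above, its degree equals $deg(x)=n$. Write $x_3=\omega\prod_{i=1}^n \frac{\lm-\al_i}{\overline{\al}_i\lm-1}$ and set $D(\lm)=c\prod_{i=1}^n(1-\overline{\al}_i\lm)$, with the unimodular constant $c$ chosen so that $x_3=D^{\sim n}/D$. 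Since every $|\al_i|<1$, $D$ has no zeros on $\DC$, which gives $(1)$ and $(6)$.

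\emph{Polynomiality of $E_1,E_2$.} Define $E_1:=x_1D$ and $E_2:=x_2D$. These are rational and holomorphic on a neighbourhood of $\DC$, because the poles of $x_1,x_2$ lie off $\DC$. On $\T$ we have $1/\overline{\lm}=\lm$, so
\[
E_1=\overline{x}_2x_3D=\overline{x}_2D^{\sim n}=\lm^n\overline{x_2(\lm)D(\lm)}=E_2^{\sim n}(\lm).
\]
Thus the rational functions $E_1(\lm)$ and $\lm^n\overline{E_2(1/\overline{\lm})}$ agree on $\T$, hence everywhere. The left side is holomorphic on a neighbourhood of $\DC$. The right side is holomorphic on $|\lm|>1$ and satisfies $O(|\lm|^n)$ at $\infty$, since $E_2$ is bounded near $0$. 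Gluing the two shows $E_1$ extends to an entire function of polynomial growth of order $n$, so $E_1$ is a polynomial of degree at most $n$. The symmetric argument, using $x_2=\overline{x}_1x_3$ on $\T$, applies to $E_2$. This gives $(2)$, $(4)$ and $(5)$.

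\emph{Condition $(3)$.} Since $\Ebar\subseteq\DC^3$, we have $|x_i|\le1$, and therefore $|E_i|\le|D|$ on $\DC$. I expect the main obstacle to be the previous step: justifying that $x_1D$ really has degree at most $n$, and not merely that it is rational. The reflection identity above handles this, provided one checks carefully that there is no pole at $0$ on the reflected side.

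\emph{Converse.} Given $E_1,E_2,D$ satisfying $(1)$–$(3)$, the functions $x_i$ are rational and holomorphic on $\DC$, and $x_3=D^{\sim n}/D$ is a Blaschke product of degree at most $n$. On $\T$ we get $|x_3|=1$, $|x_2|\le1$, and $x_1=E_2^{\sim n}/D=\overline{x}_2x_3$. So $x(\T)\subseteq b\E$ by Theorem \ref{thm6.1}.

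It remains to show $x(\DC)\subseteq\Ebar$. Fix $z\in\D$ and consider
\[
\lm\mapsto\frac{x_3(\lm)z-x_1(\lm)}{x_2(\lm)z-1},
\]
which is holomorphic on $\DC$ because $|x_2|\le1$. On $\T$ its modulus equals $|z-\overline{x}_2|/|1-x_2z|\le1$, using the identity
\[
|1-x_2z|^2-|z-\overline{x}_2|^2=(1-|z|^2)(1-|x_2|^2)\ge0 .
\]
By the maximum principle its modulus is at most $1$ on $\D$. I would then invoke the standard tetrablock criterion: $(x_1,x_2,x_3)\in\Ebar$ if and only if $|x_2|\le1$ and this quotient has modulus at most $1$ for all $z\in\D$, an equivalent form of Theorem \ref{tetrablock} from \cite{Abouhajar}. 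If that form is not available, I would instead verify $1-x_1z_1-x_2z_2+x_3z_1z_2\neq0$ directly by the same Möbius argument in $z_1$. This shows $x(\DC)\subseteq\Ebar$, and the degree of $x$ is at most $n$ because the degree of $x_3$ is.
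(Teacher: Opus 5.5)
Your proof is correct in both directions. The paper itself gives no proof of this statement; it quotes it from \cite{Alsalhi}, so there is no in-paper argument to compare with. What you wrote is the standard argument, namely the one used for rational $\Gamma_2$-inner functions in \cite{AglerV}, adapted to $\E$:
\begin{enumerate}
\item write $x_3=D^{\sim n}/D$;
\item use $x_1=\overline{x}_2x_3$ on $\T$ and reflection to show that $x_1D$ and $x_2D$ are polynomials of degree at most $n$;
\item for the converse, apply the maximum principle to $\Psi(z,x(\cdot))$, where $\Psi(z,x)=(x_3z-x_1)/(x_2z-1)$.
\end{enumerate}

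The only ingredient you use that is not stated in the paper is the criterion that $x\in\Ebar$ if and only if $|x_2|\le1$ and $\sup_{z\in\D}|\Psi(z,x)|\le1$. This is in \cite{Abouhajar}, Theorem 2.4. Your fallback also closes it without citation, via the factorization
\[
1-x_1z_1-x_2z_2+x_3z_1z_2=(1-x_2z_2)\bigl(1-z_1\Psi(z_2,x)\bigr).
\]
The two conditions make the left-hand side non-vanishing for all $z_1,z_2\in\D$. Hence $(rx_1,rx_2,r^2x_3)\in\E$ for $0<r<1$, and letting $r\to1$ gives $x\in\Ebar$.
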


	\begin{thm}[\cite{NagyII}, Section 53]\textbf{$($Fej\'{e}r-Riesz Theorem$)$}\label{Fejer}
		If $f(\lm)=\overset{n}{\underset{i=-n}{\sum}}a_i\lm^i$ is a trigonometric polynomial of degree $n$ with $f(\lm) \geq 0$ for all $\lm \in \T$, then there exists an analytic polynomial $D(\lm)=\overset{n}{\underset{i=0}{\sum}}b_i\lm^i$ of degree $n$ such that $D(\lm) \ne 0$ for all $\lm \in \D$ and $f(\lm)=|D(\lm)|^2$	
		for all $\lm \in \T$.
	\end{thm}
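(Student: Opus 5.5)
The plan is the classical root-pairing argument. First I use positivity to get symmetry of the coefficients. Since $f$ is real-valued on $\T$ and $\overline{\lm}=1/\lm$ there, we have $\overline{f(\lm)}=\sum_i \overline{a_i}\lm^{-i}$. Uniqueness of Fourier coefficients then gives $a_{-i}=\overline{a_i}$ for all $i$. In particular $a_{-n}=\overline{a_n}\neq 0$, because $f$ has degree exactly $n$.

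Next I pass to an ordinary polynomial. Put $P(\lm)=\lm^n f(\lm)=\sum_{k=0}^{2n}a_{k-n}\lm^k$. It has degree exactly $2n$ and satisfies $P(0)=a_{-n}\neq 0$. Using $a_{-i}=\overline{a_i}$, one checks the identity $P(\lm)=\lm^{2n}\overline{P(1/\overline{\lm})}$. Hence the nonzero roots of $P$ are invariant under the reflection $\alpha\mapsto 1/\overline{\alpha}$, with multiplicities preserved. Roots off $\T$ therefore come in pairs $\{\alpha,1/\overline{\alpha}\}$ with $|\alpha|>1$.

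I expect the main obstacle to be the roots on $\T$. The claim is that each root $\zeta\in\T$ of $P$ has even multiplicity. To see this, write $f(e^{it})=e^{-int}P(e^{it})$ near $t_0$ with $\zeta=e^{it_0}$. There $f$ is real analytic in $t$, nonnegative, and vanishes at $t_0$ to the same order as $P$ vanishes at $\zeta$. A real-analytic function of one real variable that does not change sign must vanish to even order, so the multiplicity is even.

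Now I build $D$ by selecting one root from each pair and half of each unimodular root. Let $\alpha_1,\dots,\alpha_n$ consist of the roots of $P$ with $|\alpha|>1$, counted with multiplicity, together with each unimodular root taken with half its multiplicity. This gives exactly $n$ roots, so
\[
P(\lm)=a_n\prod_{j=1}^n(\lm-\alpha_j)(\lm-1/\overline{\alpha_j}).
\]
For $\lm\in\T$ we have $|\lm-1/\overline{\alpha}|=|\lm-\alpha|/|\alpha|$. Hence on $\T$,
\[
f(\lm)=|P(\lm)|=c\prod_j|\lm-\alpha_j|^2, \qquad c=|a_n|\prod_j|\alpha_j|^{-1}>0.
\]
Here $f(\lm)=|P(\lm)|$ because $f\geq 0$ and $|\lm^n|=1$.

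Finally, set $D(\lm)=\sqrt{c}\prod_{j=1}^n(\lm-\alpha_j)$. This is an analytic polynomial of degree $n$ with $|D|^2=f$ on $\T$. Every $\alpha_j$ satisfies $|\alpha_j|\geq 1$, so $D(\lm)\neq 0$ for all $\lm\in\D$, which completes the proof.
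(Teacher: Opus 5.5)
Your proof is correct and complete: the coefficient symmetry $a_{-i}=\overline{a_i}$, the invariance of the zeros of $P(\lambda)=\lambda^n f(\lambda)$ under $\alpha\mapsto 1/\overline{\alpha}$, the even multiplicity of the zeros on $\T$, and the choice of $n$ zeros with $|\alpha_j|\geq 1$ together give the required $D$. The paper does not prove this statement itself but quotes it from the cited textbook, and your root-pairing argument is the standard classical proof of the Fej\'er--Riesz theorem.
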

	
	Let $x=(x_1, x_2, x_3)$ be an $\Ebar$-inner function. We have by Proposition \ref{prop_Ein_Hin} the existence of a holomorphic function on $\D$ such that $f=(a, x_1, x_2, x_3)$ is an $\CQ$-inner function. We explicitly find such an $f$ when $x$ is a rational $\Ebar$-inner function as the next theorem explains.
	
	\begin{thm}\label{thm10.14}
		Let $x=(x_1, x_2, x_3)$ be a rational $\Ebar$-inner function of degree $n$ and let $E_1, E_2, D$ be polynomials as in \eqref{eqn10.2}. If $A$ is a polynomial such that $A(\lambda) \ne 0$ for all $\lm \in \D$ and $|A(\lambda)|^2=|D(\lambda)|^2-|E_1(\lambda)|^2$ for all  $\lm \in \T$, then 
		\[
		f=\bigg(cB\frac{A}{D}, \ \frac{E_1}{D},\ \frac{E_2}{D}, \ \frac{D^{\sim n}}{D}\bigg)
		\]	
		defines a rational $\CQ$-inner function for every finite Blaschke product $B$ and for every $c \in \T$.
	\end{thm}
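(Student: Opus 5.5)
We need to show that $f$ is rational, holomorphic on $\D$, maps $\D$ into $\CQ$, and sends $\T$ into $b\Q$. The plan is to verify the boundary condition directly on $\T$, and then deduce the interior condition from the maximum modulus principle applied to the defining fractional linear expression of $\CQ$ in \eqref{eqn5.3}.

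\textbf{Step 1 (regularity).} Since $D$ has no zeros on $\DC$ by Theorem \ref{thm_Einner}, each component of $f$ is a rational function with no poles on $\DC$. The same holds for $cB$, because a finite Blaschke product has its poles outside $\DC$. Moreover, $(x_1,x_2,x_3)=(E_1/D,E_2/D,D^{\sim n}/D)$ is a rational $\Ebar$-inner function by Theorem \ref{thm_Einner}.

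\textbf{Step 2 (boundary values).} On $\T$ we have $|cB|=1$. Hence
\[
|a|^2+|x_1|^2=\frac{|A|^2+|E_1|^2}{|D|^2}=1.
\]
Since $(x_1,x_2,x_3)(\lm)\in b\E$ for $\lm\in\T$, Theorem \ref{thmbQ} gives $f(\T)\subseteq b\Q$.

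\textbf{Step 3 (interior).} We must show $f(\lm)\in\CQ$ for $\lm\in\D$. We already know $x(\lm)\in\Ebar$. Fix $z_1,z_2\in\D$ and consider
\[
g_{z}(\lm)=\frac{cB(\lm)A(\lm)\sqrt{(1-|z_1|^2)(1-|z_2|^2)}}{D(\lm)\bigl(1-x_1(\lm)z_1-x_2(\lm)z_2+x_3(\lm)z_1z_2\bigr)}.
\]
The denominator factor does not vanish: since $x(\lm)\in\Ebar$ and $z_1,z_2\in\D$, the number $1-x_1z_1-x_2z_2+x_3z_1z_2$ is nonzero. Indeed, $\Ebar$ is the closure of $\E$, the open disc points $z_i$ are interior, and a standard scaling argument applies: $(rz_1,rz_2)$ lies in the closed bidisc, and $\Ebar$ points satisfy nonvanishing for $|z_i|<1$. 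I would cite the description of $\Ebar$ in Theorem \ref{tetrablock} or argue directly via $\Ebar=\{$points with nonvanishing on $\D^2\}$.

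Thus $g_z$ is holomorphic on $\D$ and continuous on $\DC$. On $\T$, the point $f(\lm)$ lies in $b\Q\subseteq\CQ$, so $|g_z|\le 1$ there by \eqref{eqn5.3}. The maximum modulus principle then gives $|g_z|\le1$ on $\D$. Hence $f(\lm)\in\CQ$ by \eqref{eqn5.3}.

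\textbf{Main obstacle.} The main subtlety is the nonvanishing of the denominator $1-x_1(\lm)z_1-x_2(\lm)z_2+x_3(\lm)z_1z_2$ for $\lm\in\D$ and $z\in\D^2$, which is what makes $g_z$ holomorphic. If $x(\lm)$ happens to lie in $\partial\E$, I would instead use the fact that the characterization \eqref{eqn5.3} of $\CQ$, as a closure, requires only $x\in\Ebar$ together with the inequality. In that case I would argue that points of $\Ebar$ give nonvanishing on the open bidisc, since $\Ebar\subseteq\{1-x_1z_1-x_2z_2+x_3z_1z_2\neq0 \ \forall z\in\D^2\}$. This follows by Hurwitz's theorem applied to limits from $\E$, or from $r x\in\E$ for $r<1$. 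Alternatively, the continuity of $g_z$ on $\DC$ can be secured by the same argument, since the identity $|g_z|\le1$ on $\T$ already came from $f(\T)\subseteq\CQ$.
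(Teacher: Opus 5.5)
Your proposal is correct and follows the paper's proof essentially step for step. You check that $|a|^2+|x_1|^2=(|A|^2+|E_1|^2)/|D|^2=1$ on $\T$ and invoke Theorem \ref{thmbQ}, then apply the maximum modulus principle to $\lambda\mapsto\psi_{z_1,z_2}(f(\lambda))$ for fixed $z_1,z_2\in\D$ and conclude via \eqref{eqn5.3}. The only addition is your justification that $1-x_1z_1-x_2z_2+x_3z_1z_2\neq0$ for $x\in\Ebar$ and $(z_1,z_2)\in\D^2$, which the paper simply asserts. The Hurwitz argument is the clean way to do this: the polynomial equals $1$ at the origin, so a limit of nonvanishing ones cannot acquire a zero. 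The scaling remark is unnecessary.
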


	\begin{proof}
		Let $a, x_1, x_2, x_3$ be defined by 
		\[
		a=cB\frac{A}{D}, \quad x_1=\frac{E_1}{D}, \quad x_2=\frac{E_2}{D} \quad \text{and} \quad x_3=\frac{D^{\sim n}}{D}.
		\]
		Since $D(\lm) \ne 0$ on $\overline{\D}$, it follows that $a, x_1, x_2, x_3$ are continuous on $\overline{\D}$. Consequently $f$ is a continuous function on $\overline{\D}$. Let us show that $f=(a, x_1, x_2, x_3)$ is a rational $\CQ$-inner function. We have to prove that $f(\lm) \in \CQ$ and, for almost all $\lm \in \T, f(\lm) \in b\Q$. By assumption $x=(x_1, x_2, x_3): \D \to \Ebar$ is a rational $\Ebar$-inner function, which means $|x_3(\lambda)|=1$ for almost all $\lm \in \T$.  Now we need to show that $|a(\lm)|^2=1-|x_1(\lm)|^2$ for almost all $\lm \in \T$. Indeed, for all $\lm$ in $\T$, we have 
		\[
		|a(\lm)|^2=\bigg|cB(\lm)\frac{A(\lm)}{D(\lm)}\bigg|^2=\bigg|\frac{A(\lm)}{D(\lm)}\bigg|^2=1-\bigg|\frac{E_1(\lm)}{D(\lm)}\bigg|^2=1-|x_1(\lm)|^2.
		\] 	
		By Theorem \ref{thmbQ}, $f(\T)\subseteq b\Q$. Let us show that $f=(a, x_1, x_2, x_3)=\displaystyle\bigg(cB\frac{A}{D}, \ \frac{E_1}{D},\ \frac{E_2}{D}, \ \frac{D^{\sim n}}{D}\bigg)$ maps $\D$ to $\CQ$, that is, 
		\[
		f(\lm)=(a(\lm), \ x_1(\lm), \ x_2(\lm), \ x_3(\lm)) \in \CQ \quad \text{for all} \ \lm \in \D.
		\]
		For any $\lm \in \D$, we must have $(x_1(\lm), \ x_2(\lm), \ x_3(\lm)) \in \Ebar$ as $x=(x_1, x_2, x_3)$ is an $\Ebar$-inner function. In view of \eqref{eqn5.3}, we need to show that
		\[
		\bigg|\frac{a(\lm)\sqrt{(1-|z_1|^2)(1-|z_2|^2)}}{1-x_1(\lm)z_1-x_2(\lm)z_2+x_3(\lm)z_1z_2}\bigg| \leq 1 \quad \text{for all} \quad \lambda, z_1, z_2 \in \D.
		\]
		For every $z_1, z_2 \in \D$, consider the function 
		\[
		\Pz(f(.)): \overline{\D} \to \C, \quad  \lambda \mapsto \frac{a(\lm)\sqrt{(1-|z_1|^2)(1-|z_2|^2)}}{1-x_1(\lm)z_1-x_2(\lm)z_2+x_3(\lm)z_1z_2}.
		\]
		For all $\lambda \in \overline{\D}$, we have
		$
		x(\lambda) \in \Ebar$ and so, $1-x_1(\lm)z_1-x_2(\lm)z_2+x_3(\lm)z_1z_2 \ne 0$ for any $z_1, z_2 \in\D$.
		Therefore, $\Pz(f(.))$ is continuous on $\overline{\D}$ and holomorphic on $\D$. Since $f(\lm) \in b\Q \subseteq \CQ$ for $\lambda \in \T$, we have by \eqref{eqn5.3} that for all $\lambda \in \T$,
		\[
		|\Pz(f(\lm))|=|\Pz(a(\lm), \ x_1(\lm), \ x_2(\lm), \ x_3(\lm))| \leq 1, \quad z_1, z_2 \in \D.
		\]
		By maximum principle, it follows that 
		\[
		|\Pz(f(\lm))|=|\Pz(a(\lm), \ x_1(\lm), \ x_2(\lm), \ x_3(\lm))| \leq 1 \quad \text{ for all } \lm \in \D.
		\]
		Again by \eqref{eqn5.3}, $f(\lm)=(a(\lm), \ x_1(\lm), \ x_2(\lm), \ x_3(\lm)) \in \CQ$ for all $\lm \in \overline{\D}$. Thus, $f=(a, x_1, x_2, x_3)$ is a rational map from $\D$ to $\CQ$ that maps $\T$ to $b\Q$. This shows that $f=(a, x_1, x_2, x_3)$ is a rational $\CQ$-inner function. 
	\end{proof} 
	
	We now present a complete description of the rational $\CQ$-inner functions and this is the main result of this section.
	
	\begin{thm}\label{thm10.12}
		Let $f=(a, x_1, x_2, x_3)$ be a rational $\CQ$-inner function and let $deg(x_3)=n$. Let $a \ne 0$ and let $a=a_{in}a_{out}$ be an inner-outer factorization of $a$. Then there exist polynomials $A, E_1, E_2, D$ of degree at most $n$ such that  the following hold.
		\begin{enumerate}
			\item $E_1(\lm)=E_2^{\sim n}(\lm)$ for $\lm \in \overline{\D}$,\\
			
			\item $A$ is outer and $D(\lm) \ne 0$ for all $\lambda  \in \overline{\D}$,\\
			
			\item $|E_i(\lm)| \leq |D(\lm)|$ on $\overline{\D}$, for $i=1,2$,\\
			
			\item $|A(\lambda)|^2=|D(\lambda)|^2-|E_1(\lambda)|^2$ on $\T$,\\	
			
			\item 
			$\displaystyle 
			(a, x_1, x_2, x_3)=\left(a_{in}\frac{A}{D}, \frac{E_1}{D}, \frac{E_2}{D}, \frac{D^{\sim n}}{D} \right)$ on $\overline{\D}$.
		\end{enumerate}
		Conversely, if $A, E_1, E_2$ and $D$ are polynomials of degree at most $n$ that satisfy conditions $(1)-(4)$ and $a, x_1, x_2$ and $x_3$ are defined as in $(5)$, then $f=(a, x_1, x_2, x_3)$ is a rational $\CQ$-inner function with degree of $x_3$ at most $n$.	
	\end{thm}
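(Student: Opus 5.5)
The forward direction reduces to the tetrablock case. By Lemma \ref{lem10.5}, $x=(x_1,x_2,x_3)$ is a rational $\Ebar$-inner function, and by the discussion following the definition of degree, $deg(x)=deg(x_3)=n$. Theorem \ref{thm_Einner} then supplies polynomials $E_1,E_2,D$ of degree at most $n$ satisfying $(1)$, $(3)$, $D\neq 0$ on $\DC$, and the last three coordinates of $(5)$. What remains is the first coordinate $a$.

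Since $f(\T)\subseteq b\Q$, Theorem \ref{thmbQ} gives $|a|^2=1-|x_1|^2$ on $\T$. Hence
\[
|a(\lm)|^2|D(\lm)|^2=|D(\lm)|^2-|E_1(\lm)|^2 \quad \text{for } \lm\in\T .
\]
The right-hand side is a nonnegative trigonometric polynomial of degree at most $n$, since $D$ and $E_1$ have degree at most $n$. By the Fej\'er--Riesz Theorem \ref{Fejer}, there is a polynomial $A$ of degree at most $n$, with no zeros in $\D$, such that $|A|^2=|D|^2-|E_1|^2$ on $\T$. This gives $(4)$.

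Next we show that $A$ is outer. A polynomial with no zeros in $\D$ is outer: its zeros on $\T$ contribute factors $(\lm-\zeta)$ with $|\zeta|=1$, and each such factor is outer. Now set $g=A/D$. Since $D$ has no zeros on $\DC$, $1/D$ is outer and invertible in $H^\infty$, so $g$ is outer and $|g|=|a|$ on $\T$. The function $a$ is rational and continuous on $\DC$, so $a_{out}$ is determined up to a unimodular constant by its boundary modulus $|a|$. Therefore $a_{out}=cA/D$ for some $c\in\T$. Absorbing $c$ into $A$ (or into $a_{in}$, since the factorization is unique only up to unimodular constants) gives $a=a_{in}A/D$, which completes $(5)$ and $(2)$.

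The converse follows from the converse part of Theorem \ref{thm_Einner}. Conditions $(1)$--$(3)$ make $x=(E_1/D,E_2/D,D^{\sim n}/D)$ a rational $\Ebar$-inner function with $deg(x_3)\le n$. Then Theorem \ref{thm10.14}, applied with $B=a_{in}$ and $c=1$, shows that $f$ is rational $\CQ$-inner. Theorem \ref{thm10.14} also uses that $A$ has no zeros in $\D$, which holds because $A$ is outer.

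A few technical points need care. First, Theorem \ref{thm10.14} is stated for a finite Blaschke product $B$. In the converse, $a_{in}$ is just some inner function, and $f$ must be rational, so we interpret $a_{in}$ as a finite Blaschke product, or argue directly. The maximum-principle argument in the proof of Theorem \ref{thm10.14} works for any inner $B$, using boundedness and a.e.\ boundary values. Rationality of $a$ forces $a_{in}$ to be a finite Blaschke product in the forward direction. Second, the degree bound needs checking: $|D|^2-|E_1|^2=\sum_{|k|\le n}c_k\lm^k$ on $\T$, so Fej\'er--Riesz gives $\deg A\le n$.

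The main obstacle is the identification $a_{out}=A/D$ up to a constant. It relies on uniqueness of outer functions with a prescribed boundary modulus and on $A$ being outer even when it vanishes on $\T$. I would justify the latter by factoring $A$ as a constant times $\prod(\lm-\zeta_j)$ times a polynomial zero-free on $\DC$, and noting that each factor $\lm-\zeta_j$ with $|\zeta_j|=1$ is outer.
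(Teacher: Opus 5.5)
Your proposal is correct and follows essentially the same route as the paper. You reduce to the tetrablock via Lemma~\ref{lem10.5} and Theorem~\ref{thm_Einner}, obtain $A$ from the Fej\'er--Riesz theorem, and identify $a_{out}$ with $A/D$ up to a unimodular constant by uniqueness of outer functions; for the converse you combine the converse of Theorem~\ref{thm_Einner} with Theorem~\ref{thm10.14}, reading $a_{in}$ as a finite Blaschke product, and your extra remarks on $A$ staying outer when it vanishes on $\T$ and on $\deg A\le n$ just make explicit what the paper leaves implicit.
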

	
	\begin{proof}
		Let $f=(a, x_1, x_2, x_3)$ be a rational $\CQ$-inner function and let $deg(x_3)=n$. By Lemma \ref{lem10.5}, $x=(x_1, x_2, x_3)$ is a rational $\Ebar$-inner function of degree $n$. It follows from Theorem \ref{thm_Einner} that there exist polynomials $E_1, E_2, D$ of degree at most $n$ such that the following hold: 
		\begin{equation*}
			\begin{split}
				&(1)  \	\ D(\lm) \ne 0 \  \text{on} \  \overline{\D}, \qquad (2) \ E_1=E_2^{\sim n} \ \text{on} \ \overline{\D}, 
				\qquad (3) \  |E_i(\lm)| \leq |D(\lm)| \  \text{for} \ \lm \in \overline{\D}, i=1,2,\\
				&(4) \ \	x_1=\frac{E_1}{D}\ \ \text{on} \ \ \overline{\D},\qquad (5) \ \	x_2=\frac{E_2}{D} \ \ \text{on} \ \ \overline{\D} \  \qquad (6) \ \	x_3=\frac{D^{\sim n}}{D} \ \ \text{on} \ \ \overline{\D}. \\
			\end{split}	
		\end{equation*}
		Since $f$ is a rational $\CQ$-inner function, $(a(\lm), x_1(\lm), x_2(\lm), x_3(\lm)) \in b\Q$ for all $\lm \in \T$. Thus, $|a_{out}(\lm)|^2=1-|x_1(\lm)|^2$ as $|a_{in}(\lm)|=1$ everywhere on $\T$. Then
		\begin{equation}\label{eqn10.3}
			|a_{out}(\lm)|^2=1-\frac{|E_1(\lm)|^2}{|D(\lm)|^2}  \quad \text{and so,} \quad 0 \leq |a_{out}(\lambda)|^2|D(\lm)|^2=|D(\lm)|^2-|E_1(\lm)|^2 \ \text{on} \ \T.
		\end{equation}
		By Theorem \ref{Fejer}, there is a polynomial $A$ of degree at most $n$ such that $A(\lm) \ne 0$ for $\lm \in \D$ and 
		\begin{equation}\label{eqn10.4}
			|A(\lambda)|^2=|D(\lambda)|^2-|E_1(\lambda)|^2 \quad \text{for all} \quad \lm \in \T.
		\end{equation}
		It follows from \eqref{eqn10.3} \& \eqref{eqn10.4} that $|A(\lm)|^2=|a_{out}(\lm)|^2|D(\lm)|^2$ for every $\lm \in \T$. Note that $D(\lambda) \ne 0$ on $\overline{\D}$. Thus, 
		\[
		|a_{out}(\lm)|=\bigg|\frac{A(\lm)}{D(\lm)}\bigg| \quad \text{for all} \ \lambda \in \T.
		\]
		Hence, $A \slash D$ is an outer function such that $|a(\lm)|=|A(\lm)\slash D(\lm)|$ for all $\lm \in \T$. Since outer factors are unique up to scalars of modulus one, there exists $\om \in \T$ such that 
		\[
		a_{out}=\om \frac{A}{D} \quad \text{on} \ \overline{\D}. 
		\]
		Therefore, $a=a_{in}A\slash D$ on $\overline{\D}$, if $A$ is replaced by $\om A$. Conversely, assume that $A, E_1, E_2$ and $D$ are polynomials of degree at most $n$ which satisfy conditions $(1)-(4)$ and $a, x_1, x_2$ and $x_3$ are defined as in $(5)$. It follows from Theorem \ref{thm_Einner} that $x=(x_1, x_2, x_3)$ is a rational $\Ebar$-inner function of degree at most $n$. Since the rational inner functions on $\D$ are precisely the finite Blaschke products, the rest of the conclusion follows from Theorem \ref{thm10.14}.
	\end{proof}

	We now present examples of $\CQ$-inner maps based on the idea of $\overline{\Pe}$-inner functions from \cite{AglerIV}. 	
	
	\begin{eg}
		Let $h_1, h_2$ be non-constant inner functions. Then the map given by
		\begin{equation}\label{eqn10.1}
			h: \D \to M_2(\C), \quad h(\lambda)=\begin{pmatrix}
				h_1(\lambda) &  0\\
				0 & h_2(\lambda) \\
			\end{pmatrix}
		\end{equation}
		is a non-constant analytic map with $\|h(\lambda)\|=\max\{|h_1(\lambda)|, |h_2(\lambda)| \} <1$ for all $\lambda \in \D$. Then the map
		$
		f: \D \to \C^4$ given by $f(\lambda) =\pi(h(\lambda))=(0,\ h_1(\lambda),\ h_2(\lambda),\ h_1(\lambda)h_2(\lambda))$ defines an analytic map. It is clear from the definition of $\QN$ that $f(\lambda)=\pi(h(\lambda)) \in \QN$. We have by \eqref{eqn5.3} that $f(\lambda) \in \CQ$ for all $\lambda \in \D$. Since $h_1$ and $h_2$ are inner functions on $\D$, it follows that 
		\[
		|h_1(\lambda)|=1, \quad |h_2(\lambda)|=1, \quad |h_1(\lambda)h_2(\lambda)|=1 \quad \text{for almost all } \lambda \in \T.
		\] 
		and so, by Theorem \ref{thmbQ}, $f(\lambda) \in b\Q$ for almost every $\lambda \in \T$. Hence, $f$ is an $\CQ$-inner function. \qed 
	\end{eg}
	
	\begin{eg}
		Let $h: \D \to M_2(\C)$ be as in \eqref{eqn10.1}. Define $h_0=Uh$, where  $U$ is given by
		\[
		U=\begin{pmatrix}
			\frac{1}{\sqrt{2}} & \frac{1}{\sqrt{2}} \\
			\frac{i}{\sqrt{2}} & \frac{-i}{\sqrt{2}}
		\end{pmatrix} \quad \text{and so,} \quad h_0(\lm)=\frac{1}{\sqrt{2}}\begin{pmatrix}
			h_1(\lm) & h_2(\lm)\\
			ih_1(\lm) & -ih_2(\lm)	
		\end{pmatrix}
		\]
		for all $\lm \in \D$. Note that $U$ is a unitary matrix and
		$
		\|h_0(\lm)\| \leq \|U\| \ \|h(\lm)\|=\|h(\lm)\|<1
		$ for all $\lm \in \D$. Define $f=\pi \circ h_0$ on $\D$, which is precisely given by
		\begin{equation*}
			\begin{split}
				f(\lm)=\pi(h_0(\lm))
				=\bigg(\frac{ih_1(\lm)}{\sqrt{2}}, \frac{h_1(\lm)}{\sqrt{2}}, \frac{-ih_2(\lm)}{\sqrt{2}}, -ih_1(\lm)h_2(\lm) \bigg) \quad (\lm \in \D).
			\end{split}
		\end{equation*}
		Note that almost for all $\lm \in \T, |h_1(\lm)|=1=|h_2(\lm)|$. Using Theorem \ref{thmbQ}, it is not difficult to see that $f(\lm) \in b\Q$ for almost all $\lm \in \T$. Hence, $f$ is an $\CQ$-inner function. \qed
	\end{eg}
	
	\begin{eg}
		Let $v, h_1, h_2$ be inner functions on $\D$. Consider the functions 
		\[
		h_0(\lm)=\begin{pmatrix}
			h_1(\lm) & 0	\\
			0 & h_2(\lm)
		\end{pmatrix} \quad  \text{and} \quad 	V(\lm)= U_0\begin{pmatrix}
			1 & 0 \\
			0 & v(\lm)
		\end{pmatrix}, \quad \text{where} \quad U_0=\begin{pmatrix}
			\frac{1}{\sqrt{2}} & \frac{1}{\sqrt{2}} \\
			\frac{-1}{\sqrt{2}} & \frac{1}{\sqrt{2}}
		\end{pmatrix}  
		\]
		for  $\lm \in \D$. Note that $U_0$ is a unitary matrix. Define 
		\begin{equation*}
			\begin{split}
				U: \D \to M_2(\C), \, \, U(\lm)=(V^*h_0V)(\lm)=\frac{1}{2}\begin{pmatrix}
					h_1(\lm)+h_2(\lm) & (h_1(\lm)-h_2(\lm))v(\lm)\\
					(h_1(\lm)-h_2(\lm))\overline{v(\lm)} & (h_1(\lm)+h_2(\lm))|v(\lambda)|^2
				\end{pmatrix}.
			\end{split}
		\end{equation*}
		Let $\lambda \in \D$. Then $\|V^*(\lm)\|=\max\{1, |\overline{v(\lm)}|\} \leq 1
		$ and so, $\|U(\lm)\| \leq \|V(\lm)\|^2 \|h_0(\lm)\| <1$. Define
		\[
		f(\lm) =\pi(U(\lm))=\bigg(\frac{1}{2}(h_1-h_2)\overline{v}, \ \frac{1}{2}(h_1+h_2), \ \frac{1}{2}(h_1+h_2)|v|^2, \ \frac{1}{4}((h_1+h_2)^2-(h_1-h_2)^2|v|^2) \bigg)
		\]
		for $\lm \in \D$. Evidently, $f$ is analytic on $\D$ if and only if $v$ is constant. We denote by $f=(a, x_1, x_2, x_3)$ on $\D$. Since $|v(\lm)|=|h_1(\lm)|=|h_2(\lm)|=1$ for almost every $\lm \in \T$, then
		\begin{align*} |x_3(\lm)|&=\frac{1}{4}|(h_1(\lm)+h_2(\lm))^2-(h_1(\lm)-h_2(\lm))^2|=|h_1(\lm)h_2(\lm)|=1,\\
			|a(\lm)|^2+|x_1(\lm)|^2&=\frac{1}{4}|h_1(\lm)-h_2(\lm)|^2+\frac{1}{4}|h_1(\lm)+h_2(\lm)|^2=1
		\end{align*} 
		for almost all $\lm \in \T$. It follows from Theorem \ref{thmbQ} that $f(\lm) \in b\Q$ for almost all $\lm$ in $\T$. Hence, $f$ is an $\CQ$-inner function if and only if $v$ is constant. \qed 
	\end{eg}
	
	\begin{eg}
		Let $m, n \in \mathbb{N}$. We have by part-$(4)$ of Theorem \ref{tetrablock} that  $(0, 0, \lm^n) \in \Ebar$ for all $\lm \in \overline{\D}$. Define 
		\[
		A: \overline{\D} \to M_2(\C), \quad A(\lambda)=\begin{pmatrix}
			0 & -\lm^n\\
			1 & 0\\
		\end{pmatrix} \quad \text{and so,} \quad \|A(\lambda)\| \leq 1.
		\]
		Let $\lambda \in \overline{\D}$. It follows from Proposition 4.7 in \cite{Biswas-Pal} that 	$\CQN=\{\pi(A): A \in M_2(\C), \ \|A\| \leq 1\}$ and so, $\pi(A(\lambda))=(1, 0, 0, \lm^n) \in \CQN$. By \eqref{eqn5.3}, $(1, 0, 0, \lm^n) \in \CQ$ and $\underset{z_1, z_2 \in \D}{\sup}|\Pz(1, 0, 0, \lm^n)| \leq 1$. Then
		\[
		\sup_{z_1, z_2 \in \D}|\Pz(\lm^m, 0, 0, \lm^n)|= |\lm|^m\sup_{z_1, z_2 \in \D}|\Pz(1, 0, 0, \lm^n)| \leq 1.
		\]
		By \eqref{eqn5.3}, $(\lm^m, 0, 0, \lm^n) \in \CQ$ for all $\lm \in \overline{\D}$. Hence, the function $f: \overline{\D} \to \CQ$ defined  by $f(\lm)=(\lm^m, 0, 0, \lm^n)$ is analytic. By Theorem \ref{thmbQ}, $f(\T) \subseteq b\Q$. Hence, $f$ is a rational $\CQ$-inner function. \qed
	\end{eg}

	\section{A Schwarz Lemma for the hexablock}
	\label{Schwarz_lem}

	\noindent For sets $\Omega_1 \subseteq \C^n$ and $\Omega_2 \subseteq \C^m$, we denote by Hol$(\Omega_1, \Omega_2)$ the space of holomorphic functions on $\Omega_1$ taking values in $\Omega_2$. The classical Schwarz Lemma provides a characterization of the two-point interpolation problem in $\D \setminus \{0\}$. In this section, we explore a Schwarz Lemma for $\Q$, that is, we characterize those pairs $\lambda_0 \in \D$ and $(a, x_1, x_2, x_3) \in \Q$ for which there exists a rational $\CQ$-inner function $h$ satisfying $h(0)=0$ and $h(\lambda_0)=(a,x_1,x_2,x_3)$. It follows from Proposition 3.16 in \cite{Biswas-Pal2} that if $h:\D\to\CQ$ is a non-constant holomorphic map such that $h(\lambda)\in\Q$ for some $\lambda\in\D$, then $h(\D)\subseteq\Q$. As an immediate consequence, we have the following lemma.

	\begin{lem} Let $\lambda_0 \in \D \setminus \{0\}$ and let $(a, x_1, x_2, x_3) \in \Q$. Then the following are equivalent:
		\begin{enumerate}
			\item[(i)] there is an analytic map $h: \D \to \overline{\Q}$ such that $h(0)=(0, 0, 0, 0)$ and $h(\lambda_0)=(a, x_1, x_2, x_3)$;
			
			\item[(ii)] there is an analytic map $h: \D \to \Q$ such that $h(0)=(0, 0, 0, 0)$ and $h(\lambda_0)=(a, x_1, x_2, x_3)$.
		\end{enumerate}
	\end{lem}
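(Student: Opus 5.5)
The implication (ii)$\Rightarrow$(i) needs no work: $\Q\subseteq\CQ$, so any analytic $h:\D\to\Q$ is also an analytic map into $\CQ$ with the same interpolation values. All the content is in (i)$\Rightarrow$(ii), and for that I will rely on Proposition 3.16 of \cite{Biswas-Pal2}, quoted just before the statement.

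Let $h:\D\to\CQ$ be analytic with $h(0)=(0,0,0,0)$ and $h(\lambda_0)=(a,x_1,x_2,x_3)\in\Q$. The plan is to show that $h$ meets the hypotheses of that proposition and then read off $h(\D)\subseteq\Q$.

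\textbf{Step 1: $h$ is non-constant.} Suppose $h$ were constant. Then $h(\lambda_0)=h(0)=(0,0,0,0)$, so $(a,x_1,x_2,x_3)=0$. I will treat this case directly rather than through the proposition. The origin lies in $\Q$: by Theorem \ref{thm_bidisc} (or Theorem \ref{thm_EQ}), $(0,0,0,0)\in\Q$. So the constant map $h\equiv 0$ already takes values in $\Q$ and witnesses (ii). Note also that this constant case genuinely needs separate handling, since the proposition requires non-constancy.

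\textbf{Step 2: apply the proposition.} If $h$ is non-constant, then it is a non-constant holomorphic map $\D\to\CQ$ with $h(\lambda_0)\in\Q$. Proposition 3.16 of \cite{Biswas-Pal2} then gives $h(\D)\subseteq\Q$. Hence $h$ itself is an analytic map $\D\to\Q$ satisfying $h(0)=0$ and $h(\lambda_0)=(a,x_1,x_2,x_3)$, which is (ii).

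The only point requiring care is the constant case in Step 1, which is why I split it off before invoking the proposition. Beyond that, the proof is immediate from the cited proposition.
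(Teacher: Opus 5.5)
Your proof is correct and follows the paper's route exactly: the paper deduces the lemma as an immediate consequence of Proposition 3.16 in \cite{Biswas-Pal2}, and your separate treatment of the constant case fills in a detail the paper leaves implicit. In that case you do not even need Theorem \ref{thm_bidisc}, since $0=h(\lambda_0)=(a,x_1,x_2,x_3)$ already lies in $\Q$ by hypothesis.
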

	
	The following necessary conditions for a Schwarz lemma for $\Q$ follow from the classical Schwarz lemma and the Schwarz lemma for the tetrablock.
	
		\begin{thm}\label{necessary}
		Let $\lm_0 \in \D\setminus\{0\}$ and let $(a, x_1, x_2, x_3) \in \Q$. Then, in the following, $(1)\implies(2)\implies(3)\implies(4)\implies(5):$
		
		\begin{enumerate}
			\item there is a rational $\CQ$-inner function $f$ with $f(0)=(0, 0, 0, 0)$ and $f(\lm_0)=(a, x_1, x_2, x_3)$; 
			
			\item there exists $h \in$ Hol$(\D, \CQ)$ such that $h(0)=(0, 0, 0, 0)$ and $h(\lm_0)=(a, x_1, x_2, x_3)$;
			
			\item 
			$
			\underset{z_1, z_2 \in \D}{\sup} |\psi_{z_1, z_2}(a, x_1, x_2, x_3)| \leq |\lambda_0|
			$
			and
			\[
			\max\bigg\{\frac{|x_1-\overline{x}_2x_3|+|x_1x_2-x_3|}{1-|x_2|^2}, \ \frac{|x_2-\overline{x}_1x_3|+|x_1x_2-x_3|}{1-|x_1|^2}\bigg\} \leq |\lambda_0|;
			\]
			\item $(\lm_0^{-1}a, x_1, x_2, x_3) \in \CQ$	 and
			\[
			\max\bigg\{\frac{|x_1-\overline{x}_2x_3|+|x_1x_2-x_3|}{1-|x_2|^2}, \ \frac{|x_2-\overline{x}_1x_3|+|x_1x_2-x_3|}{1-|x_1|^2}\bigg\} \leq |\lambda_0|;
			\]		
			\item 
			$
			|a| \leq |\lm_0|\sqrt{1-|x_1|^2}
			$ 
			and
			\[
			\max\bigg\{\frac{|x_1-\overline{x}_2x_3|+|x_1x_2-x_3|}{1-|x_2|^2}, \ \frac{|x_2-\overline{x}_1x_3|+|x_1x_2-x_3|}{1-|x_1|^2}\bigg\} \leq |\lambda_0|.
			\]		
		\end{enumerate}
	\end{thm}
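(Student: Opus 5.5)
(1)$\Rightarrow$(2) is immediate: a rational $\CQ$-inner function is in particular a holomorphic map $\D\to\CQ$ with the same interpolation data.

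For (2)$\Rightarrow$(3), let $h=(h_0,h_1,h_2,h_3)\in$ Hol$(\D,\CQ)$ with $h(0)=0$ and $h(\lm_0)=(a,x_1,x_2,x_3)$. By Lemma \ref{lem10.5}, $(h_1,h_2,h_3)\in$ Hol$(\D,\Ebar)$ maps $0$ to $(0,0,0)$ and $\lm_0$ to $(x_1,x_2,x_3)$. The Schwarz lemma for the tetrablock (\cite{Abouhajar}) then yields
\[
\max\bigg\{\frac{|x_1-\overline{x}_2x_3|+|x_1x_2-x_3|}{1-|x_2|^2}, \ \frac{|x_2-\overline{x}_1x_3|+|x_1x_2-x_3|}{1-|x_1|^2}\bigg\} \leq |\lambda_0|.
\]
Some care is needed here because that result is stated for maps into $\E$. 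Since $(x_1,x_2,x_3)\in\E$, Proposition 3.16 of \cite{Biswas-Pal2} (or the open-mapping argument behind the preceding lemma) shows that $h$ maps into $\Q$, so $(h_1,h_2,h_3)$ maps into $\E$.

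For the other half of (3), fix $z_1,z_2\in\D$ and set $g(\lm)=\Pz(h(\lm))$. This is holomorphic on $\D$, because the denominator $1-h_1z_1-h_2z_2+h_3z_1z_2$ does not vanish when $(h_1,h_2,h_3)(\lm)\in\E$. By \eqref{eqn5.3} we have $|g|\le1$, and $g(0)=0$ since $h_0(0)=0$. The classical Schwarz lemma gives $|g(\lm_0)|\le|\lm_0|$. Taking the supremum over $z_1,z_2$ gives $\sup_{z_1,z_2\in\D}|\Pz(a,x_1,x_2,x_3)|\le|\lm_0|$.

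For (3)$\Rightarrow$(4), note that $\Pz$ is linear in its first coordinate. Hence
\[
\sup_{z_1,z_2\in\D}|\Pz(\lm_0^{-1}a,x_1,x_2,x_3)|=|\lm_0|^{-1}\sup_{z_1,z_2\in\D}|\Pz(a,x_1,x_2,x_3)|\le1.
\]
Since $(x_1,x_2,x_3)\in\E\subseteq\Ebar$, \eqref{eqn5.3} shows that $(\lm_0^{-1}a,x_1,x_2,x_3)\in\CQ$. The tetrablock inequality carries over unchanged.

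For (4)$\Rightarrow$(5), apply Lemma \ref{lem6.5} to $(\lm_0^{-1}a,x_1,x_2,x_3)\in\CQ$. This gives $|\lm_0|^{-2}|a|^2+|x_1|^2\le1$, that is, $|a|\le|\lm_0|\sqrt{1-|x_1|^2}$. The tetrablock inequality again carries over unchanged.

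The only step that is not routine is invoking the tetrablock Schwarz lemma in (2)$\Rightarrow$(3) in exactly the stated form. This requires checking that the maximum of the two quotients is its condition for a map $\D\to\overline{\E}$ with $0\mapsto0$ and $\lm_0\mapsto x$. Otherwise, one would rederive it by applying the classical Schwarz lemma to the linear fractional functions $\Psi_\omega(x)=(x_3\omega-x_1)/(x_2\omega-1)$, $\omega\in\T$, composed with $h$, and then maximizing over $\omega$.
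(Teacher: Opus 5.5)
Your proposal is correct and follows essentially the same route as the paper: the tetrablock Schwarz lemma for the last three coordinates, the classical Schwarz lemma applied to $\psi_{z_1,z_2}\circ h$, and then \eqref{eqn5.3} and Lemma \ref{lem6.5} for the last two implications. Your extra checks only make explicit what the paper's proof uses tacitly: that $h$ actually maps into $\Q$, and that the tetrablock Schwarz lemma is available for maps into $\Ebar$ (as recorded in Theorem \ref{thm_Sch_E_1}).
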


\begin{proof}
	The condition $(1) \implies (2)$ follows trivially. Suppose $(2)$ holds and let $h=(h_1, h_2, h_3, h_4)$. Then $h=(h_2, h_3, h_4) \in$ Hol$(\D, \E)$ that maps $0$ to $(0, 0, 0)$ and $\lambda_0$ to $(x_1, x_2, x_3)$. By Schwarz Lemma for $\E$ (see Theorem 1.2 in \cite{Abouhajar}), the second inequality in condition $(3)$ holds. Let $z_1, z_2 \in \D$. It is clear from the definition of $\Q$ that the function 
	\[
	\psi_{z_1, z_2}(b, y_1, y_2, y_3)=\frac{b\sqrt{(1-|z_1|^2)(1-|z_2|^2)}}{1-y_1z_1-y_2z_2+y_3z_1z_2}
	\]
	maps $\Q$ analytically to $\D$. It also maps $(0, 0, 0, 0)$ to $0$. Hence, $\psi_{z_1, z_2}\circ h$ is an analytic self-map of $\D$  that maps $0$ to $0$ and $\lambda_0$ to $\psi_{z_1, z_2}(a, x_1, x_2, x_3)$. It follows from the Schwarz Lemma for $\D$ that
	\[
	|\psi_{z_1, z_2}(a, x_1, x_2, x_3)| \leq |\lambda_0| \quad \text{for all} \quad z_1, z_2 \in \D,
	\]	
	which proves condition $(3)$. The condition $(3) \implies (4)$ follows from \eqref{eqn5.3}. If $(\lm_0^{-1}a, x_1, x_2, x_3) \in \CQ$, then Lemma \ref{lem6.5} gives
	$
	|\lm_0^{-1}a|^2+|x_1|^2 \leq 1$ and so, $|a| \leq |\lm_0|\sqrt{1-|x_1|^2}$. Thus, $(4) \implies (5)$ follows. The proof is now complete.
\end{proof}

\begin{rem}
	In the initial version of our long draft \cite{Biswas-Pal} (see version 1 of \cite{Biswas-Pal} in arXiv), the above Theorem \ref{necessary} (which was stated as Theorem 13.9 in \cite{Biswas-Pal}) was presented with all five conditions being necessary and sufficient for a Schwarz lemma for $\Q$. However, the sufficiency part in those conditions are not true. In the proof of the sufficiency part (see the proof of the sufficiency part of Theorem 13.9 in \cite{Biswas-Pal}), we used Theorem  11.3 of \cite{Alsheri} which provides a set of similar necessary and sufficient conditions for a Schwarz lemma for the pentablock $\Pe$. For the sake of transparency, we state that result from \cite{Alsheri} below.
	
		\begin{thm}[\cite{Alsheri}, Theorem 11.3]\label{thm_AZ_Sch_P}
		Let $\lm_0 \in \D\setminus \{0\}$ and let $(a, s, p) \in \Pbar$. Then the following are equivalent:
		\begin{enumerate}
			\item[(i)] there exists a rational $\Pbar$-inner map $h$ such that $h(0)=(0, 0, 0)$ and $h(\lm_0)=(a, s, p)$; \smallskip 
			
			\item[(ii)] there exists an analytic function $h: \D \to \Pbar$ such that $h(0)=(0, 0, 0)$ and $h(\lm_0)=(a, s, p)$, and  $\displaystyle |a| \leq |\lm_0|\sqrt{1-\frac{|s|^2}{4}}$; \smallskip 
			
			\item[(iii)]
			$\displaystyle 
			\frac{2|s-\overline{s}p|+|s^2-4p|}{4-|s|^2} \leq |\lambda_0|, \  |s|<2$ and  $\displaystyle |a| \leq |\lm_0|\sqrt{1-\frac{|s|^2}{4}}$.
		\end{enumerate}  
		
	\end{thm}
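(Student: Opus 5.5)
The plan is to prove the cycle (i)$\Rightarrow$(ii)$\Rightarrow$(iii)$\Rightarrow$(i), transporting as much as possible to the hexablock through the embedding $e_\Pe$ of Theorem \ref{thmP_in_H}. Throughout, $h=(a(\cdot),s(\cdot),p(\cdot))$ denotes the interpolating map.

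For (i)$\Rightarrow$(ii), the analytic map is $h$ itself, so only the bound on $|a|$ needs proof.
\begin{itemize}
\item By Theorem \ref{thmP_in_H}, $e_\Pe(\Pbar)\subseteq \CQ$ and $e_\Pe(b\Pe)\subseteq b\Q$.
\item Hence $e_\Pe\circ h=(a,s/2,s/2,p)$ is a rational $\CQ$-inner function sending $0$ to $(0,0,0,0)$ and $\lm_0$ to $(a,s/2,s/2,p)$.
\item The implication $(1)\Rightarrow(5)$ of Theorem \ref{necessary}, applied with $x_1=s/2$, gives $|a|\le|\lm_0|\sqrt{1-|s|^2/4}$.
\item Theorem \ref{necessary} is stated for points of $\Q$. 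If $(a,s/2,s/2,p)\in\partial\Q$, I will instead run its proof directly: Lemma \ref{lem6.5} and the Schwarz lemma applied to $\Pz\circ e_\Pe\circ h$ are still valid there.
\end{itemize}

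For (ii)$\Rightarrow$(iii), I compose with the retraction $(a,s,p)\mapsto(s,p)$ from $\Pbar$ onto $\Gamma_2$, using Theorem \ref{pentablock}.
\begin{itemize}
\item This gives an analytic map $\D\to\Gamma_2$ with $0\mapsto(0,0)$ and $\lm_0\mapsto(s,p)$.
\item The map is non-constant, since it takes two different values. It takes the interior value $(0,0)$, so by the analogue for $\Gamma_2$ of the interior-propagation fact quoted before Theorem \ref{necessary}, it maps into $\Gg$. In particular $|s|<2$.
\item The Agler--Young Schwarz lemma for $\Gg$ \cite{AglerII} then yields $\frac{2|s-\overline{s}p|+|s^2-4p|}{4-|s|^2}\le|\lm_0|$.
\item The bound on $|a|$ is already part of hypothesis (ii), so (iii) follows.
\end{itemize}

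For (iii)$\Rightarrow$(i), the plan is first to use the converse half of the Agler--Young lemma to build a rational $\Gamma_2$-inner map $(s(\cdot),p(\cdot))$ with $(0,0)\mapsto$ and $\lm_0\mapsto(s,p)$. Then I would realize $(s/2,s/2,p)$ as a rational $\Ebar$-inner function $E_1/D,E_2/D,D^{\sim n}/D$ and take $a(\cdot)=cB\,A/D$ as in Theorem \ref{thm10.14}. Here $|A|^2=|D|^2-|E_1|^2$ on $\T$, and $B$ is a Blaschke product with a zero at $0$ together with one further factor $B_\alpha$. I then need to tune $\alpha$ and $c$ so that $a(\lm_0)=a$. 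Projecting back by $\psi_\Pe$ would give (i).

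The main obstacle is exactly this tuning step, and I expect it may fail. The attainable values $cB(\lm_0)A(\lm_0)/D(\lm_0)$ fill the disc of radius $|\lm_0|\,|A(\lm_0)/D(\lm_0)|$. The hypothesis only gives $|a|\le|\lm_0|\sqrt{1-|s|^2/4}$, so I would need $|A/D|(\lm_0)\ge\sqrt{1-|x_1(\lm_0)|^2}$ with $x_1=s/2$. However, $\log(1-|x_1|^2)$ is superharmonic with boundary values $2\log|A/D|$, and $\log|A/D|$ is harmonic. The minimum principle therefore gives the reverse inequality $|A/D(\lm_0)|\le\sqrt{1-|x_1(\lm_0)|^2}$, with equality only in degenerate cases. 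One could try to use the remaining freedom in choosing the $\Gamma_2$-inner interpolant. Failing that, I would look for a counterexample to (iii)$\Rightarrow$(i) with $s\neq0$ and $|a|=|\lm_0|\sqrt{1-|s|^2/4}$. This is consistent with the preceding remark that the sufficiency direction is doubtful.
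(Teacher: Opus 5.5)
Your implications (i)$\Rightarrow$(ii) and (ii)$\Rightarrow$(iii) are sound. They are the necessary direction, essentially Theorem \ref{necessary} and Proposition \ref{prop_Schw_P_nec} transported through $e_\Pe$. The implication (iii)$\Rightarrow$(i), however, cannot be completed, because the statement is false. The paper does not prove it: it quotes it from \cite{Alsheri} only to refute the sufficiency part in Example \ref{eg_false_Pe}.

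You sensed the failure but left it as a suspicion. As written, your obstruction only shows that the particular construction of Theorem \ref{thm10.14} falls short, not that no rational $\Pbar$-inner interpolant exists. To turn your route into an actual disproof you would need three further ingredients.
\begin{enumerate}
\item By Lemma \ref{lem_ratPQ} and Theorem \ref{thm10.12}, every rational $\Pbar$-inner $h$ with $h(0)=(0,0,0)$ and $a(\cdot)\not\equiv 0$ has $a(\cdot)=a_{in}A/D$, with $a_{in}(0)=0$ and $|s/2|^2+|A/D|^2=1$ on $\T$.
\item Subharmonicity of $|s/2|^2+|A/D|^2$, which is cleaner than your logarithm, then gives $|a|<|\lm_0|\sqrt{1-|s|^2/4}$ whenever $s\neq 0$.
\item You would still have to exhibit a point on the equality locus satisfying the $\Gamma_2$-inequality, e.g.\ $\lm_0=1/2$ and $(a,s,p)=(\sqrt{21}/10,4/5,4/25)$.
\end{enumerate}

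The missing idea, which is what the paper uses, is simpler and stronger. The map $(a,s,p)\mapsto(a,s/2)$ sends $\Pbar$ into $\overline{\mathbb{B}}_2$, by Theorem \ref{thm_final_embed} together with Lemma \ref{lem6.5}. So for every analytic $h:\D\to\Pbar$ with $h(0)=(0,0,0)$, the Schwarz lemma for the Euclidean ball forces $|a|^2+|s|^2/4\le|\lm_0|^2$. Since $|\lm_0|^2-|s|^2/4<|\lm_0|^2(1-|s|^2/4)$ whenever $s\neq0$, this is strictly stronger than the bound on $a$ in (iii). Hence (iii) does not even imply (ii), and the failure is not confined to the equality locus you planned to search.

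The paper's example is $\lm_0=1/2$ and $(a,s,p)=(2/5,4/5,4/25)\in\Pe$. Here the $\Gamma_2$-quotient equals $2/5<1/2$ and $|a|=2/5<\sqrt{21}/10$, so (iii) holds. Yet $|a|^2+|s|^2/4=8/25>1/4$, so no analytic interpolant into $\Pbar$, inner or not, exists.
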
  
	
Note that, the sufficiency part in those conditions in the Schwarz lemma for pentablock (as in Theorem \ref{thm_AZ_Sch_P}) is not correct as the following counter example shows. For this reason, we have stated in Theorem \ref{necessary} those five conditions as necessary conditions only.

	\begin{eg}\label{eg_false_Pe}
	Take
	\[
	\lambda_0=\frac12 \quad \text{and} \quad
	(a_0,s_0,p_0)=\left(\frac25,\frac45,\frac4{25}\right).
	\]
	The matrix $A=(a_{ij})_{i, j=1}^2=\begin{pmatrix} 2 \slash 5 & 0\\[2pt] 2 \slash 5 & 2 \slash 5 \end{pmatrix}$ satisfies $I-A^*A > 0$ and thus $\|A\| < 1$. Consequently,  $(a_{21},\operatorname{tr}A,\det A)=(a_0,s_0,p_0)=(2 \slash 5, 4 \slash 5, 4 \slash 25) \in \Pe$. Since $s_0^2=4p_0$, we have
	\begin{align*}
		\frac{2|s_0-\overline{s_0}p_0|+|s_0^2-4p_0|}{4-|s_0|^2}
		&=\frac{2\cdot\frac45\left(1-\frac4{25}\right)}{4-\frac{16}{25}}
		=\frac25<\frac12=|\lambda_0|.
	\end{align*}
	Moreover, $|s_0|<2$ and
	\[
	|a_0|=\frac25
	<\frac{\sqrt{21}}{10}
	=\frac12\sqrt{1-\frac14\left(\frac45\right)^2}=|\lambda_0|\sqrt{1-\frac14|s_0|^2}.
	\]
	Thus condition \textup{(iii)} of Theorem \ref{thm_AZ_Sch_P} holds. The holomorphic map
	\[
	e_{\mathbb P \to \mathbb B_2}:\overline{\Pe}\to \overline{\mathbb B}_2,
	\qquad (a,s,p) \mapsto \left(a,\frac{s}{2}\right)
	\]
	is well-defined since $|a|^2+|s|^2\slash 4 \leq 1$ for every $(a, s, p) \in \overline{\mathbb P}$. The latter inequality follows  directly from Theorem \ref{thm_final_embed}. Let if possible, there exist a holomorphic function $f: \D \to \overline{\mathbb P}$ such that $f(0)=(0, 0, 0)$ and $f(\lambda_0)=(a_0, s_0, p_0)$. Then
	$g=e_{\mathbb P \to \mathbb B_2}\circ f:\D\to\overline{\mathbb B}_2$ is a holomorphic map that satisfies $g(0)=(0, 0)$. It follows from the Schwarz lemma  for the Euclidean ball (see Chapter 8, Section 8.1 in \cite{Rudin}) that $\|g(\lambda)\|_2\le|\lambda|$ for every $\lambda \in \D$. Also, we have 
	\[
	\|g(\lambda_0)\|_2^2
	=|a_0|^2+\frac14|s_0|^2
	=\frac4{25}+\frac4{25}
	=\frac8{25}>\frac14=|\lambda_0|^2,
	\]
	which is a contradiction. \qed
\end{eg}

\end{rem}

	We now provide a solvability criterion for a Schwarz lemma for $\Q$ for a special case. The proof of the next result runs along the same lines as that of Theorem 11.2 in \cite{Alsheri}.
	
	\begin{thm} 
		Let $\lm_0 \in \D\setminus\{0\}$ and let $(a, x_1, x_2, x_3) \in \Q$ be such that $|x_3|=|\lm_0|$. Then the following are equivalent:
		\begin{enumerate}
			\item there is a rational $\CQ$-inner map $f$ such that $f(0)=(0, 0, 0, 0)$ and $f(\lm_0)=(a, x_1, x_2, x_3)$; 
			
			\item there exists $f \in$ Hol$(\D, \CQ)$ such that $f(0)=(0, 0, 0, 0)$ and $f(\lm_0)=(a, x_1, x_2, x_3)$;
			
			\item 
			$
			\underset{z_1, z_2 \in \D}{\sup} |\psi_{z_1, z_2}(a, x_1, x_2, x_3)| \leq |\lambda_0|, \quad
			$
			and
			\[
			\max\bigg\{\frac{|x_1-\overline{x}_2x_3|+|x_1x_2-x_3|}{1-|x_2|^2}, \ \frac{|x_2-\overline{x}_1x_3|+|x_1x_2-x_3|}{1-|x_1|^2}\bigg\} \leq |\lambda_0|.
			\]
			
		\end{enumerate}
	\end{thm}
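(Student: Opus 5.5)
The implications $(1)\Rightarrow(2)\Rightarrow(3)$ are already contained in Theorem \ref{necessary}, so the whole task is $(3)\Rightarrow(1)$. I would use the hypothesis $|x_3|=|\lm_0|$ to build an explicit degree-one interpolant, following the pattern of Theorem 11.2 in \cite{Alsheri}. Put $y_j=x_j/\lm_0$ for $j=1,2,3$ and $\om=y_3\in\T$.

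\emph{Step 1 (the tetrablock part).} The aim is to show that the second inequality in $(3)$, together with $|x_3|=|\lm_0|$, forces $(y_1,y_2,y_3)\in b\E$. Equivalently, by Theorem \ref{thm6.1}, it forces $y_1=\overline{y}_2\om$ and $|y_2|\le 1$.
\begin{itemize}
\item I would first try the Schwarz lemma for $\E$ (Theorem 1.2 in \cite{Abouhajar}). In the extremal case $|x_3|=|\lm_0|$, the map $\lambda\mapsto x_3(\lambda)$ of any interpolant is forced to be $\om\lambda$.
\item Alternatively, I would argue directly with the quantities in $(3)$. Write $|x_1x_2-x_3|\ge |x_3|-|x_1x_2|$ and insert this into
\[
|x_1-\overline{x}_2x_3|+|x_1x_2-x_3|\le|\lm_0|(1-|x_2|^2).
\]
Then compare with part $(4)$ of Theorem \ref{tetrablock}, so that the inequality can only hold when $x_1=\overline{x}_2x_3/|\lm_0|^2$, which is exactly $y_1=\overline{y}_2\om$.
\end{itemize}
This is the step I expect to be the main obstacle. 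The algebra has to show that equality is forced, and not merely that it is permitted.

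\emph{Step 2 (the $\Ebar$-inner part).} Granting Step 1, set $x(\lambda)=(y_1\lambda,\,y_2\lambda,\,\om\lambda)$. In the notation of Theorem \ref{thm_Einner}, take $n=1$, $D\equiv 1$ and $E_1(\lambda)=y_1\lambda$. Then $E_2^{\sim 1}$, $D^{\sim 1}$ and $E_1$ agree up to absorbing the unimodular constant $\om$; I would rather just verify directly that $x$ is rational $\Ebar$-inner.
\begin{itemize}
\item For $\lambda\in\T$, the point $x(\lambda)$ satisfies $y_1\lambda=\overline{y_2\lambda}\,\om\lambda$, $|\om\lambda|=1$ and $|y_2\lambda|\le 1$, so $x(\lambda)\in b\E$ by Theorem \ref{thm6.1}.
\item For $\lambda\in\D$, membership $x(\lambda)\in\Ebar$ follows from the maximum principle applied to $1-x_1(\lambda)z_1-x_2(\lambda)z_2+x_3(\lambda)z_1z_2$, or from part $(4)$ of Theorem \ref{tetrablock}.
\end{itemize}
Moreover $x(0)=0$ and $x(\lm_0)=(x_1,x_2,x_3)$.

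\emph{Step 3 (the first coordinate).} Here $|D|^2-|E_1|^2=1-|y_1|^2$ is constant on $\T$, so Theorem \ref{thm10.14} allows $A\equiv\sqrt{1-|y_1|^2}$. We may assume $|y_1|<1$, since otherwise Lemma \ref{lem6.5} forces $a=0$ and we take $a(\lambda)\equiv 0$. The first inequality in $(3)$ says $(a/\lm_0,y_1\lm_0,y_2\lm_0,y_3\lm_0)\in\CQ$. I would upgrade this, via the boundary formula (Theorem 2.12 of \cite{Biswas-Pal}, evaluated at $(a/\lm_0,y_1,y_2,y_3)$) or via a maximum-principle argument in $\lambda$, to
\[
\Big|\frac{a}{\lm_0}\Big|\le\sqrt{1-|y_1|^2}.
\]
Then put $\beta=a/\big(\lm_0\sqrt{1-|y_1|^2}\big)\in\DC$, and choose a finite Blaschke product $B$ with $B(\lm_0)=\beta$:
\begin{itemize}
\item a unimodular constant if $|\beta|=1$;
\item a single Möbius factor $\lambda\mapsto\dfrac{\beta-\tau(\lambda)}{1-\overline{\beta}\,\tau(\lambda)}$ composed with one vanishing at $\lm_0$, otherwise.
\end{itemize}
Define
\[
f(\lambda)=\big(\lambda B(\lambda)\sqrt{1-|y_1|^2},\;y_1\lambda,\;y_2\lambda,\;\om\lambda\big).
\]
By Theorem \ref{thm10.14}, with Blaschke product $\lambda B(\lambda)$, this $f$ is rational $\CQ$-inner. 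It satisfies $f(0)=0$ and $f(\lm_0)=(a,x_1,x_2,x_3)$, which proves $(1)$.
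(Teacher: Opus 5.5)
Your proposal has a genuine gap: Step 2 is false as stated, and Step 1 aims at the wrong target. For $\lambda\in\T$ you have $\overline{y_2\lambda}\,\omega\lambda=\overline{y_2}\,\omega|\lambda|^2=\overline{y_2}\,\omega$. On the other hand, $x_1(\lambda)=y_1\lambda=\overline{y_2}\,\omega\lambda$. These two agree for all $\lambda\in\T$ only if $y_1=y_2=0$. So $\lambda\mapsto(y_1\lambda,y_2\lambda,\omega\lambda)$ is \emph{not} rational $\Ebar$-inner once $(y_1,y_2)\neq(0,0)$, even if you grant $y_1=\overline{y_2}\,\omega$ and $|y_2|\le 1$. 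You can confirm this with Theorem~\ref{thm_Einner}: $x_3=\omega\lambda$ forces $D$ to be constant, and then $x_2=y_2\lambda$ forces $x_1\equiv\overline{y_2}\,\omega$, which cannot vanish at $0$ unless $y_2=0$. Step 1 is also left unproved, and the relation $y_1=\overline{y_2}\,\omega$ it aims for is not what the hypothesis actually gives.

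The missing idea is that $|x_3|=|\lm_0|$ together with the second inequality in $(3)$ forces $x_1=x_2=0$. Your second bullet already starts correctly; you only need to use $|x_1-\overline{x}_2x_3|\ge |x_1|-|x_2||x_3|$ as well. Adding the two lower bounds gives
\[
(1-|x_2|^2)|x_3|\ \ge\ (1-|x_2|)(|x_1|+|x_3|),
\]
that is, $|x_1|\le|x_2||x_3|$. Symmetrically, $|x_2|\le |x_1||x_3|$. Hence $|x_1|\le|x_1||x_3|^2$, so $x_1=0$ because $|x_3|<1$, and then $x_2=0$. Your first route, via $x_3(\lambda)=\omega\lambda$ and $h_1=\overline{h_2}\,h_3$ a.e.\ on $\T$, can also be completed, but it likewise ends with $h_1=h_2\equiv 0$. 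Once you know this, your construction collapses to $f(\lambda)=(\lambda B(\lambda),0,0,\omega\lambda)$ with $B(\lm_0)=a/\lm_0$. The bound $|a|\le|\lm_0|$ you need is just the case $z_1=z_2=0$ of the first inequality in $(3)$. So the appeal to the boundary formula of Biswas--Pal in Step 3 is unnecessary, and as written it was unjustified, since $(a/\lm_0,y_1,y_2,y_3)$ was not known to lie in $\CQ$. This is exactly the paper's argument.
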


	\begin{proof}
		First, note that $(x_1, x_2, x_3) \in \E$ and so, $|x_j|<1$ for $1 \leq j \leq 3$. Let $\om$ be a scalar of unit modulus such that $x_3=\om \lm_0$. Note that $(1) \implies (2)$ follows trivially and $(2) \implies (3)$ follows from Theorem \ref{necessary}. We prove $(3) \implies (1)$. Suppose that the inequalities in  $(3)$ hold. So, by given hypothesis we have $|x_3|=|\lm_0|$ and so,
		\[
		\frac{|x_1-\overline{x}_2x_3|+|x_1x_2-x_3|}{1-|x_2|^2} \leq |x_3|.
		\]
		Then 
		\begin{equation*}
			\begin{split}
				(1-|x_2|^2)|x_3|  \geq |x_1-\overline{x}_2x_3|+|x_1x_2-x_3| \geq |x_1|-|\overline{x}_2x_3| +|x_3|-|x_1x_2|
				=(1-|x_2|)(|x_1|+|x_3|),
			\end{split}
		\end{equation*}
		which gives $|x_1| \leq |x_2x_3|$ as $|x_2|<1$. Similarly, using the given inequality 
		\[
		\frac{|x_2-\overline{x}_1x_3|+|x_1x_2-x_3|}{1-|x_1|^2} \leq |x_3|,
		\]	
		one can show that $|x_2| \leq |x_1x_3|$. Therefore, $|x_1| \leq |x_2| \ |x_3| \leq |x_1| \ |x_3|^2$ and so, $|x_1|(1-|x_3|^2) \leq 0$. This is possible only when $x_1=0$ as $|x_3| <1$. Consequently, $x_1=x_2=0$. The function $g: \D \to \Ebar$ given by $g(\lm)=(0, 0, \om \lm)$ is a rational $\Ebar$-inner function. Note that 
		\[
		|a| \leq \sup_{z_1, z_2 \in \D}\bigg|\frac{a\sqrt{(1-|z_1|^2)(1-|z_2|^2)}}{1-x_1z_2-x_2z_2+x_3z_1z_2}\bigg|=	\sup_{z_1, z_2 \in \D} |\psi_{z_1, z_2}(a, x_1, x_2, x_3)| \leq |\lambda_0|.
		\]
		Let $\omega_1 \in \T$ be such that $\omega_1^2=\omega$. For $\lm \in \D$, define $(E_1(\lm), E_2(\lm), D(\lm))=(0, 0, \overline{\om}_1)$. Let
		\[
		f_1(\lm)=\frac{E_1(\lm)}{D(\lm)}, \quad f_2(\lm)=\frac{E_2(\lm)}{D(\lm)} \quad \text{and} \quad f_3(\lm)=\frac{D^{\sim 1}(\lm)}{D(\lm)}.
		\]	
		It is evident that $g=(f_1, f_2, f_3)$. By Theorem \ref{thm10.14}, we can construct a rational $\CQ$-inner function 
		\[
		f=\bigg(cB\frac{A}{D}, \ \frac{E_1}{D},\ \frac{E_2}{D}, \ \frac{D^{\sim 1}}{D}\bigg)
		\]	
		for every finite Blaschke product $B$ and $c \in \T$, where $A$ is a polynomial such that  
		\[
		A(\lm) \ne 0 \ \  \text{for all} \ \lm \in {\D}\quad  \text{and} \quad  |A(e^{it})|^2=|D(e^{it})|^2-|E_1(e^{it})|^2=|D(e^{it})|^2=|\overline{\om}_1|^2=1.
		\]
		We choose $A=1$, the constant polynomial. It is sufficient to construct $f_1: \D \to \C$ such that $f_1=cB$ with $|c|=1$ such that $f_1(0)=0$ and $f_1(\lm_0)=a$.
		
		\medskip 
		
		If $|a|=|\lm_0|$, then we define $f_1(\lm)=m\lm$ for $\lm \in \D$, where $ m \in \T$ such that $a=m\lm_0$. It is easy to see that $f_1(0)=0$ and $f_1(\lm_0)=a$. 
		
		\medskip 
		
		Let $|a|<|\lm_0|$ and let $\eta_0=a \slash \lm_0$. It is clear that $\eta_0 \in \D$. Consider the map $f_1: \D \to \D$ given by$f_1(\lm)=\lm(h_1^{-1} \circ h_2(\lm))$, 
		where $h_1$ and $h_2$ are the Blaschke factors given by 
		\[
		h_1^{-1}(z)=\frac{z+\eta_0}{1+\overline{\eta_0}z} \quad \text{and} \quad h_2(z)=\frac{z-\lm_0}{1-\overline{\lm_0}z} \quad \text{for} \ z \in \D.
		\]
		It follows easily that $f_1(0)=0$ and $f_1(\lm_0)=a$. Putting everything together, we have a rational $\CQ$-inner function $f: \D \to \CQ$ with $f(\lm)=(f_1(\lm), 0, 0, \om \lm)$ defined as follows: 
		\begin{enumerate}
			\item[(a)] if $|a|=|\lm_0|$, then $f_1(\lm)=m\lm$ for $\lm \in \D$, where $|m|=1$ and $a=m\lm_0$;
			\item[(b)] if $|a|<|\lm_0|$, then 
			\[
			f_1(\lm):=\lm\frac{\lm-\lm_0+\eta_0(1-\overline{\lm}_0\lm)}{1-\overline{\lm}_0\lm+\overline{\eta_0}(\lm-\lm_0)} \quad (\lm \in \D, \eta_0=a\lm_0^{-1}).
			\]
		\end{enumerate}
		Moreover, $f$ satisfies $f(0)=(0, 0, 0, 0)$ and $f(\lm_0)=(a, x_1, x_2, x_3)$. The proof is now complete. \end{proof}

	We recall the following interpolation theorems for $\E$ from the literature.
	
	\begin{thm}[\cite{Abouhajar}, Theorem 1.2]\label{thm_Sch_E_1}
		Let $\lm_0 \in \D\setminus \{0\}$ and let $(x_1, x_2, x_3) \in \E$. Then the following are equivalent:
		\begin{enumerate}
			\item there exists $h \in$ Hol$(\D, \Ebar)$ such that $h(0)=(0, 0, 0)$ and $h(\lm_0)=(x_1, x_2, x_3)$;
			\item  
			\[
			\max\bigg\{\frac{|x_1-\overline{x}_2x_3|+|x_1x_2-x_3|}{1-|x_2|^2}, \ \frac{|x_2-\overline{x}_1x_3|+|x_1x_2-x_3|}{1-|x_1|^2}\bigg\} \leq |\lambda_0|.
			\]
		\end{enumerate}
	\end{thm}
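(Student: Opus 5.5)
This theorem is quoted from \cite{Abouhajar}, so the plan is to reconstruct its proof from the matricial description of $\E$ and the Schwarz lemma for the matrix ball; in the paper it could simply be cited.

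\textbf{(1) $\Rightarrow$ (2).} Suppose $h=(h_1,h_2,h_3)\in$ Hol$(\D,\Ebar)$ with $h(0)=0$ and $h(\lm_0)=(x_1,x_2,x_3)$. For each fixed $z\in\D$ I would use the linear fractional maps
\[
\Psi_z(y_1,y_2,y_3)=\frac{y_3z-y_1}{y_2z-1}, \qquad \Upsilon_z(y_1,y_2,y_3)=\frac{y_3z-y_2}{y_1z-1}.
\]
These are the standard functions on the tetrablock. They map $\Ebar$ into $\DC$; this follows from the defining condition $1-y_1z_1-y_2z_2+y_3z_1z_2\neq 0$, solved for $z_1$.

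Since $\Psi_z(0)=0$, the classical Schwarz lemma applied to $\Psi_z\circ h$ gives $|\Psi_z(x)|\le|\lm_0|$ for every $z\in\D$, and similarly for $\Upsilon_z$.

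Next, compute $\sup_{z\in\D}|\Psi_z(x)|$. The map $z\mapsto (x_3z-x_1)/(x_2z-1)$ is a Möbius map with $|x_2|<1$, so the image of $\DC$ is a disc. Its center is $(x_1-\overline{x}_2x_3)/(1-|x_2|^2)$ and its radius is $|x_1x_2-x_3|/(1-|x_2|^2)$. The supremum of $|\Psi_z(x)|$ is therefore the first quantity in the maximum in (2), and the second quantity comes from $\Upsilon_z$ by symmetry. This direction is routine.

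\textbf{(2) $\Rightarrow$ (1).} Here I would use the matricial realization in \eqref{eqn_domains}. The point $x$ lies in $\E$ precisely when $x=(a_{11},a_{22},\det A)$ for some $A$ with $\|A\|<1$. The condition $\sup_z|\Psi_z(x)|\le|\lm_0|$ should translate into the existence of $A$ with $\|A\|\le|\lm_0|$ realizing $x$. The idea is that $\Psi_z(x)$ is the linear fractional transformation $\mathcal F_A(z)$, and by \cite{Abouhajar} its supremum over $z$ equals $\inf\{\|A\|: \pi(A)=x\}$ after optimizing the off-diagonal entries $a_{12},a_{21}$ subject to $a_{12}a_{21}=x_1x_2-x_3$.

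With such an $A$ in hand, define $F(\lm)=\lm A/\lm_0$. Then $\|F(\lm)\|\le 1$, and $h=(F_{11},F_{22},\det F)$ is holomorphic into $\Ebar$ with $h(0)=0$ and $h(\lm_0)=x$.

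\textbf{Main obstacle.} The hard step is the norm-realization claim: showing that both bounds in (2) together force the existence of a single $A$ with $\|A\|\le|\lm_0|$. My first attempt would be to pick $|a_{12}|=|a_{21}|=\sqrt{|x_1x_2-x_3|}$ with suitable arguments. I would then verify that $|\lm_0|^2I-A^*A\ge0$ by checking that its trace and determinant are nonnegative, and reduce the determinant inequality to the two quantities in (2) via the identities in Theorem~\ref{tetrablock}.
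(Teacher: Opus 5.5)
Your direction (1)$\Rightarrow$(2) is correct. It is the standard argument; the paper itself gives no proof here and simply cites \cite{Abouhajar}. The maps $\Psi_z$ and $\Upsilon_z$ send $\Ebar$ into $\DC$ for $z\in\D$, Schwarz's lemma applies to $\Psi_z\circ h$, and your center/radius computation for $z\mapsto(x_3z-x_1)/(x_2z-1)$ is right, including the degenerate case $x_1x_2=x_3$.

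The converse rests on a false claim. It is not true that $\sup_{z\in\D}|\Psi_z(x)|=\inf\{\|A\|:(a_{11},a_{22},\det A)=x\}$. Since $\Psi_z(a_{11},a_{22},\det A)=a_{11}+a_{12}a_{21}z(1-a_{22}z)^{-1}$, one only has the inequality $\sup_z|\Psi_z|\le\|A\|$ for contractions $A$. A concrete counterexample is $x=(0,0,\lambda_0)$:
\begin{itemize}
\item both quantities in (2) equal $|\lambda_0|$, so (2) holds;
\item (1) holds with $h(\lambda)=(0,0,\lambda)$;
\item yet every $A$ with $a_{11}=a_{22}=0$ and $\det A=\lambda_0$ has $\|A\|\ge\sqrt{|a_{12}a_{21}|}=\sqrt{|\lambda_0|}>|\lambda_0|$.
\end{itemize}
Your own trace test already fails here: for your choice of $A$, $\operatorname{tr}(|\lambda_0|^2I-A^*A)=2(|\lambda_0|^2-|\lambda_0|)<0$.

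The obstruction is structural, not a matter of choosing entries better. Your interpolant $F(\lambda)=\lambda A/\lambda_0$ has $F(0)=0$, and so does any $F$ you could build this way. Then $F(\lambda)/\lambda$ is Schur, which forces $\|F(\lambda_0)\|\le|\lambda_0|$. Consequently $\det F$ has a double zero at the origin, giving $|x_3|\le|\lambda_0|^2$, which neither (1) nor (2) implies.

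The missing idea is that the matrix interpolant only needs $(F_{11},F_{22},\det F)(0)=(0,0,0)$, not $F(0)=0$. You must allow $F(0)=\begin{pmatrix}0&c\\0&0\end{pmatrix}$ with $c$ possibly nonzero. For the example above, $F(\lambda)=\begin{pmatrix}0&1\\-\lambda&0\end{pmatrix}$ works.

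With this freedom, (2)$\Rightarrow$(1) becomes a two-point matricial Nevanlinna--Pick problem. You need to show that (2) guarantees a suitable $c$ and a suitable factorization $a_{12}a_{21}=x_1x_2-x_3$ of the off-diagonal entries of $A=\begin{pmatrix}x_1&a_{12}\\a_{21}&x_2\end{pmatrix}$. For these, the Pick matrix for the data $F(0)=\begin{pmatrix}0&c\\0&0\end{pmatrix}$, $F(\lambda_0)=A$ must be positive semidefinite. This is exactly the shape of $W_0,W_1$ in Theorem \ref{thm_hexa_schwarz}. The matricial Nevanlinna--Pick theorem (cf. Theorem \ref{lem_4100}) then produces $F$, and $h=(F_{11},F_{22},\det F)$ solves (1).

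Proving that both bounds in (2) make this Pick problem solvable is the real content of the proof in \cite{Abouhajar}. Your proposal does not supply it, and the choice $c=0$ that you implicitly make cannot work in general.
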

	
	For more than two interpolation points, Brown et al. proved the following theorem in \cite{Brown}.
	
	\begin{thm}[\cite{Brown}, Theorem 8.1]\label{thm_Brown}
		Let $\lm_1, \dotsc, \lm_n$ be distinct points in $\D$ and let $(x_{1j}, x_{2j}, x_{3j}) \in \Ebar$ be such that $x_{1j}x_{2j} \ne x_{3j}$ for $j=1, \dotsc, n$. The following are equivalent:
		\begin{enumerate}
			\item there exists $h \in$ Hol$(\D, \Ebar)$ such that $h(\lm_j)=(x_{1j}, x_{2j}, x_{3j})$ for $j=1, \dotsc, n$;
			
			\item there exists a rational $\Ebar$-inner function $h$ such that $h(\lm_j)=(x_{1j}, x_{2j}, x_{3j})$ for $j=1, \dotsc, n$.
		\end{enumerate}
	\end{thm}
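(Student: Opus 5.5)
The implication $(2)\Rightarrow(1)$ is trivial, since a rational $\Ebar$-inner function is in particular an element of Hol$(\D, \Ebar)$. For $(1)\Rightarrow(2)$ I would pass through $2\times 2$ matrix functions. The two steps are: lift the given $h$ to a matricial Schur function; then replace that lift by a rational inner (Blaschke--Potapov type) matricial interpolant and project it back to $\Ebar$ via $A\mapsto (a_{11}, a_{22}, \det A)$.

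\emph{Lifting.} Let $h=(h_1,h_2,h_3)\in$ Hol$(\D,\Ebar)$ interpolate the data. I would invoke the lifting result for the tetrablock that was already used in the proof of Proposition \ref{prop_Ein_Hin}, namely the argument of Theorem 2.2.5 in \cite{AbouhajarI}. It should produce an analytic $F=[F_{ij}]:\D\to M_2(\C)$ with $\|F(\lm)\|\le 1$ and $(F_{11},F_{22},\det F)=h$; if necessary I would first apply it to $r h$ with $r<1$ and pass to a normal-families limit. In particular the matrices $W_j:=F(\lm_j)$ are contractions with diagonal $(x_{1j},x_{2j})$ and off-diagonal product $F_{12}(\lm_j)F_{21}(\lm_j)=x_{1j}x_{2j}-x_{3j}$. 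Hence the matricial Nevanlinna--Pick problem $\lm_j\mapsto W_j$ is solvable, and its Pick matrix $\big[(I-W_iW_j^*)/(1-\lm_i\overline{\lm}_j)\big]_{i,j}$ is positive semidefinite.

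\emph{Rational inner solution.} Next I would show that a solvable $2\times 2$ Nevanlinna--Pick problem with finitely many nodes has a solution that is rational and unitary on $\T$. The route is the lurking isometry argument: factor the Pick matrix as a Gram matrix in a finite-dimensional space $\C^N$ and obtain an isometry between finite-dimensional spaces. Extend it to a unitary colligation $U=\begin{pmatrix}A&B\\ C&D\end{pmatrix}$ on $\C^N\oplus\C^2$ and take
\[
G(\lm)=D+\lm C(I-\lm A)^{-1}B .
\]
Since $U$ is unitary on a finite-dimensional space, $G$ is rational, has no poles on $\overline{\D}$, and satisfies $G(\lm)^*G(\lm)=I$ for $\lm\in\T$. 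Setting $g=(G_{11},G_{22},\det G)$, we get $g(\lm_j)=(x_{1j},x_{2j},x_{3j})$, and $g(\D)\subseteq\Ebar$ by \eqref{eqn_domains} and continuity. For a unitary $2\times2$ matrix $U'$ one has $|\det U'|=1$ and $u'_{11}=\overline{u'_{22}}\det U'$. By Theorem \ref{thm6.1}, this gives $g(\T)\subseteq b\E$, so $g$ is rational $\Ebar$-inner.

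\emph{Main obstacle.} I expect the delicate point to be the lifting step. The lift $F$ can degenerate where $h_1h_2=h_3$, because there the off-diagonal entries are not determined up to the $w_{12}w_{21}$ ambiguity, and the Schur-class lift may fail to exist or fail to be analytic. This is exactly where the hypothesis $x_{1j}x_{2j}\ne x_{3j}$ enters.

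If a global analytic lift is not available directly, my fallback is to work only with the node values. I would parametrise $W_j=\begin{pmatrix}x_{1j}&w_j\\ (x_{1j}x_{2j}-x_{3j})/w_j&x_{2j}\end{pmatrix}$ with $w_j\ne0$. Then I would show, using the Abouhajar--White--Young realisation of $\Ebar$-valued functions through a $2\times2$ Schur function in the $\Phi$-variables $(z,\,x_1,x_2,x_3)\mapsto (x_3z-x_1)/(x_2z-1)$, that some choice of the scalings $w_j$ makes the Pick matrix positive. The nonvanishing of $x_{1j}x_{2j}-x_{3j}$ guarantees these scalings are genuine free parameters at every node. Once positivity is secured, the rational inner construction above finishes the proof.
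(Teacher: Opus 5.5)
The steps after the lift are sound. Suppose $F$ is analytic on $\D$ with $\|F(\lm)\|\le 1$ and $(F_{11},F_{22},\det F)=h$. Then the Pick matrix of $W_j=F(\lm_j)$ is positive, and Theorem \ref{lem_4100} gives a rational matrix-inner $G$ with $G(\lm_j)=W_j$. The paper already quotes that theorem, so you need not redo the lurking isometry. Note that poles of the transfer function on $\T$ are excluded because it is rational and bounded on $\D$, not merely because the colligation is finite-dimensional. Finally $(G_{11},G_{22},\det G)$ is rational $\Ebar$-inner by Theorem \ref{thm6.1}, exactly as in the converse half of Theorem \ref{thm_hexa_schwarz}.

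The gap is the lift itself. It carries the whole proof, and you leave it unproven and, in your last paragraph, doubt it. The paper applies the result you invoke (Proposition \ref{prop_Ein_Hin}) only to $\Ebar$-\emph{inner} functions, where the balanced lift is unitary a.e.\ on $\T$. Passing to $rh$ does not put a general $h\in$ Hol$(\D,\Ebar)$ into that class. Your fallback, choosing the $w_j$ so that the Pick matrix is positive, restates the goal rather than proving it.

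The missing lemma is true with no hypothesis at all, and your worry that the lift degenerates along $h_1h_2=h_3$ is misplaced. Let $\delta=h_1h_2-h_3\in H^\infty$.
\begin{itemize}
\item If $\delta\equiv 0$, take $F=\mathrm{diag}(h_1,h_2)$.
\item Otherwise, let $o$ be the outer function with $|o|=|\delta|^{1/2}$ a.e.\ on $\T$, and put $F_{21}=o$, $F_{12}=\delta/o$. Then $F_{12}\in H^\infty$, since the zeros of $\delta$ are absorbed into its inner factor. Also $\det F=h_3$ and $|F_{12}|=|F_{21}|$ a.e.\ on $\T$.
\end{itemize}
For the norm bound, use that for $A\in M_2(\C)$ one has $\|A\|^2=\frac12\bigl(t+\sqrt{t^2-4|\det A|^2}\bigr)$ with $t=\sum_{i,j}|a_{ij}|^2$. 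If $(a_{11},a_{22},\det A)$ is prescribed, then $|a_{12}a_{21}|$ is fixed, so $t$, and hence $\|A\|$, is minimised when $|a_{12}|=|a_{21}|$. Every point of $\Ebar$ is $(a_{11},a_{22},\det A)$ for some $A$ with $\|A\|\le 1$, by \eqref{eqn_domains} and compactness. Hence the balanced boundary values satisfy $\|F(\zeta)\|\le 1$ a.e. The maximum principle applied to $\langle F(\cdot)u,v\rangle$ then gives $\|F\|\le 1$ on $\D$. With this lemma your proof is complete, and the condition $x_{1j}x_{2j}\ne x_{3j}$ is never used.

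This route differs from the one sketched in Remark \ref{rem_Brown}, which follows \cite{Brown}. There, Theorem 7.1 of \cite{Brown} supplies a lift with nonvanishing $F_{21}$, together with a decomposition of $1-\overline{\Psi(w,h(\mu))}\Psi(z,h(\lm))$. The lurking isometry then runs over the pairs $(\lm_j,z_k)$ with three auxiliary points $z_k$. So the rational inner $E$ is only forced to reproduce the linear fractional maps $z\mapsto\Psi(z,x_{1j},x_{2j},x_{3j})$, i.e.\ the triples $(E_{11},E_{22},\det E)(\lm_j)$, not a prescribed matrix $E(\lm_j)$. In that proof the hypothesis only ensures $h_1h_2\not\equiv h_3$ so that Theorem 7.1 applies, which is why the remark can tolerate the node $(0,0,0)$.

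Your route interpolates full matrices, which is more rigid, but it needs only a bare Schur lift and the standard matricial Pick theorem. As a by-product it drops the hypothesis. It also makes unnecessary the separate handling, in Remark \ref{rem_Brown}, of the triangular point and of the case $x_1x_2=x_3$.
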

	
	\begin{rem}\label{rem_Brown}
		Let $\lm_0 \in \D\setminus \{0\}$ and let $(x_1, x_2, x_3) \in \E$. Suppose there is a function $h\in$  Hol$(\D, \Ebar)$ such that $h(0)=(0, 0, 0)$ and $h(\lm_0)=(x_1, x_2, x_3)$. We wish to construct a rational $\Ebar$-inner function $g$ such that $g(0)=(0, 0, 0)$ and $g(\lm_0)=(x_1, x_2, x_3)$. One cannot directly apply Theorem \ref{thm_Brown} as $(0, 0, 0)$ is a triangular point of $\Ebar$. However, the proof of Theorem \ref{thm_Brown} ensures the existence of such a function. For this reason, we discuss its proof here. As mentioned earlier, an interested reader is referred to Theorem 8.1 in \cite{Brown} for the original proof.
		
		\smallskip 
		
		Let $x_1x_2=x_3$. Without loss of generality, assume that $|x_2| \leq |x_1|$. It follows from the proof of Theorem \ref{thm_Sch_E_1} (which is Theorem 1.2 in \cite{Abouhajar}) that $|x_2| \leq |x_1| \leq |\lm_0|<1$. Choose $g_1, g_2 \in \A(\D)$ such that $g_j(0)=0$ and $g_j(\lm_0)=x_j$ for $j=1,2$. Then $g=(g_1, g_2, g_1g_2)$ is a rational $\Ebar$-inner function such that $g(0)=(0, 0, 0)$ and $g(\lm_0)=(x_1, x_2, x_3)$. 
		
		\smallskip 
		
		Let $x_1x_2 \ne x_3$. As mentioned earlier, one can simply employ the proof of Theorem \ref{thm_Brown} (see Theorem 8.1 in \cite{Brown}) to obtain the desired rational $\Ebar$-inner function. In this case, $h_1(\lm_0)-h_2(\lm_0)h_3(\lm_0)=x_1x_2-x_3 \ne 0$ and so, $h_1-h_2h_3 \ne 0$ on $\D$. It follows from Theorem 7.1 in \cite{Brown} that there is an analytic map \[
		F=\begin{pmatrix}
			h_1 & f_1 \\
			f_2 & h_2
		\end{pmatrix}: \D \to M_2(\C)
		\]
		such that $f_2$ is non-vanishing on $\D, \|F(\lambda)\| \leq 1$ for all $\lm \in \D$ and 
		\[
		1-\overline{\Psi(w, h(\mu))}\Psi(z, h(\lm))=(1-\overline{w}z)\overline{\gamma(\mu, w)}\gamma(\lm, z)+(1-\overline{\mu}\lm)\eta(\mu, w)^*\frac{1-F(\mu)^*F(\lm)}{1-\overline{\mu}\lm}\eta(\lm, z)
		\]
		for all $z, w, \lm, \mu \in \D$, where
		\[
		\Psi(z, y_1, y_2, y_3)  =\frac{y_3 z-y_1}{y_2 z-1}, \quad \gamma(\lm, z)=(1-h_2(\lm)z)^{-1}f_2(\lm) \quad \text{and} \quad \eta(\lm, z)=\begin{pmatrix}
			1 \\
			\gamma(\lm, z)z
		\end{pmatrix}.
		\]
		Let $z_1,z_2, z_3$ be distinct points in $\D$. Also, let $(\lm_1, \lm_2)=(0, \lm_0)$, $(x_{11}, x_{12}, x_{13})=(0, 0, 0)$ and $(x_{21}, x_{22}, x_{23})=(x_1, x_2, x_3)$. Since $F(\lm)$ is a contraction for all $\lm \in \D$ and $f_2 \ne 0$, Proposition 5.1 in \cite{Brown} shows the existence of the following positive semidefinite matrices of order $6$: 
		\[
		N=(N_{il, jk})=(\overline{\gamma(\lm_i, z_l)}\gamma(\lm_j, z_k)) 
		\]
		and 
		\[
		M=(M_{il, jk})=\left(\eta(\lm_i, z_l)^*\frac{1-F(\lm_i)^*F(\lm_j)}{1-\overline{\lm}_i\lm_j}\eta(\lm_j, z_k) \right),
		\]
		where $1 \leq i, j \leq 2$ and $1 \leq k, l \leq 3$. Consequently, there is a Hilbert space $\mathcal{H}$ of dimension at most $6$ and vectors $v_{jk} \in \mathcal{H}$ such that $M_{il, jk}=\langle v_{jk}, v_{il} \rangle_\mathcal{H}$ for $1 \leq i, j \leq 2$ and $1 \leq k, l \leq 3$. It can be easily verified that 
		\[
		\left \langle  \begin{pmatrix}
			\Psi(z_k, x_{1j}, x_{2j}, x_{3j}) \\
			\gamma(\lm_j, z_k)\\
			v_{jk}
		\end{pmatrix}, \begin{pmatrix}
			\Psi(z_l, x_{1i}, x_{2i}, x_{3i}) \\
			\gamma(\lm_i, z_l)\\
			v_{il}
		\end{pmatrix}\right \rangle
		=\left \langle  \begin{pmatrix}
			1 \\
			z_k\gamma(\lm_j, z_k)\\
			\lm_jv_{jk}
		\end{pmatrix}, \begin{pmatrix}
			1 \\
			z_l\gamma(\lm_i, z_l)\\
			\lm_iv_{il}
		\end{pmatrix}\right \rangle
		\]
		for $1 \leq i, j \leq 2$ and $1 \leq k, l \leq 3$, where the inner product is considered over $\C^2 \oplus \mathcal{H}$. It follows from Proposition B.1.22 in \cite{Brown_thesis}, there is a unitary $L$ on $\C^2 \oplus \mathcal{H}$ such that 
		\[
		L\begin{pmatrix}
			1 \\
			z_k\gamma(\lm_j, z_k)\\
			\lm_jv_{jk}
		\end{pmatrix}=\begin{pmatrix}
			\Psi(z_k, x_{1j}, x_{2j}, x_{3j}) \\
			\gamma(\lm_j, z_k)\\
			v_{jk}
		\end{pmatrix} 
		\] 
		for $1 \leq  j \leq 2$ and $1 \leq k \leq 3$. Let us write $\displaystyle L=\begin{pmatrix}
			A & B \\
			C & D \\
		\end{pmatrix}$. Following the proof of Theorem 8.1 in \cite{Brown}, we have that
		\[
		\begin{pmatrix}
			\Psi(z_k, x_{1j}, x_{2j}, x_{3j}) \\
			\gamma(\lm_j, z_k)
		\end{pmatrix}=(A+\lm_jB(I-\lm_j D)^{-1}C)\begin{pmatrix}
			1 \\
			z_k \gamma(\lm_j, z_k)
		\end{pmatrix} 
		\] 
		for $1 \leq j \leq 2$ and $1 \leq k \leq 3$. The function given by
		\[
		E: \D \to M_2(\C), \quad E(\lm)=A+\lm B(I-\lm D)^{-1}C=\begin{pmatrix}
			a(\lm) & b(\lm) \\
			c(\lm) & d(\lm)
		\end{pmatrix}
		\]
		defines a rational $2 \times 2$ inner function such that $E(\lm)$ is unitary for all $\lm \in \T$, as $L$ is unitary and $\mathcal{H}$ has a finite dimension. Therefore, the map $g: \D \to \Ebar$ defined as $g=(a, d, \det(E))$ is a rational $\Ebar$-inner function. The fact that $g(\lm_0)=(x_1, x_2, x_3)$ follows from the proof of Theorem 8.1 in \cite{Brown}. We show that $g(0)=(0, 0, 0)$. Note that 
		\begin{equation}\label{eqn_Brown}
			\begin{pmatrix}
				\Psi(z_k, 0, 0, 0) \\
				\gamma(0, z_k)
			\end{pmatrix}=A\begin{pmatrix}
				1 \\
				z_k \gamma(0, z_k)
			\end{pmatrix} =E(0)\begin{pmatrix}
				1 \\
				z_k \gamma(0, z_k)
			\end{pmatrix} =\begin{pmatrix}
				a(0) & b(0) \\
				c(0) & d(0)
			\end{pmatrix}\begin{pmatrix}
				1 \\
				z_k \gamma(0, z_k)
			\end{pmatrix} 
		\end{equation} 
		for $1 \leq k \leq 3$. Since $\Psi(z_k, 0, 0, 0)=0$, it follows that $
		a(0)+b(0)z_k\gamma(0, z_k)=0$ and $\gamma(0, z_k)=c(0)+d(0)z_k\gamma(0, z_k)$.
		Thus, we have that $0=a(0)+b(0)z_k(1-d(0)z_k)^{-1}c(0)$ and so,
		\[
		\frac{a(0)-\det(E(0)) \ z_k}{1-d(0)z_k}=0.
		\]
		Thus, the M\"obius map
		\[
		\frac{a(0)-\det(E(0)) \ z}{1-d(0)z}
		\]
		has three distinct zeros in $\D$ and hence is identically equal to the zero map. So, it follows that $a(0)=0$ and $\det(E(0))=0$. Therefore, $g(0)=(0, d(0), 0)$. Since $h_2(0)=0$ and $f_2(0) \ne 0$, we have that $\gamma(0, z_k)=(1-h_2(0)z_k)^{-1}f_2(0)=f_2(0)$ and so, $\gamma(0, z_k) \ne 0$ for all $k=1,2,3$. From \eqref{eqn_Brown}, we have 
		\[
		\begin{pmatrix}
			0 \\ 
			f_2(0)
		\end{pmatrix}=E(0)\begin{pmatrix}
			1 \\
			z_k f_2(0)
		\end{pmatrix}=\begin{pmatrix}
			0 & b(0) \\
			c(0) & d(0)
		\end{pmatrix} \begin{pmatrix}
			1 \\
			z_k f_2(0)
		\end{pmatrix}=\begin{pmatrix}
			z_k b(0) f_2(0) \\
			c(0)+z_k f_2(0) d(0)
		\end{pmatrix} 
		\]
		for $k=1, 2, 3$. Since $z_1,z_2$ are distinct and $f_2(0) \ne 0$, it follows that 
		\[
		f_2(0)=c(0)+z_1f_2(0) d(0)=c(0)+z_2f_2(0) d(0) \quad \text{and so,} \quad d(0)=0. 
		\] 
		Thus, $g(0)=(0, 0, 0)$ and the desired rational $\Ebar$-inner function is obtained. 
	\end{rem} 
	
	Combining Theorem \ref{thm_Sch_E_1}, Theorem \ref{thm_Brown} and Remark \ref{rem_Brown}, we have the following result.
	
	\begin{thm}\label{thm_BrownII}
		Let $\lm_0 \in \D\setminus \{0\}$ and let $(x_1, x_2, x_3) \in \E$. Then the following are equivalent:
		\begin{enumerate}
			\item there exists a rational $\Ebar$-inner function $h$ such that $h(0)=(0, 0, 0)$ and $h(\lm_0)=(x_1, x_2, x_3)$;
			
			\item there exists $h \in$ Hol$(\D, \Ebar)$ such that $h(0)=(0, 0, 0)$ and $h(\lm_0)=(x_1, x_2, x_3)$;
			\item  
			$\displaystyle 
			\max\bigg\{\frac{|x_1-\overline{x}_2x_3|+|x_1x_2-x_3|}{1-|x_2|^2}, \ \frac{|x_2-\overline{x}_1x_3|+|x_1x_2-x_3|}{1-|x_1|^2}\bigg\} \leq |\lambda_0|.
			$
		\end{enumerate}
	\end{thm}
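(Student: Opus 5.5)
The plan is to prove the cycle $(1)\Rightarrow(2)\Rightarrow(3)\Rightarrow(1)$, with nearly all the work already done in Theorem \ref{thm_Sch_E_1}, Theorem \ref{thm_Brown} and Remark \ref{rem_Brown}.

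\textbf{The easy implications.} The implication $(1)\Rightarrow(2)$ is immediate. A rational $\Ebar$-inner function is holomorphic on $\D$ with values in $\Ebar$, so it belongs to Hol$(\D,\Ebar)$. The implication $(2)\Rightarrow(3)$ is exactly the forward direction of Theorem \ref{thm_Sch_E_1}, the Schwarz lemma for $\E$ from \cite{Abouhajar}.

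\textbf{The implication $(3)\Rightarrow(1)$.} First, apply Theorem \ref{thm_Sch_E_1} in the reverse direction. This gives some $h\in$ Hol$(\D,\Ebar)$ with $h(0)=(0,0,0)$ and $h(\lm_0)=(x_1,x_2,x_3)$. It remains to upgrade $h$ to a rational $\Ebar$-inner function with the same two interpolation values. Theorem \ref{thm_Brown} cannot be quoted directly, because the node $(0,0,0)$ satisfies $0\cdot 0=0$ and so violates the hypothesis $x_{1j}x_{2j}\neq x_{3j}$. Instead, split into the two cases treated in Remark \ref{rem_Brown}.

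\emph{Case $x_1x_2=x_3$.} The proof of Theorem \ref{thm_Sch_E_1} gives $|x_1|,|x_2|\le|\lm_0|$. Choose automorphism-type Blaschke maps $g_1,g_2$ with $g_j(0)=0$ and $g_j(\lm_0)=x_j$. For example, take $g_j(\lm)=\lm\,\phi_j(\lm)$ with $\phi_j$ a Blaschke factor sending $\lm_0$ to $x_j/\lm_0$, or $\phi_j$ unimodular constant when $|x_j|=|\lm_0|$. This is the same construction as in the proof of the special-case Schwarz lemma above. Then $(g_1,g_2,g_1g_2)$ is rational $\Ebar$-inner, since on $\T$ one has $g_1=\overline{g_2}\,g_1g_2$ and $|g_1g_2|=1$, which is the condition in Theorem \ref{thm6.1}.

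\emph{Case $x_1x_2\neq x_3$.} Run the realization argument of \cite{Brown}, as spelled out in Remark \ref{rem_Brown}. This produces a finite-dimensional unitary colligation $L$ and a rational matrix inner function $E$, and $g=(E_{11},E_{22},\det E)$ is rational $\Ebar$-inner with $g(\lm_0)=(x_1,x_2,x_3)$. The remark's computation with three distinct points $z_k$ forces $g(0)=(0,0,0)$.

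\textbf{Main obstacle.} The only delicate point is this second case, specifically verifying $g(0)=0$ despite the degenerate node. That verification is carried out in Remark \ref{rem_Brown}, so I would simply invoke it.
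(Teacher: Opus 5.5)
Your proposal is correct and is essentially the paper's argument: $(1)\Rightarrow(2)$ is trivial, $(2)\Leftrightarrow(3)$ is Theorem \ref{thm_Sch_E_1}, and the upgrade to a rational $\Ebar$-inner map is exactly the two-case argument of Remark \ref{rem_Brown}. Your explicit choice $g_j(\lambda)=\lambda\phi_j(\lambda)$ in the case $x_1x_2=x_3$ is the right one, since automorphisms of $\D$ vanishing at $0$ are rotations and only work when $|x_j|=|\lambda_0|$; the bound $|x_j|\le|\lambda_0|$ also follows directly from (3), whose two ratios reduce to $|x_1|$ and $|x_2|$ when $x_3=x_1x_2$.
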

	
	Note that Theorem \ref{thm_BrownII} adds one feather to the existing Schwarz lemma for the tetrablock. We now present necessary and sufficient conditions for the solvability of Schwarz type lemma on $\Q$ when the interpolating function is a rational $\CQ$-inner map. To do so, we need the following matricial Nevanlinna-Pick interpolation result.

	\begin{thm}[S. C. Power, \cite{Power}]\label{lem_4100}
		Let $z_1,\ldots,z_n$ be distinct points of $\mathbb D$ and let
		$W_1,\ldots,W_n\in M_d(\mathbb C)$. Then the following
		conditions are equivalent:
		\begin{enumerate}
			\item There exists a rational $d\times d$ matrix-inner
			function $U$ such that $U(z_i)=W_i$ for $1\leq i\leq n$.
			\item The matrix
			$\displaystyle 
			\left(
			\frac{I-W_iW_j^*}{1-z_i\overline{z_j}}
			\right)_{i,j=1}^n
			$
			is positive semi-definite.
		\end{enumerate}
	\end{thm}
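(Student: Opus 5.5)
The plan is to prove the two implications separately, using the standard lurking-isometry (unitary colligation) argument. All spaces involved are finite dimensional, and that finiteness is exactly what makes the interpolant rational.

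\emph{$(1)\Rightarrow(2)$.} Let $U$ be a rational $d\times d$ matrix-inner function. Since $U$ has no poles on $\overline{\D}$ and is unitary on $\T$, the maximum principle gives $\|U(\lambda)\|\le 1$ on $\D$. Hence $U$ is in the matricial Schur class, so the de Branges--Rovnyak kernel
\[
K(z,w)=\frac{I-U(z)U(w)^*}{1-z\overline{w}}
\]
is positive on $\D\times\D$. One way to see this is to note that multiplication by $U^*$ is a contraction on the vector-valued Hardy space $H^2\otimes\C^d$ acting on Szeg\H{o} kernels. Restricting $K$ to the points $z_1,\dots,z_n$ gives positivity of the Pick matrix in $(2)$.

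\emph{$(2)\Rightarrow(1)$.}
\begin{enumerate}
\item Factor the positive semidefinite block matrix as a Gram matrix. This produces a Hilbert space $\mathcal H$ with $\dim\mathcal H\le nd$ and operators $G_i:\mathcal H\to\C^d$ such that
\[
I-W_iW_j^*=(1-z_i\overline{z_j})\,G_iG_j^* \quad \text{for all } i,j.
\]
\item Rearrange this identity as
\[
I+z_i\overline{z_j}\,G_iG_j^*=W_iW_j^*+G_iG_j^*.
\]
It says that the map
\[
V:\begin{pmatrix}x\\ \overline{z_i}G_i^*x\end{pmatrix}\longmapsto\begin{pmatrix}W_i^*x\\ G_i^*x\end{pmatrix}\qquad(x\in\C^d,\ 1\le i\le n)
\]
extends linearly to a well-defined isometry between two subspaces of $\C^d\oplus\mathcal H$.
\item Because $\C^d\oplus\mathcal H$ is finite dimensional, the orthogonal complements of the domain and range have equal dimension. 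So $V$ extends to a unitary $\begin{pmatrix}A&B\\ C&D\end{pmatrix}$ on $\C^d\oplus\mathcal H$.
\item Eliminate the state variable $G_i^*x$ from the resulting equations. This gives
\[
W_i^*=A+\overline{z_i}B(I-\overline{z_i}D)^{-1}C .
\]
Set $U(\lambda)=\bigl(A+\overline{\lambda}B(I-\overline{\lambda}D)^{-1}C\bigr)^*$, which equals the transfer function of the adjoint colligation evaluated at $\lambda$. Then $U(z_i)=W_i$. Moreover $U$ is rational, of McMillan degree at most $nd$, and analytic on $\D$ because $\|D\|\le 1$.
\end{enumerate}

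\emph{Main obstacle.} The hard step is showing that $U$ is \emph{inner}, i.e.\ unitary on all of $\T$ with no boundary singularities. The standard colligation identity gives
\[
I-U(\lambda)U(\mu)^*=(1-\lambda\overline{\mu})\,\Gamma(\lambda)\Gamma(\mu)^*
\]
with $\Gamma(\lambda)=B(I-\lambda D)^{-1}$ or its adjoint version. This identity yields unitarity of $U(\zeta)$ at every $\zeta\in\T$ where $I-\zeta D$ is invertible.

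The difficulty is that the contraction $D$ may have eigenvalues on $\T$. I would handle this by decomposing $\mathcal H=\mathcal H_u\oplus\mathcal H_c$, where $D|_{\mathcal H_u}$ is unitary and $D|_{\mathcal H_c}$ is completely non-unitary. On $\mathcal H_u$, unitarity of the full colligation forces $B|_{\mathcal H_u}=0$ and $C$ to map into $\mathcal H_c$, so $\mathcal H_u$ does not contribute to $U$. After compressing to $\mathcal H_c$, the operator $D$ has spectral radius less than $1$. Then $U$ is analytic on a neighbourhood of $\overline{\D}$ and unitary on $\T$, hence a rational $d\times d$ matrix-inner function interpolating the data.

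If this decoupling turns out to be delicate to write down, the fallback is to replace each $W_i$ by $rW_i$ with $r<1$. Then Pick positivity becomes strict and the same construction goes through. One passes to the limit $r\to1$, using the uniform degree bound $nd$ and compactness of rational inner functions of bounded degree, with care taken that no degree is lost at the boundary.
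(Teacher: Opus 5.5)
\textbf{Verdict.} Your argument is correct. Note that the paper itself gives no proof of this statement. It is quoted from Power and used as a black box, so there is no internal argument to compare with, and your lurking-isometry proof is a legitimate self-contained replacement.

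\textbf{Direction $(1)\Rightarrow(2)$.} The Pick matrix is the compression of $I-M_UM_U^*\ge 0$ to the vectors $k_{z_j}\otimes x$, via $M_U^*(k_w\otimes x)=k_w\otimes U(w)^*x$. One wording point: it is the adjoint of $M_U$ that acts on kernel functions this way. ``Multiplication by $U^*$'' does not act on $H^2\otimes\mathbb C^d$ at all.

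\textbf{Direction $(2)\Rightarrow(1)$.} The following steps are all correct:
\begin{itemize}
\item the Gram identity;
\item the isometry $V$ and its unitary extension;
\item the elimination $W_i^*=A+\overline{z_i}B(I-\overline{z_i}D)^{-1}C$;
\item passage to the adjoint colligation.
\end{itemize}
Your decoupling of the unitary part of $D$ is exactly the point that needs care. It uses $\|Bh\|^2+\|Dh\|^2=\|h\|^2$ and $\|C^*h\|^2+\|D^*h\|^2=\|h\|^2$ for $h$ in the unitary part. This is what gives analyticity across $\mathbb T$ and unitarity at every boundary point.

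\textbf{Relation to the paper.} The paper uses the same realization machinery in Remark~\ref{rem_Brown}. There it asserts that $E(\lambda)=A+\lambda B(I-\lambda D)^{-1}C$ is rational inner merely because $L$ is unitary and $\mathcal H$ is finite dimensional. Your treatment of possible unimodular eigenvalues of $D$ is the step that actually justifies such a claim.

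\textbf{Optional simplification.} You can avoid the decoupling altogether by taking a minimal Gram factorization, with $\dim\mathcal H$ equal to the rank of the Pick matrix. Then $h\mapsto(G_1h,\dots,G_nh)$ is injective.
\begin{itemize}
\item Suppose $Dh=\zeta h$ with $|\zeta|=1$. Then $Bh=0$, so the unitary colligation maps $(0,h)$ to $(0,\zeta h)$.
\item Compare inner products with $(x,\overline{z_i}G_i^*x)$ before and after applying the unitary. This gives $(\zeta-z_i)\langle G_ih,x\rangle=0$ for all $x$.
\item Since $|z_i|<1$, this forces $G_ih=0$ for all $i$, hence $h=0$.
\end{itemize}
So $r(D)=r(D^*)<1$ automatically. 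The resulting $U$ is analytic on a neighbourhood of $\overline{\mathbb D}$, with McMillan degree at most the rank of the Pick matrix.

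\textbf{The fallback.} In either version, the fallback via $rW_i$ and a limit over rational inner functions of bounded degree is unnecessary. You should drop it, because the limiting step there is the genuinely delicate part, and you have not controlled it.
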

	
	We are now in a position to present the following Schwarz type lemma for $\Q$ where the interpolating function is a rational $\CQ$-inner map.
	
	\begin{thm}\label{thm_hexa_schwarz}
		Let $\lambda_0\in\mathbb D\setminus\{0\}$ and
		$(a,x_1,x_2,x_3)\in\mathbb H$. Then the following
		are equivalent:
		
		\begin{enumerate}
			\item there exists a rational $\CQ$-inner
			map $f$ such that $f(0)=(0,0,0,0)$ and $f(\lambda_0)=(a,x_1,x_2,x_3)$;

			\item there exist $w_{12}, w_{21}, z_{12}\in\mathbb C$ satisfying $w_{12}w_{21}=x_1x_2-x_3$ and $|a| \leq |\lm_0w_{12}|$ such that 
			\begin{equation}\label{eqn_hexa_pick}
				\begin{pmatrix}
					I-W_0W_0^*&I-W_0W_1^*\\[2mm]
					I-W_1W_0^*&
					\dfrac{I-W_1W_1^*}{1-|\lambda_0|^2}
				\end{pmatrix}
				\geq0,
				\quad 	\text{where} \quad W_0=
				\begin{pmatrix}
					0&z_{12}\\
					0&0
				\end{pmatrix} \ \ \text{and} \ \  W_1=
				\begin{pmatrix}
					x_1& w_{12}\\
					w_{21}&x_2
				\end{pmatrix}.
			\end{equation}
		\end{enumerate}
	\end{thm}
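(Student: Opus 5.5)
The plan is to pass through rational $2\times 2$ matrix inner functions and use Theorem \ref{lem_4100} (Power) in both directions. The matrix $W_0$ encodes the value at $0$ and $W_1$ the value at $\lambda_0$ of such a function $U$. We read off $x=(U_{11},U_{22},\det U)$, and the $a$-component comes from an off-diagonal entry of $U$ times a scalar inner factor.

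\emph{(2)$\Rightarrow$(1).} Suppose $w_{12},w_{21},z_{12}$ are as in (2). Since \eqref{eqn_hexa_pick} is exactly the Pick matrix for the nodes $0,\lambda_0$, Theorem \ref{lem_4100} gives a rational $2\times 2$ inner $U=(U_{ij})$ with $U(0)=W_0$ and $U(\lambda_0)=W_1$.
\begin{itemize}
\item Put $x=(U_{11},U_{22},\det U)$. Because $U$ is a contraction on $\D$ and unitary on $\T$, it is a rational $\Ebar$-inner function: on $\T$ we have $|\det U|=1$ and $U_{11}=\overline{U_{22}}\det U$ (Theorem \ref{thm6.1}). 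Its values are $x(0)=(0,0,0)$ and $x(\lambda_0)=(x_1,x_2,x_1x_2-w_{12}w_{21})=(x_1,x_2,x_3)$.
\item Next choose a finite Blaschke product (possibly a unimodular constant) $\varphi$ with $\varphi(\lambda_0)=a/(\lambda_0w_{12})$. This is possible since $|a|\le|\lambda_0w_{12}|$; if $w_{12}=0$, then $a=0$ and we take $\varphi(\lambda)=\lambda$, which vanishes at $\lambda_0$.
\item Define $\tilde a(\lambda)=\lambda\varphi(\lambda)U_{12}(\lambda)$. Then $\tilde a(0)=0$ and $\tilde a(\lambda_0)=a$. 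On $\T$, the unit-row condition gives $|\tilde a|^2+|U_{11}|^2=|U_{12}|^2+|U_{11}|^2=1$.
\item Hence $f=(\tilde a,U_{11},U_{22},\det U)$ maps $\T$ into $b\Q$ by Theorem \ref{thmbQ}. For fixed $z_1,z_2\in\D$, the map $\lambda\mapsto\Pz(f(\lambda))$ is holomorphic on $\D$ and continuous on $\DC$, since $x(\lambda)\in\Ebar$ keeps the denominator nonzero.
\item By \eqref{eqn5.3} it is bounded by $1$ on $\T$, so the maximum principle, exactly as in the proof of Theorem \ref{thm10.14}, gives $f(\D)\subseteq\CQ$. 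Thus $f$ is the required rational $\CQ$-inner map.
\end{itemize}

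\emph{(1)$\Rightarrow$(2).} Let $f=(a,x_1,x_2,x_3)$ be rational $\CQ$-inner with $f(0)=0$. By Lemma \ref{lem10.5}, $x$ is rational $\Ebar$-inner with $x(0)=0$. We need a rational matrix inner $F=\begin{pmatrix}x_1&F_{12}\\ F_{21}&x_2\end{pmatrix}$ with $\det F=x_3$, and we will arrange $F_{12}$ to be a multiple of $a_{out}$.
\begin{itemize}
\item Use Theorem \ref{thm10.12}: write $x_1=E_1/D$, $x_2=E_2/D$, $x_3=D^{\sim n}/D$, and $a=a_{in}A/D$ with $A$ outer and $|A|^2=|D|^2-|E_1|^2$ on $\T$.
\item Set $F_{12}=A/D$, which is outer, and $F_{21}=(E_1E_2-D^{\sim n}D)/(AD)$. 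Then $F_{12}F_{21}=x_1x_2-x_3$.
\item The point to check is that $F_{21}$ is a polynomial quotient without poles in $\DC$, and that $F$ is unitary on $\T$. For the latter one uses $E_1=E_2^{\sim n}$: on $\T$ this gives $|E_1E_2-D^{\sim n}D|=|D|^2-|E_1|^2=|A|^2$, which is the $\Ebar$-inner structure from Theorem \ref{thm_Einner}.
\item For the former, divisibility of $E_1E_2-D^{\sim n}D$ by $A$ is the delicate part, since zeros of $A$ on $\T$ could create poles there. If this fails, I would instead take $F_{21}=\overline{F_{12}}\,(\ldots)$ on $\T$ via the construction in the proof of Proposition \ref{prop_Ein_Hin} (\cite{AbouhajarI}, Thm.\ 2.2.5), and only require $|F_{12}|=|A/D|$ on $\T$ with $F_{12}$ outer.
\end{itemize}
Granting such an $F$, the remaining steps are as follows.
\begin{itemize}
\item At $0$ we have $\det F(0)=0$ and $F_{12}(0)\ne0$ (outer), so $F_{21}(0)=0$. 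Hence $F(0)=W_0$ with $z_{12}=F_{12}(0)$, and $F(\lambda_0)=W_1$ with $w_{ij}=F_{ij}(\lambda_0)$, so $w_{12}w_{21}=x_1x_2-x_3$.
\item Since $a_{in}(0)=0$, the function $a/(\lambda F_{12})=\omega a_{in}/\lambda$ is inner, so $|a|\le|\lambda_0w_{12}|$.
\item Finally, Theorem \ref{lem_4100} applied to $F$ gives \eqref{eqn_hexa_pick}.
\end{itemize}
The main obstacle is the construction of this rational inner $F$ with a prescribed outer off-diagonal entry, in particular handling boundary zeros of $A$.
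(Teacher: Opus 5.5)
Your (2)$\Rightarrow$(1) is correct, and it is leaner than the paper's. The paper factors $1-|u_{11}|^2$ on $\mathbb{T}$ through a Fej\'er--Riesz outer function $q_{11}=A/D$ and shows $u_{12}/q_{11}$ is inner to get $|a|\le|\lambda_0 q_{11}(\lambda_0)|$. It then takes $f=(Bq_{11},u_{11},u_{22},\det U)$ via Theorem \ref{thm10.14}, and treats $u_{11}$ inner separately. Your choice $\tilde a=\lambda\varphi U_{12}$ gives $|\tilde a|^2+|U_{11}|^2=1$ on $\mathbb{T}$ directly from unitarity, so you need neither Fej\'er--Riesz nor a case split. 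What the paper's route gains is a first coordinate already in the normal form (Blaschke product)$\cdot A/D$ of Theorem \ref{thm10.12}. A small slip: when $w_{12}=0$, $\varphi(\lambda)=\lambda$ does not vanish at $\lambda_0$, but any $\varphi$ works there because $U_{12}(\lambda_0)=0$.

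The gap is in (1)$\Rightarrow$(2). Up to a unimodular constant your $F$ is the paper's matrix $U$, since $F_{21}=(x_1x_2-x_3)D/A$ is the paper's $(f_1f_2-f_3)/q$ with $q=A/D$. However, you only grant that $F$ is rational inner, and that is the whole content of this direction. It is easy to prove:
\begin{itemize}
\item $F_{21}$ has no poles in $\mathbb{D}$, because $A$ is outer and $D$ has no zeros on $\overline{\mathbb{D}}$.
\item On $\mathbb{T}$ one has $E_1=\lambda^n\overline{E_2}$ and $D^{\sim n}=\lambda^n\overline{D}$. Hence $E_1E_2-D^{\sim n}D=-\lambda^n|A|^2$, so $F_{21}=-x_3\overline{F_{12}}$ at every $\zeta\in\mathbb{T}$ with $A(\zeta)\neq 0$.
\item Therefore $|F_{21}|\le 1$ on $\mathbb{T}$ off finitely many points. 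This excludes poles on $\mathbb{T}$, since a rational function is unbounded along $\mathbb{T}$ near such a pole; no divisibility argument is needed.
\end{itemize}
It is this phase identity, together with $x_1=\overline{x_2}\,x_3$ and $|x_3|=1$, that makes the off-diagonal entry of $FF^*$ vanish on $\mathbb{T}$. The modulus identity you wrote only handles the diagonal entries. Your fallback via Proposition \ref{prop_Ein_Hin} would not close the gap as stated. It gives no control over the inner factor of $F_{12}$, and outerness of $F_{12}$ is exactly what your bound $|a|\le|\lambda_0 w_{12}|$ relies on. Once $F$ is established, the rest of your argument matches the paper: $F(0)=W_0$, the bound via $a_{in}/\lambda$, and Theorem \ref{lem_4100}.

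You also miss the degenerate case $a\equiv 0$. Since $|a|^2+|x_1|^2=1$ on $\mathbb{T}$, this is the same as $x_1$ being inner. In this case Theorem \ref{thm10.12} does not apply, because it assumes $a\neq 0$. Moreover $A\equiv 0$, so $F_{12}$ is not outer and $F_{21}$ is undefined. The paper handles it separately:
\begin{itemize}
\item $|x_1|=1$ on $\mathbb{T}$ forces $|x_2|=1$ and $x_3=x_1x_2$ on $\mathbb{T}$, hence on $\overline{\mathbb{D}}$.
\item Then $F=\mathrm{diag}(x_1,x_2)$ with $w_{12}=w_{21}=z_{12}=0$ satisfies (2).
\end{itemize}
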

	
	\begin{proof}
		Let us assume that there exists a rational $\CQ$-inner
		map $f=(f_0, f_1, f_2, f_3)$ such that $f(0)=(0,0,0,0)$ and $f(\lambda_0)=(a,x_1,x_2,x_3)$. By Lemma \ref{lem10.5}, the map $h=(f_1,f_2,f_3)$ is rational $ \Ebar$-inner. In particular, the map $f_1: \D \to \C$ satisfies $|f_1(\lm)|\leq 1$ for every $\lm \in \D$. We have by Theorem \ref{thm10.12} that $f_1$ is a rational holomorphic map on $\DC$. Suppose $f_1$ is inner, that is, $|f_1(\zeta)|=1$ almost everywhere in $\T$. Then $|f_1|=1$ everywhere on $\T$. Since $h$ is rational $\Ebar$-inner, it follows that $|f_2|=1$ on $\T$ and $f_3(\zeta)=f_1(\zeta)f_2(\zeta)$ for every $\zeta \in \T$. As $f$ is rational $\CQ$-inner map, we have that $|f_0(\zeta)|^2+|f_1(\zeta)|^2=1$ for $\zeta \in \T$ and thus $|f_0|=0$ on $\T$. By identity theorem, $f_3=f_1f_2$ and $f_0=0$ on $\DC$. In particular, $x_1x_2=x_3$ and $a=0$ since $f(0)=(0, 0, 0, 0)$ and $f(\lm_0)=(a, x_1, x_2, x_3)$. Consider the 
		function
		\[
		V: \DC \to M_2(\C), \quad \lm \mapsto \begin{pmatrix}f_1(\lm)&0\\0&f_2(\lm)\end{pmatrix}.
		\]
		Evidently, $V$ is a matrix-valued rational holomorphic map on $\DC$ such that $\|V(\lm)\|\leq 1$ for $\lm \in \DC$. Moreover, $\|V(\zeta)\|=1$ for $\zeta \in \T$.  Note that  $V(0)=0$ and 	$V(\lambda_0)=
		\begin{pmatrix}x_1&0\\0&x_2\end{pmatrix}$. Set $w_{12}=w_{21}=z_{12}=0$. Then $w_{12}w_{21}=x_1x_2-x_3$ and $|a|=|w_{12}\lm_0|=0$. Also, \eqref{eqn_hexa_pick} follows from Theorem \ref{lem_4100}. 
		
		\smallskip Next, consider the case when $f_1$ is not inner. It follows from Theorem \ref{thm10.12} that there exist polynomials $E_1$ and $D$ such that $D(\lm) \neq 0$ for every $\lm \in  \DC$ and $f_1=E_1\slash D$. Note that $|D(\zeta)|^2-|E_1(\zeta)|^2\geq0$ for $\zeta\in\mathbb T$. By the Fej\'er-Riesz theorem, there exists a polynomial $B$ having no zeros in $\mathbb D$ such that $|D(\zeta)|^2-|E_1(\zeta)|^2=|B(\zeta)|^2$ for $\zeta\in\mathbb T$. Consider the rational function $q=B/D$ on $\DC$. Clearly, $q$ has no zeros in $\D$ and for every $\zeta \in \T$, we have
		\[
		|q(\zeta)|^2=|B(\zeta)\slash D(\zeta)|^2=1-|E_1(\zeta)\slash D(\zeta)|^2=1-|f_1(\zeta)|^2.
		\]
		Define
		\[
		U: \D \to M_2(\C), \quad \lm \mapsto 
		\begin{pmatrix}
			f_1(\lm) & q(\lm)\\[1mm]
			\dfrac{f_1(\lm)f_2(\lm)-f_3(\lm)}{q(\lm)}& f_2(\lm)
		\end{pmatrix}.
		\]
		Evidently, $U$ is a rational analytic matrix-valued function on
		$\mathbb D$. We claim that $U$ is contractive on $\mathbb D$. Put
		$g= (f_1f_2-f_3)\slash q$. Since $(f_1,f_2,f_3)$ is rational $\Ebar$-inner, it follows that
		\[
		f_1=\overline{f_2}f_3,\qquad
		|f_3|=1,\qquad |f_1|=|f_2|
		\quad\text{a.e. on }\mathbb T.
		\]
		As these functions are rational and holomorphic on a
		neighbourhood of $\overline{\mathbb D}$, the identities
		hold everywhere on $\mathbb T$. Consequently, whenever
		$q(\zeta)\neq0$,
		\[
		g(\zeta)
		=\frac{f_3(\zeta)(|f_2(\zeta)|^2-1)}{q(\zeta)}
		=-f_3(\zeta)\overline{q(\zeta)}.
		\]
		In particular, $|g(\zeta)|=|q(\zeta)|\leq1$ at every
		such point. Now, we show that $g$ has no poles on $\mathbb T$.
		Indeed, if $\zeta_0\in\mathbb T$ were a pole of order
		$m\geq1$, then $|g(\zeta)|\to\infty$ as $\zeta\to\zeta_0$ along
		$\mathbb T$, contradicting $|g(\zeta)|\leq1$ on
		a punctured arc around $\zeta_0$, since $q$ has only
		finitely many zeros. Thus $g$, and consequently $U$,
		extends holomorphically to a neighbourhood of
		$\overline{\mathbb D}$. Moreover, for every $\zeta\in\mathbb T$,
		\[
		U(\zeta)=
		\begin{pmatrix}
			f_1(\zeta)&q(\zeta)\\
			-f_3(\zeta)\overline{q(\zeta)}&f_2(\zeta)
		\end{pmatrix}.
		\]
		Since $|f_1|^2+|q|^2=1$,
		$|f_3|=1$ and
		$f_1=\overline{f_2}f_3$ on $\mathbb T$, we have that
		$U(\zeta)U(\zeta)^*=I$ for every $\zeta\in\mathbb T$.
		Thus $U$ is unitary on $\mathbb T$. By maximum modulus principle, $\|U(\lambda)\|\leq1$ for every $\lambda\in\mathbb D$. Thus, $U$ is a rational matrix-valued inner function. Set $w_{12}=q(\lm_0), w_{21}=(x_1x_2-x_3)\slash q(\lm_0)$ and $z_{12}=q(0)$. Then 
		$U(0)=
		\begin{pmatrix}
			0& z_{12}\\
			0&0
		\end{pmatrix}, U(\lm_0)=\begin{pmatrix}
			x_1 & w_{12}\\
			w_{21} & x_2
		\end{pmatrix}$ and so, \eqref{eqn_hexa_pick} follows from Theorem \ref{lem_4100}. It remains to show that $|a| \leq |\lm_0w_{12}|$, i.e., $|a| \leq |\lm_0q(\lm_0)|$. To do so, we prove that $f_0/q$ is rational inner. Since $q$ has no
		zeros in $\mathbb D$, the rational function $f_0/q$ is
		holomorphic on $\mathbb D$. Also, we have
		\[
		|f_0(\zeta)|^2=1-|f_1(\zeta)|^2
		=|q(\zeta)|^2,\qquad \zeta\in\mathbb T.
		\]
		As $q$ is a nonzero rational function, it has only finitely
		many zeros on $\mathbb T$. Consequently,
		$|f_0/q|=1$ almost everywhere on $\mathbb T$.
		If $f_0/q$ admits a pole at some $\zeta_0\in\mathbb T$, then $|f_0(\zeta_0)\slash q(\zeta_0)| \to \infty$ as $\zeta\to\zeta_0$
		along $\mathbb T$, which is a contradiction as $|f_0\slash q|=1$ a.e. on $\T$. Thus $f_0/q$
		extends holomorphically to a neighbourhood of
		$\overline{\mathbb D}$. By maximum modulus principle,
		$|f_0/q|\leq1$ on $\mathbb D$. Since $(f_0\slash q)(0)=0$, it follows from Schwarz's lemma on $\D$  that
		$|f_0(\lambda_0)\slash q(\lambda_0)|\leq|\lambda_0|$ and thus $|a|\leq |w_{12}\lm_0|$.
		
		\medskip
		
		Conversely, suppose $(2)$ holds. By Theorem \ref{lem_4100},
		there exists a rational matrix-valued inner map
		\[
		U=
		\begin{pmatrix}
			u_{11}&u_{12}\\
			u_{21}&u_{22}
		\end{pmatrix}
		\]
		such that $U(0)=W_0$ and $U(\lambda_0)=W_1$. Define $h=(u_{11},u_{22},\det U)$. Since $U$ is rational matrix-inner, $h$ is a
		rational $\Ebar$-inner function. Suppose $h$ has degree $n$. Also, $h(0)=(0,0,0)$ and $h(\lambda_0)
		=(x_1,x_2,x_1x_2-w_{12}w_{21})
		=(x_1,x_2,x_3)$. Evidently, the map $u_{11}: \D \to \C$ is a rational holomorphic map that satisfies $|u_{11}(\lm)| \leq 1$ for $\lm \in \D$. Assume that $u_{11}$ is not inner. It follows from Theorem \ref{thm_Einner} that there exist polynomials $E_1, E_2, D$ of degree at most $n$ such that the following hold: 
		\begin{equation*}
			\begin{split}
				&(1)  \	\ D(\lm) \ne 0 \  \text{on} \  \overline{\D}, \qquad (2) \ E_1=E_2^{\sim n} \ \text{on} \ \overline{\D}, 
				\qquad (3) \  |E_i(\lm)| \leq |D(\lm)| \  \text{for} \ \lm \in \overline{\D}, i=1,2,\\
				&(4) \ \	u_{11}=\frac{E_1}{D}\ \ \text{on} \ \ \overline{\D},\qquad (5) \ \	u_{22}=\frac{E_2}{D} \ \ \text{on} \ \ \overline{\D} \  \qquad (6) \ \	\det U=\frac{D^{\sim n}}{D} \ \ \text{on} \ \ \overline{\D}. \\
			\end{split}	
		\end{equation*}
	 By Fej\'er-Riesz theorem, there exists an outer polynomial $A$ such that $|A(\zeta)|^2=|D(\zeta)|^2-|E_1(\zeta)|^2$ for $\zeta \in \T$. Choose $q_{11}=A\slash D$. Then $|q_{11}|^2=1-|u_{11}|^2$
		 a.e. on $\mathbb T$. Since $U$ is unitary matrix-valued a.e. on $\T$, we have that
		$|u_{12}|^2=1-|u_{11}|^2=|q_{11}|^2$ a.e. on $\mathbb T$. Since $q_{11}$ has no zeros in $\D$ and $|u_{12}|=|q_{11}|$ a.e. on $\T$, the rational function $u_{12}/q_{11}$ is holomorphic on $\D$ that has only removable singularities on $\T$. By maximum modulus principle,	$u_{12}/q_{11}$ is an inner function and so, $|u_{12}(\lambda_0)|
		\leq|q_{11}(\lambda_0)|$. Since $u_{12}(\lambda_0)=w_{12}$, we have that $|a|\leq |\lm_0w_{12}|
		\leq |\lm_0q_{11}(\lambda_0)|$. Put $w=a \slash q_{11}(\lambda_0)$. Then $|w|\leq|\lambda_0|$. By the scalar
		two-point Nevanlinna-Pick theorem, there is
		a finite Blaschke product $B$ such that $B(0)=0$ and  $B(\lambda_0)=w$. Define
		\[
		f=(Bq_{11}, h_1, h_2, h_3)=(Bq_{11}, u_{11}, u_{22}, \det U)=\bigg(B\frac{A}{D}, \ \frac{E_1}{D},\ \frac{E_2}{D}, \ \frac{D^{\sim n}}{D}\bigg).
		\]
		It follows from Theorem \ref{thm10.14} that $f$ is rational
		$\CQ$-inner. Also, $f(0)=(0,0,0,0)$ and
		\[
		f(\lambda_0)
		=(B(\lambda_0)q_{11}(\lambda_0),
		x_1,x_2,x_3)
		=(a,x_1,x_2,x_3).
		\]
		It remains to consider the case when
		$u_{11}$ is inner. Then $u_{12}=0$ a.e. on $\T$ as $U$ is unitary matrix-valued a.e. on $\T$. Since $u_{12}$ is rational and belongs to $H^\infty(\D)$, it follows from maximum modulus principle that $u_{12}=0$ on $\D$. In particular, $w_{12}=u_{12}(\lm_0)=0$ and so, $a=0$ since $|a| \leq |\lm_0w_{12}|$.  In this case, we consider a rational holomorphic map on $\D$ given by
		\[
		f=(0,h_1,h_2,h_3)=(0, u_{11}, u_{22}, \det U).
		\]
		Clearly, $f(\D) \subseteq \{0\} \times \Ebar \subseteq \CQ$. Since $|u_{11}|=|u_{22}|=1$ a.e. on $\T$, we have that $f(\zeta) \in b\Q$ a.e. on $\T$. Hence, $f$ is a rational $\CQ$-inner function. The proof is now complete. 
	\end{proof}

	\section{Some geometry and function theory of the pentablock via hexablock}\label{sec_appl}
	
	\noindent	At every stage of this paper, we explore and find interaction of the hexablock with the symmetrized bidisc, the tetrablock and the pentablock. In this section, we provide alternative proofs to several results on the geometry and function theory of pentablock from \cite{AglerIV, Alsheri, Jindal, KosinskiII} using the theory of hexablock. It is already proved in Theorem \ref{thmP_in_H} that $(a, s, p) \in \Pe$ if and only if $(a, s\slash 2, s\slash 2, p) \in \Q$. Consequently, many geometric and function theoretic properties of $\Pe$ can be deduced from the corresponding properties of $\Q$, and this constitutes the main content of this section. Throughout the section, we shall use the following notations: 
	\[
	e_0(a, s, p)=(a, s\slash 2, s \slash 2, p) \quad \& \quad e_*(a, x_1, x_2, x_3)=(a, x_1+x_2, x_3) \quad (a, x_1, x_2, x_3, s, p \in \C).
	\]
	We mention that the restriction of $e_0$ to $\Pbar$ appears in Section \ref{connect}. We shall also follow the notation $e_\Pe=e_0|_{\Pbar}$. Let us begin with the following result showing interplay between $\Pe$ and $\Q$.
	
	\begin{thm}\label{thm_HP_final} 
		The following assertions hold:
		\begin{equation*}
			\begin{split} 
				\Pe&=\{(a, s, p) : (a, s\slash 2, s \slash 2, p) \in \Q\}, \\ 
				\Pbar&=\{(a, s, p) : (a, s\slash 2, s \slash 2, p) \in \CQ\}, \\
				\partial \Pe&=\{(a, s, p) : (a, s\slash 2, s \slash 2, p) \in \partial \Q\}.
			\end{split}
		\end{equation*} 
	\end{thm}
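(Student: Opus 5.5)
The first two identities are a restatement of Theorem \ref{thmP_in_H} (equivalently, part $(5)$ of Theorem \ref{thm_final_embed}). That theorem says $(a,s,p)\in\Pe$ if and only if $e_0(a,s,p)=(a,s\slash 2,s\slash 2,p)\in\Q$, and $(a,s,p)\in\Pbar$ if and only if $e_0(a,s,p)\in\CQ$. In set notation this gives exactly $\Pe=e_0^{-1}(\Q)$ and $\Pbar=e_0^{-1}(\CQ)$. The only care needed is to note that the ``if'' directions are covered: for the open set this follows from Lemma \ref{lem_H_P} applied to $e_0(a,s,p)$, because $e_*\circ e_0=\mathrm{id}$; for the closure it follows from the closed version of the same lemma.

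For the boundary I will write $\partial\Pe=\Pbar\setminus\Pe$ and $\partial\Q=\CQ\setminus\Q$. This is valid because both are bounded domains and each equals the interior of its closure. I will not actually need that fact, because I can use the set descriptions just obtained. If $(a,s,p)\in\partial\Pe$, then $(a,s,p)\in\Pbar$ but not in $\Pe$, so $e_0(a,s,p)\in\CQ\setminus\Q=\partial\Q$ by the two identities. This was also proved directly inside Theorem \ref{thmP_in_H}. Conversely, if $e_0(a,s,p)\in\partial\Q=\CQ\setminus\Q$, then the second identity gives $(a,s,p)\in\Pbar$, and the first identity gives $(a,s,p)\notin\Pe$. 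Hence $(a,s,p)\in\partial\Pe$.

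The only point needing justification is the identity $\partial\Q=\CQ\setminus\Q$, and likewise for $\Pe$. This holds for any open set whose closure is the given closed set. Here $\CQ$ denotes the closure of the open set $\Q$, and the description \eqref{eqn5.3} is a characterization of that closure, so $\partial\Q=\overline{\Q}\setminus\Q$ holds by definition of topological boundary, since $\Q$ is open. The same applies to $\Pe$, whose closure $\Pbar$ is described in Theorem \ref{pentablock}. So no real obstacle arises: the whole theorem is a bookkeeping consequence of Theorem \ref{thmP_in_H}. The step to state carefully is that the equivalence there includes both directions for both the open and the closed sets.
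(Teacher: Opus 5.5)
Your proposal is correct and is essentially the paper's argument. The first two identities are Theorem~\ref{thmP_in_H}, with the reverse inclusions supplied by Lemma~\ref{lem_H_P} via $e_*\circ e_0=\mathrm{id}$; the third follows set-theoretically from $\partial\Pe=\Pbar\setminus\Pe$ and $\partial\Q=\CQ\setminus\Q$, which hold because both domains are open. The paper compresses this into one line, and you have written out the same bookkeeping, including the observation that the ``if'' directions come from Lemma~\ref{lem_H_P}, which the paper leaves implicit.
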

	
	\begin{proof}
		Note that $(1)$ and $(2)$ follow from Theorem \ref{thmP_in_H} and so, $(3)$ holds. 
	\end{proof} 
	
	We recall some geometric notions for subsets in $\C^n$ that will be used in the sequel.  For a compact set $K \subset \C^n$, the \textit{polynomial convex hull} $\widehat{K}$ of $K$ is defined as
	\[
	\widehat{K}=\left\{z \in \C^n \ : \ |p(z)| \leq \|p\|_{\infty, K}  \ \ \text{for all} \ p \in \C[z_1, \dotsc, z_n] \right\},
	\]
	where $\|p\|_{\infty, K}=\sup\{|p(z)|: z \in K\}$. 	The set $K$ is said to be \textit{polynomially convex} if $\widehat{K}=K$ or equivalently, if for every $z \notin K$, there exists $p \in \C[z_1, \dotsc, z_n]$ such that $\|p\|_{\infty, K} \leq 1<|p(z)|$.  A domain $\Omega$ in $\C^n$ is said to be \textit{polynomially convex} if for each compact subset $K$ of $\Omega$, the polynomial hull $\widehat{K}$ of $K$ is contained in $\Omega$. The authors of \cite{Biswas-Pal} proved that $\Q$ is a polynomially convex domain and $\CQ$ is a polynomially convex compact set. 
	
	
	\begin{thm}
		The set $\Pbar$ is polynomially convex. Moreover, the domain $\Pe$ is polynomially convex.
	\end{thm}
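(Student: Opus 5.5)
The plan is to transfer polynomial convexity from the hexablock to the pentablock through the polynomial maps $e_0(a,s,p)=(a,s/2,s/2,p)$ and $e_*(a,x_1,x_2,x_3)=(a,x_1+x_2,x_3)$. We use two facts. By Theorem \ref{thm_HP_final}, $\Pbar=e_0^{-1}(\CQ)$ and $\Pe=e_0^{-1}(\Q)$. From \cite{Biswas-Pal}, $\CQ$ is polynomially convex and $\Q$ is a polynomially convex domain. The argument rests on one observation: if $z\in\C^3$ and $K\subseteq\C^3$ is compact, then for every polynomial $P$ on $\C^4$ the composition $P\circ e_0$ is a polynomial on $\C^3$. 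Hence $z\in\widehat{K}$ implies $e_0(z)\in\widehat{e_0(K)}$.

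For the compact set, we argue by contraposition. Take $z=(a,s,p)\notin\Pbar$. Then $e_0(z)\notin\CQ$ by Theorem \ref{thm_HP_final}. Polynomial convexity of $\CQ$ gives a polynomial $P$ on $\C^4$ with $\|P\|_{\infty,\CQ}\le1<|P(e_0(z))|$. Set $p=P\circ e_0$, which is a polynomial in $(a,s,p)$. Since $e_0(\Pbar)\subseteq\CQ$, we get $\|p\|_{\infty,\Pbar}\le1<|p(z)|$. So $z\notin\widehat{\Pbar}$, and therefore $\widehat{\Pbar}=\Pbar$. Compactness of $\Pbar$ is part of the known setup: $\Pe$ is bounded, or alternatively $\Pbar$ is the preimage of the compact set $\CQ$ under the proper linear injection $e_0$.

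For the domain, take a compact $K\subset\Pe$. Then $e_0(K)$ is a compact subset of $\Q$. Polynomial convexity of the domain $\Q$ gives $\widehat{e_0(K)}\subseteq\Q$. Let $z\in\widehat{K}$. For every polynomial $P$ on $\C^4$ we have $|P(e_0(z))|=|(P\circ e_0)(z)|\le\|P\circ e_0\|_{\infty,K}=\|P\|_{\infty,e_0(K)}$. Hence $e_0(z)\in\widehat{e_0(K)}\subseteq\Q$, and Theorem \ref{thm_HP_final} gives $z\in\Pe$. So $\widehat{K}\subseteq\Pe$.

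No step here is genuinely hard. The only point needing care is the pullback inequality, which holds because polynomials composed with the affine map $e_0$ remain polynomials. The real content sits in the cited polynomial convexity of $\Q$ and $\CQ$ and in the identification of $\Pe$ and $\Pbar$ as slices in Theorem \ref{thmP_in_H}.
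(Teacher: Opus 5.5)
Your proof is correct and follows the paper's argument essentially verbatim. For $\Pbar$ you pull back via $e_0$ a polynomial separating $e_0(z)$ from $\CQ$, and for $\Pe$ you use $e_0(\widehat{K})\subseteq\widehat{e_0(K)}\subseteq\Q$ together with Theorem \ref{thm_HP_final}. The only blemish is cosmetic: you use $p$ both for the third coordinate of $z=(a,s,p)$ and for the polynomial $P\circ e_0$, so rename one of them.
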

	
	\begin{proof}
		This result appears as Theorem 6.3 in  \cite{AglerIV}. Let $(a, s, p) \notin \Pbar$. It follows from Theorem \ref{thm_HP_final} that $(a, s\slash 2, s\slash 2, p) \notin \CQ$. Since $\CQ$ is polynomially convex, there exists a polynomial $g$ in four variables such that $\|g\|_{\infty, \CQ} \leq 1$ and $|g(a, s\slash 2, s\slash 2, p)|>1$. Then $h(z_1, z_2, z_3)=g(z_1, z_2\slash 2, z_2\slash 2, z_3)$ is a polynomial with $|h(a, s, p)|>1$. Let $(\tilde{a}, \tilde{s}, \tilde{p}) \in \Pbar$. Again by Theorem \ref{thm_HP_final}, $(\tilde{a}, \tilde{s}\slash 2, \tilde{s}\slash 2, \tilde{p}) \in \CQ$. Thus, $|h(\tilde{a}, \tilde{s}, \tilde{p})|=|g(\tilde{a}, \tilde{s}\slash 2, \tilde{s}\slash 2, \tilde{p})| \leq 1$ and so, $\|h\|_{\infty, \Pbar} \leq 1$. Hence, $\Pbar$ is polynomially convex.
		
		\smallskip
		
		Next, we prove that the domain $\Pe$ is polynomially convex. Let $K \subset \Pe$ be compact. We need to show that the polynomial convex hull $\widehat{K}$ is in $\Pe$. Consider $e_0(K)=\{(a, s\slash 2, s\slash 2, p): (a, s, p) \in K\}$. Evidently, $e_0(K)$ is compact and by Theorem \ref{thm_HP_final}, $e_0(K) \subset \Q$. Since $\Q$ is a polynomially convex domain, it follows that $\widehat{e_0(K)} \subset \Q$.  Let $(a, s, p) \in \widehat{K}$ and let $h \in \C[z_1, z_2, z_3, z_4]$. Then $h \circ e_0$ is a polynomial in three variables and so, 
		\begin{equation*}
			\begin{split} 
				|h(a,s\slash 2, s\slash 2, p)|=|h(e_0(a, s, p))|  \leq \|h\circ e_0\|_{\infty, K}
				=\|h\|_{\infty, e_0(K)}.
			\end{split}
		\end{equation*} 
		Thus, $(a, s\slash 2, s\slash 2, p) \in \widehat{e_0(K)}$, which itself is contained in $\Q$. We have by Theorem \ref{thm_HP_final} that $(a, s, p) \in \Pe$ and thus, $\widehat{K} \subset \Pe$. Therefore, the domain $\Pe$ is a polynomially convex. 	\end{proof} 
	
	A domain $\Omega$ in $\C^n$ is said to be \textit{$(m_1, \dotsc, m_n)$-quasi-balanced} if there exist relatively prime natural numbers $m_1, \dotsc, m_n$ such that $(\lambda^{m_1}z_1, \dotsc, \lambda^{m_n}z_n) \in \Omega$ for every $\lambda \in \overline{\D}$ and $(z_1, \dotsc, z_n) \in \Omega$.  We say that $\Omega$ is \textit{linearly convex} if for any $z \notin \Omega$, there is a complex hyperplane passing through $z$ that does not intersect $\Omega$. It was proved in \cite{Biswas-Pal} that the domain $\Q$ is a $(1, 1, 1, 2)$-quasi-balanced linearly convex domain. As an application of this, we prove analogous results for the pentablock.
	
	\begin{prop}\label{prop_P_lc_qb}
		$\Pe$ is linearly convex and $(1, 1, 2)$-quasi-balanced.
	\end{prop}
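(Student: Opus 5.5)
The plan is to transfer both properties from $\Q$ through the maps $e_0(a,s,p)=(a,s/2,s/2,p)$ and $e_*(a,x_1,x_2,x_3)=(a,x_1+x_2,x_3)$. We use Theorem \ref{thm_HP_final}, which gives $\Pe=e_0^{-1}(\Q)$, and the facts from \cite{Biswas-Pal} that $\Q$ is linearly convex and $(1,1,1,2)$-quasi-balanced.

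\emph{Quasi-balancedness.} Let $(a,s,p)\in\Pe$ and $\lambda\in\DC$. By Theorem \ref{thm_HP_final}, $(a,s/2,s/2,p)\in\Q$. Since $\Q$ is $(1,1,1,2)$-quasi-balanced, $(\lambda a,\lambda s/2,\lambda s/2,\lambda^2 p)\in\Q$. This point equals $e_0(\lambda a,\lambda s,\lambda^2 p)$, so Theorem \ref{thm_HP_final} again gives $(\lambda a,\lambda s,\lambda^2p)\in\Pe$. The integers $1,1,2$ are relatively prime, so $\Pe$ is $(1,1,2)$-quasi-balanced.

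\emph{Linear convexity.} Let $w=(a,s,p)\notin\Pe$. Then $e_0(w)\notin\Q$ by Theorem \ref{thm_HP_final}. Linear convexity of $\Q$ gives a complex affine hyperplane through $e_0(w)$ missing $\Q$, say
\[
L=\{y\in\C^4:\ \ell(y)=\ell(e_0(w))\},
\]
where $\ell(y)=\alpha_0y_0+\alpha_1y_1+\alpha_2y_2+\alpha_3y_3$ is a nonzero linear functional. We pull it back along the affine (indeed linear) map $e_0$ and set
\[
\ell\circ e_0(b,t,r)=\alpha_0 b+\tfrac{\alpha_1+\alpha_2}{2}\,t+\alpha_3 r .
\]
The set $L'=\{v:\ \ell\circ e_0(v)=\ell\circ e_0(w)\}$ contains $w$. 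If $v\in L'\cap\Pe$, then $e_0(v)\in L\cap\Q$, which contradicts the choice of $L$. So $L'$ misses $\Pe$, provided $L'$ is genuinely a hyperplane.

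The main obstacle is this nondegeneracy: we need $\ell\circ e_0\neq 0$. Equivalently, we must exclude $\alpha_0=\alpha_3=0$ with $\alpha_1=-\alpha_2$, i.e.\ $L$ of the form $\{y_1-y_2=c\}$ with $c=\ell(e_0(w))/\alpha_1$. Such an $L$ contains $e_0(w)$, which has $y_1-y_2=0$, so $c=0$ and $L=\{y_1=y_2\}$. But this hyperplane meets $\Q$ at the origin, since $(0,0,0,0)\in\Q$. So this case cannot occur, and $L'$ is a complex hyperplane through $w$ disjoint from $\Pe$.

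Hence $\Pe$ is linearly convex, and together with the first part this proves the proposition.
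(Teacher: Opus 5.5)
Your proof is correct and follows the paper's argument: both properties are transferred from $\mathbb{H}$ via $e_0$ and Theorem \ref{thm_HP_final}, and the separating hyperplane for $\mathbb{P}$ is the pullback of the one for $\mathbb{H}$, exactly as in the paper. Your extra check that the pulled-back functional $\alpha_0 b+\frac{\alpha_1+\alpha_2}{2}t+\alpha_3 r$ is nonzero (ruling out $L=\{y_1=y_2\}$ because it contains $0\in\mathbb{H}$) settles a nondegeneracy point that the paper's proof passes over silently.
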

	
	\begin{proof}
		A reader is referred to \cite{AglerIV} and \cite{KosinskiII} for the original proofs. Let $(a, s, p) \notin \Pe$. It follows from Theorem \ref{thm_HP_final} that $(a, s\slash 2, s\slash 2, p) \notin \Q$. Since $\Q$ is a linearly convex domain, there exists a complex hyperplane 
		\[
		H_0=\left\{(z_1, z_2, z_3, z_4) \in \C^4 : a_1z_1+a_2z_2+a_3z_3+a_4z_4=b \right\} 
		\]
		passing through $(a, s\slash 2, s\slash 2, p)$ such that $H_0 \cap \Q =\emptyset$. Consider the hyperplane in $\C^3$ given by 
		\[
		P_0=\left\{(w_1, w_2, w_3) \in \C^3: a_1w_1+\left(\frac{a_2+a_3}{2}\right)w_2+a_4w_3=b\right\}.
		\]
		Since $(a, s\slash 2, s\slash 2, p) \in H_0$, we have that $(a, s,p) \in P_0$. If $(\tilde{a}, \tilde{s}, \tilde{p}) \in P_0 \cap \Pe$, then it follows from Theorem \ref{thm_HP_final} that $(\tilde{a}, \tilde{s}\slash 2, \tilde{s} \slash 2, \tilde{p}) \in H_0 \cap \Q$, a contradiction. Thus, $P_0$ is a complex hyperplane passing through $(a, s, p)$ without intersecting $\Pe$. Therefore, $\Pe$ is linearly convex. Since $\Q$ is $(1, 1, 1, 2)$-quasi-balanced, it follows from Theorem \ref{thm_HP_final} that $\Pe$ is $(1, 1, 2)$-balanced.
	\end{proof} 
	
	\subsection*{A subgroup of $\A(\Pe)$}  For $\omega \in \T$ and $v=-\xi B_z \in Aut(\D)$, consider the map given by
	\begin{equation}\label{eqn_fwv}
		f_{\omega, v}(a, s, p)=\frac{\xi}{1-\overline{z}s+\overline{z}^2p}\bigg( \omega  (1-|z|^2)a, -2z+(1+|z|^2)s-2\overline{z}p, \xi(z^2-z s+p)  \bigg). 
	\end{equation}
	The authors of \cite{AglerIV} proved that each $f_{\Omega, v}$ defines an automorphism of $\Pe$. Furthermore, the set $G(\Pe)=\{f_{\om, v} : \om \in \T, v \in \A(\D)\}$ forms a subgroup of $\A(\Pe)$. We prove the same result as an application of the subgroup of automorphisms of $\A(\Q)$ presented in \cite{Biswas-Pal2}. First, we recall from \cite{Young} that $\text{Aut}(\mathbb E)$ is completely determined by a pair $(v, \chi)$ of functions in $\text{Aut}(\D)$. Consider
	\[
	v(\lm)=-\xi_1B_{z_1}(\lm)=-\xi_1\left(\frac{\lm-z_1}{\overline{z}_1\lm-1}\right) \quad \text{and} \quad  \chi(\lm)=-\xi_2B_{-\overline{z}_2}(\lm)=\xi_2 \left(\frac{\lm+\overline{z}_2}{z_2\lm+1}\right),
	\]
	where $\xi_1, \xi_2 \in \T$ and $z_1, z_2 \in \mathbb{D}$. The map $\tau_{v,\chi}(x_1,x_2,x_3)$ given by
	{\small 
		\begin{equation}\label{lem5.1}
			\left(\xi_1\frac{(x_1-z_1)+\xi_2\overline{z}_2(z_1x_2-x_3)}{(1-\overline{z}_1x_1)-\xi_2\overline{z}_2(x_2-\overline{z}_1x_3)}, \ \
			\frac{z_2(\overline{z}_1x_1-1)+\xi_2(x_2-\overline{z}_1x_3)}{(1-\overline{z}_1x_1)-\xi_2\overline{z}_2(x_2-\overline{z}_1x_3)}, \ \
			\xi_1\frac{z_2(z_1-x_1)-\xi_2(z_1x_2-x_3)}{(1-\overline{z}_1x_1)-\xi_2\overline{z}_2(x_2-\overline{z}_1x_3)}\right)  \smallskip 
		\end{equation}
	}
	\par \noindent	defines an automorphism of $\mathbb{E}$. Indeed, the only automorphisms of $\mathbb{E}$ are $(x_1, x_2, x_3) \mapsto \tau_{v,\chi}(x_1,x_2,x_3)$ and $(x_1, x_2, x_3) \mapsto \tau_{v,\chi, F}(x_1,x_2,x_3)=\tau_{v,\chi}(x_2, x_1, x_3)$. It was proved in \cite{Biswas-Pal2} that every automorphism of $\mathbb{E}$ induces an automorphism of $\mathbb{H}$. In particular, the maps 
	\begin{equation*}
		T_{v, \chi, \omega}(a, x_1, x_2, x_3)=\bigg(\frac{a\w\xi_2\sqrt{(1-|z_1|^2)(1-|z_2|^2)}}{1-x_1\overline{z}_1-x_2\overline{z}_2\xi_2+x_3\overline{z}_1\overline{z}_2\xi_2}, \tau_{v, \chi}(x_1, x_2, x_3)\bigg) \quad (|\omega|=1)
	\end{equation*}
	and $T_{v, \chi, F, \omega}(a, x_1, x_2, x_3)=T_{v, \chi, \omega}(a, x_2, x_1, x_3)$ are automorphisms of $\mathbb{H}$. Moreover, it is proved that 
	\[
	G(\mathbb{H})=\left\{T_{v, \chi, \omega}, \ T_{v, \chi, F, \omega}: v, \chi \in \text{Aut}(\mathbb{D}) \ \text{and} \ \omega \in \mathbb{C} \ \text{with} \ |\omega|=1 \right\}
	\] 
	forms a subgroup of $\text{Aut}(\mathbb{H})$. Capitalizing on this, we prove that $G(\Pe)$ is a subgroup of $\A(\Pe)$. We mention here that the equality $G(\Pe)=\A(\Pe)$ was proved in \cite{KosinskiII, GuicongII}, thereby giving a complete description of $\A(\Pe)$.
	
	\begin{thm}\label{thm_AutPsub} 
		Let $\om \in \T$ and $v \in \A(\D)$. With the above notations and \eqref{eqn_fwv}, we have that the set
		\[
		G(\Pe)=\{f_{\om, v} : v \in \A(\D), \om \in \T\}
		\]
		forms a subgroup of $\A(\Pe)$. Also, $f_{\om, v}=e_* \circ T_{v, \ v_*, \ \omega} \circ e_0, \ f_{\om, v}^{-1}=f_{\overline{\om}, v^{-1}}$ and $f_{\om, v}\circ f_{\theta, \chi}=f_{\om \theta, v \circ \chi}$.
	\end{thm}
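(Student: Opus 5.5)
The plan is to realize each $f_{\om, v}$ as a composition of maps already known to behave well with respect to $\Q$ and $\Pe$, namely $e_0$, an automorphism in $G(\Q)$, and $e_*$. The automorphism property of $f_{\om, v}$ will then follow from Theorem \ref{thm_HP_final} and Lemma \ref{lem_H_P}, and the group laws will follow from the explicit formula \eqref{eqn_fwv}.

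\textbf{Step 1: choice of $v_*$ and the factorization.} Write $v=-\xi B_z$. I will take $v_*=\chi$ of the form $\chi(\lm)=\xi_2(\lm+\overline{z}_2)/(z_2\lm+1)$, with parameters chosen so that $\xi_2\overline{z}_2=\overline{z}$ and the remaining constants match $\xi$. A natural first attempt is $\xi_2=1$, $z_2=z$; the exact choice will be adjusted as the computation dictates. With $(z_1,\xi_1)=(z,\xi)$ and $(x_1,x_2,x_3)=(s/2,s/2,p)$, the common denominator in \eqref{lem5.1} becomes $1-\overline{z}s+\overline{z}^2p$ up to this matching. The same holds for the denominator of the first coordinate of $T_{v,\chi,\om}$.

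I then compute $e_*\circ T_{v,v_*,\om}\circ e_0(a,s,p)$ as follows.
\begin{itemize}
\item The first coordinate is $\om\xi_2(1-|z|^2)a/(1-\overline{z}s+\overline{z}^2p)$. The factor $\sqrt{(1-|z_1|^2)(1-|z_2|^2)}$ collapses to $1-|z|^2$ because $|z_2|=|z|$.
\item The sum of the first two coordinates of $\tau_{v,v_*}(s/2,s/2,p)$ should simplify to $\xi(-2z+(1+|z|^2)s-2\overline{z}p)/(1-\overline{z}s+\overline{z}^2p)$.
\item The third coordinate should simplify to $\xi^2(z^2-zs+p)/(1-\overline{z}s+\overline{z}^2p)$.
\end{itemize}
Matching all three against \eqref{eqn_fwv}, possibly after reparametrizing $\om$ by a unimodular constant, gives $f_{\om, v}=e_*\circ T_{v,v_*,\om}\circ e_0$. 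This algebraic verification is the main obstacle, since the sign and conjugation conventions in \eqref{lem5.1} must be tracked carefully. If it fails as set up, I will first vary the unimodular parameter $\xi_2$, and then try the flipped automorphism $T_{v,\chi,F,\om}$.

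\textbf{Step 2: $f_{\om, v}$ maps $\Pe$ into $\Pe$.} Theorem \ref{thm_HP_final} gives $e_0(\Pe)\subseteq\Q$. Since $T_{v,v_*,\om}\in\A(\Q)$, it maps $\Q$ into $\Q$. Lemma \ref{lem_H_P} gives $e_*(\Q)\subseteq\Pe$. Hence $f_{\om, v}$ is a holomorphic self-map of $\Pe$.

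\textbf{Step 3: group laws.} I will verify $f_{\om, v}\circ f_{\theta,\chi}=f_{\om\theta, v\circ\chi}$ directly from \eqref{eqn_fwv}. Note that the pair $(s,p)$ transforms as the symmetrization of $(\lm_1,\lm_2)\mapsto(v(\lm_1),v(\lm_2))$: if $(s,p)=(\lm_1+\lm_2,\lm_1\lm_2)$, the second and third coordinates of $f_{\om, v}$ are $v(\lm_1)+v(\lm_2)$ and $v(\lm_1)v(\lm_2)$. So these coordinates compose like $v\circ\chi$. For the first coordinate, the multiplier $(1-|z|^2)/(1-\overline{z}s+\overline{z}^2p)$ equals $\xi^{-1}\sqrt{v'(\lm_1)v'(\lm_2)}$ up to a unimodular constant, and the chain rule then gives the cocycle identity.

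Since $f_{1,\mathrm{id}}=\mathrm{id}$, taking $\theta=\overline{\om}$ and $\chi=v^{-1}$ yields $f_{\om, v}^{-1}=f_{\overline{\om}, v^{-1}}$. Combined with Step 2, this shows each $f_{\om, v}$ is a bijective holomorphic self-map of $\Pe$ with holomorphic inverse, i.e. $f_{\om, v}\in\A(\Pe)$. Closure under composition and inverses, which is exactly the group laws just established, then shows that $G(\Pe)$ is a subgroup of $\A(\Pe)$.
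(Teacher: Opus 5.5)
Your proposal is correct in substance. Steps 1 and 2 are the paper's own argument: the same factorization $f_{\om,v}=e_*\circ T_{v,v_*,\om}\circ e_0$, and the same use of Theorem~\ref{thm_HP_final} and Lemma~\ref{lem_H_P} to get $f_{\om,v}(\Pe)\subseteq\Pe$. One correction to Step 1: the parameters are forced, not adjustable. Take $z_1=z$, $\xi_1=\xi$ and $\xi_2\overline z_2=\overline z$, so $z_2=\xi_2 z$. Then the sum of the first two coordinates of $\tau_{v,\chi}(s/2,s/2,p)$ has numerator $(\xi+\xi_2)\bigl((s/2)(1+|z|^2)-z-\overline z p\bigr)$, and the third coordinate has numerator $\xi\xi_2(z^2-zs+p)$. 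Matching these to $f_{\om,v}$ forces $\xi_2=\xi$ and $z_2=\xi z$, i.e.\ $v_*=-\xi B_{-\overline{\xi z}}$. This is exactly the paper's involution $\om B_\al\mapsto\om B_{\overline\om\,\overline\al}$. Your ``first attempt'' $\xi_2=1$, $z_2=z$ works only when $\xi=1$, and neither a reparametrization of $\om$ nor the flipped map is needed.

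Step 3 takes a genuinely different route. The paper lifts to $\Q$. Since $T_{\chi,\chi_*,\theta}\circ e_0$ takes values in $\{x_1=x_2\}$, where $e_0\circ e_*$ is the identity, one gets $f_{\om,v}\circ f_{\theta,\chi}=e_*\circ T_{v,v_*,\om}\circ T_{\chi,\chi_*,\theta}\circ e_0$. The composition rule $T_{v,v_*,\om}\circ T_{\chi,\chi_*,\theta}=T_{v\circ\chi,(v\circ\chi)_*,\om\theta}$ from \cite{Biswas-Pal} (Corollary 2.9) then finishes the argument.

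You instead stay on $\Pe$. You write $f_{\om,v}(a,s,p)=(\om\, m_v(s,p)\, a,\ \tau_v(s,p))$ with $m_v=\xi(1-|z|^2)/(1-\overline z s+\overline z^2 p)$ and $m_v^2=v'(\lm_1)v'(\lm_2)$, so the group law becomes a chain-rule cocycle identity. This is self-contained and explains why the law holds. The paper's version serves the section's aim of deriving pentablock facts from $\Q$, but it leans on an external lemma about $G(\Q)$.

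However, ``up to a unimodular constant'' is not enough here. You need the exact identity $m_{v\circ\chi}=(m_v\circ\tau_\chi)\,m_\chi$, and the chain rule only gives it up to sign. Fix the branch as follows. Note that $m_v(2\lm,\lm^2)=\xi(1-|z|^2)/(1-\overline z\lm)^2=v'(\lm)$, so the identity holds exactly on $\{(2\lm,\lm^2):\lm\in\D\}$ by the chain rule. The ratio of the two sides is then a continuous $\{\pm1\}$-valued function on the connected set $\Gg$, hence identically $1$.
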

	
	\begin{proof}
		This theorem appears as Theorem 7.1 in \cite{AglerIV}. For the sake of brevity, we recall the involution $* : \A(\D) \to \A(\D)$ defined by $v=\w B_\alpha \mapsto v_*=\w B_{\overline{\w}\overline{\alpha}}$. Let $v=-\xi B_{z}$ and $\om \in \T$. Then $v_*=-\xi B_{-\overline{\xi z}}$. It follows from Theorem \ref{thm_HP_final} that $(a, s\slash 2, s\slash 2, p) \in \CQ$ for every $(a, s, p) \in \Pbar$. Let $g=e_*\circ T_{v, \ v_*, \ \omega}\circ e_0$. Note that
		\[
		T_{v, v_*, \omega}(e_0(a, s, p))= T_{v, v_*, \omega}(a, s\slash 2, s\slash 2, p)=\bigg(\w\xi\frac{a(1-|z|^2)}{1-s\overline{z}+p\overline{z}^2}, \ \tau_{v, v_*}(s\slash 2, s\slash 2, p)\bigg),
		\]
		where 	
		\[
		\tau_{v, v_*}(s\slash 2, s\slash 2, p)=\left(\tau_1(s\slash 2, s\slash 2, p), \ \tau_2(s\slash 2, s\slash 2, p), \ \tau_3(s\slash 2, s\slash 2, p)\right)
		\]
		is as in \eqref{lem5.1}. Some routine calculations give
		\[		
		\tau_j\left(\frac{s}{2}, \frac{s}{2}, p\right)=\xi\left[\frac{(s\slash 2)(1+|z|^2)-p\overline{z}-z}{1-\overline{z}s+\overline{z}^2p}\right] \ (j=1,2), \ \
		\tau_3\left(\frac{s}{2}, \frac{s}{2}, p\right)=\xi^2\left[\frac{z^2-sz+p}{1-\overline{z}s+\overline{z}^2p}\right] .
		\]
		Then 
		\begin{equation*}
			\begin{split}
				g(a, s, p)
				&=\bigg(\w\xi\frac{a(1-|z|^2)}{1-s\overline{z}+p\overline{z}^2}, \ \tau_1(s\slash 2, s\slash 2, p)+\tau_2(s\slash 2, s\slash 2, p), \ \tau_3(s\slash 2, s\slash 2, p)\bigg)\\
				&=\bigg(\w\xi\frac{a(1-|z|^2)}{1-s\overline{z}+p\overline{z}^2}, \ \xi\left[\frac{s(1+|z|^2)-2p\overline{z}-2z}{1-\overline{z}s+\overline{z}^2p}\right], \ \xi^2\left[\frac{z^2-sz+p}{1-\overline{z}s+\overline{z}^2p}\right]\bigg)\\
				&=\frac{\xi}{1-\overline{z}s+\overline{z}^2p}\bigg( \omega  (1-|z|^2)a, \ -2z+(1+|z|^2)s-2\overline{z}p, \ \xi(z^2-z s+p)  \bigg)\\
				&=f_{\om, v}(a, s, p),\\
			\end{split}
		\end{equation*}
		where the last equality follows from the definition of $f_{\om, v}$ as in \eqref{eqn_fwv}.  Thus, we have proved that $f_{\om, v}=e_* \circ T_{v, \ v_*, \ \omega} \circ e_0$. Consequently, it follows from Theorem \ref{thm_HP_final} that $f_{\om, v}(\Pbar) \subseteq (\Pbar)$ and $f_{\om, v}(\Pe) \subseteq (\Pe)$ for all $\om \in \T$ and $v \in \A(\D)$. Let $v, \chi \in \A(\D)$ and let $\om,  \theta \in \T$. For $(a, s, p)$, let $T_{\chi, \ \chi_*, \ \theta} \circ e_0(a, s, p)=(\tilde{a}, \tilde{x}_1, \tilde{x}_2, \tilde{x}_3)$. It follows from the above discussion that $\tilde{x}_1=\tilde{x}_2$. Moreover, it follows from Corollary 2.9 in \cite{Biswas-Pal} that $T_{v, \ v_*, \ \omega}\circ T_{\chi, \ \chi_*, \ \theta}=T_{v \circ \chi, \ (v \circ \chi)_*, \ \omega \theta}$. Then 
		\begin{equation*}
			\begin{split}
				f_{\om, v}\circ f_{\theta, \chi}(a, s, p)=e_* \circ T_{v, \ v_*, \ \omega} \circ e_0\circ e_* \circ T_{\chi, \ \chi_*, \ \theta} \circ e_0(a, s, p)
				&=e_* \circ T_{v, \ v_*, \ \omega} \circ e_0\circ e_*(\tilde{a}, \tilde{x}_1, \tilde{x}_1, \tilde{x}_3)\\
				&=e_* \circ T_{v, \ v_*, \ \omega} \circ e_0(\tilde{a}, 2\tilde{x}_1, \tilde{x}_3)\\
				&=e_* \circ T_{v, \ v_*, \ \omega}(\tilde{a}, \tilde{x}_1, \tilde{x}_1, \tilde{x}_3)\\
				&=e_* \circ T_{v, \ v_*, \ \omega}\circ T_{\chi, \ \chi_*, \ \theta}\circ e_0(a, s, p)\\
				&=e_* \circ T_{v \circ \chi, \ (v \circ \chi)_*, \ \omega \theta} \circ  e_0(a, s, p) \\
				&=f_{\om \theta, v \circ \chi}(a, s, p)
			\end{split}
		\end{equation*}
		and so, $G(\Pe)$ is closed under composition. The associativity property follows trivially. Moreover, the identity map can be written as $f_{\om, v}=\text{id}_\Pe$ for $\om=1$ and $v=-B_0$. Note that 
		\begin{equation*}
			f_{\om, v}\circ f_{\overline{\om}, v^{-1}}(a, s, p)=f_{\om \overline{\om}, v \circ v^{-1}}(a, s, p)=(a, s, p).
		\end{equation*}
		Therefore, $f_{\om ,v}$ is a bijective holomorphic map on $\Pbar$ and so, $G(\Pe)\subseteq \A(\Pe)$. Hence, $G(\Pe)$ forms a subgroup of $\A(\Pe)$ under composition. The proof is now complete.
	\end{proof}

	\subsection*{The distinguished boundary of $\Pe$} 
	Recall that for a bounded domain $\Omega$ in $\C^n$, $A(\Omega)$ is the algebra of continuous functions on $\overline{\Omega}$ that are holomorphic in $\Omega$. It was proved in \cite{AglerIV} that the compact sets in $\C^3$ given by
	\[
	K_0=\left\{(a, s, p): (s, p) \in b\Gamma_2, |a|^2+|s|^2 \slash 4=1 \right\}, \quad K_1=\left\{(a, s, p): (s, p) \in b\Gamma_2, |a|^2+|s|^2\slash 4\leq 1\right\}
	\]
	are boundaries of $A(\Pe)$, i.e., every function in $A(\Pe)$ attains its maximum modulus on $K_0$ and $K_1$. We give an alternative proof to this result using the theory of $\Q$. We mention that all the subsequent results related to $b\Pe$ follow from results in Section 8 of \cite{AglerIV}. We also describe $b\Pe$ in terms of $b\Q$.
	
	\begin{prop}\label{peak_K1}
		Every peak point of $\Pbar$ is contained in $K_1$ and $b\Pe \subseteq K_1$. Also, $K_1$ is a closed boundary for $A(\Pe)$. 
	\end{prop}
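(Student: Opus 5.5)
The plan is to transfer the analogous facts for $\Q$ to $\Pe$ through the maps $e_0$ and $e_*$. The facts used for $\Q$ are: $b\Q$ is described by Theorem~\ref{thmbQ}, and every peak point of $\CQ$ lies in $b\Q$. The transfer is possible because $e_0$ restricts to the proper map $e_\Pe:\Pbar\to\CQ$ of Theorem~\ref{thmP_in_H}, and $e_*$ maps $\CQ$ onto $\Pbar$ by Lemma~\ref{lem_H_P}.

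\textbf{Step 1: peak points of $\Pbar$ lie in $K_1$.} Let $(a,s,p)$ be a peak point of $\Pbar$, so some $\phi\in A(\Pe)$ has $\phi(a,s,p)=1$ and $|\phi|<1$ on the rest of $\Pbar$. Set $\Phi=\phi\circ e_*\in A(\Q)$; this is well defined by Lemma~\ref{lem_H_P}. Then $\Phi$ peaks exactly on the fibre $e_*^{-1}(a,s,p)\cap\CQ$, and this fibre contains $e_0(a,s,p)=(a,s/2,s/2,p)$. Since $\CQ$ is compact, $|\Phi|$ attains its maximum $1$ on $b\Q$. So some point $(a,x_1,x_2,p)\in b\Q$ satisfies $x_1+x_2=s$.

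By Theorem~\ref{thmbQ} this point has $|p|=1$, $(x_1,x_2,p)\in b\E$ and $|a|^2+|x_1|^2=1$. From $(x_1,x_2,p)\in b\E$, Theorem~\ref{thm6.1} gives $x_1=\overline{x}_2p$. Hence $s=\overline{x}_2p+x_2$, so $\overline{s}p=x_2+\overline{x}_2p=s$, and also $|s|\le2$. Theorem~\ref{thm_bGam} then gives $(s,p)\in b\Gamma_2$.

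It remains to show $|a|^2+|s|^2/4\le1$. This follows directly from $(a,s,p)\in\Pbar$: by Theorem~\ref{thmP_in_H}, $(a,s/2,s/2,p)\in\CQ$, and then Lemma~\ref{lem6.5} applies. Therefore $(a,s,p)\in K_1$.

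\textbf{Step 2: $K_1$ is a closed boundary.} First, $K_1$ is compact. It is contained in $\Pbar$, because for $(s,p)\in b\Gamma_2$ with $|a|^2+|s|^2/4\le1$ the point $(a,s/2,s/2,p)$ lies in $\CQ$. To see this, note that $(s/2,s/2,p)\in b\E$, so Theorem 2.12 of \cite{Biswas-Pal} (as used in Lemma~\ref{lem_B2_HexaII}) gives $\sup|\psi_{z_1,z_2}|=|a|/\sqrt{1-|s|^2/4}\le1$; when $|s|=2$ we use a limiting argument.

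Next, take $\phi\in A(\Pe)$. The function $\phi\circ e_*$ lies in $A(\Q)$ and attains its maximum modulus at some $(a,x_1,x_2,p)\in b\Q$. By Step 1's computation, its image $(a,x_1+x_2,p)$ lies in $K_1$. Moreover $\max_{\Pbar}|\phi|=\max_{\CQ}|\phi\circ e_*|$, because $e_*(\CQ)=\Pbar$ (surjectivity holds since $e_*\circ e_0=\mathrm{id}$). So $\phi$ attains its maximum modulus on $K_1$.

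\textbf{Step 3: $b\Pe\subseteq K_1$.} This is now immediate: $b\Pe$ is the smallest closed boundary, and $K_1$ is a closed boundary.

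\textbf{Main obstacle.} The delicate part is Step 1's claim that a peak point itself lies in $K_1$, not merely some point of its fibre. This is resolved because the peak set of $\Phi$ is exactly the fibre over $(a,s,p)$. Hence the point of $b\Q$ where $|\Phi|=1$ maps to $(a,s,p)$ itself, and that is what forces $(s,p)\in b\Gamma_2$.
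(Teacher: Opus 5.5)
Your proof is correct, but it reaches the boundary statements by a different route from the paper. Your Step 1 is essentially the paper's argument. You pull the peak function back by $e_*$, pick a point of $b\Q$ in the fibre over the peak point, and read off $(s,p)\in b\Gamma_2$ from Theorem \ref{thmbQ}. The only cosmetic difference is the last inequality. You get $|a|^2+|s|^2/4\le 1$ from $(a,s/2,s/2,p)\in\CQ$ and Lemma \ref{lem6.5}, whereas the paper estimates $|x_1+x_2|^2/4\le |x_1|^2$ using $|x_1|=|x_2|$. Your check that $K_1\subseteq\Pbar$ is also fine. Theorem \ref{pentablock}(3) gives it at once, and when $|s|=2$ no limit is needed, since then $a=0$.

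The genuine difference is in how the boundary claims are obtained.
\begin{itemize}
\item The paper proves only the peak-point statement directly. It then invokes the density of peak points in the Shilov boundary, cited from \cite{Honary}, to get $b\Pe\subseteq K_1$. It concludes that $K_1$ is a boundary because it contains $b\Pe$.
\item You instead observe that $e_*(b\Q)\subseteq K_1$ and $e_*(\CQ)=\Pbar$. So the boundary property of $b\Q$ pushes forward, and $K_1$ is a closed boundary outright. Then $b\Pe\subseteq K_1$ follows from minimality of the distinguished boundary.
\end{itemize}

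Your route is more elementary, since it avoids the Bishop-type density theorem. It also makes your Step 1 logically redundant: a peak point lies in every closed boundary, so peak points lie in $b\Pe$, which lies in $K_1$.

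The paper's peak-point route has a different advantage. It is the template reused for the sharper set $K_0$ in Proposition \ref{prop_bP_K0}. There your push-forward trick cannot work, because $e_*(b\Q)$ is not contained in $K_0$. For example, $(0,-i,i,1)\in b\Q$ but $e_*(0,-i,i,1)=(0,0,1)\notin K_0$.
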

	
	\begin{proof} This result appears in parts as Proposition 8.3 and Theorem 8.4 in \cite{AglerIV}. We provide an alternative proof here. The fact that $K_1 \subset \Pbar$ follow from Theorem \ref{pentablock}. Let $(a_0, s_0, p_0)$ be a peak point of $\Pbar$. Then there exists $f \in A(\Pe)$ such that $|f(a_0, s_0, p_0)|=1$ and $|f(q)|<1$ for all $q \in \Pbar$ with $q \ne (a_0, s_0, p_0)$. By Theorem \ref{thm_HP_final}, $(a_0, s_0\slash 2, s_0\slash 2, p) \in \CQ$. Consider the map $\psi: \CQ \to \Pbar$ defined as $\psi(a, x_1, x_2, x_3)=(a, x_1+x_2, x_3)$. We have by Lemma \ref{lem_H_P} that $\psi$ is a well-defined holomorphic map. We define the map 
		\[
		g: \CQ \to \C, \quad g(a, x_1, x_2, x_3)=f \circ\psi(a, x_1, x_2, x_3)=f(a, x_1+x_2, x_3).
		\]
		Evidently, $g \in A(\Q)$, $g|(a_0, s_0\slash 2, s_0\slash 2, p_0)|=1$ and $\|g\|_{\infty, \CQ}\leq 1$. Since $b\Q$ is a boundary of $\CQ$, it follows that there exists $(a, x_1, x_2, x_3) \in b\Q$ such that $\|g\|_{\infty, \CQ}=|g(a, x_1, x_2, x_3)|$. By Lemma \ref{lem_H_P}, $(a, x_1+x_2, x_3) \in \Pbar$ and so, $|f(a, x_1+x_2, x_3)|=1$. Consequently, $(a, x_1+x_2, x_3)=(a_0, s_0, p_0)$. It follows from Theorem \ref{thmbQ} that $|a|^2+|x_1|^2=1, x_1=\overline{x}_2x_3, |x_1| \leq 1$ and $|x_3|=1$. Then
		\[
		s_0-\overline{s}_0p_0=(x_1+x_2)-(\overline{x}_1+\overline{x}_2)x_3=0, \quad |s_0|=|x_1+x_2| \leq 2 \quad \text{and} \quad |p_0|=1. 
		\]
		By Theorem \ref{thm_bGam}, $(s_0,p_0) \in b\Gamma_2$. Since $|x_1|=|x_2|=\sqrt{1-|a|^2}$, it follows that 
		\[
		|a_0|^2+\frac{|s_0|^2}{4}=|a_0|^2 +\frac{|x_1|^2}{4}+\frac{|x_1|^2}{4}+\frac{2 \text{Re}\overline{x}_1x_2}{4} \leq |a|^2+|x_1|^2 \leq 1
		\]
		and so, $(a_0, s_0, p_0) \in K_1$. Consequently, every peak point of $\Pbar$ is contained in $K_1$. We have from \cite{Honary} that the peak points of a Banach function algebra $B$  on a compact metric space form a dense subset of the Shilov boundary of $B$ and so, $b\Pe \subset K_1$. Hence, $K_1$ is a boundary for $A(\Pe)$. 	\end{proof} 
	
	We now prove describe $K_0$ using the distinguished boundary of $b\Q$. We also show that every point in $K_0$ a peak point of $\Pbar$ and $K_0 \subseteq b\Pe$, which appear as parts in Theorem 8.4 of \cite{AglerIV}.
	
	\begin{prop}\label{prop14.7}
		$K_0=\{(a, s, p): (a, s\slash 2, s\slash 2, p) \in b\Q\}$. Moreover, every point in $K_0$ is a peak point of $\Pbar$ and $K_0 \subseteq b\Pe$. 
	\end{prop}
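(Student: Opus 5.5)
The statement has three parts: a set-theoretic description of $K_0$, the peak-point property, and the inclusion $K_0 \subseteq b\Pe$. I will prove them in that order, using Theorem \ref{thmbQ} for the first and a pull-back of peaking functions for the second.

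\emph{The description of $K_0$.} By Theorem \ref{thmbQ}, $(a, s\slash 2, s\slash 2, p) \in b\Q$ if and only if $(s\slash 2, s\slash 2, p) \in b\E$ and $|a|^2+|s|^2\slash 4=1$. Theorem \ref{thm6.1} says that $(s\slash 2, s\slash 2, p)\in b\E$ if and only if
\[
s\slash 2=\overline{s}p\slash 2,\qquad |p|=1,\qquad |s|\leq 2 .
\]
By Theorem \ref{thm_bGam}, these three conditions are exactly $(s,p)\in b\Gamma_2$. Combining the two equivalences gives
\[
K_0=\{(a,s,p): (a, s\slash 2, s\slash 2, p) \in b\Q\}.
\]

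\emph{The peak-point property.} Fix $q_0=(a_0,s_0,p_0)\in K_0$. I will build a peaking function directly from the geometry of $K_0$. Let $g(a,s,p)$ be a polynomial that peaks at $(s_0,p_0)$ over $\Gamma_2$; such a $g$ exists because every point of $b\Gamma_2$ is a peak point of $\Gamma_2$. A natural choice is the symmetrization of the bidisc peak function: writing $(s_0,p_0)=(\lambda_1+\lambda_2,\lambda_1\lambda_2)$ with $\lambda_1,\lambda_2\in\T$, take
\[
g(s,p)=\tfrac{1}{4}\bigl(2+\overline{s}_0 s\slash \ldots\bigr),
\]
or more simply
\[
g(s,p)=\tfrac{1}{2}\Bigl(1+\tfrac{1}{4}\bigl(\overline{p}_0 p+\ldots\bigr)\Bigr),
\]
and adjust the precise formula as needed. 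Next set
\[
h(a,s,p)=\tfrac{1}{2}\bigl(\overline{a}_0 a+\overline{s}_0 s\slash 4\bigr)+\tfrac{1}{2}.
\]
On $\Pbar$ we have $|a|^2+|s|^2\slash 4\leq 1$; this follows from Lemma \ref{lem6.5} through $e_\Pe$. Since $|a_0|^2+|s_0|^2\slash 4=1$, the Cauchy--Schwarz inequality gives $|h|\leq 1$ on $\Pbar$. Equality $|h|=1$ forces $a=a_0$ and $s=s_0$. The function $F=g\cdot h$ therefore satisfies $|F|\le 1$ on $\Pbar$, and $|F(q)|=1$ forces $a=a_0$, $s=s_0$ and $(s,p)=(s_0,p_0)$. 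Hence $F$ peaks exactly at $q_0$.

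The main obstacle is producing a valid peaking function for $\Gamma_2$ at points of $b\Gamma_2$. If an explicit formula is awkward, I will argue instead that $b\Gamma_2$ consists of peak points of $\Gamma_2$. This holds because $\Gamma_2$ is the image of $\DC^2$ under symmetrization, and every point of $\T^2$ is a peak point of $\DC^2$. One can then take the symmetric product
\[
\tfrac{1}{4}(1+\overline{\lambda}_1 z_1)(1+\overline{\lambda}_2 z_2)+\tfrac{1}{4}(1+\overline{\lambda}_2 z_1)(1+\overline{\lambda}_1 z_2)\Big/2 ,
\]
suitably normalized, expressed as a polynomial in $(s,p)$. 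Uniqueness of the peak holds because the fibre of the symmetrization map over $(s_0,p_0)$ is the pair $\{(\lambda_1,\lambda_2),(\lambda_2,\lambda_1)\}$.

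\emph{The inclusion $K_0\subseteq b\Pe$.} Every peak point of $\Pbar$ lies in every closed boundary of $A(\Pe)$, and in particular in the Shilov boundary $b\Pe$. Since the previous step shows that each point of $K_0$ is a peak point, we obtain $K_0\subseteq b\Pe$.
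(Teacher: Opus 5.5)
\textbf{Gap: the factor $g$ is never actually constructed.} Your description of $K_0$ via Theorems \ref{thmbQ}, \ref{thm6.1} and \ref{thm_bGam} is correct and agrees with the paper. So is the final step: a peak point lies in every closed boundary, hence in $b\Pe$. The factor $h$ is also fine: Lemma \ref{lem6.5} applied to $e_\Pe(a,s,p)$ gives $|a|^2+|s|^2/4\le 1$ on $\Pbar$, and equality in Cauchy--Schwarz forces $(a,s)=(a_0,s_0)$.

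The problem is $g$. Your displayed candidates contain literal ellipses, so no function is specified. The fallback you propose does not work either. Summing $\tfrac14(1+\overline{\lambda}_1z_1)(1+\overline{\lambda}_2z_2)$ and its flip gives
\[
\tfrac14\bigl(2+\overline{s}_0s+2\overline{p}_0p\bigr).
\]
Its value at $(s_0,p_0)$ is $1+|s_0|^2/4$. At the point $(2s_0/|s_0|,\,p_0)\in b\Gamma_2$ it equals $1+|s_0|/2$, which is larger whenever $0<|s_0|<2$. For $s_0=0$ it reduces to $\tfrac12(1+\overline{p}_0p)$, which has modulus one on the whole segment $\{(s,p_0)\in b\Gamma_2\}$. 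No normalization fixes this. Your fibre argument breaks because, when $\lambda_1\ne\lambda_2$, at each point of the fibre one of the two summands has modulus strictly less than one. So the average never reaches modulus one over $(s_0,p_0)$.

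\textbf{Repair, and comparison with the paper.} You do not need a peak function for $\Gamma_2$ at all, since $h$ already pins down $(a,s)$. Take $g(a,s,p)=\tfrac12(1+\overline{p}_0p)$. Because $|p|\le 1$ on $\Pbar$ and $|p_0|=1$, we have $|g|\le 1$ on $\Pbar$, with equality only when $p=p_0$. Then the polynomial $F=gh$ satisfies $F(q_0)=1$ and $|F(q)|<1$ for $q\in\Pbar\setminus\{q_0\}$.

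You should also record that $K_0\subseteq\Pbar$. This follows from your first step, $b\Q\subseteq\CQ$, and Theorem \ref{thmP_in_H}, and it is needed for $q_0$ to be a peak point of $\Pbar$.

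With this change your route differs from the paper's. The paper takes a function in $A(\Q)$ peaking at $e_0(q_0)\in b\Q$ and pulls it back through $e_0$. That is short given the hexablock machinery, but it uses the fact that points of $b\Q$ are peak points of $\CQ$, which is not part of Theorem \ref{thmbQ} as stated. Your repaired argument is self-contained and produces an explicit polynomial peak function.
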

	
	\begin{proof} Let $(a, s, p) \in K_0$. Then $|a|^2+|s|^2\slash 4 = 1$ and $(s, p) \in b\Gamma_2$. By Theorem \ref{thm_bGam}, $s=\overline{s}p, |s| \leq 2$ and $|p|=1$. Thus, we have by Theorem \ref{thmbQ} that $(a, s\slash 2, s\slash 2, p) \in b\Q$ if and only if $(a, s,p) \in K_0$. So, $K_0=\{(a, s, p): (a, s\slash 2, s\slash 2, p) \in b\Q\}$ and so, by Theorem \ref{thm_HP_final}, $K_0 \subseteq \Pbar$. Let $(a, s, p) \in K_0$. Then $(a, s\slash 2, s\slash 2, p) \in b\Q$. By Theorem \ref{thmbQ}, there exists $f \in A(\Q)$ such that $f(a, s\slash 2, s\slash 2, p)=1$ and $|f(q)|<1$ for all $q \in \CQ$ with $q \ne (a, s\slash 2, s\slash 2, p)$. It follows from Theorem \ref{thm_HP_final} that the map $g: \Pbar \to \C^3$ given by $g(\tilde{a}, \tilde{s}, \tilde{p})=f(\tilde{a}, \tilde{s}\slash 2, \tilde{s}\slash 2, \tilde{p})$ is well-defined and belongs to $A(\Pe)$. Evidently, $|g(a, s, p)|=1,$ and $\|g\|_{\infty, \Pbar} \leq 1$. If $|g(\tilde{a}, \tilde{s}, \tilde{p})|=1$, then $|f(\tilde{a}, \tilde{s}\slash 2, \tilde{s}\slash 2, \tilde{p})|=1$ and so, $(\tilde{a}, \tilde{s}, \tilde{p})=(a, s, p)$. Thus, $g$ is a peaking function for $(a, s, p)$. Therefore, every point in $K_0$ is a peak point of $\Pbar$. It  follows from the definition of $b\Pe$ that $K_0 \subseteq b\Pe$. The proof is now complete.
	\end{proof} 
	
	Though the previous result tells us that $K_0 \subseteq b\Pe$, Theorem \ref{thmbP} yields that these two sets must coincide. We obtain the same using the methods involved in determining $b\Q$ in \cite{Biswas-Pal2}. To do so, we need to describe the topological boundary $\partial \Pe$ of $\Pe$ in a way similar to that in Section 3 in \cite{Biswas-Pal2}. We have by Theorem \ref{pentablock} that
	\begin{align*}
		\Pe&=\{(a, s, p) \in \C \times \Gg : |\psi_z(a, s, p)| <1 \ \text{for all} \ z \in \D  \}, 
		\\
		\Pbar&=\{(a, s, p) \in \C \times \Gamma_2 \ :  |\psi_z(a, s, p)| \leq 1 \ \text{for all} \ z \in \D  \},
	\end{align*}
	where 
	\[
	\psi_z(a, s, p)=\frac{a(1-|z|^2)}{1-sz+pz^2}.
	\]
	Note that $\psi_z(a, s, p)=\psi_{z, z}(a, s\slash 2, s\slash 2, p)$ for all $(a, s, p) \in \C \times \Gamma_2$ and $z \in \D$. Consequently, a point $(a, s, p) \in \partial \Pe=\Pbar \setminus \Pe$ if and only if either $(s, p) \in \partial \Gg$ and $|\psi_z(a, s, p)| \leq 1$ for every $z \in \D$, or $(s, p) \in \Gamma_2$ and $ |\psi_z(a, s, p)|=1$ for some $z \in \D$. Let us consider the subsets of $\partial \Pe$ given by	
	\begin{align*}
		\partial_0 \Pe&=\{(a, s, p) \in \C \times \partial \Gg : a=0 \}, \\
		\partial_1 \Pe&=\left\{(a, s, p) \in \C \times \Gamma_2 : a \ne 0,  |\psi_{z}(a, s, p)|=1 \ \text{for some} \ z \in \D \right\}, \\
		\partial_2 \Pe&=\left\{(a, s, p) \in \C \times \partial \Gg : a \ne 0, \ \underset{z \in \D}{\sup}|\psi_{z}(a, s, p)| \leq 1 \right\}.
	\end{align*}
	It is not difficult to see that $\partial \Pe= \partial_0 \Pe \cup \partial_1 \Pe \cup \partial_2 \Pe$. We now briefly recall the description of $\A(\Gg)$ from the literature, e.g., see \cite{Agler2008}. For $v \in \A(\D)$, the map 
	\[
	\tau_v(z_1+z_2, z_1z_2)=(v(z_1)+v(z_2), v(z_1)v(z_2)) \quad (z_1, z_2 \in \overline{\D})
	\] 
	is in $\A(\Gg)$ and conversely all automorphisms of $\Gg$ is of this kind. It follows from Theorem \ref{thm_bGam} that $\tau_v(b\Gamma_2)=b\Gamma_2$ for all $v \in \A(\D)$. A reader is refer to \cite{Jarnicki, Agler2008, EdigarianII} for further details. Moreover, if $\omega \in \T$ and $v=-\xi B_z \in Aut(\D)$, the map $f_{\om, v} \in \A(\Pe)$ can be written as 
	\begin{equation}\label{eqn_fwvII}
		f_{\om, v}(a, \lm_1+\lm_2, \lm_1\lm_2)=\left(\om\xi \psi_{\bar z}(a, \lm_1+\lm_2, \lm_1\lm_2), \tau_v(a, \lm_1+\lm_2, \lm_1\lm_2) \right)
	\end{equation}
	for all $(\lm_1+\lm_2, \lm_1\lm_2) \in \Gamma_2$. One can directly use Theorem \ref{thm_AutPsub} for a proof of \eqref{eqn_fwvII}. Furthermore, the maps $f_{\omega, v}$ constitute group $\A(\Pe)$ (see \cite{KosinskiII, GuicongII}). We prove below that any automorphism of $\Pe$ preserves $K_0$ and the peak points of $\Pbar$. In fact, one can also refer to Section 8 of \cite{AglerIV} for a more general result, which shows that an automorphism of the form $f_{\om, v}$ of $\Pe$ preserves its distinguished boundary. However, for our purpose, we do not need the general result.

	\begin{lem}\label{lem_peakfwv}
		Let $\om \in \T$ and $v \in \A(\D)$. Then $f_{\om, v}$ as in \eqref{eqn_fwv} preserves the peak points of $\Pbar$ and $f_{\om, v}(K_0)=K_0$.
	\end{lem}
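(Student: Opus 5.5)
The plan splits into two parts: first that $f_{\om, v}$ preserves peak points of $\Pbar$, then that $f_{\om, v}(K_0)=K_0$. The first part is purely formal, using that $f_{\om, v}$ is a homeomorphism of $\Pbar$ which is holomorphic on $\Pe$ together with its inverse.

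\textbf{Peak points.} By Theorem \ref{thm_AutPsub}, $f_{\om, v}$ is a bijective holomorphic self-map of $\Pe$ with inverse $f_{\overline{\om}, v^{-1}}$. Formula \eqref{eqn_fwv} has denominator $1-\overline{z}s+\overline{z}^2p$, which equals $(1-\overline{z}\lm_1)(1-\overline{z}\lm_2)$ for $(s,p)=(\lm_1+\lm_2,\lm_1\lm_2)\in\Gamma_2$. This is nonvanishing on $\Pbar$ because $|z|<1$ and $|\lm_i|\le 1$. Hence both $f_{\om, v}$ and its inverse extend continuously to $\Pbar$, and the proof of Theorem \ref{thm_AutPsub} already gives $f_{\om,v}(\Pbar)\subseteq\Pbar$.

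Now let $q_0$ be a peak point of $\Pbar$ with peaking function $g\in A(\Pe)$. Put $h=g\circ f_{\om, v}^{-1}=g\circ f_{\overline{\om}, v^{-1}}$. Then $h\in A(\Pe)$, $|h(f_{\om,v}(q_0))|=1$, and $|h(q)|<1$ for every $q\neq f_{\om,v}(q_0)$, by injectivity of $f_{\om,v}^{-1}$ on $\Pbar$. So $f_{\om,v}(q_0)$ is a peak point. The same argument applied to the inverse gives the reverse inclusion.

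\textbf{The set $K_0$.} Let $(a,s,p)\in K_0$ and write $s=\lm_1+\lm_2$, $p=\lm_1\lm_2$. Since $(s,p)\in b\Gamma_2$, Theorem \ref{thm_bGam} gives $|p|=1$, and then $|\lm_1|=|\lm_2|=1$ because $|\lm_i|\le1$. By \eqref{eqn_fwvII}, the last two coordinates of $f_{\om,v}(a,s,p)$ are $\tau_v(s,p)=(v(\lm_1)+v(\lm_2),v(\lm_1)v(\lm_2))$, which lies in $b\Gamma_2$ since $v$ maps $\T$ to $\T$. It remains to verify
\[
|a'|^2+|s'|^2/4=1,
\qquad a'=\om\xi\,\frac{a(1-|z|^2)}{(1-\overline{z}\lm_1)(1-\overline{z}\lm_2)},
\qquad s'=v(\lm_1)+v(\lm_2).
\]
Write $\lm_j=e^{i\theta_j}$. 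Then $|s|^2/4=\cos^2((\theta_1-\theta_2)/2)$, so $|a|^2=\sin^2((\theta_1-\theta_2)/2)=|\lm_1-\lm_2|^2/4$. Two identities finish the computation. The first is
\[
|v(\lm_1)-v(\lm_2)|=\frac{(1-|z|^2)|\lm_1-\lm_2|}{|1-\overline{z}\lm_1||1-\overline{z}\lm_2|},
\]
which gives $|a'|^2=|v(\lm_1)-v(\lm_2)|^2/4$. The second holds because $|v(\lm_j)|=1$:
\[
|s'|^2/4+|v(\lm_1)-v(\lm_2)|^2/4=\tfrac12\left(|v(\lm_1)|^2+|v(\lm_2)|^2\right)=1.
\]
Together these give $|a'|^2+|s'|^2/4=1$, so $f_{\om,v}(K_0)\subseteq K_0$. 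Applying this to $f_{\overline{\om},v^{-1}}$ yields the reverse inclusion, hence $f_{\om,v}(K_0)=K_0$.

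An alternative route to $f_{\om,v}(K_0)\subseteq K_0$ uses the factorization $f_{\om,v}=e_*\circ T_{v,v_*,\om}\circ e_0$, Proposition \ref{prop14.7}, and the invariance of $b\Q$ under automorphisms of $\Q$. However, one would also need that $T_{v,v_*,\om}$ keeps the diagonal $x_1=x_2$, which the proof of Theorem \ref{thm_AutPsub} shows. The main obstacle is the Möbius identity for $|v(\lm_1)-v(\lm_2)|$, which is standard: $|B_z(\lm_1)-B_z(\lm_2)|$ equals $(1-|z|^2)|\lm_1-\lm_2|/(|1-\overline z\lm_1||1-\overline z\lm_2|)$.
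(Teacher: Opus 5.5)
Your proof is correct. The peak-point half is the same as the paper's: you transport the peaking function via $g\circ f_{\om,v}^{-1}$.

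For $f_{\om,v}(K_0)\subseteq K_0$ you take a genuinely different route. You work directly in $\C^3$. On $K_0$ you have $\lm_1,\lm_2\in\T$, and the parallelogram law gives $|a|=|\lm_1-\lm_2|/2$. The Möbius identity
$|v(\lm_1)-v(\lm_2)|=(1-|z|^2)|\lm_1-\lm_2|/(|1-\overline{z}\lm_1||1-\overline{z}\lm_2|)$
then shows that the new first coordinate satisfies $|a'|=|v(\lm_1)-v(\lm_2)|/2$. This is exactly the $K_0$ condition for $(a',\tau_v(s,p))$.

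The paper instead lifts to the hexablock. It uses Proposition \ref{prop14.7} to identify $K_0$ with $\{(a,s,p):(a,s/2,s/2,p)\in b\Q\}$, and the factorization $f_{\om,v}=e_*\circ T_{v,v_*,\om}\circ e_0$ from Theorem \ref{thm_AutPsub}. It then uses that $T_{v,v_*,\om}$ keeps the diagonal $x_1=x_2$ and carries $b\Q$ into $b\Q$, and reads off $(\tilde a,2\tilde x_1,\tilde x_3)\in K_0$ from Theorem \ref{thmbQ}.

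Your argument is elementary and self-contained. It does not depend on the invariance of $b\Q$ under $T_{v,v_*,\om}$, which the paper asserts rather tersely. It also makes the geometry of $K_0$ visible: $|a|$ is half the chord between the two unimodular roots, and $f_{\om,v}$ rescales the $a$-coordinate by exactly the factor the Möbius map applies to that chord.

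The paper's argument needs no computation and fits the section's aim of deriving pentablock facts from hexablock theory. The price is reliance on more structure of $\Q$ and of its automorphism subgroup.
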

	
	\begin{proof}
		If $g$ is a peaking function for a peak point $(a, s, p) \in \Pbar$, then $f_{\om , v}(a, s, p)$ is also a peak point of $\Pbar$ with peaking function $g\circ f_{\om , v}^{-1}$. Hence, the peak points of $\Pbar$ are preserved under $f_{\om , v}$. Let $(a, s, p) \in K_0$. We have by Proposition \ref{prop14.7} that $(a, s\slash 2, s\slash 2, p) \in b\Q$. It follows from the proof of Theorem \ref{thm_AutPsub} that $T_{v, v_*, \om}e_0(a, s, p)=T_{v, v_*, \om}(a, s\slash 2, s\slash 2, p)=(\tilde{a}, \tilde{x}_1, \tilde{x}_2, \tilde{p})$ with $\tilde{x}_1=\tilde{x}_2$. Moreover, by Theorem \ref{thmbQ}, $T_{v, v_*, \om}(a, s\slash 2, s\slash 2, p)=(\tilde{a}, \tilde{x}_1, \tilde{x}_2, \tilde{x}_3) \in b\Q$ and so, $|\tilde{a}|^2+|\tilde{x}_1|^2=1$ and $(\tilde{x}_1, \tilde{x}_2, \tilde{x}_3)=(\tilde{x}_1, \tilde{x}_1, \tilde{x}_3) \in b\E$. By Theorems \ref{thm_bGam} \& \ref{thm6.1}, $(2\tilde{x}_1, \tilde{x}_3) \in b\Gamma_2$. Then
		\begin{equation*}
			\begin{split}
				f_{\om, v}(a, s, p)
				&=e_*T_{v, v_*, \om}e_0(a, s, p) \qquad [\text{by Theorem \ref{thm_AutPsub}}]\\ 
				&=e_*T_{v, v_*, \om}(a, s\slash 2, s\slash 2, p)\\
				&=e_*(\tilde{a}, \tilde{x}_1, \tilde{x}_1, \tilde{x}_3)\\
				&=(\tilde{a}, 2\tilde{x}_1, \tilde{x}_3).
			\end{split}
		\end{equation*}
		Since $|\tilde{a}|^2+|\tilde{x}_1|^2=1$ and $(2\tilde{x}_1, \tilde{x}_3) \in b\Gamma_2$, it follows that $(\tilde{a}, 2\tilde{x}_1, \tilde{x}_3) \in K_0$. Therefore, $f_{\om, v}(K_0)\subseteq K_0$ for all $\om \in \T$ and $v \in \A(\D)$. Again, by Theorem \ref{thm_AutPsub} we have $f_{\om, v}^{-1}=f_{\overline{\om}, v^{-1}}$ and so, $f_{\om, v}(K_0)=K_0$ for all $\om \in \T$ and $v \in \A(\D)$. The proof is now complete.
	\end{proof} 
	
	Recall from the literature that an \textit{analytic disc} in a domain $\Omega \subseteq \C^n$ is a non-constant analytic map $h: \D \to \overline{\Omega}$.	The authors of \cite{Biswas-Pal2} proved that the image of an analytic disc in $\CQ$ is a subset of $\Q$ if the image contains at least one element in $\Q$. We now prove an analogue of this result for $\Pbar$. As per our knowledge, we could not locate the next two propositions in the literature. So, we prove them independently. However, we adopt similar techniques as in the works of Abouhajar et al. \cite{Abouhajar}, Edigarian et al. \cite{EdigarianI} and Kosi\'{n}ski \cite{Kosinski, KosinskiII} in proving these results. To this end, let us recall the definition of analytic disc.
	
	\begin{prop}\label{prop14.10}
		Let $h$ be an analytic disc in $\Pbar$. If $h$ maps some point of $\D$ into $\Pe$, then $h(\D) \subseteq \Pe$.
	\end{prop}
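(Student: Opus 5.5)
The natural route is to transport the statement to the hexablock via the embedding $e_0(a,s,p)=(a,s\slash 2,s\slash 2,p)$ and use the analogous fact for $\Q$ recalled at the start of Section \ref{Schwarz_lem}. That fact (Proposition 3.16 in \cite{Biswas-Pal2}) says: if $h:\D\to\CQ$ is a non-constant holomorphic map with $h(\lambda)\in\Q$ for some $\lambda\in\D$, then $h(\D)\subseteq\Q$.

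Let $h=(h_1,h_2,h_3):\D\to\Pbar$ be a non-constant analytic map with $h(\lambda_1)\in\Pe$ for some $\lambda_1\in\D$. Define
\[
H=e_0\circ h=\left(h_1,\tfrac{h_2}{2},\tfrac{h_2}{2},h_3\right):\D\to\C^4 .
\]
This map is clearly holomorphic, since $e_0$ is linear. By Theorem \ref{thm_HP_final} (equivalently Theorem \ref{thmP_in_H}), $H(\D)\subseteq\CQ$ because $h(\D)\subseteq\Pbar$, and $H(\lambda_1)\in\Q$ because $h(\lambda_1)\in\Pe$.

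Next we check that $H$ is non-constant. This holds because $e_0$ is injective: if $H$ were constant, then $h=e_*\circ H$ would be constant, since $e_*\circ e_0=\mathrm{id}$. Hence $H$ is an analytic disc in $\CQ$ meeting $\Q$, and the hexablock result gives $H(\D)\subseteq\Q$.

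Finally, we pull back along the first description in Theorem \ref{thm_HP_final}, $\Pe=\{(a,s,p):(a,s\slash 2,s\slash 2,p)\in\Q\}$. For every $\lambda\in\D$ we have $e_0(h(\lambda))\in\Q$, so $h(\lambda)\in\Pe$. Equivalently, $h=e_*\circ H$ and $e_*(\Q)\subseteq\Pe$ by Lemma \ref{lem_H_P}.

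There is no substantive obstacle here; the only real input is the hexablock proposition from \cite{Biswas-Pal2}. If a self-contained argument were wanted instead, I would argue directly as follows.

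\emph{Tetrablock part.} First, $(h_2,h_3)$ is an analytic disc in $\Gamma_2$ meeting $\Gg$, so it lies in $\Gg$. Should $(h_2,h_3)$ be constant, it is trivially in $\Gg$. Otherwise this follows from the $\Gamma_2$ analogue, obtained by applying the maximum principle to the functions $\Phi_\omega(s,p)=\frac{2\omega p-s}{2-\omega s}$ for $\omega\in\T$.

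\emph{Pentablock part.} Then consider, for each $z\in\D$, the holomorphic function $\lambda\mapsto\psi_z(h(\lambda))$, which has modulus at most $1$ on $\D$. By the open mapping theorem this modulus is strictly less than $1$ unless the function is a unimodular constant. The remaining step, and the only delicate one, is to rule out such a constant. For that I would use that $\sup_z|\psi_z(h(\lambda_1))|<1$, which follows from Theorem \ref{pentablock}, so $|\psi_z(h(\lambda_1))|<1$ for every $z$; a unimodular constant would contradict this at $\lambda=\lambda_1$. However, this pointwise inequality $|\psi_z|<1$ for every $z$ does not by itself yield $\sup_z|\psi_z|<1$. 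Closing that gap would need the uniform attainment of the supremum at an interior point $z\in\D$, which is the content of Corollary 2.8 in \cite{Biswas-Pal} used in the proof of Theorem \ref{thmP_in_H}. This is why I prefer the transport argument above.
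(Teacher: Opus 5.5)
Your proof is correct. It is essentially the second (hexablock) argument the paper gives immediately after its main proof: compose with $e_0$, invoke Proposition 3.16 of \cite{Biswas-Pal2}, and pull back. The differences are cosmetic. You pull back using the identity $\Pe=\{(a,s,p):(a,s/2,s/2,p)\in\Q\}$ from Theorem \ref{thm_HP_final}, whereas the paper argues by contradiction via $e_0(\partial\Pe)\subseteq\partial\Q$, which is the same fact. You also verify explicitly that $e_0\circ h$ is non-constant, a point the paper leaves implicit.

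The paper's primary proof is the self-contained route you sketched and then set aside, and the step you flagged there is not actually an obstacle.
\begin{itemize}
\item Lemma 3.4 of \cite{Abouhajar} gives $(h_2(\lambda),h_3(\lambda))\in\Gg$ for every $\lambda\in\D$.
\item Consequently the denominator $1-h_2(\lambda)z+h_3(\lambda)z^2$ has no zeros on $\DC$. So $z\mapsto\psi_z(h(\lambda))$ is continuous on $\DC$ and vanishes on $\T$.
\item Hence $\sup_{z\in\D}|\psi_z(h(\lambda))|$ is attained at some $z_\lambda\in\D$. By the open-mapping step, that maximum is $<1$.
\end{itemize}
You need neither uniformity in $\lambda$ nor Corollary 2.8 of \cite{Biswas-Pal}, which concerns the two-variable hexablock supremum. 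This compactness observation is exactly why the $\Gamma_2$ step comes first in the direct proof.
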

	
	\begin{proof}
		Suppose that $h(\lambda_0) \in \Pe$ for some $\lambda_0 \in \D$. Let $h=(h_1, h_2, h_3)$. It follows from Theorem \ref{pentablock} that $(h_2(\lambda), h_3(\lambda)) \in \Gamma_2$ for every $\lambda \in \D$. Let us consider the map $h_0: \D \to \Gamma_2$ defined as $\lambda \mapsto \left((h_2(\lambda), h_3(\lambda)\right)$. Evidently, $h_0$ is an analytic map. It follows from Theorem \ref{pentablock} that $h_0(\lambda_0) \in \Gg$. Using Lemma 3.4 in \cite{Abouhajar}, we conclude that $h_0(\D) \subseteq \Gg$. Let $z \in \D$. Then the map 
		$
		\lambda \mapsto \psi_{z}(h(\lambda))
		$
		is an analytic map on $\D$. It follows from Theorem \ref{pentablock} that $|\psi_{z}(h(\lambda))| \leq 1$. Since $h(\lambda_0) \in \Pe$ and $z \in \D$, we have by Theorem \ref{pentablock} that $|\psi_z(h(\lambda_0))|<1$. By open mapping theorem, $|\psi_z(h(\lambda))|<1$ for every $\lambda \in \D$ and $z \in \D$. By Theorem \ref{pentablock} and the fact that the supremum of $|\psi_z(.)|$ over $z \in \D$ is attained at some point of $\D$, we have that $
		\sup_{z \in \D} |\psi_z(h(\lambda))|<1.
		$
		It follows from Theorem \ref{pentablock} that $h(\lambda) \in \Pe$ for every $\lambda \in \D$. The proof is now complete.
	\end{proof}

	The above result can also be proved using the theory of hexablock. To do so, consider an analytic disc $h$ in $\Pbar$ and the map $e_0: \Pbar \to \C^4, e_0(a, s, p)=(a, s\slash 2, s \slash 2, p)$. It follows from Theorems \ref{thmP_in_H} \& \ref{thm_HP_final} that $e_0(\Pe) \subseteq \Q, e_0(\Pbar)\subseteq \CQ$ and $e_0(\partial \Pe) \subseteq \partial \Q$. Consequently, the map
	$g=e_0 \circ h $ is an analytic disc in $\CQ$. Suppose that $h(\lambda_0) \in \Pe$ for some $\lambda_0 \in \D$. Then $g(\lambda_0)=e_0(h(\lambda_0)) \in e_0(\Pe) \subseteq \Q$. We have by Proposition 3.16 in \cite{Biswas-Pal2} that $g(\D) \subseteq \Q$. Let $\lm \in \D$ and let if possible, $h(\lambda) \notin \Pe$. Then $h(\lm) \in \partial \Pe$ and  $g(\lm)=e_0(h(\lm)) \in e_0(\partial \Pe) \subseteq \partial \Q$. Thus, $g(\lm) \in \partial \Q$, a contradiction. Hence, $h(\D) \subseteq \Pe$.

	\begin{prop}\label{adiscP}
		Let $h$ be an analytic disc in $\Pbar$. Then no point in $h(\D)$ is a peak point of $\Pbar$.	
	\end{prop}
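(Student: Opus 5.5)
Let $h=(h_1,h_2,h_3)$ be an analytic disc in $\Pbar$ and suppose, towards a contradiction, that $q_0=h(\lambda_0)$ is a peak point of $\Pbar$ for some $\lambda_0\in\D$. The plan is to compose a peaking function with $h$ and then apply the maximum modulus principle on $\D$.

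So let $g\in A(\Pe)$ be a peaking function for $q_0$, that is, $g(q_0)=1$ and $|g(q)|<1$ for every $q\in\Pbar\setminus\{q_0\}$. The function $\phi=g\circ h$ is defined on $\D$, and $|\phi|\le 1$ there. It attains the value $1$ at the interior point $\lambda_0$.

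The only subtle point is that $\phi$ is holomorphic on $\D$, because $g$ is assumed holomorphic only on the open set $\Pe$. We split into two cases.

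\textbf{Case 1: $h(\D)$ meets $\Pe$.} Proposition \ref{prop14.10} gives $h(\D)\subseteq\Pe$. Then $\phi$ is holomorphic on $\D$ and attains its maximum modulus at $\lambda_0$, so $\phi\equiv 1$. Since $g$ peaks only at $q_0$, this forces $h\equiv q_0$, contradicting that an analytic disc is non-constant.

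\textbf{Case 2: $h(\D)\subseteq\partial\Pe$.} Here we approximate by the quasi-balanced dilation. By Proposition \ref{prop_P_lc_qb}, $\Pe$ is $(1,1,2)$-quasi-balanced, so for $0<r<1$ the map
\[
\delta_r(a,s,p)=(ra,rs,r^2p)
\]
sends $\Pbar$ into $\Pe$, since $\Pbar$ is the closure and $\Pe$ is a balanced-type open set. Put $g_r=g\circ\delta_r$; this is holomorphic on a neighbourhood of $\Pbar$. Hence $\phi_r=g_r\circ h$ is holomorphic on $\D$, with $|\phi_r|\le 1$ because $\delta_r(\Pbar)\subseteq\Pe$. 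As $r\to1^-$, uniform continuity of $g$ on the compact set $\Pbar$ gives $\phi_r\to\phi$ uniformly on $\D$. So $\phi$ is a uniform limit of holomorphic functions and is therefore holomorphic on $\D$. The maximum modulus argument of Case 1 then again gives $\phi\equiv1$ and $h\equiv q_0$, a contradiction.

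Note that this dilation argument in fact handles both cases at once, so the case split only serves to make the idea transparent.

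The main obstacle is justifying that $\delta_r(\Pbar)\subseteq\Pe$ for $r<1$. If quasi-balancedness (stated with $\lambda\in\overline{\D}$) does not give the strict inclusion directly, we can verify it through Theorem \ref{pentablock}.
\begin{enumerate}
\item For the symmetrized bidisc part: $(rs,r^2p)=(r\lambda_1+r\lambda_2,\,r\lambda_1\cdot r\lambda_2)$, which lies in $\Gg$ when $|\lambda_i|\le1$.
\item For the first coordinate, we need
\[
\sup_{z\in\D}\frac{r|a|(1-|z|^2)}{|1-rsz+r^2pz^2|}<1.
\]
This should follow from the supremum characterization in part (3) of Theorem \ref{pentablock}. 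That bound is increasing in $|\lambda_i|$ in a suitable sense, and $|a|$ is multiplied by $r$.
\end{enumerate}
Alternatively, we can transfer the inclusion from the known $(1,1,1,2)$-quasi-balancedness of $\Q$ via $e_0$ and Theorem \ref{thm_HP_final}.
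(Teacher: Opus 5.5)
Your proposal is correct and is essentially the paper's argument: dilate $h$ to $(rh_1,rh_2,r^2h_3)$, compose with the peaking function, obtain holomorphy of $g\circ h$ as a uniform limit, and conclude with the maximum modulus principle. The one difference is how you get the dilated disc into $\Pe$. You use the global inclusion $\delta_r(\Pbar)\subseteq\Pe$, which does hold via the supremum criterion of Theorem~\ref{pentablock}, because $r(1-|z|^2)<1-|rz|^2$ and $(rs,r^2p)\in\Gg$ (quasi-balancedness alone, which only concerns points of $\Pe$, would not give it). The paper instead applies quasi-balancedness at the single point $h(z_0)$ and then propagates with Proposition~\ref{prop14.10}, so your case split is indeed superfluous.
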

	
	\begin{proof}
		Let if possible, $h(z_0)$ be a peak point of $\Pbar$ for some $z_0 \in \D$. Let us denote $h(z_0)=q=(a, s, p)$. Then there exists $g\in A(\Pe)$ such that $g(q)=1$ and $|g(q')|<1$ for all $q'$ in $\Pbar\setminus \{q\}$. It follows from Proposition \ref{prop_P_lc_qb} that $(ra, rs, r^2p) \in \Pe$ for all $r \in (0, 1)$. Choose $r_n \in (0,1)$ such that $\displaystyle \lim_{n \to \infty}r_n=1$. Let $h=(h_1, h_2, h_3)$. For $n \in \mathbb{N}$, let us consider the following maps:
		\[
		h^{(n)}: \D \to \Pbar, \quad h^{(n)}(z)=\left(r_nh_1(z), r_nh_2(z), r_n^2h_3(z)\right).
		\] 
		Each $h^{(n)}$ is a non-constant well-defined analytic function. It follows from Proposition \ref{prop14.10} that $h^{(n)}(\D) \subseteq \Pe$ as $h^{(n)}(\lambda_0) \in \Pe$. Therefore, $g\circ h^{(n)}$ is a holomorphic map on $\D$. It is not difficult to see that $h^{(n)}$ converges locally uniformly to $h$ on $\D$. Since $g$ is uniformly continuous on $\Pbar$, we have that $g\circ h^{(n)}$ converges locally uniformly to $g\circ h$ on $\D$. Consequently, the map $g\circ h$ is analytic on $\D$ and $\max_{z \in \D}|g\circ h(z)|=|g(h(z_0))|$. Since $z_0 \in \D$, it follows from maximum modulus principle that $g \circ h$ is a constant map. Consequently, $g(h(z))=g(h(z_0))=1$ for all $z \in \D$. Since $g$ peaks at $q$, we must have $h(z)=q$ for all $z \in \D$. This is a contradiction as $h$ is non-constant. Hence, no point of $h(\D)$ can be a peak point of $\Pbar$. 
	\end{proof} 
	
	Proposition \ref{prop14.7} tells us that every peak point of $\Pbar$ is in $K_0$. The converse to this statement is also true, as established in the literature \cite{AglerIV}. Here, we provide a rather different proof based on the theory of hexablock.
	
	\begin{prop}\label{prop_bP_K0}
		Every peak point of $\Pbar$ is in $K_0$ and $b\Pe \subseteq K_0$.
	\end{prop}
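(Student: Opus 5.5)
We show that a peak point $q=(a_0,s_0,p_0)$ of $\Pbar$ lies in $K_0$. By Proposition \ref{peak_K1} we already know $q\in K_1$, so $(s_0,p_0)\in b\Gamma_2$ and $|a_0|^2+|s_0|^2/4\le 1$. It remains to exclude the strict inequality $|a_0|^2+|s_0|^2/4<1$. We do this with Proposition \ref{adiscP}: we build a non-constant analytic disc in $\Pbar$ passing through $q$ at an interior point of $\D$.

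\textbf{Step 1 (normalize $s_0$ by an automorphism).} Since $(s_0,p_0)\in b\Gamma_2$, we can write $(s_0,p_0)=(\lm_1+\lm_2,\lm_1\lm_2)$ with $\lm_1,\lm_2\in\T$. Note that $|s_0|<2$ unless $\lm_1=\lm_2$.

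\textbf{Step 2 (the disc).} Suppose $|a_0|^2+|s_0|^2/4<1$, so that $|a_0|<\sqrt{1-|s_0|^2/4}$. Define
\[
h(z)=\Big(a_0+\varepsilon z,\ s_0,\ p_0\Big), \qquad z\in\D,
\]
where $\varepsilon>0$ is chosen so that $|a_0|+\varepsilon\le \sqrt{1-|s_0|^2/4}$. We must check that $h(z)\in\Pbar$. The hexablock gives a clean test for this. By Theorem \ref{thm_HP_final}, $h(z)\in\Pbar$ if and only if $(a_0+\varepsilon z,s_0/2,s_0/2,p_0)\in\CQ$. Here $(s_0/2,s_0/2,p_0)\in b\E$ by Theorems \ref{thm6.1} and \ref{thm_bGam}, and $|s_0/2|\le 1$.

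\begin{itemize}
\item If $|s_0/2|<1$, the formula used in Lemma \ref{lem_B2_HexaII} (Theorem 2.12 of \cite{Biswas-Pal}) gives
\[
\sup_{z_1,z_2\in\D}|\psi_{z_1,z_2}(b,x_1,x_2,x_3)|=\frac{|b|}{\sqrt{1-|x_1|^2}}\quad\text{for } (x_1,x_2,x_3)\in b\E,\ |x_1|<1.
\]
Hence membership in $\CQ$ is exactly the condition $|b|^2+|s_0|^2/4\le 1$, which holds for $b=a_0+\varepsilon z$ by the choice of $\varepsilon$.
\item If $|s_0|=2$, then $|a_0|^2+1<1$ is impossible, so this case does not arise.
\end{itemize}

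\textbf{Step 3 (conclude).} The map $h$ is analytic, non-constant, takes values in $\Pbar$, and satisfies $h(0)=q$. By Proposition \ref{adiscP}, $q$ cannot be a peak point, which is a contradiction. Therefore every peak point of $\Pbar$ lies in $K_0$.

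\textbf{Step 4 (the distinguished boundary).} The peak points are dense in the Shilov boundary \cite{Honary}, and $K_0$ is closed, being compact. Hence $b\Pe\subseteq K_0$.

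The main obstacle is Step 2, namely justifying the sup formula on the fibre over $b\E$. If citing Theorem 2.12 of \cite{Biswas-Pal} feels insufficient, there is an alternative: use the matrix realization. Take
\[
A=\begin{pmatrix}s_0/2 & \ast\\ b & s_0/2\end{pmatrix}
\]
with $\|A\|\le1$, as in Proposition \ref{prop_Ein_Hin}, together with the inclusion $\CQN\subseteq\CQ$. Then Lemma \ref{lem_H_P} gives membership in $\Pbar$.
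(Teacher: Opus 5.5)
Your argument is correct, and it takes a genuinely different and noticeably shorter route than the paper's.

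\textbf{How the two routes differ.} The paper places the peak point in one of the pieces $\partial_0\Pe$, $\partial_1\Pe$, $\partial_2\Pe$ of $\partial\Pe$. In each case it uses an automorphism $f_{\om,v}$ together with Lemma \ref{lem_peakfwv} to transport the point to $(0,0,1)$, to a point with $|\tilde a|=1$, or to $(\tilde a,0,1)$. Only then does it apply Proposition \ref{adiscP}, always to the single disc $z\mapsto(z,0,1)$.

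You instead observe that over every $(s_0,p_0)\in b\Gamma_2$ with $|s_0|<2$, the $a$-fibre of $\Pbar$ is the whole closed disc of radius $\sqrt{1-|s_0|^2/4}$. Every point of $K_1\setminus K_0$ therefore lies at the centre of a horizontal analytic disc in $\Pbar$, and Proposition \ref{adiscP} excludes it at once. The paper's disc $(z,0,1)$ is essentially your construction at $(s_0,p_0)=(0,1)$.

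Your route dispenses with the case split, the automorphisms and Lemma \ref{lem_peakfwv}. Your Step 1 is in fact never used. What the paper's longer route buys is mainly that it exercises the automorphisms $f_{\om,v}=e_*\circ T_{v,v_*,\om}\circ e_0$ built from the hexablock.

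\textbf{The fibre formula.} The formula you flag as the main obstacle is sound. For $(x_1,x_2,x_3)\in b\E$, write $1-x_1z_1-x_2z_2+x_3z_1z_2=(1-x_1z_1)-z_2(x_2-x_3z_1)$. Using $x_1=\overline{x}_2x_3$ and $|x_3|=1$, one checks $|1-x_1z_1|^2-|x_2-x_3z_1|^2=(1-|x_2|^2)(1-|z_1|^2)$. Optimizing in $z_2$ then gives $\sup_{z_1,z_2}|\psi_{z_1,z_2}(b,x_1,x_2,x_3)|=|b|/\sqrt{1-|x_1|^2}$.

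More directly, for $\lm_1,\lm_2\in\T$ the closed form of Theorem \ref{pentablock}(3) reads $|a|\le\frac12|1-\overline{\lm}_2\lm_1|=\sqrt{1-|s_0|^2/4}$. So no hexablock input is needed for this step.

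\textbf{A caution about your fallback.} A matrix $\begin{pmatrix}s_0/2&\ast\\ b&s_0/2\end{pmatrix}$ with determinant $p_0$ satisfies $|\ast|\,|b|=|s_0^2/4-p_0|=1-|s_0|^2/4$. The row and column bounds of a contraction then force $|b|=\sqrt{1-|s_0|^2/4}$. For instance, $s_0=0$, $p_0=-1$ gives $\|A\|=\max\{|b|,1/|b|\}$.

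So $\CQN$ with equal diagonal entries only reaches the boundary circle of the fibre, not the interior points your disc needs. You would have to use unequal diagonal entries, as in the second matrix of Proposition \ref{prop4.7}, and then Lemma \ref{lem_H_P}. Your main argument does not depend on this, so it does not affect correctness.
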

	
	\begin{proof} Let $(a, s, p)$ be a peak point of $\Pbar$.  It follows from Proposition \ref{peak_K1} that $(a, s, p) \in K_1$ and so, $(s, p) \in b\Gamma_2$. By Theorem \ref{thm_bGam}, $s=\overline{s}p, |s| \leq 2$ and $|p|=1$. One can choose $\lambda_1, \lambda_2 \in \T$ such that $(s, p)=(\lambda_1+\lambda_2, \lambda_1\lambda_2)$. Such a choice is possible by Theorem \ref{thm_bGam}. It follows from Theorem \ref{pentablock} that $(a, s, p) \in \partial \Pe$. By above discussion, $\partial \Pe= \partial_0 \Pe \cup \partial_1 \Pe \cup \partial_2 \Pe$. We now consider three different cases depending on which of these three parts of $\partial \Pe$ contain $(a, s, p)$.
		
		\smallskip 
		
		\noindent \textbf{Case $1$.} Let $(a, s, p) \in \partial_0 \Pe$. Then $a=0$. Assume that $s^2-4p \ne 0$. Thus, $\lambda_1 \ne \lambda_2$ and so, one can choose $v \in \A(\D)$ such that $v(i)=\lm_1$ and $v(-i)=\lm_2$. We consider the corresponding  $\tau_v$ in $Aut(\Gg)$. Evidently, $\tau_v(0, 1)=(s, p)$. Then the map $f_{\om, v}$ in $\A(\Pe)$ satisfies $f_{\om, v}(0, 0, 1)=(0, s, p)$. By Lemma \ref{lem_peakfwv}, $(0, 0, 1)$ is a peak point of $\Pbar$. For $z \in \overline{\D}$, it follows from Proposition 4.13 in \cite{Biswas-Pal} that $(z, 0, 0, 1) \in \CQ$ and so, by Theorem, \ref{thm_HP_final}, $(z, 0, 1) \in \Pbar$. Let us consider the map 
		\[
		h: \D \to \Pbar, \quad h(z)=(z, 0, 1),
		\]
		which is an analytic disc in $\Pbar$. It follows from Proposition \ref{adiscP} that $h(0)=(0, 0, 1)$ cannot be a peak point for $\Pbar$, which is a contradiction. Thus, $s^2=4p$ and so, $|s|=2$. Hence, $|a|^2+|s|^2\slash 4=1$ and $(s, p) \in b\Gamma_2$ in this case. Consequently, $(a, s, p) \in K_0$.
		
		\smallskip 
		
		\noindent \textbf{Case $2$.} Let $(a, s, p) \in \partial_1 \Pe$. Then $a \ne 0$ and there exists $z \in \D$ such that $|\psi_{\bar z}(a, s, p)|=1$. Defining $v=-B_z$ we have by \eqref{eqn_fwvII} that $f_{1, v}(a, s, p)=\left(\psi_{\bar z}(a, s, p), \tau_v(s, p) \right)$. Let us denote this element by $(\tilde{a}, \tilde{s}, \tilde{p})$. Then $|\tilde{a}|=1$ and $(\tilde{s}, \tilde{p}) \in b\Gamma_2$. Since $(\tilde{a}, \tilde{s}, \tilde{p}) \in \Pbar$, it follows from Theorem \ref{pentablock} that $|\tilde{a}|^2+|\tilde{s}|^2\slash 4 \leq 1$ and so, $\tilde{s}=0$. Hence, $|\tilde{a}|^2+|\tilde{s}|^2\slash 4=1$ and $(\tilde{s}, \tilde{p}) \in b\Gamma_2$. Thus, $f_{1, v}(a, s, p) \in K_0$ and so, by Lemma \ref{lem_peakfwv}, $f_{1, v}^{-1}(f_{1, v}(a, s, p))=(a, s, p) \in K_0$. 
		
		\smallskip 
		
		\noindent \textbf{Case $3$.} Let $(a, s, p) \in \partial_2\Pe$. Then $a \ne 0$ and ${\displaystyle \sup_{z \in \D}|\psi_{z}(a, s, p)| \leq 1}$. If $s^2=4p$, then $|s|^2=4|p|=4$. Since $(a, s, p) \in \Pbar$, it follows from Theorem \ref{pentablock} that $|a|^2+|s|^2\slash 4 \leq 1$ and so, $a=0$ which is a contradiction. Hence, $s^2\ne 4p$ and thus, $\lambda_1 \ne \lambda_2$. Choose $v \in \A(\D)$ such that $v(\lm_1)=i$ and $v(\lm_2)=-i$. The corresponding  $\tau_v$ in $Aut(\Gg)$ satisfies $\tau_v(s, p)=(0, 1)$. Then the map $f_{\om, v}$ in $\A(\Pe)$ satisfies $f_{\om, v}(a, s, p)=(\tilde{a}, 0, 1)$ for some $\tilde{a} \in \C$. Since $\Pbar \subseteq \overline{\D}^3$, we have that $|\tilde{a}| \leq 1$. Again, it follows from Lemma \ref{lem_peakfwv} that $(\tilde{a}, 0, 1)$ is a peak point of $\Pbar$. Consider the map $h$ as in Case $1$ given by 
		\[
		h: \D \to \Pbar, \quad h(z)=(z, 0, 1).
		\]
		Since $h$ is an analytic disc in $\Pbar$ and  $h(\tilde{a})=(\tilde{a}, 0, 1)$ is a peak point of $\Pbar$, we have by Proposition \ref{adiscP} that $|\tilde{a}|=1$. Thus, $(\tilde{a}, 0, 1) \in K_0$ and so, by Lemma \ref{lem_peakfwv}, $f_{\om, v}^{-1}(\tilde{a}, 0, 1))=(a, s, p) \in K_0$. 
		
		\smallskip 
		
		In all three cases, we have that $(a, s, p) \in K_0$ and thus any peak point of $\Pbar$ is in $K_0$. Again, literature (e.g., see \cite{Honary}) tells us that the peak points of a Banach function algebra $B$ on a compact metric space form a dense subset of the Shilov boundary of $B$. Hence, $b\Pe \subseteq K_0$ and the proof is complete. 	\end{proof}

	We now prove the following result that appears first in \cite{AglerIV}. Our proof capitalizes the theory of hexablock and gives a connection between the distinguished boundaries of $\Pe$ and $\Q$.
	
	\begin{thm}\label{thm_bP_2}
		Let $(a,s, p) \in \C^3$. Then the following are equivalent:
		\begin{enumerate}
			\item $(a, s, p) \in K_0$;
			\item $(a, s, p)$ is a peak point of $\Pbar$; 
			\item $(a, s, p) \in b\Pe$. 
		\end{enumerate}
		Therefore, $b\Pe=\{(a, s, p) : (a, s\slash 2, s\slash 2, p) \in b\Q\}$.
	\end{thm}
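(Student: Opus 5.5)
The plan is to close the cycle $(1)\Rightarrow(2)\Rightarrow(3)\Rightarrow(1)$ using only results already proved in this section, and then read off the final description of $b\Pe$ from Proposition \ref{prop14.7}.

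For $(1)\Rightarrow(2)$, I invoke Proposition \ref{prop14.7}. It shows that every point of $K_0$ is a peak point of $\Pbar$. Concretely, the peaking function is $g=f\circ e_0$, where $f\in A(\Q)$ peaks at $e_0(a,s,p)\in b\Q$.

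For $(2)\Rightarrow(3)$, I use the definition of the distinguished boundary as the Shilov boundary of $A(\Pe)$. Let $(a,s,p)$ be a peak point with peaking function $g$. Every closed boundary of $A(\Pe)$ must contain a point where $|g|$ attains its maximum $1$. The only such point is $(a,s,p)$, so $(a,s,p)$ lies in every closed boundary. In particular it lies in $b\Pe$.

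For $(3)\Rightarrow(1)$, I use Proposition \ref{prop_bP_K0}, which gives $b\Pe\subseteq K_0$. This relies on three earlier ingredients:
\begin{enumerate}
\item the density of peak points in the Shilov boundary \cite{Honary};
\item the three-case analysis of $\partial\Pe=\partial_0\Pe\cup\partial_1\Pe\cup\partial_2\Pe$;
\item Lemma \ref{lem_peakfwv} and Proposition \ref{adiscP}.
\end{enumerate}
Since $K_0$ is closed, density of peak points suffices: peak points lie in $K_0$, hence so does their closure.

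Combining the three implications gives $K_0=\{\text{peak points of }\Pbar\}=b\Pe$. The last assertion then follows from the first part of Proposition \ref{prop14.7}, namely $K_0=\{(a,s,p):(a,s/2,s/2,p)\in b\Q\}$.

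The only step needing care is $(2)\Rightarrow(3)$: one must make sure $b\Pe$ is the smallest closed boundary, so that it is contained in every closed boundary. The paper has already fixed this characterization in its definition of the distinguished boundary, so no genuine obstacle remains. The substantive work was done in Propositions \ref{prop14.7} and \ref{prop_bP_K0}.
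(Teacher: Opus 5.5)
Your proposal is correct and follows the paper's proof exactly: $(1)\Rightarrow(2)$ from Proposition~\ref{prop14.7}, $(2)\Rightarrow(3)$ from the definitions of peak points and of $b\Pe$ as the smallest closed boundary, $(3)\Rightarrow(1)$ from Proposition~\ref{prop_bP_K0}, and the final description of $b\Pe$ from the first part of Proposition~\ref{prop14.7}. Your remark that a peak point must lie in every closed boundary spells out the step the paper leaves to the definitions.
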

	
	\begin{proof}
		$(1) \implies (2)$ follows from Proposition \ref{prop14.7}. $(2) \implies (3)$ follows from the definitions of peak points and the distinguished boundary. $(3) \implies (1)$  follows from Proposition \ref{prop_bP_K0}. Hence, $b\Pe=K_0=\{(a, s, p) : (a, s\slash 2, s\slash 2, p) \in b\Q\}$, where the last equality again follows from Proposition \ref{prop14.7}. The proof is now complete.
	\end{proof} 
	
	\subsection*{Rational penta-inner functions} Alshehri and Lykova  \cite{Alsheri} provided a complete description of the rational inner functions from $\D$ to $\Pbar$. We obtain the same result using the theory of hexablock. Let us recall the following definitions.
	
	\begin{defn}
		A \textit{$\Pbar$-inner function} is an analytic map $f: \D \to \Pbar$ such that 
		the radial limit ${\displaystyle	\lim_{r \to 1^{-}} f(r\lambda )}$ exists and belongs to $b\Pe$ for almost all $\lambda \in \T$ with respect to Lebesgue measure. A \textit{$\Gamma_2$-inner function} is an analytic map $f: \D \to \Gamma_2$ such that 
		the radial limit ${\displaystyle \lim_{r \to 1^{-}} f(r\lambda )}$ exists and belongs to $b\Gamma_2$ for almost all $\lambda \in \T$ with respect to Lebesgue measure.
	\end{defn}

	\begin{defn}
		Let $x$ be a rational $\Gamma_2$-inner function and let $x_*:\mathbb{Z}=\pi_1(\T) \to \pi_1(b\Gamma_2)$ be the homomorphism of fundamental groups induced by $x$. The degree $deg(x)$ of $x$ is defined as $x_*(1)$. 
	\end{defn}
	
	We explain a useful characterization of the degree for a rational $\Gamma_2$-function obtained in \cite{AglerV}.
	
	\begin{rem}
		Let $x=(s, p)$ be a rational $\Gamma_2$-inner function. It is not difficult to prove that (see Section 2 in \cite{AglerV}) that $p$ is a rational inner function on $\D$ and so, $p$ is a finite Blaschke product. It follows from Proposition 3.3 in \cite{AglerV} that $deg(x)$ is equal to the degree $deg(p)$ of the finite Blaschke product $p$, i.e., the number of factors in the Blaschke product.
	\end{rem}
	
	Let us recall the description of rational $\Gamma_2$-inner functions from the literature. 
	
	\begin{prop}[\cite{AglerV}, Proposition 2.2]
		If $x=(s, p)$ is a rational $\Gamma_2$-inner function of degree $n$, then there exist polynomials $E$ and $D$ of degree at most $n$ such that the following hold: 
		\begin{equation}\label{eqn_Ginner}
			\begin{split}
				&(1)  \	\ D(\lm) \ne 0 \  \text{on} \  \overline{\D}, \qquad (2) \ E=E^{\sim n} \ \text{on} \ \overline{\D}, 
				\qquad (3) \  |E(\lm)| \leq 2|D(\lm)| \  \text{for} \ \lm \in \overline{\D}, \\
				&(4) \ \	s=\frac{E}{D}\ \ \text{on} \ \ \overline{\D},\qquad (5)  \ \	x_3=\frac{D^{\sim n}}{D} \ \ \text{on} \ \ \overline{\D}. \\
			\end{split}	
		\end{equation}
		Conversely, if $E$ and $D$ are polynomials of degree at most $n$ that satisfy conditions $(1), (2)$ and $(3)$, then $x=(s, p)$ with $s$ and $p$ defined as in $(4)$ and $(5)$ is a rational $\Gamma_2$-inner function of degree at most $n$.
	\end{prop}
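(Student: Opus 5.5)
This is Proposition 2.2 of \cite{AglerV}, and I would derive it from the tetrablock description in Theorem \ref{thm_Einner} via the embedding $e_{\Gg}$ of Theorem \ref{thm_GQretract}. Let $x=(s,p)$ be rational $\Gamma_2$-inner of degree $n$. Put $y=(s/2,s/2,p)$. This is a rational map, holomorphic on $\overline{\D}$, and by Theorems \ref{tetrablock} and \ref{thmG_2} it takes values in $\Ebar$, since $(s,p)\in\Gamma_2$ gives $|s/2-\overline{s/2}p|\cdot 2\le 1-|p|^2$ with $|s/2|\le 1$. On $\T$ we have $(s,p)\in b\Gamma_2$, so $s=\overline{s}p$, $|p|=1$ and $|s|\le 2$ by Theorem \ref{thm_bGam}. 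Hence $s/2=\overline{(s/2)}\,p$, $|p|=1$ and $|s/2|\le1$, which by Theorem \ref{thm6.1} means $y(\T)\subseteq b\E$. So $y$ is rational $\Ebar$-inner, and its degree equals $\deg(p)=n$ (Proposition 4.14 of \cite{Alsalhi} together with the remark on $\Gamma_2$-degree).

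Next I apply Theorem \ref{thm_Einner} to $y$. This gives polynomials $E_1,E_2,D$ of degree at most $n$ with $D\ne0$ on $\overline{\D}$, $E_1=E_2^{\sim n}$, $|E_i|\le|D|$, $s/2=E_1/D=E_2/D$ and $p=D^{\sim n}/D$. Since $E_1/D=E_2/D$ on $\overline{\D}$ and $D$ does not vanish there, $E_1=E_2$ as polynomials. Set $E=2E_1$. Then $E^{\sim n}=2E_1^{\sim n}=2E_2=E$, $|E|\le 2|D|$, $s=E/D$ and $p=D^{\sim n}/D$, which gives (1)–(5). The one point needing care is that $\sim n$ is an involution on polynomials of degree at most $n$, so $E_1=E_2^{\sim n}$ implies $E_2=E_1^{\sim n}$; this is the only possible obstacle and it is routine.

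For the converse, suppose $E,D$ satisfy (1)–(3). Put $E_1=E_2=E/2$. Then $E_1=E_2^{\sim n}$ by (2) and $|E_i|\le|D|$ by (3), so the converse half of Theorem \ref{thm_Einner} shows that $(E/(2D),E/(2D),D^{\sim n}/D)$ is rational $\Ebar$-inner of degree at most $n$. Going back through Theorems \ref{tetrablock}, \ref{thmG_2}, \ref{thm6.1} and \ref{thm_bGam}, exactly as in Theorem \ref{thm_GQretract}, the map $(s,p)=(E/D,D^{\sim n}/D)$ takes $\D$ into $\Gamma_2$ and $\T$ into $b\Gamma_2$. Its degree is $\deg p\le n$, so $(s,p)$ is rational $\Gamma_2$-inner of degree at most $n$.
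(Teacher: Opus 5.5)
Your argument is correct, and it takes a different route from the source. The paper gives no proof of this proposition: it quotes it from \cite{AglerV}. That reference predates the tetrablock theorem and works directly with $(s,p)$. One writes the degree-$n$ Blaschke product $p$ as $D^{\sim n}/D$ with $D$ zero-free on $\overline{\D}$ and puts $E=sD$. Then $s=\overline{s}p$ on $\T$ gives $E(\lambda)=\lambda^n\overline{E(\lambda)}$ there, and a reflection argument shows that $E$ is a polynomial of degree at most $n$ with $E=E^{\sim n}$. Finally, $|E|\le 2|D|$ is just $|s|\le 2$.

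You instead push Theorem \ref{thm_Einner} through the embedding $(s,p)\mapsto(s/2,s/2,p)$ and its left inverse $(x_1,x_2,x_3)\mapsto(x_1+x_2,x_3)$. You only check that these maps respect closures and distinguished boundaries, and then collapse $E_1=E_2$ into $E=2E_1$. This is exactly the mechanism the paper uses in Section \ref{sec_appl} to derive the rational $\Pbar$-inner characterization from Theorem \ref{thm10.12}.

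Your route is shorter and fits the paper's retract philosophy. However, all of its analytic content is imported from \cite{Alsalhi}, together with the degree identities $\deg(s,p)=\deg p$ and $\deg y=\deg y_3$. So it is a reduction rather than an independent proof, and as a proof of the original result it is only as independent as \cite{Alsalhi}, whose tetrablock theorem was itself modelled on \cite{AglerV}. The direct argument, by contrast, is self-contained and elementary.

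A small simplification: you do not need the involution property of ${}^{\sim n}$. Once $E_1=E_2$ as polynomials, $E_1^{\sim n}=E_2^{\sim n}=E_1$ follows at once.
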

	
	The next result follows directly from Theorems \ref{thm_HP_final} \& \ref{thm_bP_2}.
	
	\begin{lem}\label{lem_ratPQ}
		A function $f=(a, s, p)$ is a rational $\Pbar$-inner function if and only if $(a, s\slash 2, s\slash 2, p)$ is a rational $\CQ$-inner function.
	\end{lem}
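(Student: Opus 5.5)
The plan is to check the three defining requirements of ``rational inner'' one at a time and see that each transfers through $e_0$. Write $g=e_0\circ f=(a,s/2,s/2,p)$. Since $e_0$ is linear, $g$ has rational components holomorphic on $\D$ exactly when $f$ does. It therefore remains to compare the conditions ``$f(\D)\subseteq\Pbar$'' with ``$g(\D)\subseteq\CQ$'', and ``$f(\T)\subseteq b\Pe$'' with ``$g(\T)\subseteq b\Q$''.

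For the first comparison I will use the second identity of Theorem \ref{thm_HP_final}, which says $\Pbar=\{(a,s,p):(a,s/2,s/2,p)\in\CQ\}$. Applied pointwise at each $\lm\in\D$, it gives $f(\lm)\in\Pbar$ if and only if $g(\lm)\in\CQ$.

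For the second comparison I will use the last assertion of Theorem \ref{thm_bP_2}, namely $b\Pe=\{(a,s,p):(a,s/2,s/2,p)\in b\Q\}$. Since rational inner maps extend continuously to $\DC$, as observed before Lemma \ref{lem10.5}, the boundary condition may be read pointwise on all of $\T$ (or almost everywhere, with radial limits commuting with the linear map $e_0$). Hence $f(\lm)\in b\Pe$ if and only if $g(\lm)\in b\Q$ for $\lm\in\T$.

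Putting the two comparisons together gives both directions at once. There is no real obstacle. The only point needing a brief remark is that continuity on $\DC$ (equivalently, absence of poles on $\T$) is preserved in both directions: $e_0$ is an injective linear map, and $f$ is recovered from $g$ via $e_*$.
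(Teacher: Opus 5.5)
Your proposal is correct and follows the paper's argument. The paper states that the lemma follows directly from Theorem \ref{thm_HP_final} ($\Pbar$ versus $\CQ$, applied pointwise on $\D$) and Theorem \ref{thm_bP_2} ($b\Pe$ versus $b\Q$, applied pointwise on $\T$), which is exactly your argument. Your remark about rationality and continuous extension to the closed disc being preserved under $e_0$ and $e_*$ simply makes explicit what the paper leaves implicit.
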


	\begin{thm}[\cite{Alsheri}, Theorem 7.5]
		Let $f=(a, s, p)$ be a rational $\Pbar$-inner function and let $deg(p)=n$. Let $a \ne 0$ and let $a=a_{in}a_{out}$ be an inner-outer factorization of $a$. Then there exist polynomials $A, E, D$ of degree at most $n$ such that  the following hold.
		\begin{enumerate}
			\item $E(\lm)=E^{\sim n}(\lm)$ for $\lm \in \overline{\D}$,\\
			
			\item $A$ is outer and $D(\lm) \ne 0$ for all $\lambda  \in \overline{\D}$,\\
			
			\item $|E(\lm)| \leq 2|D(\lm)|$ on $\overline{\D}$,\\
			
			\item $|A(\lambda)|^2=|D(\lambda)|^2-\frac{1}{4}|E(\lambda)|^2$ on $\T$,\\	
			
			\item 
			$
			(a, s, p)=\left(a_{in}\dfrac{A}{D}, \dfrac{E}{D}, \dfrac{D^{\sim n}}{D} \right)$ on $\ \overline{\D}$.
		\end{enumerate}
		Conversely, if $A, E$ and $D$ are polynomials of degree at most $n$ that satisfy conditions $(1)-(4)$ and $a, s$ and $p$ are defined as in $(5)$, then $f=(a, s, p)$ is a rational $\Pbar$-inner function with degree of $p$ at most $n$.	
	\end{thm}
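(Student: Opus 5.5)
We plan to deduce this from Theorem \ref{thm10.12} via Lemma \ref{lem_ratPQ}. Let $f=(a,s,p)$ be rational $\Pbar$-inner with $deg(p)=n$ and $a\neq 0$. By Lemma \ref{lem_ratPQ}, $F=(a,s/2,s/2,p)$ is rational $\CQ$-inner with third-coordinate $x_3=p$ of degree $n$. Theorem \ref{thm10.12} then gives polynomials $A,E_1,E_2,D$ of degree at most $n$ with $E_1=E_2^{\sim n}$, $A$ outer, $D\neq0$ on $\DC$, $|E_i|\le|D|$, $|A|^2=|D|^2-|E_1|^2$ on $\T$, and $(a,s/2,s/2,p)=(a_{in}A/D,E_1/D,E_2/D,D^{\sim n}/D)$.

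From $E_1/D=s/2=E_2/D$ and $D\neq 0$ we get $E_1=E_2$. Setting $E=2E_1$, we have $s=E/D$. Condition (1) follows from $E_1=E_2^{\sim n}=E_1^{\sim n}$, since the operation $\sim n$ is conjugate-linear and the scalar $2$ is real. Condition (3) follows from $|E_1|\le|D|$. Condition (4) follows from $|E_1|^2=\frac14|E|^2$. Conditions (2) and (5) transfer directly. This proves the forward direction.

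For the converse, suppose $A,E,D$ satisfy (1)--(4) and define $a,s,p$ as in (5). Put $E_1=E_2=E/2$. Then $E_1=E_2^{\sim n}$ follows from (1), $|E_i|\le|D|$ follows from (3), and $|A|^2=|D|^2-|E_1|^2$ on $\T$ follows from (4). The converse part of Theorem \ref{thm10.12} therefore shows that $(a_{in}A/D,E/(2D),E/(2D),D^{\sim n}/D)=(a,s/2,s/2,p)$ is rational $\CQ$-inner with $\deg p\le n$. Lemma \ref{lem_ratPQ} then gives that $(a,s,p)$ is rational $\Pbar$-inner.

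The only point needing care is the converse of Theorem \ref{thm10.12}: there the first coordinate is $a_{in}A/D$, with $a_{in}$ an arbitrary finite Blaschke product. This is covered by Theorem \ref{thm10.14}, which allows any finite Blaschke product $B$. The argument is otherwise routine bookkeeping.
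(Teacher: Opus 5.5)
Your proposal is correct and follows the paper's proof essentially verbatim: you pass to $(a,s/2,s/2,p)$ via Lemma~\ref{lem_ratPQ} and apply Theorem~\ref{thm10.12}. For the forward direction you note $E_1=E_2$ (as polynomials, since they agree on $\overline{\mathbb{D}}$) and set $E=2E_1$; for the converse you take $E_1=E_2=E/2$, apply the converse of Theorem~\ref{thm10.12}, and return via Lemma~\ref{lem_ratPQ}. Your remark that $a_{in}$ in the converse must be a finite Blaschke product, handled through Theorem~\ref{thm10.14}, is exactly how the paper itself justifies the converse of Theorem~\ref{thm10.12}.
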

	
	\begin{proof} We prove using the theory of rational $\CQ$-inner functions. Let $f=(a, s, p)$ be a rational $\Pbar$-inner function and let $deg(p)=n$. It follows from Lemma \ref{lem_ratPQ} that the map 
		$h=(a, s\slash 2, s\slash 2, p)$ is a rational $\CQ$-inner function. We have by Theorem \ref{thm10.12} that there exist polynomials $A, E_1, E_2, D$ of degree at most $n$ such that the following hold:
		\begin{enumerate}
			\item[(i)] $E_1(\lm)=E_2^{\sim n}(\lm)$ for $\lm \in \overline{\D}$,\\
			
			\item[(ii)] $A$ is outer and $D(\lm) \ne 0$ for all $\lambda  \in \overline{\D}$,\\
			
			\item[(iii)] $|E_i(\lm)| \leq |D(\lm)|$ on $\overline{\D}$, for $i=1,2$,\\
			
			\item[(iv)] $|A(\lambda)|^2=|D(\lambda)|^2-|E_1(\lambda)|^2$ on $\T$,\\	
			
			\item[(v)] 
			$
			(a, s\slash 2, s\slash 2, p)=\left(a_{in}\dfrac{A}{D}, \dfrac{E_1}{D}, \dfrac{E_2}{D}, \dfrac{D^{\sim n}}{D} \right)$ on $\overline{\D}$.		
		\end{enumerate}
		Note that $E_1, E_2$ are polynomials with $E_1=E_2$ on $\overline{\D}$ and hence by identity theorem $E_1=E_2$ everywhere. Choose $E=2E_1$. Then $E(\lm)=E^{\sim n}(\lm)$ and $|E(\lambda)| \leq 2|D(\lambda)|$ for $\lm \in \overline{\D}$. The forward part now follows. Conversely, assume that $A, E$ and $D$ are polynomials of degree at most $n$ that satisfy conditions $(1)-(4)$ and
		\[
		(a, s, p)=\left(a_{in}\frac{A}{D}, \frac{E}{D}, \frac{D^{\sim n}}{D} \right) \quad \text{on} \quad  \overline{\D}.
		\]
		Let $E_1=E_2=E\slash 2$. Then $A, E_1, E_2$ and $D$ satisfy the hypothesis of Theorem \ref{thm10.12}. It follows from Theorem \ref{thm10.12} that $(a, s\slash 2, s\slash 2, p)$ is a rational $\CQ$-inner function. Finally, we have by Lemma \ref{lem_ratPQ} that $f=(a, s, p)$ is a rational $\Pbar$-inner function. The proof is now complete.	\end{proof} 
	
	\subsection*{A Schwarz Lemma for the pentablock} Let $\lm_0 \in \D \setminus \{0\}$ and $(a, s, p) \in \Pe$. Agler et al. in \cite{AglerIV} provided a necessary condition for the existence for a function $f \in$ Hol$(\D, \Pbar)$ such that $f(0)=(0, 0, 0)$ and $f(\lm_0)=(a, s, p)$.
	
	\begin{prop} [\cite{AglerIV}, Proposition 11.1] \label{prop_Schw_P_nec}
		Let $f \in$ Hol$(\D, \Pe)$. If $f(0)=(0, 0, 0)$ and $f(\lambda_0)=(a, s, p)$, then
		\begin{equation*}
			\frac{2|s-\overline{s}p|+|s^2-4p|}{4-|s|^2} \leq |\lambda_0| \quad \text
			{and} \quad 		\sup_{z \in \D} \bigg|\frac{a(1-|z|^2)}{1-sz+pz^2}\bigg| \leq |\lambda_0|.
		\end{equation*}
	\end{prop}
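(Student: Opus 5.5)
The statement is Proposition \ref{prop_Schw_P_nec}. I would deduce it from the hexablock results by transporting the map $f$ into $\Q$ through $e_0$ and then applying Theorem \ref{necessary}. Note that $f$ takes values in $\Pe\subseteq\Pbar$.

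\textbf{Step 1: lift $f$ to a disc in $\CQ$.} Put $h=e_0\circ f$, so that $h(\lm)=(f_1(\lm),f_2(\lm)/2,f_2(\lm)/2,f_3(\lm))$. By Theorem \ref{thmP_in_H} (equivalently Theorem \ref{thm_HP_final}), $h$ maps $\D$ holomorphically into $\Q\subseteq\CQ$. It satisfies $h(0)=(0,0,0,0)$ and $h(\lambda_0)=(a,s/2,s/2,p)\in\Q$. Hence condition (2) of Theorem \ref{necessary} holds for the point $(a,s/2,s/2,p)$, and therefore so does condition (3).

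\textbf{Step 2: the bound on $a$.} The first half of condition (3) reads
\[
\sup_{z_1,z_2\in\D}|\psi_{z_1,z_2}(a,s/2,s/2,p)|\le|\lambda_0|.
\]
Restricting to the diagonal $z_1=z_2=z$ gives $\psi_{z,z}(a,s/2,s/2,p)=a(1-|z|^2)/(1-sz+pz^2)$. This yields the second inequality of the proposition.

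\textbf{Step 3: the first inequality.} The cleanest route is to apply the Schwarz lemma for $\Gg$ (Agler--Young) to the holomorphic map $(f_2,f_3):\D\to\Gg$. This map lands in $\Gg$ by Theorem \ref{pentablock}, or alternatively by $\psi_{\Gg}\circ h$ via Theorem \ref{thm_GQretract}. It sends $0\mapsto(0,0)$ and $\lambda_0\mapsto(s,p)$. The Agler--Young lemma gives exactly
\[
\frac{2|s-\overline{s}p|+|s^2-4p|}{4-|s|^2}\le|\lambda_0|.
\]

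If one instead wants to stay purely within the hexablock, one can try to derive this bound from the tetrablock inequality in condition (3) with $x_1=x_2=s/2$ and $x_3=p$:
\[
\frac{|s/2-\overline{s}p/2|+|s^2/4-p|}{1-|s|^2/4}\le|\lambda_0|.
\]
Multiplying numerator and denominator by $4$ gives $\bigl(2|s-\overline{s}p|+|s^2-4p|\bigr)/(4-|s|^2)\le|\lambda_0|$, which is precisely the required inequality. So the tetrablock Schwarz lemma applied to $(s/2,s/2,p)$ suffices. This algebraic identification is the only step that needs care, and it is a routine check.
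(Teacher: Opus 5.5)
Your proposal is correct and, in its hexablock form, is exactly the paper's proof: you lift by $e_0$, invoke Theorem \ref{necessary}, restrict $\psi_{z_1,z_2}$ to the diagonal, and rescale the tetrablock quantity with $x_1=x_2=s/2$, $x_3=p$ by $4$. Your alternative appeal to the Agler--Young Schwarz lemma for $\Gg$ is also valid for the first inequality, but it bypasses the hexablock, whereas the paper's purpose in this section is to rederive the pentablock results from $\Q$.
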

	
	\begin{proof}
		We give an alternative proof using the theory of the hexablock. Let $f=(f_1, f_2, f_3)$. Consider the map $h=e_0 \circ f=(f_1, f_2\slash 2, f_2 \slash 2, f_3)$ on $\D$. It follows from Theorem \ref{thm_HP_final} that $h \in$ Hol$(\D, \Q)$. Evidently, $h(0)=(0, 0, 0, 0)$ and $h(\lm_0)=(a, s\slash 2, s\slash 2, p)$. Let $x_1=x_2=s\slash 2$ and $x_3=p$. Some routine computations and Theorem \ref{necessary} together give
		\begin{equation*}
			\frac{2|s-\overline{s}p|+|s^2-4p|}{4-|s|^2}=	\max\bigg\{\frac{|x_1-\overline{x}_2x_3|+|x_1x_2-x_3|}{1-|x_2|^2}, \ \frac{|x_2-\overline{x}_1x_3|+|x_1x_2-x_3|}{1-|x_1|^2}\bigg\} \leq |\lambda_0|
		\end{equation*}
		and 
		\begin{equation*}
			\sup_{z \in \D} \bigg|\frac{a(1-|z|^2)}{1-sz+pz^2}\bigg| = 	\sup_{z \in \mathbb{D}}\bigg|\frac{a(1-|z|^2)}{1-(x_1+x_2)z+x_3z^2}\bigg| \leq \sup_{z_1, z_2 \in \mathbb{D}}\bigg|\frac{a\sqrt{(1-|z_1|^2)(1-|z_2|^2)}}{1-x_1z_1-x_2z_2+x_3z_1z_2}\bigg| \leq |\lambda_0|.
		\end{equation*}
		The proof is now complete.	 
	\end{proof}

Using the theory of hexablock, we present a necessary and sufficient condition for the solvability of Schwarz lemma for $\Pe$. In fact, the result below follows directly from Theorem \ref{thm_final_embed}.  
		
	\begin{prop}\label{prop_SL_iff}
		Let $\lm_0 \in \D\setminus \{0\}$ and let $(a, s, p) \in \Pe$. Then the following are equivalent:
		\begin{enumerate}
			\item[(i)] there exists
			$f\in\text{Hol}(\mathbb D, \mathbb P)$
			such that $f(0)=(0,0,0)$ and $f(\lambda_0)=(a,s,p)$;

			\item[(ii)] there exists
			$g\in\text{Hol}(\mathbb D, \Q)$
			such that $g(0)=(0,0,0,0)$ and  $g(\lambda_0)=
			\left(a, s\slash 2, s\slash 2,p\right)$.	
		\end{enumerate}
	\end{prop}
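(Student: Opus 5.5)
The plan is to transfer holomorphic maps in both directions along the embedding $e_0(a,s,p)=(a,s/2,s/2,p)$ and the retraction $e_*(a,x_1,x_2,x_3)=(a,x_1+x_2,x_3)$. Theorem \ref{thm_final_embed}, equivalently Theorem \ref{thmP_in_H} and Lemma \ref{lem_H_P}, says both maps send the open domains into each other.

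\textbf{(i) $\Rightarrow$ (ii).} Given $f\in\text{Hol}(\D,\Pe)$ with $f(0)=(0,0,0)$ and $f(\lm_0)=(a,s,p)$, we set $g=e_0\circ f$. Part $(5)$ of Theorem \ref{thm_final_embed} (the open version) gives $g(\D)\subseteq\Q$. Since $e_0$ is linear, $g$ is holomorphic. We also have $g(0)=e_0(0,0,0)=(0,0,0,0)$ and $g(\lm_0)=(a,s/2,s/2,p)$.

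\textbf{(ii) $\Rightarrow$ (i).} Given $g=(g_0,g_1,g_2,g_3)\in\text{Hol}(\D,\Q)$ with the stated values, we set $f=e_*\circ g=(g_0,g_1+g_2,g_3)$. Lemma \ref{lem_H_P} gives $f(\D)\subseteq\Pe$, and $f$ is holomorphic because $e_*$ is linear. Then $f(0)=(0,0,0)$ and $f(\lm_0)=(a,s/2+s/2,p)=(a,s,p)$.

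No genuine obstacle is expected. The only point to check is that the open-domain statements are the ones being used, so that the targets are $\Pe$ and $\Q$ rather than their closures. This is exactly what Theorem \ref{thmP_in_H} and Lemma \ref{lem_H_P} provide. If one preferred maps into the closures, the lemma in Section \ref{Schwarz_lem} (Proposition 3.16 in \cite{Biswas-Pal2}) and Proposition \ref{prop14.10} would let us pass back to the open domains, because the interpolated point $(a,s,p)$ lies in $\Pe$.
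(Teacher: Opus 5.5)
Your proof is correct and matches the paper's argument: compose with $e_0$ for (i)$\Rightarrow$(ii) and with $e_*$ for (ii)$\Rightarrow$(i), using that these maps send $\Pe$ into $\Q$ and $\Q$ into $\Pe$. Citing Lemma \ref{lem_H_P} for the second direction is in fact the more precise justification, since the paper cites Theorem \ref{thm_final_embed}, whose part $(5)$ only covers points of the form $(a,s/2,s/2,p)$.
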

	
\begin{proof}
	Suppose there exists
	$f\in\text{Hol}(\mathbb D, \mathbb P)$
	such that $f(0)=(0,0,0)$ and $f(\lambda_0)=(a,s,p)$. Let $f=(f_1, f_2, f_3)$. By Theorem \ref{thm_final_embed}, the map $g: \D \to \Q$ given by $g=(f_1, f_2\slash 2, f_2\slash 2, f_3)$ is a well-defined holomorphic map. Clearly, $g(0)=(0, 0, 0, 0)$ and $g(\lm_0)=(a, s\slash 2, s\slash 2, p)$.  Conversely, suppose there exists
	$g\in\text{Hol}(\mathbb D, \Q)$
	such that $g(0)=(0,0,0,0)$ and  $g(\lambda_0)=
	\left(a, s\slash 2, s\slash 2,p\right)$. Let $g=(g_0, g_1, g_2, g_3)$. Again by Theorem \ref{thm_final_embed}, the function $f$ defined as $f=(g_0, g_1+g_2, g_3)$ is holomorphic on $\D$ that maps $\D$ into $\Pe$. Evidently, $f(0)=(0,0,0)$ and $f(\lambda_0)=(a,s,p)$.
\end{proof}	
	
Finally, we present the following necessary condition for the solvability of Schwarz lemma for $\Pe$ when an interpolating function is rational $\Pbar$-inner.

\begin{prop}
	Let $\lm_0 \in \D\setminus \{0\}$ and let $(a, s, p) \in \Pe$. If there exists a rational $\Pbar$-inner map $f$ such that $f(0)=(0,0,0)$ and $f(\lambda_0)=(a,s,p)$. Then there exist $w_{12}, w_{21}, z_{12}\in\mathbb C$ satisfying $4w_{12}w_{21}=s^2-4p$ and $|a| \leq |\lm_0w_{12}|$ such that 
		\begin{equation*}
			\begin{pmatrix}
				I-W_0W_0^*&I-W_0W_1^*\\[2mm]
				I-W_1W_0^*&
				\dfrac{I-W_1W_1^*}{1-|\lambda_0|^2}
			\end{pmatrix}
			\geq0,
			\quad 	\text{where} \quad W_0=
			\begin{pmatrix}
				0&z_{12}\\
				0&0
			\end{pmatrix} \ \ \text{and} \ \  W_1=
			\begin{pmatrix}
				x_1& w_{12}\\
				w_{21}&x_2
			\end{pmatrix}.
		\end{equation*}	
\end{prop}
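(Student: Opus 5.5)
The plan is to transfer the problem to the hexablock through Lemma \ref{lem_ratPQ} and then apply the forward direction of Theorem \ref{thm_hexa_schwarz}. In the matrix $W_1$ of the statement, $x_1$ and $x_2$ are to be read as $s\slash 2$ and $s\slash 2$.

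Let $f=(f_1,f_2,f_3)$ be a rational $\Pbar$-inner map with $f(0)=(0,0,0)$ and $f(\lambda_0)=(a,s,p)$. By Lemma \ref{lem_ratPQ}, the map
\[
g=(f_1,\,f_2\slash 2,\,f_2\slash 2,\,f_3)
\]
is a rational $\CQ$-inner function. It satisfies $g(0)=(0,0,0,0)$ and $g(\lambda_0)=(a,s\slash 2,s\slash 2,p)$.

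Next I check that this target point lies in $\Q$, as Theorem \ref{thm_hexa_schwarz} requires. Since $(a,s,p)\in\Pe$, Theorem \ref{thmP_in_H} (equivalently Theorem \ref{thm_HP_final}) gives $(a,s\slash 2,s\slash 2,p)\in\Q$. Thus $\lambda_0\in\D\setminus\{0\}$ and the point $(a,x_1,x_2,x_3)=(a,s\slash 2,s\slash 2,p)$ satisfy the hypotheses of Theorem \ref{thm_hexa_schwarz}, and condition (1) of that theorem holds for $g$.

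The implication $(1)\Rightarrow(2)$ of Theorem \ref{thm_hexa_schwarz} then supplies $w_{12},w_{21},z_{12}\in\C$ with three properties:
\[
w_{12}w_{21}=x_1x_2-x_3=\frac{s^2}{4}-p,\qquad |a|\le|\lambda_0w_{12}|,
\]
and the block matrix built from $W_0=\begin{pmatrix}0&z_{12}\\0&0\end{pmatrix}$ and $W_1=\begin{pmatrix}s/2&w_{12}\\w_{21}&s/2\end{pmatrix}$ is positive semidefinite. Multiplying the first identity by $4$ gives $4w_{12}w_{21}=s^2-4p$, which is exactly the stated conclusion.

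There is no real obstacle here. The only points needing care are the identification $x_1=x_2=s\slash 2$ in $W_1$ and the membership $(a,s\slash 2,s\slash 2,p)\in\Q$, both of which are handled above. All the substantive work is already done in Theorem \ref{thm_hexa_schwarz} and Lemma \ref{lem_ratPQ}.
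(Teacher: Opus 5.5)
Your proposal is correct and is essentially the paper's proof: form $g=(f_1,f_2/2,f_2/2,f_3)$, observe that it is rational $\CQ$-inner (the paper cites Theorems \ref{thm_final_embed} and \ref{thm_bP_2} directly, which is exactly what Lemma \ref{lem_ratPQ} packages), and apply the implication $(1)\Rightarrow(2)$ of Theorem \ref{thm_hexa_schwarz}. You also fill in three details the paper leaves implicit: that $(a,s/2,s/2,p)\in\Q$, which is a hypothesis of that theorem; that $x_1=x_2=s/2$ in $W_1$; and the rescaling to $4w_{12}w_{21}=s^2-4p$.
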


\begin{proof}
	Suppose there exists a rational $\Pbar$-inner map
	$f$	such that $f(0)=(0,0,0)$ and $f(\lambda_0)=(a,s,p)$. Let $f=(f_1, f_2, f_3)$. By Theorem \ref{thm_final_embed}, the map $g: \D \to \Q$ given by $g=(f_1, f_2\slash 2, f_2\slash 2, f_3)$ is a well-defined rational holomorphic map. Also, it follows from Theorem \ref{thm_bP_2} that $g$ is a $\CQ$-inner map. The desired conclusion now follows from Theorem \ref{thm_hexa_schwarz}.
\end{proof}

	\section{Concluding Remarks}
	
	\noindent	We conclude this article here with an indication towards several future directions of works and open problems. Along with
	this conjecture, many other intriguing questions still remain open. Below we state some of the open questions that may enhance future research.
	\begin{enumerate}
		\item It is known from \cite{NikolovII} and \cite{Zwonek} that the
		symmetrized bidisc
		$\Gg$ and the tetrablock $\E$ are $\C$-convex domains. More recently, the
		author of \cite{Guicong} established that the pentablock $\Pe$ also
		enjoys $\C$-convexity. Given the close geometric and function theoretic
		connections between $\Q$ and these domains, one can naturally ask if $\Q$ is a $\C$-convex domain too. However, at this point determining $\C$-convexity of the hexablock seems to be harder than that of the aforementioned domains.
		
		\smallskip
		
		\item The Lempert's theorem on coincidence of the Carath\'{e}odory distance and the Kobayashi distance has always been an interesting topic of research. Lempert's path-breaking work \cite{Lempert} shows that the Carath\'{e}odory distance and the Kobayashi distance coincide in a convex domain. However, the converse to this phenomenal result remained an open problem for two decades. Agler and Young \cite{AglerIII} proved that the symmetrized bidisc $\Gg$, being a non-convex domain, bears the success of Lempert's theorem. In fact, the symmetrized bidisc is the first example of a non-convex domain where Lempert's theorem holds. Interestingly, the same is true for another non-convex domain tetrablock as was shown by Edigarian, Kosi\'{n}ski and Zwonek in \cite{EdigarianI}. It is natural to ask if the Carath\'{e}odory distance and the Kobayashi distance coincide in the hexablock $\Q$ too.
		
		\smallskip
		
		\item Let $\Omega$ be a domain in $\C^n$. Given $m$ distinct points
		$\lambda_1, \dotsc, \lambda_m$ in $\D$ and $m$ points $z_1, \dotsc, z_m$
		in $\overline{\Omega}$, an  $\overline{\Omega}$-interpolation problem
		asks if one can find a holomorphic map $f: \D \to \overline{\Omega}$ such
		that $f(\lambda_j)=z_j$ for $1 \leq j \leq m$. A necessary and sufficient condition (e.g., see Chapter X in \cite{Foias_Frazho}) for the solvability of $\DC$-interpolation problem was given by the semi-positive definiteness of the matrix
		\[
		\begin{pmatrix}
			\displaystyle \frac{1-\overline{z}_iz_j}{1-\overline{\lambda}_i\lambda_j}
		\end{pmatrix}_{i, j=1}^m.
		\]
		While the general solution for a $\Gamma_2$-interpolation problem is still
		unknown, Agler and Young \cite{ALY12} obtained a complete solution for a
		two point $\Gamma_2$-interpolation problem. The solvability criteria for an
		$\Ebar$-interpolation problem was explored in \cite{Abouhajar, Brown}. In the same spirit, any general case of interpolation (at least for a two-point interpolation) in $\Q$ remains an open and challenging problem.
		
		\smallskip
		
		\item The $\mu$-synthesis problem is an interpolation problem from $\D$ to the $\mu_E$-unit ball $\mathbb{B}_{\mu_E}$ as was
		discussed in Section \ref{ch_intro}. Also, the $\mu$-synthesis with respect to a closed linear subspace $E$ of $M_n(\C)$ gives
		rise to a domain $\Omega$ in $\C^m$ for some $m\in \mathbb N$ and a holomorphic map $\pi_{\Omega}:
		\mathbb{B}_{\mu_E} \to \C^m$ such that
		$\pi_{\Omega}(\mathbb{B}_{\mu_E})=\Omega$, or at least
		$\pi_{\Omega}(\mathbb{B}_{\mu_E}) \subseteq \Omega$. For example, the symmetrized bidisc $\Gg$, the tetrablock $\E$ and the
		pentablock $\Pe$ are such domains that are realized as images of the corresponding
		$\mu_E$-unit ball under the maps $\Pi_2$, $\pi_{\E}$ and $\pi_{\Pe}$,
		respectively. If $f: \D \to \mathbb{B}_{\mu_E}$ is an analytic function that interpolates a given $n$-point data: (distinct points) $\lambda_1, \dotsc, \lambda_m \in \D$ and $A_1,
		\dotsc, A_m \in \mathbb{B}_{\mu_E}$, then $f_0= \pi_{\Omega} \circ f: \D
		\to \Omega$ is an analytic function that interpolates the corresponding $n$-point data from $\D$ to $\Omega$, i.e., $F(\lambda_j)=\pi(A_j)$
		for $ 1 \leq j \leq k$. An analytic lifting problem asks if the converse
		to this holds, i.e., given a holomorphic map $f_0: \D \to \Omega$, if one
		can find a holomorphic map $f: \D \to \mathbb{B}_{\mu_E}$ such that $f_0=
		\pi_{\Omega} \circ f$. Analytic lifting problems for $\Gg$ and $\E$ were
		studied in \cite{AglerYoung, AglerI, NikolovIII} and \cite{Abouhajar},
		respectively. Recently, an analytic lifting problem for $\Pe$ was
		explored in \cite{AglerIV, Alsheri}. Following the notations in Section \ref{ch_intro}, let
		$\mathbb{B}_{\mu_{hexa}}=\{A \in M_2(\C) : \mu_{hexa}(A) <1\}$, where
		$\mu_{hexa}$ is the cost function $\mu_E$ corresponding to the space $E$
		of all upper triangular matrices in $M_2(\C)$. It is easy to see that the map $\pi: \mathbb{B}_{\mu_{hexa}} \to \Q$ given by
		$\pi(A)=(a_{21}, a_{11}, a_{22}, \det(A))$ for $A=(a_{ij})_{i, j=1}^2$ is
		well-defined and holomorphic. This naturally leads to the study of the
		associated analytic lifting problem for $\Q$ and is a genuine future direction of research.
		
		\smallskip
		
		\item For a domain $\Omega$ in $\C^n$, a set $D \subseteq \Omega$ is said to be a \textit{complex geodesic} in $\Omega$ if there are
		holomorphic maps $f: \D \to \Omega$ and $g: \Omega \to \D$ such that
		$g\circ f=\text{id}_{\D}$ and $D=f(\D)$. It was proved in \cite{ALY14} (also see \cite{PZ}) that the complex geodesics in $\Gg$ coincide with the nontrivial holomorphic retracts of $\Gg$. The authors of \cite{Edi2009} describe all complex geodesics in $\E$ passing through the origin, and further investigations
		on holomorphic retracts and complex geodesics in $\E$ can be found in \cite{Gargi}. An ambitious but far-reaching target would be to determine a geometric classification of all complex geodesics in the hexablock.
	\end{enumerate}

	\subsection*{Funding} The first named author is partially supported in part by a J. C. Bose Fellowship (JBR\slash 2023\slash 000003). The second named author is supported by the `Core Research Grant (CRG)' with Award No. CRG/2023/005223 of Anusandhan National Research Foundation (ANRF), Govt. of India. The third named author was supported by the Prime Minister's Research Fellowship (PMRF), Government of India (PMRF ID No. 1300140), and the IIT Bombay RDF Grant of the second named author (Project Code RI/0115-10001427) during the course of work. He is currently an NBHM Postdoctoral Fellow at the Indian Statistical Institute, Bangalore Centre.

\end{document}